\newtheorem{observation}{Remark}[section]
\newtheorem{lemma}[observation]{Lemma}  
\newtheorem{theorem}[observation]{Theorem}
\newtheorem{definition}[observation]{Definition}
\newtheorem{example}[observation]{Example}
\newtheorem{remark}[observation]{Remark}
\newtheorem{proposition}[observation]{Proposition}
\newtheorem{corollary}[observation]{Corollary}
\newcommand{\linear}{{linear}\xspace}
\newcommand{\proof}{\noindent{\sc Proof:}\xspace}
\def\endproof{~\hfill$\Box$\vskip 10pt}
\newcommand{\ox}{\otimes}
\def\*{\otimes}
\newcommand{\x}{\times}
\newcommand{\<}{\langle}
\renewcommand{\>}{\rangle}
\mathchardef\colon="303A 
\mathchardef\gt="313E 
\mathchardef\lt="313C 
\newcommand{\scr}{\scriptsize}
\newcommand{\lollipop}{\multimap}
\renewcommand{\L}{{\ensuremath{\cal L}\xspace}}
\renewcommand{\S}{{\ensuremath{\cal S}\xspace}}
\newcommand{\X}{\ensuremath{\mathbb X}\xspace}
\newcommand{\Y}{\ensuremath{\mathbb Y}\xspace}
\newcommand{\ex}{{\rm ex}\xspace}
\newcommand{\eval}{{\rm eval}\xspace}
\newcommand{\curry}{{\rm curry}\xspace}
\newcommand{\diff}[4]{\frac{\partial #1}{\partial #2}\left( #3 \right)\cdot #4}
\newcommand\nats{\hbox{$I \kern - .38em N$}} 
\newcommand\ints{\hbox{$Z \kern - .65em Z$}} 
\newdimen\w@dth
\def\setw@dth#1#2{\setbox\z@\hbox{\scriptsize $#1$}\w@dth=\wd\z@
\setbox\@ne\hbox{\scriptsize $#2$}\ifnum\w@dth<\wd\@ne \w@dth=\wd\@ne \fi
\advance\w@dth by 1.2em}
\def\t@^#1_#2{\allowbreak\def\n@one{#1}\def\n@two{#2}\mathrel
{\setw@dth{#1}{#2}
\mathop{\hbox to \w@dth{\rightarrowfill}}\limits
\ifx\n@one\empty\else ^{\box\z@}\fi
\ifx\n@two\empty\else _{\box\@ne}\fi}}
\def\t@@^#1{\@ifnextchar_ {\t@^{#1}}{\t@^{#1}_{}}}
\def\t@left^#1_#2{\def\n@one{#1}\def\n@two{#2}\mathrel{\setw@dth{#1}{#2}
\mathop{\hbox to \w@dth{\leftarrowfill}}\limits
\ifx\n@one\empty\else ^{\box\z@}\fi
\ifx\n@two\empty\else _{\box\@ne}\fi}}
\def\t@@left^#1{\@ifnextchar_ {\t@left^{#1}}{\t@left^{#1}_{}}}
\def\two@^#1_#2{\def\n@one{#1}\def\n@two{#2}\mathrel{\setw@dth{#1}{#2}
\mathop{\vcenter{\hbox to \w@dth{\rightarrowfill}\kern-1.7ex
                 \hbox to \w@dth{\rightarrowfill}}%
       }\limits
\ifx\n@one\empty\else ^{\box\z@}\fi
\ifx\n@two\empty\else _{\box\@ne}\fi}}
\def\tw@@^#1{\@ifnextchar_ {\two@^{#1}}{\two@^{#1}_{}}}
\def\tofr@^#1_#2{\def\n@one{#1}\def\n@two{#2}\mathrel{\setw@dth{#1}{#2}
\mathop{\vcenter{\hbox to \w@dth{\rightarrowfill}\kern-1.7ex
                 \hbox to \w@dth{\leftarrowfill}}%
       }\limits
\ifx\n@one\empty\else ^{\box\z@}\fi
\ifx\n@two\empty\else _{\box\@ne}\fi}}
\def\t@fr@^#1{\@ifnextchar_ {\tofr@^{#1}}{\tofr@^{#1}_{}}}
\newdimen\W@dth
\def\setW@dth#1#2{\setbox\z@\hbox{$#1$}\W@dth=\wd\z@
\setbox\@ne\hbox{$#2$}\ifnum\W@dth<\wd\@ne \W@dth=\wd\@ne \fi
\advance\W@dth by 1.2em}
\def\T@^#1_#2{\allowbreak\def\N@one{#1}\def\N@two{#2}\mathrel
{\setW@dth{#1}{#2}
\mathop{\hbox to \W@dth{\rightarrowfill}}\limits
\ifx\N@one\empty\else ^{\box\z@}\fi
\ifx\N@two\empty\else _{\box\@ne}\fi}}
\def\T@@^#1{\@ifnextchar_ {\T@^{#1}}{\T@^{#1}_{}}}
\def\T@left^#1_#2{\def\N@one{#1}\def\N@two{#2}\mathrel{\setW@dth{#1}{#2}
\mathop{\hbox to \W@dth{\leftarrowfill}}\limits
\ifx\N@one\empty\else ^{\box\z@}\fi
\ifx\N@two\empty\else _{\box\@ne}\fi}}
\def\T@@left^#1{\@ifnextchar_ {\T@left^{#1}}{\T@left^{#1}_{}}}
\def\Tofr@^#1_#2{\def\N@one{#1}\def\N@two{#2}\mathrel{\setW@dth{#1}{#2}
\mathop{\vcenter{\hbox to \W@dth{\rightarrowfill}\kern-1.7ex
                 \hbox to \W@dth{\leftarrowfill}}%
       }\limits
\ifx\N@one\empty\else ^{\box\z@}\fi
\ifx\N@two\empty\else _{\box\@ne}\fi}}
\def\T@fr@^#1{\@ifnextchar_ {\Tofr@^{#1}}{\Tofr@^{#1}_{}}}
\def\Two@^#1_#2{\def\N@one{#1}\def\N@two{#2}\mathrel{\setW@dth{#1}{#2}
\mathop{\vcenter{\hbox to \W@dth{\rightarrowfill}\kern-1.7ex
                 \hbox to \W@dth{\rightarrowfill}}%
       }\limits
\ifx\N@one\empty\else ^{\box\z@}\fi
\ifx\N@two\empty\else _{\box\@ne}\fi}}
\def\Tw@@^#1{\@ifnextchar_ {\Two@^{#1}}{\Two@^{#1}_{}}}
\def\to{\@ifnextchar^ {\t@@}{\t@@^{}}}
\def\from{\@ifnextchar^ {\t@@left}{\t@@left^{}}}
\def\tofro{\@ifnextchar^ {\t@fr@}{\t@fr@^{}}}
\def\To{\@ifnextchar^ {\T@@}{\T@@^{}}}
\def\From{\@ifnextchar^ {\T@@left}{\T@@left^{}}}
\def\Two{\@ifnextchar^ {\Tw@@}{\Tw@@^{}}}
\def\Tofro{\@ifnextchar^ {\T@fr@}{\T@fr@^{}}}
\newcommand{\avoidwidow}{\par\pagebreak[3]\kern50pt \pagebreak[3]\kern-50pt}
\title{Cartesian differential storage categories}
\author{R. Blute \and J.R.B. Cockett \and R.A.G. Seely}
\begin{document}


\maketitle

\begin{abstract}
Cartesian differential categories were introduced to provide an abstract
axiomatization of categories of differentiable functions. The fundamental example
is the category whose objects are Euclidean spaces and whose arrows are smooth maps.

Tensor differential categories provide the framework for categorical models
of differential linear logic. The coKleisli category of any tensor differential category
is always a Cartesian differential category.  Cartesian differential categories, besides arising
in this manner as coKleisli categories,  occur in many different
and quite independent ways.  Thus, it was not obvious how to pass from Cartesian
differential categories back to tensor differential categories.

This paper provides natural conditions under which the linear maps of a Cartesian differential
category form a tensor differential category. This is a question of some practical importance
as much of the machinery of modern differential geometry is based on models which
implicitly allow such a passage, and thus the results and tools of the area tend to freely
assume access to this structure.

The purpose of this paper is to make precise the connection between the
two types of differential categories.  As a prelude to this, however, it
is convenient to have available a general theory which relates the behaviour of
``linear'' maps in Cartesian categories to the structure of Seely categories.  The
latter were developed to provide the categorical semantics for (fragments of)
linear logic which use a ``storage'' modality.    The general theory of storage, which
underlies the results mentioned above, is developed in the opening
sections of the paper and is then applied to the case of differential categories.
\end{abstract}

\section{Introduction}

A fundamental observation of Girard, \cite{Girard}, which led to the
development of linear logic, was that the hom-functor of stable domains
could be decomposed as $A \Rightarrow B \colon=~ !A \lollipop B$ where
the latter, $X \lollipop Y$, is the hom-functor of coherence spaces.
This suggested that one might similarly be able to decompose
differentiable maps between two spaces $A$ and $B$ as the ``linear''
maps from a constructed space $S(A)$ to $B$.  Ehrhard and Regnier's work
\cite{ER04} on the differential $\lambda$-calculus was inspired by this
analogy and in \cite{E04} Ehrhard provided concrete examples which
realized this analogy.  This raised the question of how general a
correspondence this was.

In \cite{diffl} the notion of a (tensor) differential category was
introduced as a minimal categorical doctrine in which differentiability
could be studied.  This generalized Ehrhard and Regnier's  ideas in various ways.
It dispensed with the necessity that the setting be $*$-autonomous: the
decomposition of the hom-functor was then handled through the presence
of a comonad---or modality---satisfying a minimal set  of coherences.  Finally
the differential was introduced satisfying a minimal set of identities.  In this paper
we shall, in fact, consider a slight strengthening of this basic notion by requiring,
first, that the modality be a storage modality ({\em i.e.} the Seely isomorphisms
$1 \equiv S(0)$ and $S(A) \ox S(B) \equiv S(A \x B)$ are present) and, second,
that the differential satisfies an interchange law.

This maintained the perspective of linear logic, by providing a
decomposition of differentiable functions through a comonad---or
storage modality---on more basic ``linear'' functions.  However, a
rather unsatisfactory aspect of this perspective remained.  Smooth
functions ({\em i.e.} infinitely differentiable functions)---after all the
main subject of calculus---appeared only indirectly
as the maps of the coKleisli category. This left a veil between these settings and the
direct understanding of these differentiable functions in the classical sense.   It seemed,
therefore, important to develop a more direct view of what a category of
differentiable functions should look like.  This caused us to examine more
closely the coKleisli categories of differential categories and to seek
a direct axiomatization for them.

To advance this aim we deployed (the dual of) Fuhrmann's notion of an
abstract Kleisli category \cite{fuhrmann}.  In the course of developing these ideas we
stumbled on a much deeper sense in which the situation in linear logic
can be read as mathematics.  Thus a key result of this paper is a very
general structural theorem about how ``models of linear logic'' arise.
Rather than retrospectively remove our path to this result, we have
centred the paper around the results on differential categories so that
the motivation which brought us to this particular passage is not lost.

The ``models of linear logic'' we consider here are not, in fact, models
of full linear logic, as the underlying categories are monoidal and not,
in general,  monoidal closed, let alone $*$-autonomous. These models were
studied in depth by Gavin Bierman \cite{Bierman}: he called them ``Seely
categories''.  Given that these categories have now been the object of
quite a number of studies, it is perhaps surprising that there is more to say about
them.  Our main rather general structural theorem points out that these
categories arise in a rather natural mathematical way from the ``linear
maps'' of Cartesian storage categories.  As Cartesian storage categories
have a rather natural mathematical genus, this helps to explain why
``Seely categories'' are such fundamental structures.

Our examination of the coKleisli categories of differential categories
had led us to introduce the notion of a Cartesian differential category,
\cite{CartDiff}.  Cartesian differential categories are categories of
``raw'' differentiable maps with all the indirection---alluded to
above---removed.  Pleasingly these categories have a wide range of
models.  In particular,  in \cite{CartDiff}, we showed that the
coKleisli category of a (tensor) differential category---satisfying an
additional interchange axiom---{\em is\/} a Cartesian differential
category.

However, this had left open the issue of providing a converse for this
result.  In general, it is certainly {\em not\/} the case that a
Cartesian differential category is the coKleisli category of a (tensor)
differential category---a basic counterexample is the category of finite dimensional
real vector spaces with smooth maps.   Thus, it is natural to ask what extra
conditions are required on a Cartesian differential category to make it
a coKleisli category of a (tensor) differential category. This
paper provides a rather natural (and appealing) answer to this
question.  However, lest the reader think we have completely
resolved this question, we hasten to admit that it still does not answer the question in
complete generality.  Furthermore, it leaves some other rather natural
questions open: for example it is natural to wonder whether {\em every\/}
Cartesian differential category arises as a full subcategory of the
coKleisli category of a (tensor) differential category.

The motivation for developing Cartesian storage categories was that they
provided an intermediate description of the coKleisli category of a
(tensor) differential category.  These coKleisli categories were already
proven  to be Cartesian differential categories and, as such, had an
obvious subcategory of linear maps---namely those which were linear in
the natural differential sense.  A Cartesian storage category uses the abstract
properties of ``linear maps'' as the basis for its definition and is always the coKleisli category
of its linear maps.  In order for such a storage category to be the coKleisli category of
a ``Seely category'', a further ingredient was required: one must be able to represent
bilinear maps with a tensor product.  The theory of these categories---which we have taken
to thinking of as the theory of storage---provides the theoretical core of the paper
and is described in Sections \ref{storage-categories} and \ref{tensor-storage-cats}.

In Section \ref{diff-storage} we apply this theory to Cartesian differential categories.
Cartesian differential categories exhibit a notable further coincidence of
structure.  When one can classify the linear maps of a Cartesian
differential category one also automatically has a transformation called a ``codereliction''.
Having a codereliction in any Cartesian storage category
implies---assuming linear idempotents split---that one also has, for free,
tensorial representation.  Thus, to apply the theory of storage categories to
Cartesian differential categories it suffices to know just that the linear maps are classified.
Finally, but crucially for the coherence of the story, the linear maps then, with no further
structural requirement, form a tensor differential category.  This provides the main
result of the paper Theorem \ref{lin-of-cdsc}.  Completing the circle then,
at the end of the paper, we are in a position to point out that a closed Cartesian
differential storage category is a model of Ehrhard and Regnier's
differential $\lambda$-calculus \cite{ER04}.

Considering that we have been discussing two types of differential
category, it seemed a good idea to qualify each with an appropriate
adjective, so as to distinguish between them in a balanced manner.  So
we shall (in this paper) refer to differential categories as ``tensor
differential categories'', to contrast them with ``Cartesian
differential categories''.    In parallel with this, we introduce other
terminological pairings which are natural from the perspective of this
paper, but are also structures which have received attention under
different names: For example,  ``tensor storage categories'' are
essentially ``Seely categories'' and ``tensor differential storage
categories''  are essentially what previously we called differential
storage categories.    Our purpose is not to precipitate any renaming of
familiar structures but rather to emphasize the structural relationships
which emerge from the story we tell here.

\section{Cartesian Storage categories}
\label{storage-categories}

In the discussion above two situations were described in which we selected from
a Cartesian category certain maps, namely the coherent maps in stable domains
and the linear maps among differentiable maps, and then used a classification of these maps
to extract a Seely category.   The aim of the next two sections is to show
why this works as a general procedure and, moreover, can
produce a tensor storage (or ``Seely'') category . In fact, our aim is to prove
much more: namely that {\em all\/}  ``Seely categories'' (with an exact modality)
arise in this manner.

We start by defining what is meant by a system \L\ of \linear maps. One way
to view a \linear system is as a subfibration of the simple fibration.  In
a category equipped with a system of \linear maps, we shall often
suppress explicit mention of $\cal L$, referring to maps belonging to
this system as simply being {\em linear\/}.  Such systems arise very
naturally in Cartesian closed categories from exponentiable systems of
maps and we make a detour to describe these connections.

Our initial aim is to characterize the categories in which there is
a decomposition of maps analogous to that in linear logic.   This
eventually leads us to ``Seely categories'' as those in which all the
``first-order'' structural components of the logic are present.

Leaving definitions for later, the path will be the following:
a ``Cartesian storage category'' is a category equipped with a system of
\linear maps which are ``strongly and persistently classified''.  We then
prove that this is equivalent to demanding that the category is a ``strong
abstract coKleisli category''.  This, in turn, is equivalent
to being the coKleisli category of a category with a ``forceful''
comonad.  Finally, to this story we add, in the next section,   ``tensorial representation'', which
when present, ensures the linear maps forms a tensor storage
category---this is essentially a ``Seely category''.


\subsection{Systems of \linear\ maps}\label{linear-maps}

By a {\bf system of maps} of a category is meant a subcategory on the same objects: a
{\bf Cartesian} system of maps of a Cartesian category is just such a
subcategory closed under the product structure.  This means that
projections, diagonal maps, and final map must be in the system and,
furthermore, the pairing of any two maps in the system must be in the
system.  For a system \L\ of maps to be \linear it must determine a
subfibration of the simple fibration.

The simple slice $\X[A]$ at $A$---that is the fiber over $A$ of the
simple fibration---of a Cartesian category $\X$, has the same objects
as $\X$ but the homsets are modified $\X[A](X,Y) = \X(A \x X,Y)$. The
composition in $\X[A]$ of two maps $g: A \x X \to Y$ and $h: A \x Y \to
Z$ is $\< \pi_0,g \> h$ with identities given by projection $\pi_1: A \x
X \to X$.  The substitution functor between simple slices has the
following effect on the maps of $\X$.  If $g: A \to B$ is any map of
$\X$, a map of $\X[B]$ is, in $\X$, a map $g: B \x X \to Y$, then
$\X[f](g)= (f \x 1)g: A \x X \to Y$.

\begin{definition}
A Cartesian category $\X$ is said to have a {\bf system of \linear
maps}, ${\cal L}$, in case in each simple slice $\X[A]$ there is a system
of maps ${\cal L}[A] \subseteq \X[A]$, which we shall refer to as the
{\bf ${\cal L}[A]$-linear} maps (or, when $\cal L$ is understood, simply
as ``linear'' maps), satisfying:
\begin{enumerate}[{\bf [LS.1]}]
\item  for each $X,Y,Z \in \X[A]$, all of $1_X,\pi_0: X \x Y \to
X,\pi_1: X \x Y \to Y$ are in ${\cal L}[A]$ and, furthermore, $\< f,g\>:
X \to Y \x Z \in {\cal L}[A]$ whenever $f,g \in {\cal L}[A]$;
\item  in each $\X[A]$, ${\cal L}[A]$, as a system of maps,  is closed under composition, furthermore,
whenever $g \in {\cal L}[A]$ is a retraction and $gh \in {\cal L}[A]$
then $h \in {\cal L}[A]$;
\item  all substitution functors
$$\infer{\X[B] \to_{\X[f]} \X[A]}{A \to^f B}$$
preserve linear maps.
\end{enumerate}
\end{definition}

Notice how a system of \linear maps determines a subfibration:
$$\xymatrix{{\cal L}[\X] \ar[d]^{\partial_{\cal L}} \ar[rr]^{{\cal I}[\X]}
  & & \X[\X] \ar[d]^{\partial} \\
  {\cal L} \ar[rr]_{{\cal I}} & & \X}$$
where ${\cal  I}$ is the obvious inclusion and ${\cal L}[\X]$ is the category:
\begin{description}
\item{{\bf [Objects:]}} $(A,X) \in \X_0 \x \X_0$
\item{{\bf [Maps:]}} $(f,g): (A,X) \to (B,Y)$ where $g: A \to B$ is any
map and $f: A \x X \to Y$ is linear in
its second argument.
\end{description}
Composition and identities are as in $\X[\X]$, where the maps are the
same but with no linear restriction.  Composition is defined by $(f,g)
(f',g') = (\< \pi_0 g,f \> f',g g')$ and $(\pi_1,1)$ is the identity
map.  Cartesian maps are of the form $(\pi_1,g): (A,X) \to (B,X)$.

Furthermore we observe using {\bf [LS.2]}:

\begin{lemma}
For any $X$ with a system of \linear maps in any slice $\X[A]$:
\begin{enumerate}[(i)]
\item  if $f$ is an linear map  which is an isomorphism, then $f^{-1}$ is linear;
\item  if $e$ is a linear idempotent which splits as a retraction $r$
and a section $i$, and if $r$ is linear, then $i$ is also linear.
\end{enumerate}
\end{lemma}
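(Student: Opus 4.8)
The plan is to read both clauses off the second half of \textbf{[LS.2]}---cancellation of a linear retraction on the left---using only \textbf{[LS.1]} (identities are linear) together with the diagrammatic composition convention of the simple slices fixed above. No appeal to \textbf{[LS.3]}, nor to the remaining (product) content of \textbf{[LS.1]}, is needed.

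For (i), I would first note that a linear isomorphism $f\colon X\to Y$ in $\X[A]$ is in particular a retraction in $\X[A]$, with section $f^{-1}$ (diagrammatically, $f^{-1}f = 1_Y$). The composite $f f^{-1} = 1_X$ is linear by \textbf{[LS.1]}, so \textbf{[LS.2]}, applied to the linear retraction $f$ and the composite $f f^{-1}\in{\cal L}[A]$, forces $f^{-1}\in{\cal L}[A]$, which is exactly the claim.

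For (ii), let the linear idempotent $e\colon X\to X$ split in $\X[A]$ as $e = r i$, with $r\colon X\to Z$ the retraction and $i\colon Z\to X$ the section, so that $i r = 1_Z$ and $r i = e$. Since $r$ is assumed linear and a retraction, and the composite $r i = e$ is linear by hypothesis, \textbf{[LS.2]} immediately yields $i\in{\cal L}[A]$. The two arguments are deliberately parallel: in each case a chosen map ($f^{-1}$, respectively $i$) appears as the right-hand factor of a composite whose left-hand factor is the given linear retraction, and whose value is visibly linear.

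The statement is short enough that there is essentially no obstacle; the only point requiring care is the bookkeeping of composition order, i.e.\ ensuring that the map being cancelled is indeed the right-hand factor in the relevant composite and that this composite is correctly identified as linear---as the identity $1_X$ in (i), and as the idempotent $e$ itself in (ii). Once the conventions are pinned down there is nothing further to verify.
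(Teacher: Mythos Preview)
Your proof is correct and is precisely the argument the paper intends: it states the lemma immediately after the phrase ``Furthermore we observe using {\bf [LS.2]}'' and gives no further proof, so both parts are meant to be read off from the retraction--cancellation clause of {\bf [LS.2]} exactly as you have done.
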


We shall need terminology which focuses on the linear argument rather
than the context of that argument.  Thus to say that a map $f: X \x A
\to Y$ is linear {\em in its second argument\/} is equivalent to saying
that $f$ is ${\cal L}[X]$-linear. Similarly, a map, $f$,  is linear {\em in its first argument\/} if
$c_{\x} f$ is ${\cal L}[X]$-linear.

\begin{lemma}
If ${\cal L}$ is a system of \linear maps on a Cartesian category $\X$
then if $f: X \x A \to B$ and $g: Y \x B \to C$ are linear in their
second arguments then $(f \x 1)g: Y \x X \x A \to C$ is linear in its
third argument.
\end{lemma}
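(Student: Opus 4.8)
The plan is to read ``linear in the $n$-th argument'' as ``${\cal L}[\,\cdot\,]$-linear over the appropriate context'', transport both hypotheses into a single simple slice, and then let closure under composition (\textbf{[LS.2]}) and stability under substitution (\textbf{[LS.3]}) do the work. Concretely, the hypothesis that $f : X \x A \to B$ is linear in its second argument says $f \in {\cal L}[X](A,B)$, and that $g : Y \x B \to C$ is linear in its second argument says $g \in {\cal L}[Y](B,C)$; what must be shown is that $(f \x 1)g$, regarded as a morphism $A \to C$ of the simple slice $\X[Y \x X]$, lies in ${\cal L}[Y \x X]$. So the first move is just to fix this reformulation and name the two product projections $p : Y \x X \to X$ and $q : Y \x X \to Y$.

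The second step is to push $f$ and $g$ into the common slice $\X[Y \x X]$ along $p$ and $q$. By \textbf{[LS.3]} the substitution functors $\X[p] : \X[X] \to \X[Y \x X]$ and $\X[q] : \X[Y] \to \X[Y \x X]$ preserve linear maps, so $\X[p](f) = (p \x 1_A)f$ and $\X[q](g) = (q \x 1_B)g$ are both ${\cal L}[Y \x X]$-linear. Since, by \textbf{[LS.2]}, ${\cal L}[Y \x X]$ is a subcategory of $\X[Y \x X]$ and hence closed under composition, the composite of these two maps taken inside $\X[Y \x X]$ is again ${\cal L}[Y \x X]$-linear; written out in $\X$ via the formula for slice composition, this composite is $\langle \pi_0, (p \x 1_A)f \rangle (q \x 1_B) g$.

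The last step, and the only place any care is needed, is to check that this composite is exactly the map $(f \x 1)g$ of the statement, modulo the associativity and symmetry isomorphisms that the notation leaves implicit: on a generalized element $(y,x,a)$ both pass through $(y, f(x,a))$ and land at $g(y, f(x,a))$. I expect this reconciliation of projections and coherence maps to be the only fiddly point, but it presents no genuine obstacle; once the slices are matched up, linearity of $(f \x 1)g$ in its third argument is forced. (Axiom \textbf{[LS.1]} plays no role here.)
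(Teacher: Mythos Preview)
Your proposal is correct and is essentially the paper's own proof: substitute $f$ and $g$ along the two projections $Y \x X \to X$ and $Y \x X \to Y$ into the common slice $\X[Y \x X]$ using \textbf{[LS.3]}, then compose there using \textbf{[LS.2]}. The paper states this in two lines and simply asserts the identification $(1 \x f)g = \X[\pi_1](f)\,\X[\pi_0](g)$; you spell out the slice-composition formula and the element-level check, which is a welcome bit of extra care but not a different idea.
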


\proof
As $f$ is linear in $\X[X]$, $\X[\pi_1](f): Y \x X \x A \to C$ is linear
in $\X[Y \x X]$. Similarly as $g$ is linear in $\X[Y]$, $\X[\pi_0](g)$ is
linear in $\X[Y \x X]$.  Now $(1 \x f)g = \X[\pi_1](f)\X[\pi_0](g)$
which makes it a composite of linear maps and so itself linear.
\endproof

The lemma means that to determine whether a composite is linear in a
particular argument it suffices to know that each composition was at a
linear argument.

If ${\cal L}$ is a system of \linear maps then $f: A \x B \to C$ is
(${\cal L}$-){\bf bilinear} in case it is linear in each argument
individually.

Finally we make the useful if immediate observation:

\begin{lemma}
If $\X$ has a system of linear maps ${\cal L}$ then each simple slice, $\X[A]$, inherits a system of linear maps ${\cal L}[A]$.
\end{lemma}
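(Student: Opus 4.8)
The plan is to transport the given system ${\cal L}$ along the canonical identification of iterated simple slices.  The structural fact we need is that, for all objects $A,B$ of $\X$, there is an isomorphism of Cartesian categories
$$\Phi_{A,B}: \X[A][B] \to \X[A \x B]$$
which is the identity on objects and which on homsets is precomposition with the associativity isomorphism $(A \x B) \x X \cong A \x (B \x X)$; this is immediate from the chain $\X[A][B](X,Y) = \X[A](B \x X, Y) = \X(A \x (B \x X), Y) \cong \X((A \x B) \x X,Y) = \X[A \x B](X,Y)$, and a routine unwinding of the simple-slice definitions of composition, identities, and the product structure shows that $\Phi_{A,B}$ is indeed a product-preserving functor.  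Using this, I would \emph{define} the candidate system on $\X[A]$ by setting, for each object $B$,
$$ {\cal L}[A][B] \;:=\; \Phi_{A,B}^{-1}\bigl({\cal L}[A \x B]\bigr) \;\subseteq\; \X[A][B], $$
and it then remains to verify {\bf [LS.1]}--{\bf [LS.3]} for this assignment.

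Conditions {\bf [LS.1]} and {\bf [LS.2]} transfer formally.  Since $\Phi_{A,B}$ is an isomorphism of Cartesian categories, it carries the identities, the projections $\pi_0,\pi_1$, and the pairings $\langle f,g\rangle$ of $\X[A][B]$ exactly to those of $\X[A\x B]$; as ${\cal L}[A\x B]$ contains all identities and projections and is closed under pairing by {\bf [LS.1]} applied to $\X$, the corresponding clause holds for ${\cal L}[A][B]$.  Likewise $\Phi_{A,B}$ preserves composition and both preserves and reflects retractions and sections, so closure of ${\cal L}[A\x B]$ under composition and its retraction-cancellation property ({\bf [LS.2]} for $\X$) give {\bf [LS.2]} for ${\cal L}[A][-]$.

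The one substantive point is {\bf [LS.3]}: the substitution functors $\X[A][f]$, for $f: B \to B'$ in $\X[A]$, must preserve linear maps.  Here $f$ is, in $\X$, a map $f: A \x B \to B'$, and the plan is to show that under the isomorphisms $\Phi_{A,B'}$ and $\Phi_{A,B}$ the functor $\X[A][f]: \X[A][B'] \to \X[A][B]$ is identified with the substitution functor $\X[\langle \pi_0, f\rangle]: \X[A\x B'] \to \X[A \x B]$ of $\X$ itself, taken along the ``$f$-in-context-$A$'' map $\langle \pi_0, f\rangle: A \x B \to A \x B'$.  Concretely, for $g$ a map of $\X[A][B']$ one unwinds $\X[A][f](g)$ using the slice definitions of product and composition and, after reassociating, obtains exactly $(\langle \pi_0,f\rangle \x 1)\,g$, i.e. the value of $\X[\langle\pi_0,f\rangle]$ on the transported map.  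Granting this, {\bf [LS.3]} for ${\cal L}$ on $\X$ says $\X[\langle\pi_0,f\rangle]$ preserves ${\cal L}$-linear maps, hence $\X[A][f]$ preserves ${\cal L}[A][-]$-linear maps, and $\X[A]$ has a system of linear maps as required.

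The main obstacle is precisely this last identification: keeping track of the associativity isomorphisms and recognizing substitution inside $\X[A]$ (which is indexed by $\X[A]$-arrows $A\x B\to B'$) as substitution in $\X$ along $\langle\pi_0,f\rangle: A\x B\to A\x B'$.  Everything else is formal transport of structure along the isomorphism $\Phi_{A,B}$, together with the already-established {\bf [LS.1]}--{\bf [LS.3]} for $\X$.
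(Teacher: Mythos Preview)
Your proposal is correct and is precisely the argument the paper has in mind; indeed the paper offers no proof at all, merely labelling the lemma an ``immediate observation'', so your write-up is more detailed than the original.  The identification $\X[A][f] \cong \X[\langle \pi_0,f\rangle]$ via the reassociation $\Phi_{A,B}$ is exactly the expected content, and your computation that $\X[A][f](g)$ transports to $(\langle\pi_0,f\rangle \x 1)\,g$ is the one non-formal step and is carried out correctly.
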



\subsection{Closed systems of maps}
\label{closed-systems}

A system of \linear maps induces a system of maps on $\X$ itself, as $\X
\cong \X[1]$. This class, which we shall denote by ${\cal L}[]$, will
determine the whole system of linear maps when the category is Cartesian
closed. However, clearly such an ${\cal L}[]$ must satisfy some special
properties which we now outline.

\begin{definition}
A system of maps ${\cal C} \subseteq \X$ is an {\bf exponentiable system
of maps} in case:
\begin{enumerate}[{\bf [ES.1]}]
\item  for each $X,Y,Z \in \X$, all of $1_X,\pi_0: X \x Y \to X,\pi_1: X
\x Y \to Y,\Delta: X \to X \x X$ are in ${\cal C}$ and, furthermore, $\<
f,g\>: X \to Y \x Z \in {\cal C}$ whenever $f,g \in {\cal C}$;
\item  ${\cal C}$ as a system of maps is closed under composition, moreover, whenever $g \in {\cal
C}$ is a retraction and $gh \in {\cal C}$ then $h \in {\cal C}$;
\item  whenever $f:A \to B$ is in ${\cal C}$ and $g: X \to Y$ is any map, then
$g \Rightarrow f: Y \Rightarrow A \to X \Rightarrow B$ is in ${\cal C}$;
\item furthermore
\begin{eqnarray*}
\eta[X] & = & \curry(\pi_1): A \to X \Rightarrow A \\
\mu[X] & = & \curry((\Delta \x 1) (1 \x \eval) \eval): X \Rightarrow (X
\Rightarrow A) \to  X \Rightarrow A
\end{eqnarray*}
are all in ${\cal C}$.
\end{enumerate}
\end{definition}

Here we use the exponential correspondence in the following form:
$$\infer{X \to_{\curry(f)} A \Rightarrow B}{A \x X \to^f B}.$$

\begin{definition} \label{closed-system-defn}
We shall say that a system of \linear maps in a Cartesian closed
category is {\bf closed} in case
\begin{itemize}
\item each evaluation map is linear in its second (higher-order) argument.
\item the system is closed under currying, that is if $f: A \x B \x C \to
D$ is linear in its second argument ($B$) then $\curry(f):  B \x
C \to A \Rightarrow D$ is linear in its first argument.
\end{itemize}
Equivalently, when $\X$ is Cartesian closed, a system of \linear maps is
closed provided $k: B \x X \to Y \in {\cal L}[B]$ if and only if
$\curry(k) \in {\cal L}[]$.
\end{definition}

\begin{proposition}
In a Cartesian closed category $\X$, exponentiable systems of
maps are in bijective correspondence with closed systems of \linear maps.
\end{proposition}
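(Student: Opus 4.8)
The plan is to establish a bijection in both directions and check that the two constructions are mutually inverse. Given a closed system of \linear maps ${\cal L}$ in the Cartesian closed category $\X$, form ${\cal L}[] \subseteq \X$ as in the text, namely the maps that are ${\cal L}[1]$-linear under the identification $\X \cong \X[1]$; conversely, given an exponentiable system ${\cal C} \subseteq \X$, define ${\cal C}[A](X,Y) \subseteq \X[A](X,Y) = \X(A \x X, Y)$ by declaring $f : A \x X \to Y$ to be ${\cal C}[A]$-linear precisely when $\curry(f) : X \to A \Rightarrow Y \in {\cal C}$. The bulk of the work is to verify that each construction lands in the target class and that the round trips are identities.

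First I would check that ${\cal L}[]$ is an exponentiable system when ${\cal L}$ is a closed system of \linear maps. Axiom {\bf [ES.1]} follows from {\bf [LS.1]} applied in $\X[1]$ (diagonals are linear since, as noted in {\bf [ES.1]} versus {\bf [LS.1]}, one gets $\Delta = \<1,1\>$ from the pairing clause), and {\bf [ES.2]} is immediate from {\bf [LS.2]}. For {\bf [ES.3]}, given $f : A \to B$ linear and $g : X \to Y$ arbitrary, I would express $g \Rightarrow f$ as a curry of a map built from evaluations, $g$, and $f$, and use the closedness conditions—each evaluation is linear in its higher-order argument, composition with linear maps preserves linearity (Lemma on linear composites), and currying at a linear argument stays linear—to conclude $g \Rightarrow f \in {\cal L}[]$; similarly $\eta[X]$ and $\mu[X]$ are curries of composites of evaluations and diagonals, hence linear. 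Conversely, for ${\cal C}$ exponentiable I would verify that ${\cal C}[-]$ defined via currying satisfies {\bf [LS.1]}--{\bf [LS.3]}: {\bf [LS.1]} and {\bf [LS.2]} transfer along the exponential adjunction using the corresponding clauses of {\bf [ES.1]}, {\bf [ES.2]} together with the fact that the adjunction isomorphism is natural; and {\bf [LS.3]}, preservation under substitution functors $\X[f]$, is exactly where $g \Rightarrow f \in {\cal C}$ (clause {\bf [ES.3]}) is needed, since $\X[f](g) = (f \x 1)g$ corresponds under currying to precomposition with $f \Rightarrow \mathrm{id}$-type data. One also needs that the resulting ${\cal L}$ is genuinely \emph{closed} in the sense of Definition \ref{closed-system-defn}, but this is built into the definition: the clause $k \in {\cal L}[B] \iff \curry(k) \in {\cal L}[]$ holds tautologically, and one checks separately that evaluations are linear in the higher-order variable (using $\eta,\mu \in {\cal C}$ and the triangle identities) and that currying preserves linearity.

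The two round trips are then checked. Starting from a closed system ${\cal L}$, forming ${\cal L}[]$ and then re-expanding gives back $f \in {\cal L}[A]$ iff $\curry(f) \in {\cal L}[]$, which is precisely the ``Equivalently'' characterization in Definition \ref{closed-system-defn}, so we recover ${\cal L}$. Starting from an exponentiable ${\cal C}$, passing to ${\cal C}[-]$ and restricting to $A = 1$ returns $g \in ({\cal C}[-])[]$ iff $\curry(g) : X \to 1 \Rightarrow Y \in {\cal C}$; since $1 \Rightarrow Y \cong Y$ and this isomorphism together with its inverse lies in ${\cal C}$ (they are among the structural maps forced by {\bf [ES.1]}--{\bf [ES.2]}, or are identified with $\eta$-type maps), the ``retraction'' clause of {\bf [ES.2]} lets us cancel it and conclude $g \in {\cal C}$.

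The main obstacle, I expect, is {\bf [ES.3]} in the forward direction—showing $g \Rightarrow f$ is linear when only $f$ is assumed linear and $g$ is arbitrary. One must write $g \Rightarrow f : (Y \Rightarrow A) \to (X \Rightarrow B)$ as $\curry$ of a composite that feeds the higher-order input $Y \Rightarrow A$ only through evaluation maps (which are linear in that slot by closedness) and through $f$ (linear), while $g$ touches only the ``base'' variable $X$; getting the bookkeeping of which argument is the linear one right through the chain of curry/uncurry and the Lemma on linear composites is the delicate point. The symmetric worry on the other side—that {\bf [LS.3]} really does follow from {\bf [ES.3]} and not something stronger—is handled by the same adjunction correspondence run backwards, so once the forward direction's calculation is set up carefully, the reverse is essentially its transpose.
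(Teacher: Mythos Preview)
Your plan is correct and follows the paper's argument essentially verbatim: define the linear system from ${\cal C}$ by declaring $f \in {\cal L}[A]$ iff $\curry(f) \in {\cal C}$, check {\bf [LS.1]}--{\bf [LS.3]}, then in the other direction verify {\bf [ES.3]} and {\bf [ES.4]} by writing $g \Rightarrow f$, $\eta$, $\mu$ as curries of maps linear in the higher-order slot, exactly as you describe.

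One small correction to your bookkeeping: {\bf [LS.1]} and {\bf [LS.2]} do \emph{not} simply transfer from {\bf [ES.1]} and {\bf [ES.2]} along the adjunction. The identity in $\X[A]$ is $\pi_1$, and $\curry(\pi_1) = \eta[A]$, so {\bf [ES.4]} is already needed for {\bf [LS.1]}; and the curry of a composite $\<\pi_0,f\>g$ in $\X[A]$ is $\curry(f)\,(A \Rightarrow \curry(g))\,\mu[A]$, so {\bf [LS.2]} needs both {\bf [ES.3]} (for the middle factor) and {\bf [ES.4]} (for $\mu$). Thus {\bf [ES.3]} and {\bf [ES.4]} are doing work throughout, not only at the point you flag as the ``main obstacle''. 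Also, the paper's check that evaluation is linear in $\widetilde{{\cal C}}$ is slicker than invoking $\eta,\mu$ and triangle identities: simply $\curry(\eval) = 1 \in {\cal C}$.
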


\proof
Given an exponentiable system of maps ${\cal C}$ we define a
system of linear maps $\widetilde{{\cal C}}$, by saying  $f: A \x B \to C$ is
linear in $\X[A]$ if and only if  $\curry(f): B \to A \Rightarrow C$ is
in ${\cal C}$. This defines a system of linear maps since:
\begin{enumerate}[{\bf [LS.1]}]
\item In $\X[A]$ the identity map is $\pi_1: A \x X \to X$ which is
linear as $\curry(\pi_1)$ is.  For the remainder note that $A \x X
\to^{\pi_1} X \to^f Y$ has $\curry(\pi_1 f) = \curry(\pi_0) A
\Rightarrow f$ so that whenever $f$ is in ${\cal C}$ then $\curry(\pi_1
f)$ will be in ${\cal C}$. Finally note that $\curry(\< f,g \>) = \<
\curry(f),\curry(g) \> (\< A \Rightarrow \pi_0,A \Rightarrow \pi_1
\>)^{-1}$.
\item The curry of a composite in $\X[B]$ is given by
$$\curry(\< f,\pi_1 \>g) = \curry(f)(B \Rightarrow \curry(g)) \mu[B]$$
which is a composite of ${\cal C}$-maps when $f$ and $g$ are.

Suppose now that $g \bullet h$ in $\X[B]$ and $\curry(g), \curry(gh) \in
{\cal C}$ with $g$ a retraction (with section $g^{s}$) then $(B
\Rightarrow \curry(g^{s}) \mu[B]$ is right inverse to $(B \Rightarrow
\curry(g)) \mu[B]$ where the latter is in ${\cal C}$.  Also $$B
\Rightarrow \curry(gh)) \mu[B] = B \Rightarrow \curry(g)) \mu[B] B
\Rightarrow \curry(h)) \mu[B]$$ so that $B \Rightarrow \curry(h)) \mu[B]
\in {\cal C}$ But this means $\eta[B](B \Rightarrow \curry(h)) \mu[B] =
\curry(h)$ is in ${\cal C}$.
\item If $\curry(h) \in {\cal C}$ then $\curry(\X[f](h)) = \curry((1 \x
f) h) = (f \Rightarrow B)\curry(h)$ so that the substitution functors
preserve $\widetilde{{\cal C}}$-maps.
\end{enumerate}

We observe that as $\curry(\eval) = 1$ that in $\widetilde{{\cal C}}$
the evaluation map must necessarily be linear in its second
(higher-order) argument.

For the converse, given a system of linear maps ${\cal L}$ in which
evaluation maps are linear in their second argument, we now show  that
${\cal L}[]$ is an exponential system of maps.  The only difficulties
are {\bf [ES.3]} and {\bf [ES.4]}.  For the former note that $f
\Rightarrow g$ is obtained by currying: $$A \x (A' \Rightarrow B')
\to^{f \x 1} A' \x (A' \Rightarrow B') \to^{\eval} B' \to^g B$$ However
observe that this is linear in its second argument so that currying
preserves this establishing {\bf [ES.3]}.  For the latter the argument
is similar as both $\eta$ and $\mu$ are obtained by currying maps which
are linear in the appropriate coordinates.

Finally we must argue that these transitions are inverse: that
$\widetilde{{\cal C}}[] = {\cal C}$ is immediate.  We show that
$\widetilde{{\cal L}[]} = {\cal L}$ by showing ${\cal L} \subseteq
\widetilde{{\cal L}[]}$ and the converse.  Suppose $f \in {\cal L}[A]$
then $\curry(f) \in {\cal L}[]$ so that $f \in \widetilde{{\cal L}[]}
[A]$ as required.   Conversely suppose $f \in \widetilde{{\cal L}[]}[A]$
then $\curry(f) \in {\cal L}[]$ but this means $f = (1 \x \curry(f))
\eval$ is in ${\cal L}[A]$.
\endproof

We note that when ${\cal L}$ is an exponentiable system of linear maps, the
notion of being (${\cal L}$-)bilinear becomes the requirement that both
$\curry(f)$ and $\curry(c_\x f)$ are linear in $\X$.


\subsection{Storage}

A {\bf Cartesian storage category} is a Cartesian category $\X$ together with a
system of linear maps, ${\cal L}$, which is persistently and strongly
classified,\footnote{The notion of classification might (by comparison
with the notion of a subobject classifier in a topos, for example) be
called coclassification---we shall not do that.  Furthermore, we also
use the term ``represented'', especially in the sense that under the
classification given by $S$ and $\varphi$, $f^{\sharp}$ represents $f$.}
in the following sense.

\begin{definition} ~
\begin{enumerate}[(i)]
\item
A system of \linear maps is  {\bf classified}  in
case there is a family of maps $\varphi_X: X \to S(X)$  and for each
$f: X \to Y$  a unique linear map $f^{\sharp}$ such that
$$\xymatrix{X \ar[r]^f \ar[d]_{\varphi_X} & Y \\S(X) \ar@{..>}[ru]_{f^\sharp} }$$
commutes.
\item
A system of \linear maps is {\bf strongly classified}
if there is an object function $S$ and maps $X \to^{\varphi_X} S(X)$ such
that for every $f: A \x X \to Y$ there is  a unique
$f^{\sharp}: A \x S(X) \to Y$ in ${\cal L}[A]$ (that is,
$f^\sharp$ is linear in  its second argument) making
$$\xymatrix{A \x X \ar[d]_{1 \x \varphi_X} \ar[rr]^{f} & & Y \\  A \x S(X)
\ar@{..>}[rru]_{f^{\sharp}}}$$
commute.
\item  A strong classification is said to be {\bf persistent} in case
whenever $f: A \x B \x X \to Y$ is linear in its second argument $B$
then $f^\sharp: A \x B \x S(X) \to Y$ is also linear in its second
argument.
\end{enumerate}
\end{definition}

\begin{remark}
{\em When the \linear maps are classified this makes the inclusion of the linear
maps into a right adjoint.  A strong classification makes the inclusion
of the linear maps into the simple fibration a fibred right adjoint.  A
strong persistent classification allows a powerful proof technique which
we shall use extensively in what follows. To establish the equality of
$$f,g: A \x \underbrace{S(X) \x ...S(X)}_n \to Y$$
when $f$ and $g$ are maps which are linear {\em individually\/} in their
last $n$ arguments, it suffices to show $(1 \x \varphi^n)f = (1 \x
\varphi^n)g$. Note that the linearity precondition is vital! }
\end{remark}

Strong classification gives a morphism of fibrations defined by:
$$S[\X]: \X[\X] \to {\cal L}[\X]; (f,g) \mapsto ( (f \varphi)^\sharp, g).$$
This is a functor on the total category.  Identity maps are preserved because
$$\xymatrix{A \x X \ar[d]_{1 \x \varphi}\ar[r]^{\pi_1} & X \ar[r]^{\varphi} & S(X) \\
            A \x S(X) \ar[rru]_{\pi_1}}$$
commutes making $(\pi_1 \varphi)^\sharp = \pi_1$.  Composition is preserved as
$$\xymatrix{A \x X \ar[d]_{1 \x \varphi}\ar[r]^{\<\pi_0 g,f\>} & B \x Y \ar[d]^{1 \x \varphi} \ar[r]^{f'}
                          & Z \ar[r]^{\varphi} & S(Z) \\
           A \x S(X) \ar[r]_{\< \pi_0 g,(f \varphi)^\sharp \>} & B \x S(Y) \ar[rru]_{(f'\varphi)^\sharp} }$$
Finally $S[\X]$ preserves Cartesian arrows as $S[\X](\pi_1,g) =
(\pi_1,g): (A,S(X)) \to (B,S(X))$.

Therefore the classification is the couniversal property for the
inclusion of the linear maps into the simple fibration, and so
$S$ is a fibred right adjoint.

\begin{remark}{}\rm
We recall some definitions dealing with the notion of strength.

A {\bf functor} $S: \X\to\Y$ between monoidal categories is {\bf strong} if
there is a natural transformation (``strength'') $\theta^S:X\ox S(Y)\to
S(X\ox Y)$ so that the following diagrams commute:
{\small
\[
\xymatrix{\top\ox S(Y) \ar[dr]_{u^L_\ox} \ar[rr]^{\theta} & & S(\top\ox Y) \ar[dl]^{S(u^L_\ox)} \\
 & S(Y)  }
~~~~~~
\xymatrix{X\ox (Y\ox S(Z)) \ar[d]_{1 \ox \theta} \ar[rr]^{a_\ox}  & & (X\ox Y)\ox S(Z)) \ar[dd]^{\theta} \\
                X \ox S(Y \ox Z) \ar[d]_{\theta} \\
                S(X \ox (Y \ox Z)) \ar[rr]_{S(a_\ox)} & & S((X \ox Y) \ox Z)
 }
\]
}

The identity functor is strong (with strength given by the identity),
and if $S$ and $T$ are strong, so is their composite, with strength given by
$\theta^T T(\theta^S): X \ox T(S(Y))\to T(X\ox S(Y))\to T(S(X\ox Y))$.

A {\bf natural transformation} $\psi:S \to T$ between strong functors is
{\bf strong} if the following commutes.

{\small
\[
\xymatrix{ X\ox S(Y) \ar[d]_{1\ox\psi}\ar[r]^{\theta^S} & S (X\ox Y) \ar[d]^{\psi} \\
 X\ox T(Y) \ar[r]_{\theta^T} & T(X\ox Y)
}
\]
}

A {\bf monad} $(S,\eta,\mu)$ is {\bf strong} if each of $S$, $\eta$, and $\mu$
is strong.
\end{remark}

Thus we have proved:

\begin{lemma}
If $\X$ has a strongly classified system of linear maps, then $S:
\X \to \X$ is a strong functor (and thus a morphism of simple
fibrations) and determines a fibred right adjoint to the inclusion
${\cal I}: \partial_{\cal L} \to \partial$.
\end{lemma}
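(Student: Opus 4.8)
The plan is to assemble the claimed conclusion from the pieces already laid out in the preceding discussion. We are given a strongly classified system of \linear maps, so we have the object assignment $S$, the family $\varphi_X \colon X \to S(X)$, and for each $f \colon A \x X \to Y$ a unique linear-in-the-second-argument $f^\sharp \colon A \x S(X) \to Y$ with $(1 \x \varphi_X) f^\sharp = f$. First I would pin down the action of $S$ on arrows: for $h \colon X \to Y$ set $S(h) := (\varphi_X \circ h \circ \varphi_Y)^\sharp$ (equivalently, the component at $A = 1$ of the functor $S[\X]$ already exhibited, namely $(h,h) \mapsto ((h\varphi_Y)^\sharp, h)$ read in the fibre over $1$). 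Functoriality of this assignment is exactly the functoriality of $S[\X]$ on the total category, which was verified above: identities go to identities because $(\pi_1\varphi)^\sharp = \pi_1$, and composites are preserved by the pasting diagram with $\<\pi_0 g, f\>$. So $S \colon \X \to \X$ is a functor with no further work.

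Next I would produce the strength. The candidate is $\theta_{X,Y} := (\pi_1 \varphi_{X \x Y} \text{ precomposed appropriately})^\sharp$; concretely, $\theta^S_{X,Y} \colon X \x S(Y) \to S(X \x Y)$ is the unique linear-in-$S(Y)$ map with $(1 \x \varphi_Y)\theta^S_{X,Y} = \varphi_{X \x Y}$ — i.e. $\theta^S_{X,Y} = (\varphi_{X\x Y})^\sharp$ where the sharp is taken over the context $A = X$. Its naturality in both variables, and the two coherence diagrams (the unit triangle against $u^L_\x$ and the pentagon against associativity $a_\x$), are each an equality of two maps out of $A \x S(Z)$-shaped domains that are linear in the $S(\cdot)$ arguments; by the proof technique flagged in the Remark it suffices to check them after precomposing with $1 \x \varphi$, whereupon both sides collapse to manifestly equal maps built from $\varphi$ and the Cartesian structure. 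This is the one genuinely computational part, but each check is a short diagram chase of exactly the kind already done above for the preservation of composition by $S[\X]$; I would do the unit triangle, then the associativity pentagon, in that order.

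Finally, the fibred-right-adjoint claim is already essentially proved in the text: $S[\X] \colon \X[\X] \to {\cal L}[\X]$ was shown to be a functor over $\X$ preserving Cartesian arrows, and the strong-classification universal property says precisely that, for each $A$, $f \mapsto f^\sharp$ exhibits $S$ (in the fibre $\X[A]$) as right adjoint to the inclusion ${\cal L}[A] \hookrightarrow \X[A]$, with unit $1 \x \varphi_X$ and the counit being the linear map $\varepsilon_X := (1_{S(X)})^\sharp$ — or rather, the couniversal arrow is the identity datum $\varphi$ itself. Since these adjunctions are compatible with substitution (the persistence/strength of the classification is exactly what makes the right adjoint commute with reindexing $\X[f]$), they glue to a fibred adjunction ${\cal I} \colon \partial_{\cal L} \dashv S \colon \partial \to \partial$. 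Strength of $S$ as a functor between the monoidal simple fibrations then follows from, and is recorded by, the strength $\theta^S$ just constructed. The main obstacle is bookkeeping: making sure every equality invoked really does have both sides linear in the relevant arguments before one is entitled to cancel $\varphi$ — as the Remark warns, the linearity precondition is indispensable, so I would state each lemma-sized claim with that hypothesis explicit.
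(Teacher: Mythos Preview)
Your proposal is correct and follows essentially the same line as the paper: the functoriality of $S$ and the fibred right adjoint are read off from the already-verified properties of $S[\X]$, and the strength is defined as $\theta = \varphi^\sharp$ with coherences checked by precomposing with $1 \x \varphi$. Two small cleanups: your initial formula $S(h) := (\varphi_X \circ h \circ \varphi_Y)^\sharp$ is garbled (though your parenthetical $(h\varphi_Y)^\sharp$ is right), and in the last paragraph you should say ``strength'' rather than ``persistence/strength'' --- persistence is a separate hypothesis not assumed in this lemma, and strong classification alone suffices for the reindexing compatibility since the universal property is already stated fibre-by-fibre.
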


The strength $\theta$ is given explicitly by
$$\xymatrix{A \x X \ar[d]_{1 \x \varphi} \ar[rr]^{\varphi} & & S(A \x X) \\
            A \x S(X) \ar[rru]_{\theta = \varphi^\sharp}}$$
and all the natural transformations are appropriately strong.  The
strength of $\varphi$, the unit of the adjunction, is provided by the
defining diagram of the strength transformation $\theta$ above. Notice
that $\epsilon$ is {\em not\/} natural as $\epsilon f$ is not
necessarily linear unless $f$ is; however, it does satisfy the strength
requirement. However,  $\mu = \epsilon_{S(\_)}$ is natural and also
strong.

This adjunction induces a comonad
$\check{\S} = (S,\delta,\epsilon)$ on the linear maps whose data is given by
$$\xymatrix{A \ar@{=}[rd]\ar[r]^\varphi & S(A) \ar@{..>}[d]^\epsilon \\   & A}
~~~~~~~~
  \xymatrix{A \ar[d]_\varphi \ar[r]^\varphi & S(A) \ar@{..>}[d]^{\delta=S(\varphi)} \\
           S(A) \ar[r]_\varphi & S(S(A))}$$
Notice that by definition both $\epsilon$ and $\delta$ are linear (in ${\cal L}[]$).

In Cartesian storage categories the classification is also persistent, which means
that when you make a function linear in an argument---by classifying
it at that argument---you do not destroy the linearity that any of the
other arguments might have enjoyed.  This has the important consequence
that the universal property can be extended to a multi-classification
property as mentioned above. Here is the argument for the
bi-classification property:

\begin{lemma} \label{bilinear-universal}
In a Cartesian storage category, for each $f: A \x X \x Y \to Z$ there is a
unique $f^{\sharp_2}$, which is linear in both its second and third
arguments ({\em i.e.} bilinear), such that
$$\xymatrix{A \x X \x Y \ar[d]_{1 \x \varphi \x \varphi} \ar[rr]^{f} & & Z \\
            A \x S(X) \x S(Y) \ar@{..>}[rru]_{f^{\sharp_2}}}$$
commutes.
\end{lemma}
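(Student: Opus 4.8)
The plan is to obtain $f^{\sharp_2}$ by applying the strong classification twice, and then to use persistence to recover the second piece of linearity that the first classification step seems to threaten. First I would fix $f: A \x X \x Y \to Z$ and regard it as a map $A \x X \x Y \to Z$ with the last argument ($Y$) singled out; more precisely, view it as a map in $\X[A\x X]$, so that strong classification produces a unique $g := f^{\sharp}: A \x X \x S(Y) \to Z$ which is linear in its third argument and satisfies $(1 \x 1 \x \varphi_Y)g = f$. Next I would re-examine $g$ with the argument $X$ singled out, i.e. as a map in $\X[A]$ of the form $A \x X \x S(Y) \to Z$ (regrouping $X \x S(Y)$), and apply strong classification again to get a unique $f^{\sharp_2} := g^{\sharp}: A \x S(X) \x S(Y) \to Z$, linear in the argument $S(X)$, with $(1 \x \varphi_X \x 1)f^{\sharp_2} = g$. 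Composing the two defining squares gives $(1 \x \varphi_X \x \varphi_Y)f^{\sharp_2} = f$, which is the required commuting triangle.

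The step I expect to be the main obstacle — indeed the only place where anything beyond ``apply the universal property twice'' is needed — is showing that $f^{\sharp_2}$ is still linear in its \emph{third} argument $S(Y)$, not merely in $S(X)$. This is exactly what persistence of the classification is for: $g$ is linear in its argument $S(Y)$, and $f^{\sharp_2}$ is obtained from $g$ by classifying at the argument $X$, so persistence (applied with the ``context'' being $A$ and the ``other'' linear argument being $S(Y)$) guarantees that $f^{\sharp_2}$ remains linear in $S(Y)$. Thus $f^{\sharp_2}$ is linear in both its second and third arguments, i.e. bilinear in the sense defined above.

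It remains to check uniqueness. Suppose $h: A \x S(X) \x S(Y) \to Z$ is bilinear and satisfies $(1 \x \varphi_X \x \varphi_Y)h = f$. Then, viewing $h$ in $\X[A]$ and using that $h$ is linear in its second argument $S(X)$, the map $(1 \x 1 \x \varphi_Y)h: A \x S(X) \x Y \to Z$ is still linear in the argument $S(X)$ (composing with $1 \x 1 \x \varphi_Y$ only touches the $Y$-coordinate, so by Lemma~\ref{linear-maps} on composition at linear arguments it is a composite of linear maps), and it satisfies the defining property of $g^{\sharp}$ as a classification of $g$ over the context $A$; by the uniqueness clause in strong classification, $(1 \x 1 \x \varphi_Y)h = g$. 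But $h$ is also linear in its third argument $S(Y)$, so $h$ satisfies the defining property of $f^{\sharp}$ (the first classification) relative to the context $A \x S(X)$ — that is, $h$ is a linear-in-$S(Y)$ map with $(1 \x 1 \x \varphi_Y)h = g$ — and again uniqueness of strong classification forces $h = g^{\sharp} = f^{\sharp_2}$. Hence $f^{\sharp_2}$ is the unique bilinear map making the triangle commute.
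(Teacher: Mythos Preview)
Your existence argument is correct and is essentially the paper's: classify twice (you do $Y$ then $X$; the paper does $X$ then $Y$) and invoke persistence to recover linearity in the argument that was classified first. That appeal to persistence is the only nontrivial step, and you identify it correctly.

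Your uniqueness argument, however, contains a type error. You form $(1 \x 1 \x \varphi_Y)h: A \x S(X) \x Y \to Z$ and then assert it equals $g$, but your $g$ has domain $A \x X \x S(Y)$; the two maps cannot be compared, let alone equal. The mistake is that you are undoing the classifications in the wrong order: since your construction classified at $Y$ first and at $X$ last, uniqueness must first peel off the $X$-step. Concretely, form $(1 \x \varphi_X \x 1)h: A \x X \x S(Y) \to Z$; this is still linear in its third argument $S(Y)$ (precomposition with $1 \x \varphi_X$ on the context side is a substitution functor, and {\bf [LS.3]} says these preserve linearity), and it satisfies $(1 \x 1 \x \varphi_Y)\bigl((1 \x \varphi_X \x 1)h\bigr)=f$. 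Hence, by uniqueness of the \emph{first} classification, $(1 \x \varphi_X \x 1)h=g$. Now $h$ is linear in $S(X)$ and has $(1 \x \varphi_X \x 1)h=g$, so by uniqueness of the \emph{second} classification $h=g^\sharp=f^{\sharp_2}$. With this swap of $\varphi_X$ for $\varphi_Y$ in the first step, your argument matches the paper's.
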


\proof
To establish the existence of a map we may extend it in stages:
$$\xymatrix{A \x X \x Y \ar[d]_{1 \x \varphi \x 1} \ar[rr]^{f} & & Z \\
            A \x S(X) \x Y \ar[d]_{1 \x 1 \x \varphi} \ar@{..>}[rru]_{f^{\sharp_1}}\\
            A \x S(X) \x S(Y) \ar@{..>}[rruu]_{f^{\sharp_2}}}$$
the map $f^{\sharp_2}$ is then linear in its last two arguments by
persistence.  Suppose, for uniqueness, that a map $g$, which is linear
in its last two arguments, has $(1 \x \varphi \x \varphi)g =f : A \x
S(X) \x S(Y) \to Z$. Then $f^{\sharp_1} = (1 \x 1 \x \varphi)g$ as the
latter is certainly linear in its middle argument.  Whence $g =
f^{\sharp_2}$.
\endproof

This allows the observation that there is a candidate for monoidal
structure $m_\x:S(A) \x S(B) \to S(A \x B)$ given as the unique bilinear map lifting  $\varphi$:
$$\xymatrix{A \x B \ar[d]_{\varphi \x \varphi} \ar[rr]^{\varphi} & & S(A \x B) \\ S(A) \x S(B) \ar@{..>}[urr]_{m_\x} }$$
We observe that this implies the following two identities:

\begin{lemma} \label{bilinear-identities}
In any Cartesian storage category:
\begin{enumerate}[(i)]
\item  $m_\x = \theta S(\theta') \mu$;
\item  $(\epsilon \x \epsilon) m_\x = m_\x S(m_\x) \epsilon$.
\end{enumerate}
\end{lemma}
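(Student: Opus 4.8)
We must prove the two identities in Lemma~\ref{bilinear-identities} concerning $m_\x\colon S(A)\x S(B)\to S(A\x B)$.  The overriding strategy is to exploit the uniqueness half of the bi-classification property of Lemma~\ref{bilinear-universal}: each side of each equation is a map $S(A)\x S(B)\to S(A\x B)$, and $S(A\x B)$ is linear (it is in the image of $S$), so to prove two such maps equal it suffices to check that they are \emph{bilinear} (linear individually in each of the two $S$-arguments) and that they agree after precomposition with $\varphi\x\varphi$.  Thus for each identity the work splits into (a) a linearity bookkeeping step, handled by the composition lemmas of Section~\ref{linear-maps} together with persistence, and (b) a diagram chase reducing the precomposed maps to $\varphi\colon A\x B\to S(A\x B)$, which is precisely the defining property of $m_\x$.

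\emph{Part (i): $m_\x=\theta\, S(\theta')\,\mu$.}  Here $\theta\colon A\x S(B)\to S(A\x B)$ is the strength $\varphi^\sharp$ and $\theta'\colon S(A)\x B\to S(A\x B)$ is the ``other-side'' strength obtained by symmetry (linear in its first argument), so $\theta\,S(\theta')$ is read as $S(A)\x S(B)\xrightarrow{?}$—more precisely one first applies the strength in the $S(A)$-variable, lands in $S(S(A)\x B)$ via $S(\theta')$, then multiplies by $\mu$.  I would first verify that this composite is bilinear: $\mu$ is linear, $S$ of anything is linear, and strengths are linear in their $S$-argument; persistence guarantees the remaining variable keeps its linearity through the $\sharp$-lifts implicit in $\theta,\theta'$.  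Then precompose with $\varphi\x\varphi$ and chase: $(\varphi\x\varphi)\theta = (1\x\varphi)\varphi^\sharp$-type triangles collapse using the defining diagram of $\theta$ and the comonad triangle $\delta\cdot(\text{--})=S(\varphi)$, ultimately using $\mu\epsilon$-style identities and naturality/strength of $\varphi$ and $\mu$ to bring everything down to $\varphi_{A\x B}$.  By uniqueness this equals $m_\x$.

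\emph{Part (ii): $(\epsilon\x\epsilon)m_\x = m_\x\, S(m_\x)\,\epsilon$.}  Both sides are maps $S(S(A))\x S(S(B))\to A\x B$; note here the codomain $A\x B$ is \emph{not} a priori linear, so I would not apply the bi-classification uniqueness at the top level but instead at one level down—or, more cleanly, precompose with $\varphi\x\varphi\colon S(A)\x S(B)\to S(S(A))\x S(S(B))$ and then with another $\varphi\x\varphi$, using that $\varphi\delta=\varphi S(\varphi)$ and $\varphi\epsilon=1$ to strip $\epsilon$'s.  The left side after the first $\varphi\x\varphi$ becomes $(\epsilon\x\epsilon)$ applied after $m_\x$ on the once-suspended objects, which by the $\epsilon$-triangle and naturality-up-to-linearity of $m_\x$ reduces; the right side uses $\mu=\epsilon_{S(-)}$ and part (i) (or directly the definition of $m_\x$) to rewrite $S(m_\x)\epsilon$.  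The cleanest route is probably to observe both sides are the unique bilinear lift of $\epsilon\x\epsilon$ composed appropriately, reducing (ii) to a statement about how $m_\x$ interacts with $\epsilon$, which itself follows from (i) since $\epsilon=\mu$-free part of the comonad and $S(\theta')\mu$ is built from strengths that are compatible with $\epsilon$ by the strength axioms.

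\emph{Main obstacle.}  The genuine difficulty is \emph{not} the diagram chases—once one is allowed to precompose with $\varphi$'s these are forced—but rather the \textbf{linearity bookkeeping}: before invoking uniqueness one must be certain each candidate map is bilinear, and the composites in (i) and (ii) mix $\mu$, $S(-)$, and two different strengths $\theta,\theta'$ acting on interleaved variables.  Getting the linearity of $\theta'$ (linear in its \emph{first} argument, hence requiring the symmetry $c_\x$ as in the definition of linearity in the first argument) to propagate correctly through $S(\theta')$ and then survive multiplication by $\mu$ is where persistence (Definition of persistent classification) does the real work, and it is the one place where a careless argument would silently fail.  I would therefore isolate a small sublemma: ``strengths, $\mu$, and $S$ of linear maps compose to bilinear maps in the expected variables,'' prove it once from [LS.1]--[LS.3] and persistence, and then apply it mechanically in both parts.
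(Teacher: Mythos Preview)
Your approach to part~(i) is correct and is exactly what the paper does: verify that $\theta\,S(\theta')\,\mu$ is bilinear (linear in the first argument because $\theta$ is and $S(\theta')\mu$ is linear; linear in the second by persistence, via the factorization through $\theta'$), then precompose with $\varphi\times\varphi$ and chase both sides down to $\varphi_{A\times B}$.

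Part~(ii), however, contains a type error that derails your argument. The two maps in question are
\[
(\epsilon\times\epsilon)\,m_\times,\ \ m_\times\,S(m_\times)\,\epsilon\ :\ S^2(A)\times S^2(B)\longrightarrow S(A\times B),
\]
not maps into $A\times B$ as you wrote. (On the left, $\epsilon\times\epsilon\colon S^2(A)\times S^2(B)\to S(A)\times S(B)$ is followed by $m_\times$; on the right, the first $m_\times$ is the instance at $S(A),S(B)$, then $S(m_\times)\colon S(S(A)\times S(B))\to S^2(A\times B)$, then $\epsilon$.) Once you have the correct codomain the proof is no harder than~(i): both composites are visibly bilinear (each is built from $m_\times$, $\epsilon$, and $S(\hbox{--})$, all of which are linear or bilinear in the required slots), so by Lemma~\ref{bilinear-universal} it suffices to precompose with $\varphi\times\varphi\colon S(A)\times S(B)\to S^2(A)\times S^2(B)$. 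Then $(\varphi\times\varphi)(\epsilon\times\epsilon)m_\times=m_\times$ since $\varphi\epsilon=1$, while $(\varphi\times\varphi)\,m_\times\,S(m_\times)\,\epsilon=\varphi\,S(m_\times)\,\epsilon=m_\times\,\varphi\,\epsilon=m_\times$ by naturality of $\varphi$. That is the whole argument; there is no need for a ``second $\varphi\times\varphi$'' or to invoke part~(i).

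Finally, a conceptual point: your remark that ``$S(A\times B)$ is linear (it is in the image of $S$)'' is a category error. Linearity is a property of \emph{maps}, not objects, and the uniqueness clause of Lemma~\ref{bilinear-universal} places no condition whatsoever on the codomain~$Z$. The only thing you must check before invoking uniqueness is that the two candidate maps are bilinear; the codomain is irrelevant. This misconception is harmless in~(i) but is precisely what led you to the unnecessary contortions in~(ii).
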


\begin{proof} Both can be verified using the universal property outlined in Lemma \ref{bilinear-universal}.  First one notices that
all the maps are bilinear, thus it suffices to show that prefixing with $\varphi \x \varphi$ gives the same maps:
\begin{enumerate}[(i)]
\item  Clearly $m_\x$ is bilinear.  Recall that $\mu = \epsilon_{S(\_)}$. Notice that $\theta S(\theta') \mu$  is linear in its first argument as $\theta$ is and  $S(\theta')\mu$ is linear.
However,  using persistence it is also linear in its second argument as:
$$\xymatrix{A \x S(B) \ar[d]_{\varphi \x 1}  \ar[rd]^{\varphi} \ar[rr]^{\theta'} & & S(A \x B) \ar[dr]^{\varphi} \ar@{=}[rr]  & & S(A \x B) \\
                    S(A) \x S(B) \ar[r]_{\theta} & S(S(A) \x B) \ar[rr]_{S(\theta')} & & S^2(A \x B) \ar[ru]_{~~~\epsilon_{S(A \x B)}= \mu} }$$
This gives:
\begin{eqnarray*}
(\varphi \x \varphi) m_\x & =  & \varphi \\
(\varphi \x \varphi) \theta S(\theta') \mu  & = & (\varphi \x 1)(1 \x \varphi) \theta S(\theta') \mu \\
& = & (\varphi \x 1) \varphi S(\theta') \mu \\
& = & (\varphi \x 1) \theta' \varphi \mu \\
& = & \varphi  \varphi \mu  = \varphi
\end{eqnarray*}
\item  In this case both maps are clearly bilinear and thus the equality is given by:
\begin{eqnarray*}
(\varphi \x \varphi) (\epsilon \x \epsilon) m_\x & = & m_\x \\
(\varphi \x \varphi)  m_\x S(m_\x) \epsilon & = & \varphi S(m_\x) \epsilon \\
& = & m_\x \varphi \epsilon = m_\x
\end{eqnarray*}.
\end{enumerate}
\end{proof}

Then:

\begin{proposition}\label{prop:linmaps}
In a Cartesian storage category:
\begin{enumerate}[(i)]
\item  $\S = (S,\varphi,\mu)$ is a commutative monad, that is $\S$ is a
monoidal monad with respect to the product with $m_\x = \theta
S(\theta') \mu: S(A) \x S(B) \to S(A \x B)$;
\item for every object $A$
$$S(S(X)) \Two^{\epsilon}_{S(\epsilon)} S(X) \to^\epsilon X$$
is an absolute coequalizer;
\item $f: A \x X \to Y$ is \linear in its second argument if and only if
$$\xymatrix{A \x S(X) \ar[d]_{1 \x \epsilon} \ar[r]^\theta &
           S(A \x X)\ar[r]^{~~S(f)} & S(Y) \ar[d]^{\epsilon} \\
            A \x X \ar[rr]_f & & Y}$$
commutes;
\item and so $f: A \x B \to Y$ is bilinear (\/{\em i.e.} linear in its
first two arguments) if and only if
$$\xymatrix{S(A) \x S(B) \ar[d]_{\epsilon \x \epsilon} \ar[r]^{m_{\x}} &
     S(A \x B) \ar[r]^{~~S(f)} & S(Y) \ar[d]^{\epsilon} \\
     A \x B \ar[rr]_f & & Y}$$
commutes.
\end{enumerate}
\end{proposition}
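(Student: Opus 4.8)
The plan is to establish the four parts more or less in the order they are stated, since each leans on the previous one and on the multi-classification machinery (Lemma \ref{bilinear-universal} and the persistence-based proof technique recorded in the remark above). For part (i), I would verify the monoidal-monad coherences for $(S,\varphi,\mu)$ with respect to $m_\x$. Most of these follow by the standard trick: the relevant maps are bilinear (built from $\theta$, $S(\theta')$, $\mu$, and the product structure), so by Lemma \ref{bilinear-universal} it suffices to precompose with $\varphi \x \varphi$ and use naturality of $\varphi$ together with Lemma \ref{bilinear-identities}(i), $m_\x = \theta S(\theta') \mu$. The compatibility of $m_\x$ with $\mu$ (the ``commutative'' part) and with the associativity and symmetry isomorphisms of $\x$ all reduce to such bilinear-lifting calculations. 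I would explicitly note that commutativity (i.e.\ $m_\x = c_\ox$-symmetric) follows because both $m_\x$ and its ``twisted'' version lift $\varphi$ uniquely as bilinear maps.

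For part (ii), the claim is that $S(\epsilon), \epsilon \colon S(S(X)) \to S(X)$ have $\epsilon \colon S(X) \to X$ as an absolute coequalizer. The standard fact here is that for any comonad the counit gives a split (hence absolute) coequalizer of this form: the splitting maps are $\varphi_X \colon X \to S(X)$ and $\varphi_{S(X)} = \delta_X \colon S(X) \to S(S(X))$, and one checks $\epsilon \varphi = 1_X$, $\epsilon \delta = 1_{S(X)}$, and $S(\epsilon) \delta = \varphi \epsilon$. The last of these is exactly the naturality-type square that one reads off the defining diagrams for $\delta = S(\varphi)$ and $\epsilon$; I would verify it using that both sides are linear and precomposing with $\varphi$, or directly from the comonad laws already recorded for $\check{\S}$. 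Since the splitting data witness a contractible coequalizer diagram, absoluteness is automatic.

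For part (iii), I want: $f \colon A \x X \to Y$ is linear in its second argument iff $(1 \x \epsilon)f = \theta\, S(f)\, \epsilon$. The ``only if'' direction: if $f$ is linear in its second argument, then $f = (1 \x \varphi)^\sharp$-style reasoning — more precisely $f^\sharp = f \circ (1\x\epsilon)$ won't quite be it; instead I use that when $f$ is linear, $f$ itself factors through $\varphi$ in the appropriate couniversal way, and the square is then the instance of the adjunction triangle identities transported along the strength. Concretely, $\theta S(f) \epsilon = \theta \epsilon_{S(Y)}\cdots$ — here I would invoke $\mu = \epsilon_{S(-)}$ and the strength of $\epsilon$, reducing the composite to $(1 \x \epsilon)f$ via the strength square for $\epsilon$ and naturality of $\epsilon$ on the linear map $f$. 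The ``if'' direction is the easy half: if the square commutes then $f = (1 \x \epsilon)\cdot(\text{that square's top-right path})$ exhibits $f$ as a composite $A \x X \to^{1 \x ?}$ — no, better: precompose the square with $1 \x \varphi$; using $\epsilon\varphi = 1$ and the strength/naturality of $\varphi$ one gets $f = (1 \x \varphi)\,\theta\, S(f)\, \epsilon = \varphi' S(f) \epsilon$, and since $\varphi'$, $S(f)$ linear and $\epsilon$ linear, this displays $f$ restricted along $1 \x \varphi$ as coming from a linear map, so by the uniqueness in strong classification $f$ is linear in its second argument — wait, that gives linearity of $f\circ(1\x\varphi)$'s sharp, which is what we want. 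I'd tidy this using Lemma \ref{bilinear-universal}'s uniqueness clause directly.

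Finally, part (iv) follows from (iii) by applying it twice (once in each argument) and then collapsing the two successive strengths into $m_\x$ via Lemma \ref{bilinear-identities}(i); the bookkeeping is exactly the $n=2$ instance of the multi-classification technique. \textbf{The main obstacle} I anticipate is part (iii): getting the direction and the strength/naturality bookkeeping right, because $\epsilon$ is \emph{not} natural (only strong), so one must be careful to only invoke naturality of $\epsilon$ against maps already known to be linear, and to use the strength square of $\epsilon$ everywhere else. Once (iii) is correctly set up, (i), (ii), and (iv) are routine applications of the uniqueness-of-bilinear-lifts technique and the standard split-coequalizer argument.
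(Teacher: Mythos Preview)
Your plan for (i), (ii), and (iv) is broadly aligned with the paper's approach, though in (ii) you conflate $\varphi_{S(X)}$ with $\delta_X = S(\varphi_X)$: these are different maps in general. The correct splitting section $S(X) \to S^2(X)$ is $\varphi_{S(X)}$, and the required identity $\varphi_{S(X)} S(\epsilon_X) = \epsilon_X \varphi_X$ is just naturality of $\varphi$ at $\epsilon_X$.

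The genuine gap is in (iii), and you correctly flag it as the obstacle. For the ``only if'' direction, the clean argument you are circling but not landing on is: when $f$ is linear in its second argument, both $(1 \times \epsilon)f$ and $\theta S(f)\epsilon$ are linear in the second argument and both satisfy $(1 \times \varphi)(\cdot) = f$, so by uniqueness of $f^\sharp$ they coincide. Your attempt to use ``naturality of $\epsilon$ on the linear map $f$'' cannot work because $f$ is only linear in one argument, not $\epsilon$-natural as a whole map; there is no identity $S(f)\epsilon = \epsilon f$ available here.

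For the ``if'' direction your proposed argument is vacuous: precomposing the square with $1 \times \varphi$ just yields $f = f$ and does not exhibit $f$ as linear. The paper's approach uses part (ii) essentially. One first observes that $f^\sharp = \theta S(f)\epsilon$ always; then, assuming the square commutes, one computes $(1 \times \epsilon)f^\sharp = (1 \times \epsilon\epsilon)f = (1 \times S(\epsilon)\epsilon)f = (1 \times S(\epsilon))f^\sharp$, so $f^\sharp$ coequalizes $1 \times \epsilon$ and $1 \times S(\epsilon)$. By the (absolute) split coequalizer of (ii) there is a factorization $f^\sharp = (1 \times \epsilon)g$. Then $g = (1 \times \varphi)(1 \times \epsilon)g = (1 \times \varphi)f^\sharp = f$, and since $1 \times \epsilon$ is a linear retraction and $f^\sharp$ is linear in the second argument, axiom {\bf [LS.2]} forces $g = f$ to be linear. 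This use of the coequalizer together with the retraction clause of {\bf [LS.2]} is the missing idea in your sketch.
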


\avoidwidow
\proof ~
\begin{enumerate}[{\em (i)}]
\item  To establish this, given that we know the monad is strong, it
suffices to show that the strength is commutative. That is that the
following diagram commutes:
$$\xymatrix{S(A) \x S(B) \ar[d]_{\theta'} \ar[r]^\theta & S(S(A) \x B) \ar[r]^{S(\theta')} & S(S(A \x B)) \ar[dd]^\mu \\
            S(A \x S(B)) \ar[d]_{S(\theta)} \\
            S(S(A \x B)) \ar[rr]_\mu & & S(A \x B)}$$
The defining diagrams for the two routes round this square are:
$$\xymatrix{A \x B \ar[dr]_{\varphi} \ar[r]^{\varphi \x 1}
                   & S(A) \x B \ar[d]^{\theta'} \ar[dr]^{\varphi} \ar[r]^{1 \x \varphi}
                   & S(A) \x S(B) \ar[d]^\theta \\
            & S(A \x B) \ar@{=}[ddr]\ar[dr]^{\varphi} & S(A \x S(B)) \ar[d]^{S(\theta')} \\
            & & S(S(A \x B)) \ar[d]^\mu \\ & & S(A \x B)}$$
and
$$\xymatrix{A \x B \ar[dr]_{\varphi} \ar[r]^{1 \x \varphi}
                   & A \x S(B) \ar[d]^{\theta} \ar[dr]^{\varphi} \ar[r]^{\varphi \x 1}
                   & S(A) \x S(B) \ar[d]^{\theta'} \\
            & S(A \x B) \ar@{=}[ddr]\ar[dr]^{\varphi} & S(S(A) \x B) \ar[d]^{S(\theta)} \\
            & & S(S(A \x B)) \ar[d]^\mu \\ & & S(A \x B)}$$
Note that, because the classification is persistent, following Lemma
\ref{bilinear-identities} (i),  both these constructed maps  are
bilinear.  Thus, using the universal property of $\varphi \x \varphi$
they are equal.  This shows that the two ways round the square making
the strength commutative are equal.
\item  $\epsilon$ is split by $\varphi$ and so this is a split
coequalizer: that is $\epsilon \varphi = \varphi S(\epsilon)$ and both
$\varphi \epsilon = 1$ and $S(\varphi) \epsilon = 1$.  Thus this is an
absolute coequalizer.
\item
If $f$ is linear in its second argument then $(1 \x \epsilon) f$ is also
linear in its second argument (recall $\epsilon$ is linear). But this
means $f^\sharp = (1 \x \epsilon) f$.  However, also we know $f^\sharp =
\theta S(f) \epsilon$. Thus, the diagram (which shows linearity)
commutes.

Conversely, suppose this diagram commutes; then notice it suffices to
show that we can factor $f^\sharp = (1 \x \epsilon) g$. Then, as $1 \x
\epsilon$ is linear and also a retraction, $g$ must be linear.  Finally
$g$ must of course be $f$ as $f = (1 \x \varphi) f^\sharp = (1 \x
\varphi) (1 \x \epsilon) g = g$. Now to show we can factor $f^\sharp$ in
this manner, we can use the use exactness; it suffices to show that
$$A \x S^2(X) \Two^{1 \x S(\epsilon)}_{1 \x \epsilon} A \x S(X) \to^{f^\sharp} Y$$
commutes.  For this we have the following simple calculation which uses
the fact that $\epsilon\epsilon = S(\epsilon) \epsilon$:
\begin{eqnarray*}
(1 \x \epsilon) f^\sharp
& = & (1 \x \epsilon) \theta S(f) \epsilon \\
& = & (1 \x \epsilon) (1 \x \epsilon) f = (1 \x \epsilon\epsilon) f \\
& = & (1 \x S(\epsilon)\epsilon) f = (1 \x S(\epsilon)) (1 \x \epsilon) f \\
& = & (1 \x S(\epsilon)) \theta S(f) \epsilon = (1 \x S(\epsilon)) f^\sharp.
\end{eqnarray*}
\item This is immediate from the preceding.
\end{enumerate}
\endproof

Notice that this means that in a Cartesian storage category, the system
$\L$ may equivalently be viewed as being induced by the monad $S$.  The
second property says that the induced comonad on the subfibration of
linear maps is an {\em exact modality}.  In the next section, as we study this
comonad in more detail, the significance of this property will become
clear.

We end this subsection with two observations. The first of these is that
the notion of a storage category is simply stable in the sense that each
simple slice of a storage category is itself a storage category.
This, in principle, will allow us to suppress the fibred context of a
statement about storage categories, as such statements will be true in
all simple-slice fibres.

\begin{lemma} \label{simple-slice-context}
If $\X$ is a Cartesian storage category then each simple  slice,
$\X[A]$, is also a Cartesian storage category.
\end{lemma}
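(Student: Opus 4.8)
The statement to prove is that if $\X$ is a Cartesian storage category, then each simple slice $\X[A]$ is again a Cartesian storage category. We already know from an earlier lemma that $\X[A]$ inherits a system of linear maps $\mathcal{L}[A]$ (with linear maps in the simple slice $(\X[A])[B] = \X[A \x B]$ being exactly the $\mathcal{L}[A \x B]$-linear maps, via the canonical isomorphism $(\X[A])[B] \cong \X[A \x B]$). So the only thing left is to produce a strong persistent classification for this inherited system. The natural candidate is forced on us: the classifying object functor on $\X[A]$ should be the same $S$ (acting on objects exactly as in $\X$), and the classifying map $\varphi^{A}_{X} : X \to S(X)$ in $\X[A]$ should be the composite $A \x X \to^{\pi_1} X \to^{\varphi_X} S(X)$, i.e.\ $\pi_1 \varphi_X$ viewed as a map $X \to S(X)$ in $\X[A]$.

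\medskip

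\textbf{Key steps.} First I would unwind what "$f^{\sharp}$ in $\X[A]$" must mean: given $f : B \x X \to Y$ a map of $\X[A]$, that is $f : A \x B \x X \to Y$ in $\X$, linear in $\X[A]$ means linear in $\X[A \x B]$, i.e.\ linear in its \emph{third} $\X$-argument. I would then simply set $f^{\sharp} := f^{\sharp}$ computed in $\X$ at the argument $X$, i.e.\ the unique map $A \x B \x S(X) \to Y$ that is $\mathcal{L}[A \x B]$-linear and satisfies $(1 \x 1 \x \varphi_X) f^{\sharp} = f$. Existence and uniqueness of this are exactly the strong classification property of $\X$ applied with "$A \x B$" playing the role of the context object. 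Chasing the definition of composition in the simple slice $(\X[A])[B]$ — which is composition in $\X[A \x B]$, hence governed by $\<\pi_0, -\>$-style pasting in $\X$ — one checks the classification diagram for $\X[A]$ commutes; this is the same diagram as in $\X$, just reparenthesized. Second, for persistence: a map $f : B \x C \x X \to Y$ of $\X[A]$ linear in its $C$-argument is, in $\X$, a map $A \x B \x C \x X \to Y$ linear in its $C$-argument, and we need $f^{\sharp} : A \x B \x C \x S(X) \to Y$ still linear in its $C$-argument — which is exactly persistence of the classification in $\X$, with context broken up as $(A \x B) \x C \x X$. So persistence in $\X[A]$ is \emph{literally} persistence in $\X$, read through the associativity isomorphism.

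\medskip

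\textbf{The main obstacle.} The genuine content is entirely bookkeeping: one must verify that the candidate $\varphi^{A}$ and $S$ really assemble into a strong classification \emph{in the fibred sense}, i.e.\ check the naturality/substitution axiom \textbf{[LS.3]} for $\X[A]$ and the couniversality against the simple fibration of $\X[A]$. The one place where care is genuinely required is disentangling the nested simple-slice construction: a "context object $B$ in $\X[A]$" corresponds to the pair-object structure $A \x B$ in $\X$, and products/projections in $\X[A]$ correspond to products/projections over $A$ in $\X$; getting the associators and the $\<\pi_0,-\>$ bookkeeping in the composition formula to line up is the only thing that could bite. But since every instance of the classification, linearity, and persistence conditions needed in $\X[A]$ translates — under the equivalence $(\X[A])[B] \cong \X[A \x B]$ — into the corresponding condition that $\X$ already satisfies, no new verification of substance is needed; the lemma is essentially a statement that "Cartesian storage category" is stable under slicing because all the defining data and axioms were fibred over arbitrary contexts to begin with.

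\medskip

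\textbf{Remark on presentation.} Rather than grinding the diagram chases, I would state the correspondence explicitly — $S$ unchanged on objects, $\varphi^{A}_{X} = \pi_1 \varphi_X$, and $f^{\sharp}$ in $\X[A]$ equal to $f^{\sharp}$ in $\X$ with the context regrouped — and then observe that \textbf{[LS.1]}--\textbf{[LS.3]}, strong classification, and persistence for $\X[A]$ are each immediate consequences of the same property for $\X$ under the canonical isomorphism $(\X[A])[B] \cong \X[A \x B]$, pointing the reader to Lemma~\ref{bilinear-universal} for the multi-classification technique if an explicit check of composition is wanted.
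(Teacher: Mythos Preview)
Your proposal is correct, and it is the natural argument. The paper in fact states this lemma without proof, treating it as an immediate observation (``the notion of a storage category is simply stable in the sense that each simple slice of a storage category is itself a storage category''); your sketch is precisely the unfolding one would give if asked to spell out why --- the inherited linear system comes from the earlier lemma, and strong persistent classification in $\X[A]$ at context $B$ is, via $(\X[A])[B] \cong \X[A \x B]$, literally strong persistent classification in $\X$ at context $A \x B$.
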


The second observation concerns the algebras of the monad $(S,
\varphi,\mu=\epsilon_{S})$.  Recall that an algebra for this monad
is an object $A$ with a map $\nu: S(A) \to A$ such that
$$\xymatrix{ A \ar[r]^{\varphi} \ar@{=}[dr] \ar[r] & S(A) \ar[d]_\nu & A \ar[l]_{S(\varphi)} \ar@{=}[dl] \\ & A}
~~~~~\xymatrix{S^2(A) \ar[d]_{S(\nu)} \ar[r]^{\epsilon_{S(A)}} & S(A) \ar[d]^{\nu} \\
                           S(A) \ar[r]_\nu & A}
$$
Consider the algebras whose structure maps are linear. Of course, there
could be algebras whose structure map does not lie in ${\cal L}$ but
for now we consider the case $\nu \in {\cal L}$.  These algebras, due to
the first triangle identity above, must be precisely of the form
$\nu=\epsilon_A$---and then the remaining identities will automatically
hold.  A homomorphism of such algebras is then precisely a map $f: A \to
B$ which is $\epsilon$-natural.  Thus, we have:

\begin{lemma}
The full subcategory of $S$-algebras, determined by algebras whose
structure maps is linear, is, for any Cartesian storage category $\X$,
isomorphic to the subcategory of linear maps ${\cal L}[]$.
\end{lemma}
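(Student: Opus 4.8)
The plan is to exhibit the isomorphism as the assignment which is the identity on underlying data, and to verify it is bijective on objects and on hom-sets. First I would pin down the objects. Given an $S$-algebra $(A,\nu)$ with $\nu$ linear, the unit law is exactly $\varphi_A\nu = 1_A$; but $\epsilon_A$ is, by construction, the unique linear map $S(A)\to A$ with $\varphi_A\epsilon_A = 1_A$ (it is $(1_A)^\sharp$ under the classification in trivial context), so $\nu = \epsilon_A$. Conversely, $(A,\epsilon_A)$ is always an algebra: the unit triangle is precisely $\varphi_A\epsilon_A = 1_A$, and the associativity square $S(\epsilon_A)\epsilon_A = \epsilon_{S(A)}\epsilon_A$ is exactly the identity $\epsilon\epsilon = S(\epsilon)\epsilon$ already established and used in Proposition \ref{prop:linmaps}~(iii) (recalling $\mu = \epsilon_{S(\_)}$). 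Hence $A \mapsto (A,\epsilon_A)$ is a bijection between the objects of $\X$ (equivalently, the objects of ${\cal L}[]$, since a system of maps is a subcategory on the same objects) and the objects of the full subcategory of linear-structure $S$-algebras.

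Next I would treat the hom-sets. An algebra homomorphism $f\colon (A,\epsilon_A)\to (B,\epsilon_B)$ is a map $f\colon A\to B$ of $\X$ with $\epsilon_A f = S(f)\epsilon_B$, i.e.\ one that is ``$\epsilon$-natural''; I claim this holds if and only if $f$ is linear. For the ``if'' direction, note that $S(f) = (f\varphi_B)^\sharp$ is linear for \emph{every} $f$ by the definition of $S$ on morphisms, so when $f$ is linear both $\epsilon_A f$ and $S(f)\epsilon_B$ are linear maps $S(A)\to B$; precomposing either with $\varphi_A$ yields $f$ (using $\varphi_A\epsilon_A = 1_A$ on one side and $\varphi_A S(f) = f\varphi_B$ together with $\varphi_B\epsilon_B = 1_B$ on the other), so by the uniqueness in the classification they coincide. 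For the ``only if'' direction, $\epsilon_A$ is a linear retraction (split by $\varphi_A$, since $\varphi_A\epsilon_A = 1_A$); if $f$ is a homomorphism then $\epsilon_A f = S(f)\epsilon_B$ is a composite of linear maps, hence linear, so by the retraction-cancellation clause of {\bf [LS.2]} it follows that $f$ is linear.

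Finally I would observe that the assignment $A\mapsto (A,\epsilon_A)$, $f\mapsto f$ visibly preserves identities and composition, since composition and identities in both categories are those of $\X$; combined with the object bijection and the hom-set bijection just established, this gives an isomorphism of categories from ${\cal L}[]$ onto the full subcategory of $S$-algebras with linear structure map. I do not expect a genuine obstacle here: the two points that need a little care are that $S$ carries \emph{every} map (not only linear ones) to a linear map, and that the relevant half of {\bf [LS.2]} applies because $\epsilon_A$ is a retraction; both are immediate from the definition of $S[\X]$ and from $\varphi_A\epsilon_A = 1_A$.
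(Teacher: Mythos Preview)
Your proposal is correct and follows essentially the same approach as the paper: identify the linear-structure algebras as those of the form $(A,\epsilon_A)$ via the uniqueness clause of the classification, and identify the homomorphisms as the $\epsilon$-natural maps. The paper leaves the equivalence ``$\epsilon$-natural $\Leftrightarrow$ linear in ${\cal L}[]$'' implicit (it is the trivial-context case of Proposition~\ref{prop:linmaps}\,(iii)), whereas you supply a direct argument for it using uniqueness of classification in one direction and the retraction clause of {\bf [LS.2]} in the other; this extra explicitness is fine and arguably clearer.
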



\subsection{Abstract coKleisli categories}

Abstract coKleisli categories were introduced (in the dual) by Fuhrmann
\cite{fuhrmann} to provides a direct description of coKleisli
categories. Thus, every coKleisli category is an abstract coKleisli
category and, furthermore, from any abstract  coKleisli category, $\X$,
one can construct a subcategory, $\check{\X}$, with a comonad,
$\check{\S}$, whose coKleisli category is exactly the original category
$\X$.   Of particular interest is when the constructed category,
$\check{\Y}_\S$, of a coKleisli category $\Y_\S$ is precisely $\Y$:
this, it turns out, happens precisely when $\S$ is an exact modality in the sense
above.

This section views a Cartesian storage category from the perspective of its being
an abstract coKleisli category.  A Cartesian storage category is clearly a rather
special abstract coKleisli category so that our aim is to determine what
extra algebraic conditions are being required.

\begin{definition}
An {\bf abstract coKleisli category} is a category $\X$ equipped with a
functor $S$, a natural transformation $\varphi: 1_\X \to S$, and a family of maps
$\epsilon_A: S(A) \to A$ (which is not assumed to be natural) such that
$\epsilon_{S(\_)}$ is natural.  Furthermore:
$$\varphi \epsilon = 1,~~S(\varphi) \epsilon = 1, ~~\mbox{and}~~ \epsilon\epsilon = S(\epsilon)\epsilon$$
An abstract coKleisli category is said to be {\bf Cartesian} if its underlying category, $\X$, is a
Cartesian category such that $!$, $\pi_0$ and $\pi_1$ are all
$\epsilon$-natural (in the sense that $\epsilon \pi_0 = S(\pi_0)\epsilon$, {\em etc.}).
\end{definition}

In an abstract coKleisli category the $\epsilon$-natural maps, that is
those $f$ which satisfy $S(f)\epsilon = \epsilon f$, form a subcategory
$\X_\epsilon$ and on this subcategory $\check{\S} =
(S,S(\varphi),\epsilon)$ is a comonad.  On $\X$, the larger category,
$\S =  (S,\varphi,\epsilon_{S(\_)})$ is a monad.   It is not hard to see
that the $\epsilon$-natural maps form a system of linear maps which are
classified:
$$\xymatrix{X \ar[d]_{\varphi} \ar[rr]^f & & Y \\ S(X) \ar[urr]_{S(f)\epsilon}}$$
where the uniqueness follows since if $h: S(X) \to Y$ has $\varphi h = f$
then $S(f)\epsilon = S(\varphi h)\epsilon = S(\varphi)\epsilon h = h$.

\medskip

In an abstract coKleisli category, we shall always use
the $\epsilon$-natural maps as the default system $\cal L$ of maps.  We now address
the question of when this system is a \linear system of maps.

\medskip

\begin{definition}
An abstract coKleisli category is said to be {\bf strong} in case it is
Cartesian and $S$ is a strong functor, $\varphi$ a strong natural
transformation, and $\epsilon$ is strong, even where it is unnatural,
meaning that the following commutes for all objects $A$ and $X$:
$$\xymatrix{A \x S(X) \ar[r]^\theta \ar[dr]_{1 \x \epsilon} & S(A \x X) \ar[d]^{\epsilon} \\ & A \x X}$$
\end{definition}

In a strong abstract coKleisli category $\X$, $\S = (S,\varphi,\epsilon_{S(\_)})$ is clearly a strong monad.   We observe:

\begin{proposition}
If $(\X,S,\varphi,\epsilon)$ is a strong abstract coKleisli category then the maps
$f: A \x X \to Y$ of $\L[A]\subseteq\X[A]$ such that
$$\xymatrix{ A \x S(X) \ar[d]_{1 \x \epsilon}\ar[r]^\theta & S(A \x X)
\ar[r]^{~~~S(f)} & S(Y) \ar[d]^{\epsilon} \\
             A \x X \ar[rr]_f & & Y}$$
form a system, ${\cal L}_\S[\X]$, of linear maps which are strongly classified by $(S,\varphi)$.
\end{proposition}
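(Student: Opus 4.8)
The plan is to verify the three axioms \textbf{[LS.1]}, \textbf{[LS.2]}, \textbf{[LS.3]} for the proposed family ${\cal L}_\S[A] \subseteq \X[A]$ (call a map in it ``$\S$-linear'' for this sketch), and then separately verify that this system is strongly classified by $(S,\varphi)$. A convenient reformulation is to note that $f: A \x X \to Y$ is $\S$-linear exactly when $\theta S(f)\epsilon = (1 \x \epsilon)f$, i.e. when the ``$f^\sharp$-candidate'' $\theta S(f)\epsilon: A \x S(X) \to Y$ precomposed with $1 \x \varphi$ returns $f$ \emph{and} is already of the right form; more usefully, I would first record that $g := \theta S(f)\epsilon$ always satisfies $(1\x\varphi)g = f$ (by strength of $\varphi$, the triangle $\varphi\epsilon = 1$, and functoriality), so the content of $\S$-linearity is precisely the equation $g = (1\x\epsilon)\cdot\big((1\x\varphi)g\big)$ after transport — said plainly, $\S$-linearity of $f$ is the single equation displayed in the Proposition, and $f^\sharp := \theta S(f)\epsilon$ is the candidate linear lift. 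The bulk of the proof is then a sequence of diagram chases in the strong abstract coKleisli structure.

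\textbf{Checking [LS.1] and [LS.2].} For \textbf{[LS.1]} I would check directly that $\pi_1: A\x(X\x Y)\to X$ (the identity of $\X[A]$), and the projections $\pi_0,\pi_1$ out of a product in $\X[A]$, satisfy the defining square: these reduce to $\epsilon$-naturality of the structural maps $\pi_0,\pi_1$ (assumed, since the abstract coKleisli category is Cartesian) together with the strength coherence $(1\x\epsilon)\theta = \epsilon$ and the compatibility of $\theta$ with the product projections. Closure under pairing $\<f,h\>$ follows because $S$ preserves the relevant products up to the canonical comparison and $\theta$ is compatible with it, so the square for $\<f,h\>$ is obtained by pairing the squares for $f$ and $h$. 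For \textbf{[LS.2]}, closure under composition is the key computation: given $f: A\x X\to Y$ and $h: A\x Y\to Z$ both $\S$-linear, the $\X[A]$-composite is $\<\pi_0,f\>h$, and I would show its $f^\sharp$-candidate equals $\<\pi_0, f^\sharp\>h^\sharp$ by chasing through the strength square for $\theta$, naturality of $\mu = \epsilon_{S(\_)}$, and the hypotheses on $f$ and $h$ — this is essentially the same computation already done in the excerpt to show $S[\X]$ preserves composition, now read inside a slice. The retraction-cancellation clause (if $g$ is $\S$-linear, a retraction in $\X[A]$, and $gh$ is $\S$-linear, then $h$ is $\S$-linear) I would handle by the standard argument: write $h = g^s(gh)$ where $g^s$ is the section, note $g^s$ is $\S$-linear by Lemma~\ref{linear-maps}'s section/retraction splitting applied in the slice (using that $g$ and its splitting idempotent are linear), and conclude $h$ is a composite of $\S$-linear maps; alternatively argue directly from the defining equations since $g$ being a split epi lets one cancel it.

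\textbf{Checking [LS.3] and strong classification.} For \textbf{[LS.3]} I must show $\X[f]$ preserves $\S$-linearity, i.e. if $g: B\x X\to Y$ is $\S$-linear then $(f\x1)g: A\x X\to Y$ is, for any $f:A\to B$; this follows because $\theta$ is natural in its first variable and $\varphi,\epsilon$ are compatible with $f\x 1$ ($\epsilon$ need not be natural in general, but the instances needed here are $S(f\x1)\epsilon = \epsilon(f\x1)$, which holds because $\epsilon$ is strong/unnatural only in a controlled way — more precisely it follows from the strength square plus naturality of $\theta$, not from naturality of $\epsilon$ itself). For strong classification: given arbitrary $f: A\x X\to Y$, set $f^\sharp := \theta S(f)\epsilon: A\x S(X)\to Y$; then $(1\x\varphi)f^\sharp = f$ by strength of $\varphi$ and $\varphi\epsilon=1$, and $f^\sharp$ is $\S$-linear because one checks $\theta S(f^\sharp)\epsilon = (1\x\epsilon)f^\sharp$ using $\epsilon\epsilon = S(\epsilon)\epsilon$ exactly as in the proof of Proposition~\ref{prop:linmaps}(iii) in the excerpt. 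Uniqueness: if $k: A\x S(X)\to Y$ is $\S$-linear with $(1\x\varphi)k = f$, then applying the $\S$-linearity equation for $k$ and using $S(\varphi)\epsilon = 1$ forces $k = \theta S((1\x\varphi)k)\epsilon = \theta S(f)\epsilon = f^\sharp$ — this is the same one-line argument used earlier to establish uniqueness of $S(f)\epsilon$ for the $\epsilon$-natural system.

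\textbf{Main obstacle.} The delicate point is \textbf{[LS.3]} (and the analogous compatibilities used in \textbf{[LS.1]}): because $\epsilon$ is \emph{not} natural, one cannot freely slide $\epsilon$ past arbitrary maps, and every use of a ``naturality of $\epsilon$'' step must instead be justified either by the strong-abstract-coKleisli strength axiom $(1\x\epsilon)\theta = \epsilon$, by $\epsilon$-naturality of the specific structural map involved ($!,\pi_0,\pi_1$), or by naturality of $\mu=\epsilon_{S(\_)}$. Keeping careful track of which $\epsilon$-instances are legitimately movable is where the proof could go wrong, and it is the step I would write out most carefully.
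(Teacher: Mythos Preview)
Your overall structure matches the paper's proof closely, and the classification part (existence via $f^\sharp := \theta S(f)\epsilon$, uniqueness via $S(\varphi)\epsilon = 1$ and naturality of $\theta$) is correct and essentially identical to the paper's argument.

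There is one genuine gap, in the retraction-cancellation clause of \textbf{[LS.2]}. Your primary proposal is to write $h = g^s(gh)$ and argue that the section $g^s$ is $\S$-linear ``by Lemma~\ref{linear-maps}'s section/retraction splitting''. This does not work: nothing in the hypotheses makes $g^s$ linear, and the lemma you allude to concerns splittings of \emph{linear idempotents}, which is a different situation (there one already knows the idempotent is linear; here $g^s g$ is an idempotent but not a priori linear, and in any case you would be trying to deduce linearity of the section, which is exactly what that lemma does \emph{not} give for free). Your parenthetical alternative---cancel $g$ directly because it is a split epi---is the right idea and is precisely what the paper does: from the outer square ($gh$ linear) and the left square ($g$ linear) one gets $S(g)S(h)\epsilon = S(g)\epsilon h$, and since $S(g)$ is a retraction (as $g$ is) one cancels to obtain $S(h)\epsilon = \epsilon h$. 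You should make this the argument, not the afterthought.

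A smaller confusion in \textbf{[LS.3]}: you assert that the instance $S(f\times 1)\epsilon = \epsilon(f\times 1)$ is needed. It is not, and for arbitrary $f$ it is generally false (that would be $\epsilon$-naturality of $f\times 1$). The paper's argument---and the one that actually works---uses only naturality of $\theta$ in its first variable: $(f\times 1)\theta_B = \theta_A S(f\times 1)$, after which $S(f\times 1)S(g) = S((f\times 1)g)$ and the assumed $\S$-linearity of $g$ finish the square. No $\epsilon$-naturality of $f\times 1$ enters.
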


\proof  We check the conditions for being a system of linear maps:
\begin{enumerate}[{\bf [LS.1]}]
\item The identity in $\X[A]$ is $\epsilon$-natural as
$$\xymatrix{A \x S(X) \ar[drr]_{\pi_1} \ar[dd]_{1 \x \epsilon} \ar[rr]^\theta & & S(A \x X)  \ar[d]^{S(\pi_1)} \\
                          & & S(X) \ar[d]^\epsilon \\
            A \x X \ar[rr]_{\pi_1} & & X}$$
where we use strength.

We check that the projections in $\X[A]$ are $\epsilon$-natural using:
$$\xymatrix{A \x S(X_1 \x X_2) \ar[d]_{1 \x \epsilon} \ar[r]^\theta &
S(A \x (X_1 \x X_2)) \ar[r]^{~~~S(\pi_1)}
                          & S(X_1 \x X_2) \ar[d]^\epsilon \ar[r]^{~~~S(\pi_i)} & S(X_i) \ar[d]^{\epsilon} \\
            A \x (X_1 \x X_2) \ar[rr]_{\pi_1} & & X_1 \x X_2 \ar[r]_{~~\pi_i} & X_i}$$

Suppose $f_1: A \x X \to Y_1$ and $f_1: A \x X \to Y_1$ are $\epsilon$-natural in $\X[A]$.  Then in the
following diagram the left hand square commutes if and only if the two outer squares corresponding to the
two post compositions with the projections commute.
$$\xymatrix{ A \x S(X) \ar[d]_{1 \x \epsilon} \ar[r]^\theta & S(A \x X) \ar[r]^{S(\< f_1,f_2 \>)}
                   & S(Y_1 \x Y_2) \ar[d]^\epsilon \ar[r]^{~~~S(\pi_i)} & S(Y_i) \ar[d]^\epsilon \\
             A \x X \ar[rr]_{\< f_1,f_2 \>} & & Y_1 \x Y_2 \ar[r]_{~~\pi_i} & Y_i}$$
However the outer squares commute by assumption and so $\< f_1,f_2 \>$ is $\epsilon$-natural in $\X[A]$.
\item We must show that $\epsilon$-natural maps in $\X[A]$ compose. We shall do it explicitly:
$$\xymatrix{ & S(A \x X)  \ar@/^4pc/[rrrd]^{S(\< \pi_0,f \>)} \ar[r]^{S(\Delta \x 1)}
                 & S(A \x A \x X) \ar[drr]^{S(1 \x f)} \\
         A \x S(X) \ar[dd]_{1 \x \epsilon} \ar[ru]^\theta \ar[r]^{\!\!\Delta \x 1}
                 & A \x A \x S(X) \ar[r]^{1 \x \theta} \ar[dd]^{1 \x 1 \x \epsilon}
                 & A \x S(A \x X) \ar[u]^\theta \ar[r]^{~~1\x S(f)} & A \x S(Y) \ar[r]^\theta \ar[dd]^{1 \x \epsilon}
                 & S(A \x Y) \ar[d]^{S(g)} \\
         & &  & & S(Z) \ar[d]^\epsilon \\
         A \x X \ar@/_3pc/[rrr]_{\< \pi_0,f\>} \ar[r]_{\Delta \x 1} & A \x A \x X \ar[rr]_{1 \x f}
                 & & A \x Y \ar[r]_g & Z}$$
Next we must show that if $gh$ is $\epsilon$-natural and $g$ is a retraction which is $\epsilon$-natural then
$h$ is $\epsilon$-natural.  We shall prove it in the basic case leaving it as an exercise for the reader to
do the calculation in a general slice.

We have
$$\xymatrix{S(A)\ar[d]^\epsilon \ar[r]^{S(g)} & S(B)\ar[d]^\epsilon \ar[r]^{S(h)} & S(C) \ar[d]^\epsilon \\
            A \ar[r]^{g} & B \ar[r]^{h} & C}$$
where the left square is known to commute and the outer square commutes.  This means
$S(g) S(h) \epsilon = S(g) \epsilon h$ but $S(g)$ is a retraction as $g$ is so the right square commutes.
\item  We must show that if $f$ is $\epsilon$-natural in $\X[A]$ then $(g \x 1) f$ is $\epsilon$-natural.
Here is the diagram:
$$\xymatrix{A \x S(X) \ar[dr]_{g \x 1} \ar[dd]_\epsilon \ar[r]^\theta
                  & S(A \x X) \ar[rd]_{S(g \x 1)} \ar[rr]^{S((g \x 1)f)} & & S(Y) \ar[dd]^\epsilon \\
            & B\x S(X) \ar[d]_{1 \x \epsilon} \ar[r]_\theta & S(B \x X) \ar[ru]_{S(f)} \\
            A \x X \ar[r]_{g \x 1} & B \x X \ar[rr]_f & & Y}$$
\end{enumerate}
This system of linear maps is always strongly classified by
$(S,\varphi)$ by  defining $f^\sharp = \theta S(f) \epsilon$, which is
clearly $\epsilon$-natural in $\X[A]$, and certainly has $(1 \x
\varphi)\theta S(f) \epsilon = f$.  It is unique, for if $g$ is
$\epsilon$-natural in $\X[A]$ and has $(1 \x \varphi) g = f$, then
$$\theta S(f) \epsilon = \theta S((1 \x \varphi) g)\epsilon = (1 \x S(\varphi)) \theta S(g) \epsilon
                       = (1 \x S(\varphi)) (1 \x \epsilon) g = g.$$
\endproof

To complete the story it remains to establish the property to which persistence corresponds:

\begin{lemma}
A strong abstract coKleisli category has a persistent strong classification of the $\epsilon$-natural maps if and only if $\S$ is a commutative
(or monoidal) monad.
\end{lemma}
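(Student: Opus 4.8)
The plan is to prove the two implications separately. Recall that in any strong abstract coKleisli category the $\epsilon$-natural maps already form a strongly classified system of \linear maps, with classification $f^\sharp = \theta\, S(f)\, \epsilon$, and that $\S = (S,\varphi,\epsilon_{S(\_)})$ is already a strong monad; what is at stake is only whether its strength is commutative --- that is, whether the strength-commutativity square $\theta\, S(\theta')\, \mu = \theta'\, S(\theta)\, \mu$ (the square displayed in the proof of Proposition~\ref{prop:linmaps}(i)) commutes.

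For the forward implication there is essentially nothing to prove: a strong abstract coKleisli category whose strong classification of the $\epsilon$-natural maps is persistent satisfies, clause by clause, the definition of a Cartesian storage category --- its underlying category is Cartesian, the $\epsilon$-natural maps form a system of \linear maps, and that system is persistently and strongly classified. Hence Proposition~\ref{prop:linmaps}(i) applies verbatim and gives that $\S$ is a commutative (monoidal) monad.

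For the converse, assume $\S$ is commutative and let $f\colon A\x B\x X\to Y$ be linear in its second argument $B$; we must show that the classification $f^{\sharp_X}\colon A\x B\x S(X)\to Y$ of $f$ at its third argument is again linear in $B$. Since the structure is simply stable --- each simple slice of a strong abstract coKleisli category is again one, and commutativity of the monad descends to slices --- we may argue in a slice and so suppress the parameter $A$ throughout. By the definition of the linear-maps system ${\cal L}_\S$, a map is linear in a prescribed argument exactly when its classification at that argument equals the map obtained by applying $\epsilon$ in that argument; thus ``$f$ linear in $B$'' reads $f^{\sharp_B} = (1\x\epsilon_B\x 1)f$, and ``$f^{\sharp_X}$ linear in $B$'' is the equation $(f^{\sharp_X})^{\sharp_B} = (1\x\epsilon_B\x 1)f^{\sharp_X}$ to be proved. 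From $f^{\sharp_B} = (1\x\epsilon_B\x 1)f$ and naturality of the strength that classifies at the $X$-slot, one gets directly that $(f^{\sharp_B})^{\sharp_X} = (1\x\epsilon_B\x 1)f^{\sharp_X}$. Hence it suffices to prove the \emph{interchange of iterated classifications}, $(f^{\sharp_X})^{\sharp_B} = (f^{\sharp_B})^{\sharp_X}$ --- classifying first at $X$ and then at $B$ agrees with classifying first at $B$ and then at $X$.

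To prove the interchange, expand both iterated classifications using $g^\sharp = \theta\, S(g)\, \epsilon$. The map $f$ factors out of both sides, leaving a common tail $S^2(f)\, S(\epsilon)\, \epsilon$; rewriting $S(\epsilon)\,\epsilon$ as $\mu\,\epsilon$ by the coKleisli identity $\epsilon\epsilon = S(\epsilon)\epsilon$ and pushing $S^2(f)$ through by naturality of $\mu = \epsilon_{S(\_)}$, what remains to check is an identity between two composites built only from the strength on the $B$-slot, the strength on the $X$-slot, their images under $S$, and $\mu$. Once $A$ has been absorbed into the slice, this identity is exactly the strength-commutativity square with the two tensor factors taken to be $B$ and $X$, which holds because the slice's monad is commutative; the chase then closes and $f^{\sharp_X}$ is linear in $B$, i.e.\ persistence holds. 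I expect the only real obstacle to be the bookkeeping of this last step --- matching the several parametrized strengths and their $S$-images (and, if one does not pass to a slice, the attendant associativity and symmetry isomorphisms) against the single two-variable commutativity square --- and no idea beyond commutativity of the strength is needed.
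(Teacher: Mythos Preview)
Your proposal is correct and follows essentially the same argument as the paper. For the direction persistence $\Rightarrow$ commutativity, both you and the paper simply invoke Proposition~\ref{prop:linmaps}(i). For commutativity $\Rightarrow$ persistence, the paper presents a single large commuting diagram whose cells are exactly the ingredients you isolate: the commutativity square $\theta_2\,S(\theta_3)\,\mu = \theta_3\,S(\theta_2)\,\mu$ (your interchange of iterated classifications), naturality of the strength in the context variable (your step from $f^{\sharp_B}=(1\times\epsilon\times 1)f$ to $(f^{\sharp_B})^{\sharp_X}=(1\times\epsilon\times 1)f^{\sharp_X}$), the assumed linearity of $f$ in the middle argument, and the coKleisli identity $S(\epsilon)\epsilon=\epsilon\epsilon$ together with naturality of $\mu$. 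Your factoring through an explicit ``interchange of classifications'' is a mild repackaging of the same diagram chase; the paper keeps the parameter $A$ explicit throughout rather than passing to a slice, which is the bookkeeping you anticipate at the end.
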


\proof
Suppose that $f: A \x X \x Y \to Z$ is linear in its second argument.  That is
$$\xymatrix{A \x S(X) \x Y \ar[d]_{1 \x \epsilon \x 1} \ar[r]^{\theta_2}
                               & S(A \x X \x Y) \ar[r]^{~~~S(f)} & S(Z) \ar[d]^\epsilon \\
            A \x X \x Y \ar[rr]_f & & Z}$$
commutes. Then we must show that after linearizing the last argument the result is still linear in the second
argument. That is the following diagram commutes:
$$\xymatrix{ & S(A \x X \x S(Y)) \ar[r]^{~~S(\theta_3)} & S^2(A \x X \x
Y) \ar[dr]^\epsilon \ar[r]^{~~~S^2(f)}
                                    & S^2(Z) \ar[dr]^\epsilon \\
            A \x S(X) \x S(Y) \ar[ur]^{\theta_2} \ar[d]_{1 \x \epsilon \x 1} \ar[dr]_{\theta_3} & &
                            & S(A \x X \x Y) \ar[r]^{~~~~S(f)} & S(Z) \ar[d]^\epsilon \\
            A \x X \x S(Y) \ar[dr]_{\theta_3} & S(A \x S(X) \x Y)
\ar[d]^{S(1 \x \epsilon \x 1)} \ar[r]^{~~S(\theta_2)}
                           & S^2(A \x X \x Y) \ar[r]_{~~~S^2(f)} \ar[ur]_\epsilon
                           & S^2(Z) \ar[ur]_\epsilon \ar[d]_{S(\epsilon)}
                           & Z  \\
            & S(A \x X \x Y) \ar[rr]_{S(f)} & & S(Z) \ar[ur]_\epsilon}$$

 Conversely we have already established that a persistent strong classification gives rise to a commutative monad.
\endproof

We shall say that a strong abstract coKleisli category is {\bf
commutative} if its monad is.  We have now established:

\begin{theorem}
If $\X$ is a Cartesian category, then $\X$ is a Cartesian storage category (that
is, $\X$ has a persistently strongly classified system of \linear maps)
if and only if it is a strong abstract commutative coKleisli category.
\end{theorem}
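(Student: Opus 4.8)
\medskip\noindent\textbf{Proof proposal.}
The plan is that this theorem only has to assemble results already in hand, one implication per direction, together with a brief check that the two passages are mutually inverse.

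For the forward direction, suppose $\X$ is a Cartesian storage category, and take $S$, $\varphi_X\colon X\to S(X)$ and $\epsilon_X\colon S(X)\to X$ to be the endofunctor, unit and (unnatural) counit extracted above, so that $\mu=\epsilon_{S(\_)}$. The lemma asserting that a strongly classified system makes $S$ a strong functor, together with the subsequent remarks that $\varphi$ is a strong natural transformation, that $\mu$ is natural and strong, and that $\epsilon$ meets the strength requirement even where it is not natural, supplies all of the strength data demanded of a strong abstract coKleisli category; and $\X$ is Cartesian by hypothesis, with $\pi_0,\pi_1$ \linear, hence $\epsilon$-natural by Proposition~\ref{prop:linmaps}(iii) applied with $A=1$, and $!$ $\epsilon$-natural trivially since there is only one map into $1$. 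It then remains to verify the three abstract-coKleisli identities: $\varphi\epsilon=1$ is the triangle defining $\epsilon$; $S(\varphi)\epsilon=1$ is a unit law of the monad $\S=(S,\varphi,\mu)$, valid by Proposition~\ref{prop:linmaps}(i); and $\epsilon\epsilon=S(\epsilon)\epsilon$ follows by observing that both sides are \linear maps out of $S^2(A)$ and that precomposing each with $\varphi_{S(A)}$ yields $\epsilon_A$ --- using $\varphi\epsilon=1$ on the left, and naturality of $\varphi$ together with $\varphi\epsilon=1$ on the right --- so the two agree by the uniqueness clause of the classification. Finally the monad $\S$ is commutative by Proposition~\ref{prop:linmaps}(i), so $\X$ is a strong abstract commutative coKleisli category.

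For the converse, let $(\X,S,\varphi,\epsilon)$ be a strong abstract commutative coKleisli category and equip $\X$ with its default system $\L$ of $\epsilon$-natural maps (and its slicewise generalisation ${\cal L}_\S$). The proposition of the present subsection shows that these form a system of \linear maps strongly classified by $(S,\varphi)$, and the lemma characterising persistence shows that, since the monad $\S$ is commutative, this classification is persistent; hence $\X$ is a Cartesian storage category. To see that the two passages are mutually inverse --- so that we genuinely have an equivalence of structures, not merely a match of existence conditions --- one checks that from a Cartesian storage category the recovered $\epsilon$-natural maps are exactly the original \linear maps, which is Proposition~\ref{prop:linmaps}(iii) read in each simple slice, and that from a strong abstract commutative coKleisli category the extracted $(S,\varphi,\epsilon)$ is the one with which we started. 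The only place where care is needed is the bundle of strength and coKleisli equations for $\epsilon$ in the forward direction, but each of these, as indicated, reduces to the uniqueness clause of (strong) classification, so nothing genuinely difficult intervenes.
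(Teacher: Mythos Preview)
Your proposal is correct and follows the same approach as the paper: the theorem is a summary statement assembled from the preceding results (the Proposition that a strong abstract coKleisli structure yields a strongly classified linear system, the Lemma equating persistence with commutativity of the monad, and Proposition~\ref{prop:linmaps} for the forward direction), and the paper's own proof is literally the phrase ``We have now established.'' Your write-up is more explicit than the paper's---in particular your verification of the three abstract-coKleisli identities and of the $\epsilon$-naturality of the projections, and your remark that the two passages are mutually inverse via Proposition~\ref{prop:linmaps}(iii)---but the logical content is the same.
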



\subsection{Linear idempotents}

A very basic construction on a category is to split the idempotents and we pause our development to briefly consider how this construction
applies to storage categories.  In a storage category there is a slightly subtle issue as it is quite possible for a linear idempotent to split into two
non-linear components.   This means that even though an idempotent may split in the ``large'' category of all maps it need not in
the subcategory of linear maps.   We shall be interested in splitting the linear idempotents linearly, for in this case we can show that the
resulting category is also a storage category which extends the original category without introducing any new linear maps.

We start with a basic observation:

\begin{lemma}  \label{storage-splitting}
In a Cartesian storage category:
\begin{enumerate}[(i)]
\item When $fg$ is linear and $g$ is monic and linear, then $f$ is linear;
\item When $fg$ is linear and $f$ is linear with $S(f)$ epic, then $g$ is linear;
\item A linear idempotent $e:X \to Y$ splits linearly, in the sense that there is a splitting $(r,s)$ with $rs = e$ and $sr = 1$ with both $r$ and $s$ linear
iff there is a splitting in which at least one of $r$ and $s$ are linear.
\end{enumerate}
\end{lemma}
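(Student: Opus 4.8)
The plan is to prove the three statements in order, since (iii) will follow easily from (i) and (ii) once those are in hand. For (i), suppose $fg$ is linear with $g$ monic and linear; I want to conclude $f$ is linear. The tool available is the characterization in Proposition \ref{prop:linmaps}(iii): a map is linear in its second argument iff it equalizes a certain square built from $\theta$, $S(-)$ and $\epsilon$. I would write out this square for $f$, and try to reduce the required commutativity to the corresponding (known) square for $fg$ together with the square for $g$ (known by linearity of $g$). The monicity of $g$ is what lets me cancel $g$ on the right: precomposing the two candidate legs for $f$ with $g$ gives legs that agree because $fg$ is linear and $g$ is linear, hence the legs for $f$ already agreed before postcomposition — wait, that is backwards; $g$ being monic cancels on the \emph{right} of $f$, i.e. we need $g$ to be cancellable after $f$, so we apply $Sg$ and $g$ appropriately. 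Concretely, the square for $f$ composed with $g$ on the output side becomes the square for $fg$ after using naturality/linearity of $g$; since $g$ is monic this forces the square for $f$ to commute. I expect this to go through by a routine diagram chase using only $S$ functoriality, the strength laws, and $\epsilon$-naturality of $g$.

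For (ii), suppose $fg$ is linear, $f$ is linear, and $S(f)$ is epic. Here I cancel $f$ on the \emph{input} side. Again writing the defining square of Proposition \ref{prop:linmaps}(iii) for $g$, I would precompose with $f$ (on the appropriate argument) and use that $fg$ is linear and $f$ is linear to show the resulting outer diagram commutes; then $S(f)$ epic lets me cancel to recover commutativity of the square for $g$ itself. The role of $S(f)$ being epic (rather than just $f$ epic) is exactly that the arrow one needs to cancel appears under $S$ in the relevant square (it is $S(f)$, or $S$ applied to a strength-adjusted version of $f$). This is the mirror image of (i) and should be an equally routine chase.

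For (iii), suppose a splitting $(r,s)$ of the linear idempotent $e = rs$, $sr = 1$, is given with at least one of $r, s$ linear. If $r$ is linear: then $r s = e$ is linear and $s r = 1$; apply (i) to the equation $e = rs$? Better: $r$ is linear and $s$ is a section hence monic; from $sr = 1$ linear and... hmm, I would instead argue: $r$ linear, and $rs = e$ linear, and $s$ monic (it is a section), so by (i) applied to ``$f s' = $ linear'' with the roles set up so that $s$ cancels — actually the cleanest route is: since $r$ is a retraction, $S(r)$ is a retraction too (split by $S(s)$), in particular epic; then $s r = 1$ is linear, $r$ is linear, $S(r)$ epic, so (ii) gives $s$ linear. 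Symmetrically, if $s$ is linear, then $s$ is monic, $r s = e$ is linear... no — use $sr = 1$: $s$ linear, $s$ is a section so... I want $r$ linear. We have $r s = e$ linear with $s$ linear and monic, so (i) gives $r$ linear. Either way both components end up linear, and the converse direction of the ``iff'' is trivial.

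The main obstacle I anticipate is purely bookkeeping: getting the strength-adjusted forms of the maps right in the Proposition \ref{prop:linmaps}(iii) squares (the $\theta$'s and which argument is the linear one), and making sure that ``$g$ monic'' and ``$S(f)$ epic'' are cancelled against the correct occurrences in those squares. There is no conceptual difficulty beyond diagram chasing once the right square is written down; the lemma is really just the observation that the equational characterization of linearity in Proposition \ref{prop:linmaps} interacts well with monos, epis, and $S$ applied to them.
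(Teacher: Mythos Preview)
Your overall strategy is correct and matches the paper's, but you are overcomplicating parts (i) and (ii). The lemma concerns linearity in the base category ${\cal L}[]$, not in a general slice, so the relevant characterization is simply $\epsilon$-naturality: $f$ is linear iff $S(f)\epsilon = \epsilon f$. There is no need to invoke the strength $\theta$ or the full square of Proposition~\ref{prop:linmaps}(iii); the paper just writes the two-square diagram for $fg$ factored through $f$ and $g$, and cancels. Concretely for (i): $S(f)\epsilon g = S(f)S(g)\epsilon = S(fg)\epsilon = \epsilon fg$, and $g$ monic gives $S(f)\epsilon = \epsilon f$. For (ii): $S(f)S(g)\epsilon = S(fg)\epsilon = \epsilon fg = S(f)\epsilon g$, and $S(f)$ epic gives $S(g)\epsilon = \epsilon g$. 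Your diagram chase with $\theta$ would eventually reduce to exactly this, but it is unnecessary scaffolding.

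For (iii) you have the right idea but a small slip in the case where $r$ is linear: you invoke ``$sr = 1$ is linear'' to feed into (ii), but (ii) needs the \emph{linear} factor first, i.e.\ you should use $rs = e$ (linear) with $r$ linear and $S(r)$ epic (since $r$ is a retraction) to conclude $s$ is linear. The case where $s$ is linear you handle correctly via (i), using that $s$ is monic and $rs = e$ is linear. With that correction, (iii) is exactly the paper's argument.
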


\begin{proof} Consider
\[
\xymatrix{ S(X) \ar[rr]^{S(fg)} \ar[dd]_{\epsilon} \ar[dr]^{S(f)}  && S(Y) \ar[dd]^{\epsilon}  \\
 & S(Y) \ar[dd]^{\raisebox{20pt}{\scr$\epsilon$}} \ar[ur]^{S(g)}  \\
X \ar[rr]|{~}^{fg~~~~~~~~~~~} \ar[dr]_{f}
  && Y  \\
 & Y \ar[ur]_{g} }
\]
Under the conditions of {\em (i)} the back face commutes since $fg$ is linear
({\em i.e.} $\epsilon$-natural), and the ``$g$ face'' commutes.  Then $S(f)\epsilon g = S(f) S(g) \epsilon = \epsilon f g$ and, thus, as
$g$ is monic, $S(f)\epsilon = \epsilon f$  implying $f$ is linear.

Under the conditions of {\em (ii)} the back face still commutes and the ``$f$ face'' commutes.  So then  $S(f)\epsilon g = \epsilon f g =  S(g) F(g) \epsilon$ and, thus, as
$S(f)$ is epic $\epsilon g = S(g) \epsilon$  implying $g$ is linear.

Finally for {\em (iii)} if $e$ splits and $s$ is linear then $e=rs$ satisfies {\em (i)} while if $r$ is linear it satisfies {\em (ii)} as $S(r)$ being a retraction is certainly epic.
\end{proof}

In a storage category we shall often wish to split all the linear idempotents, that is split all idempotents $e$ such that $e \in {\cal L}$.  An important observation
is that this can be done entirely formally:

\begin{proposition}
\label{linear-idempotent-splitting}
If $\X$ is a Cartesian storage category then the category obtained by splitting the linear idempotents, ${\sf Split}_{\cal L}(\X)$, is a Cartesian storage category in which linear idempotents split and the embedding
$${\cal I}: \X \to {\sf Split}_{\cal L}(\X)$$
preserves and reflects linear maps and the classification.
\end{proposition}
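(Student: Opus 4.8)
The plan is to build the category $\split_{\cal L}(\X)$ by the standard Karoubi-envelope construction, restricted to linear idempotents, and then check that every piece of storage structure transports along the embedding. So first I would recall the construction: objects of $\split_{\cal L}(\X)$ are pairs $(X,e)$ with $e\colon X\to X$ idempotent and $e\in{\cal L}[]$, and a map $(X,e)\to(Y,e')$ is a map $f\colon X\to Y$ with $efe' = f$; composition and identities ($1_{(X,e)} = e$) are as usual. The embedding ${\cal I}$ sends $X$ to $(X,1_X)$ and is full and faithful. The Cartesian structure is inherited: $(X,e)\x(Y,e')=(X\x Y,e\x e')$, the terminal object is $(1,1_1)$, and since $e\x e'$ is linear whenever $e,e'$ are (by {\bf [LS.1]}), projections and pairings land where they should and remain linear. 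I would check that $\split_{\cal L}(\X)$ is idempotent-complete for linear idempotents by the usual argument: a linear idempotent on $(X,e)$ is a map $g$ with $ege=g$ and $gg=g$; it splits through $(X,g)$ (noting $g$ is linear in $\X$, hence gives a linear idempotent there), and Lemma~\ref{storage-splitting}(iii) guarantees the splitting can be taken with both legs linear.

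Next I would define the storage structure on $\split_{\cal L}(\X)$. The obvious candidate is $\bar S(X,e) = (S(X), S(e))$ — legitimate since $S(e)$ is a linear idempotent by functoriality and by Proposition~\ref{prop:linmaps}(iii), $S$ preserving linearity — with $\bar\varphi_{(X,e)} = e\varphi S(e) = \varphi S(e) = e\varphi$ (these agree by naturality of $\varphi$) and $\bar\epsilon_{(X,e)} = S(e)\epsilon e = \epsilon e = S(e)\epsilon$ (agreeing since $\epsilon e = S(e)\epsilon$ because $e$ is linear, {\em i.e.} $\epsilon$-natural). One verifies $\bar\varphi\bar\epsilon = 1_{(X,e)} = e$, $\bar S(\bar\varphi)\bar\epsilon = e$, and $\bar\epsilon\bar\epsilon = \bar S(\bar\epsilon)\bar\epsilon$ directly from the corresponding identities in $\X$, pre- and post-composing with $e$ as needed. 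The strength $\bar\theta_{(X,e),(Y,e')}$ is just $(e\x e')\,\theta\,S(e\x e') = \theta\,S(e\x e')$, and the strong abstract coKleisli axioms (strength of $S$, of $\varphi$, of $\epsilon$, the strength triangle for unnaturality) all descend mechanically. By the Theorem characterizing Cartesian storage categories as strong abstract commutative coKleisli categories, it then suffices to note that commutativity of the monad descends as well: the commutativity square for $\bar\theta$ reduces to the one for $\theta$ pre-/post-composed with the relevant idempotents.

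Then I would verify the claims about the embedding. That ${\cal I}$ preserves linear maps is immediate: if $f$ is $\epsilon$-natural in $\X$, it is $\bar\epsilon$-natural in $\split_{\cal L}(\X)$ trivially since $\bar\epsilon_{(X,1)} = \epsilon$. That ${\cal I}$ reflects linear maps: a map $f\colon(X,1)\to(Y,1)$ in $\split_{\cal L}(\X)$ is just a map $f\colon X\to Y$ in $\X$, and it is $\bar\epsilon$-natural iff $S(f)\epsilon = \epsilon f$ iff it is linear in $\X$. That ${\cal I}$ preserves and reflects the classification: $\bar S\circ{\cal I} = {\cal I}\circ S$ on objects and $\bar\varphi_{(X,1)} = \varphi_X$, and for $f\colon X\to Y$ the factorization $f^\sharp = \theta S(f)\epsilon$ computed in $\split_{\cal L}(\X)$ over $(X,1),(Y,1)$ coincides with the one in $\X$; uniqueness of $f^\sharp$ in $\split_{\cal L}(\X)$ among linear maps $(S(X),1)\to(Y,1)$ then follows from uniqueness in $\X$ together with reflection of linearity just established, and likewise for the persistent/strong versions via the simple-slice discussion (Lemma~\ref{simple-slice-context}).

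**The main obstacle** I expect is not any single hard step but the bookkeeping of carrying the idempotents $e$ through every structural diagram and confirming that all the ``alternative forms'' of the transported natural transformations ($e\varphi = \varphi S(e)$, $\epsilon e = S(e)\epsilon$, etc.) really do coincide and really do satisfy each axiom on the nose rather than merely up to the idempotent. The one genuinely substantive point is establishing that $\split_{\cal L}(\X)$ is closed under splitting \emph{linear} idempotents with \emph{linear} legs — but this is exactly what Lemma~\ref{storage-splitting}(iii) delivers, so the work is really just assembling pieces already proven. I would present the proof by first stating the construction and the Cartesian structure in a sentence or two, then defining $(\bar S,\bar\varphi,\bar\epsilon,\bar\theta)$, then invoking the Theorem to reduce to checking the strong-abstract-commutative-coKleisli axioms (which I would say ``follow by a routine computation, pre- and post-composing with the relevant idempotents''), and finally dispatching the embedding claims as above.
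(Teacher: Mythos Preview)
Your proposal is correct and takes a genuinely different route from the paper's own proof. The paper works directly with the definition of a Cartesian storage category: it declares a map $f: e\times e' \to e''$ in $\split_{\cal L}(\X)$ to be linear in an argument precisely when it is so as a map in $\X$, then checks {\bf [LS.1]}--{\bf [LS.3]} and the (strong, persistent) classification property by hand. The substantive work in the paper's argument is the verification of {\bf [LS.2]} in a slice $\split_{\cal L}(\X)[a]$---the retraction condition---which requires a nontrivial calculation because a linear retraction in that slice need not be a retraction in $\X$. Your approach instead transports the abstract coKleisli data $(\bar S,\bar\varphi,\bar\epsilon,\bar\theta)$ and invokes the characterization theorem (Cartesian storage $\Leftrightarrow$ strong abstract commutative coKleisli); since {\bf [LS.2]} is already established once-and-for-all in that theorem's proof, you sidestep the slice calculation entirely. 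This is a real economy, and the two notions of ``linear'' agree (your $\bar\epsilon$-natural maps are exactly the $\X$-linear maps, since $\bar\epsilon_e = \epsilon e = S(e)\epsilon$ collapses the condition to ordinary $\epsilon$-naturality). One small remark: your appeal to Lemma~\ref{storage-splitting}(iii) for the linear splitting of a linear idempotent $g$ on $(X,e)$ is unnecessary---both legs of the canonical splitting through $(X,g)$ are the $\X$-map $g$ itself, which you have already argued is linear in $\X$.
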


\proof
The objects of ${\sf Split}_{\cal L}(\X)$  are the linear idempotents in $\X$,  and the maps $f: e \to e'$ are, as usual, those such that $efe'=f$.  As linear
idempotents are closed under products
(that is whenever $e$ and $e'$ are linear idempotents then $e \x e'$ is a linear idempotent) it is standard that these form a Cartesian category.   We shall say that $f: e \x e' \to e''$ is linear in its first argument precisely when it is so as a map in $\X$.  It is then immediate that in ${\sf Split}_{\cal L}(\X)$ linear idempotents will split linearly.

The functor $S$ is now defined on ${\sf Split}_{\cal L}(\X)$ by taking an object $e$ to $S(e)$ and a map $f$ to $S(f)$.   The transformations $\varphi$ and $\epsilon$ can also be extended by setting $\varphi_e := e \varphi S(e) =  \varphi S(e) = e \varphi$ and $\epsilon_e:= S(e) \epsilon e = \epsilon e = S(e)\epsilon$ (where here we use the linearity of $e$).

To show that $(S,\varphi)$ on ${\sf Split}_{\cal L}(\X)$ classifies linear maps in ${\sf Split}_{\cal L}(\X)$ consider
$$\xymatrix { a \x e \ar[rr]^f \ar[d]_{a \x \varphi} & & e' \\ a \x S(e) \ar@{..>}[urr]_{f^\sharp} }$$
where $a$, $e$, $e'$ are linear idempotents and $f= (a \x e)fe'$ and $f^\sharp$ is the linear classification for $f$ in $\X$ then it suffices to show that
$f^\sharp = (a \x S(e))f^\sharp e'$ to show this is a classification in ${\sf Split}_{\cal L}(\X)$.   To show this first note that $(a \x S(e))f^\sharp e'$ is linear, so if
$(1 \x \varphi)(a \x S(e))f^\sharp e' =f$  we will have secured the equality.  For this we note:
$$(1 \x \varphi)(a \x S(e))f^\sharp e' = (a \x e)(1 \x \varphi) )f^\sharp e' = (a \x e) f e' = f.$$
Persistence now follows immediately from persistence in $\X$.

We also need to show that the linear maps in ${\sf Split}_{\cal L}(\X)$ satisfy {\bf [LS.1]}, {\bf [LS.2]} ,and {\bf [LS.3]}.  The only difficulty concerns {\bf [LS.2]} and the condition on linear
retractions.  In a slice ${\sf Split}_{\cal L}(\X)[a]$ we must show that if $g$ is a linear retraction (where the section is not necessarily linear) and $gh$ is linear then $h$ is linear.   This
gives the following commuting diagram of maps
$$\xymatrix{a \x e' \ar[d]_{a \x e'} \ar[rr]^{\< \pi_0,v \>} & & a \x e \ar[dll]^{\< \pi_0,g \>} \ar[d]^{\< \pi_0,g\> h} \\ a \x e' \ar[rr]_h & & e''}$$
in which the downward pointing arrows contain the known linear
components and $v$ is the section of $g$  which gives rise to the
leftmost arrow being an ``identity map'' at the idempotent $a \x e'$. We
must show that $h$ is linear in this slice, that is that $(1 \x
\epsilon) h = \theta S(h) \epsilon$.   Here is the calculation:
 \begin{eqnarray*}
 \theta S(h) \epsilon & = & \theta S((a \x e')h) \epsilon ~~~~~~~~~~~\mbox{as $h: a \x e' \to e''$}\\
 & = & \theta S(\<\pi_0,v\>\<\pi_0,g\>h) \epsilon ~~~~\mbox{as $v$ is a section of $g$}\\
 & = & \< \pi_0,\theta S(v) \> \theta S(\<\pi_0,g\>h) \epsilon \\
 & = &  \< \pi_0,\theta S(v) \> (1 \x \epsilon) \<\pi_0,g\>h ~~~\mbox{as $gh$ is linear} \\
 & = & \< \pi_0,\theta S(\< \pi_0,v  \>g) \> (1 \x \epsilon) h ~~~\mbox{as $g$ is linear}\\
 & = & \< \pi_0,\theta S(\pi_1 e') \>(1 \x \epsilon) h ~~~~~\mbox{as $v$ is a section of $g$}\\
 & = & (1 \x \epsilon)(1 \x e')h = (1 \x \epsilon)h.
 \end{eqnarray*}
 \endproof

 It is worth remarking that splitting linear idempotents of $\X$ does not cause linear idempotents in $\X[A]$ to split.  One can, of course, split the linear
idempotents in $\X[A]$ but this in general will have more objects than ${\sf Split}_{\cal L}(\X)[A]$ as there will be more idempotents which are ``linear in one
argument'' than are just linear.  More precisely, a linear idempotent in
 $\X[A]$ is an $e': A \x X \to X$ linear in $X$ with $\<\pi_0,e'\>e' = e'$ and these, in general, will strictly include idempotents $\pi_1e$ where $e$ is a linear idempotent in $\X$.

\subsection{Forceful comonads}

At this stage we have a fairly good grasp of what the monad of a
Cartesian storage category looks like.  However, notably absent has been
a discussion of the properties of the comonad $\check{\S}$ on the linear
maps of a Cartesian storage category.  In this section we correct this
defect.

Recall that the strength in a Cartesian storage category is not linear,
in general, so that the comonad $\check{\S}$ will not have a strength.
To guarantee that $\check{\S}$ be strong,
we shall postulate the presence of a map which generates a strength
map in the coKleisli category: this we call a {\em force\/} and we
develop its properties below.

\begin{definition}
A {\bf (commutative) force} for a comonad $\check{\S} =
(S,\epsilon,\delta)$ is a natural transformation
$$S(A \x S(X)) \to^{\psi} S(A \x X)$$
which renders the following diagrams commutative:
\begin{enumerate}[{\bf [{Force}.1]}]
\item Associativity of force:
$$\xymatrix{S(A \x (B \x S(C)) \ar[d]_{\delta} \ar[rr]^{a_\x} & & S((A \x B) \x S(C)) \ar[dddd]^{\psi} \\
            S^2(A \x (B \x S(C)) \ar[d]_{S(\sigma_\x)} \\
            S(S(A) \x S(B \x S(C)) \ar[d]_{S(\epsilon \x \psi)}\\
            S(A \x S(B \x C)) \ar[d]_{\psi} \\
            S(A \x (B \x C)) \ar[rr]_{S(a_\x)} & & S((A \x B) \x C)
           }$$
\item  Projection and force:
$$\xymatrix{ S(A \x S(B)) \ar[d]_{S(\pi_1)} \ar[rr]^{\psi} & & S(A \x B) \ar[d]^{S(\pi_1)} \\
             S^2(B) \ar[rr]_{\epsilon} & & S(B)}$$
\item Forceful naturality:
$$\xymatrix{S(A \x S(B)) \ar[rr]^{\delta} \ar[dd]_{\psi} & & S^2(A \x S(B)) \ar[d]^{S(\sigma_\x)} \\
                        & & S(S(A) \x S^(B)) \ar[d]^{S(1 \x (\epsilon\delta))}  \\
                        S(A \x B) \ar[d]_{\delta} & & S(S(A) \x S^2(B)) \ar[d]^{\psi} \\
                        S^2(A \x B) \ar[rr]_{S(\sigma_\x)} & & S(S(A) \x S(B)) }$$
\item The forcefulness of counit:
$$\xymatrix{S(A \x S^2(B)) \ar[d]_{\psi} \ar[rr]^{S(1 \x \epsilon)} & & S(A \x S(B)) \ar[d]^{\psi} \\
            S(A \x S(B)) \ar[rr]_{\psi} & & S(A \x B) }$$
\item The forcefulness of comultiplication:
$$\xymatrix{S(A \x B) \ar[r]^{\delta} \ar@{=}[d] & S^2(A \x B)
\ar[r]^{\!\!\!\!\!S(\sigma_\x)} & S(S(A) \x S(B)) \ar[d]^{\psi} \\
            S(A \x B) & & S(S(A) \x B) \ar[ll]^{S(\epsilon \x 1)}}$$
\item Commutativity of force:
$$\xymatrix{S(S(A) \x S(B)) \ar[d]_{\psi'} \ar[rr]^{\psi} & & S(S(A) \x B) \ar[d]^{\psi'} \\
            S(A \x S(B)) \ar[rr]_{\psi} & & S(A \x B)}$$
where $\psi': = S(c_\x) \psi S(c_\x)$ is the symmetric dual of the force.
\end{enumerate}
\end{definition}

\noindent
We shall say that a comonad is {\bf forceful} if it comes equipped with
a force.  First we observe the following.

\begin{proposition}
In any Cartesian storage category $\X$ the comonad, $\check{\S}$ on the linear maps has a force.
\end{proposition}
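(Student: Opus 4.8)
The plan is to construct the force $\psi$ explicitly using the monad structure available in a Cartesian storage category and then verify the six coherence diagrams. The natural candidate for $\psi \colon S(A \x S(X)) \to S(A \x X)$ is obtained from the counit of the comonad and the strength: since in the coKleisli category (equivalently, among the linear maps) the relevant morphism $A \x S(X) \to A \x X$ is $1 \x \epsilon$, and this is linear (as $\epsilon$ is linear), we take $\psi := S(1 \x \epsilon_X)$. More precisely, to fit the typing we may need $\psi = S(\theta_{A,S(X)}) \cdots$; but the cleanest description is: $\psi$ is the linear classification of the map $S(A \x S(X)) \xrightarrow{\epsilon} A \x S(X) \xrightarrow{1 \x \epsilon} A \x X$, that is, $\psi = S(1 \x \epsilon)$ regarded via $S$ acting on the linear map $1 \x \epsilon$. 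Since $S$ is a functor on the whole category and $1 \x \epsilon$ is a genuine arrow $A \x S(X) \to A \x X$, setting $\psi := S(1 \x \epsilon)$ is well-defined and natural in both $A$ and $X$.

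First I would record naturality of $\psi = S(1 \x \epsilon)$: naturality in $A$ is immediate from functoriality of $S$ and naturality of identities; naturality in $X$ uses only that $\epsilon$ is natural on the subcategory of linear maps, which is exactly where these diagrams live. Then I would translate each of the six axioms \textbf{[Force.1]}--\textbf{[Force.6]} into a statement about $S$ applied to a diagram in $\X$, using the established identities: $\epsilon \varphi = 1$, $S(\varphi)\epsilon = 1$, $\epsilon\epsilon = S(\epsilon)\epsilon$, the strength coherences, and crucially Proposition \ref{prop:linmaps}(i) (commutativity of the monad) together with the bilinear identities of Lemma \ref{bilinear-identities}. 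For instance \textbf{[Force.2]} (projection and force) reduces to $S$ of the square asserting $\pi_1 (1 \x \epsilon) = (1 \x 1)(\pi_1)$ composed with $\epsilon$ in the right way, which is just functoriality plus $\epsilon$-naturality of $\pi_1$ (guaranteed since $\X$ is a Cartesian abstract coKleisli category). Axioms \textbf{[Force.4]} and \textbf{[Force.5]} (forcefulness of counit and comultiplication) become $S$-images of the identities $(1\x\epsilon)(1\x\epsilon) = (1 \x \epsilon\epsilon) = (1 \x S(\epsilon)\epsilon)$ and of $(1 \x \varphi)$-cancellation, which we have already exploited in the proof of Proposition \ref{prop:linmaps}(iii).

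The main obstacle will be \textbf{[Force.1]} (associativity of force) and \textbf{[Force.6]} (commutativity of force), since these genuinely involve the interaction of two strength maps $\theta$ and $\theta'$ with $\mu$ and $\delta = S(\varphi)$, and this is precisely where the commutativity of the monad $\S$ is needed. My strategy there is to reduce both sides to bilinear maps out of $S(\cdot) \x S(\cdot)$ (after precomposing with the appropriate $\epsilon$'s to strip off the outer $S$) and then invoke the bi-classification universal property of Lemma \ref{bilinear-universal}: it suffices to check the two composites agree after prefixing with $\varphi \x \varphi$, at which point everything collapses via $\epsilon\varphi = 1$ and $S(\varphi)\epsilon = 1$ to a routine diagram chase. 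The commutativity axiom \textbf{[Force.6]} follows symmetrically once one observes that $\psi' = S(c_\x)\psi S(c_\x) = S((1 \x \epsilon) \text{ conjugated by } c_\x)$ and that $\theta$ and $\theta'$ are related by the symmetry in the way recorded in Proposition \ref{prop:linmaps}(i). I would present the construction of $\psi$ and the easy axioms in the text, and handle \textbf{[Force.1]} and \textbf{[Force.6]} by the ``prefix with $\varphi$'' technique, leaving the fully expanded pasting diagrams to the reader since they are mechanical given the universal property.
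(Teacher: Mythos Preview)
Your candidate $\psi := S(1 \x \epsilon)$ is the wrong map, and this is not a cosmetic issue: with this choice \textbf{[Force.2]} already fails. Indeed,
\[
S(1 \x \epsilon)\,S(\pi_1) \;=\; S\bigl((1 \x \epsilon)\pi_1\bigr) \;=\; S(\pi_1 \epsilon) \;=\; S(\pi_1)\,S(\epsilon),
\]
whereas the axiom demands $S(\pi_1)\,\epsilon$. So you would need $S(\epsilon_X) = \epsilon_{S(X)}$ as maps $S^2(X) \to S(X)$, which is false for any nontrivial comonad (these two maps are merely coequalized by $\epsilon$, not equal). More conceptually, $S(1 \x \epsilon) = S(\theta\epsilon) = S(\theta)\,S(\epsilon)$, so your $\psi$ differs from the correct one exactly by the distinction between $S(\epsilon)$ and $\epsilon_{S}$.

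The paper's definition is the one you almost wrote down and then abandoned: $\psi := S(\theta)\,\epsilon$, i.e.\ the linear classification $\theta^\sharp$ of the strength $\theta$ (so that $\varphi\,\psi = \theta$). Your description of $\psi$ as ``the linear classification of $\epsilon(1\x\epsilon)$'' is also off-target: that would land in the wrong codomain. With the correct $\psi$, the verifications do proceed much as you outlined---one repeatedly uses the ``prefix with $\varphi$'' technique, the strength coherences for $\theta$, and (for \textbf{[Force.6]}) commutativity of the monad---but the reductions are not simply ``$S$ applied to a diagram in $\X$'': the outer $\epsilon$ in $S(\theta)\epsilon$ interacts nontrivially with the $S$-layer, and this is exactly what makes axioms like \textbf{[Force.2]} and \textbf{[Force.3]} come out right.
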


\proof
If $\X$ is a Cartesian storage category we may define $\psi = S(\theta)\epsilon$:
by inspection this is a linear map ({\em i.e.} in ${\cal L}[]$).
We must check it satisfies all the given properties.
\begin{enumerate}[{\bf [{Force}.1]}]
\item The interaction of associativity and strength gives the equation $(1 \x \theta)\theta S(a_\x) = a_\x \theta$. We shall use this and the
fact that $S(\varphi h)\epsilon = h$ for linear $h: S(X) \to Y$ in the calculation:
\begin{eqnarray*}
\delta S(\sigma_\x) S(\epsilon \x \psi) \psi S(a_\x)
    & = & S( \varphi \delta S(\sigma_\x) S(\epsilon \x \psi) \psi S(a_\x)) \epsilon \\
    & = & S( \varphi \varphi  S(\sigma_\x) S(\epsilon \x \psi) \psi S(a_\x)) \epsilon \\
    & = & S( \varphi \sigma_\x (\epsilon \x \psi) \varphi \psi S(a_\x)) \epsilon \\
    & = & S( (\varphi \x \varphi) (\epsilon \x \psi) \varphi \psi S(a_\x)) \epsilon \\
    & = & S((1 \x \theta)\theta S(a_\x)) \epsilon \\
    & = & S(a_\x \theta) \epsilon = S(a_\x) \psi
\end{eqnarray*}
\item The projection of force:
\begin{eqnarray*}
\psi S(\pi_1) & = & S(\theta)\epsilon S(\pi_1) \\
              & = & S(\theta S(\pi_1)) \epsilon \\
              & = & S(\pi_1) \epsilon
\end{eqnarray*}
\item Forceful naturality:
\\
We shall use the couniversal properties repeatedly:  to show $\psi \delta S(\sigma_\x) = \delta S(\sigma_\x(1 \x \epsilon\delta))\psi$ it suffices,
as they are both linear to show that they are equal where prefixed with $\varphi$:
\begin{eqnarray*}
\varphi \psi \delta S(\sigma_\x) & = & \theta \delta S(\sigma_\x) \\
                   & = & (1 \x \delta) \theta S(\theta \sigma_\x) \\
\varphi \delta S(\sigma_\x (1 \x (\epsilon\delta))) \psi & = & \varphi \varphi S(\sigma_\x (1 \x (\epsilon\delta))) \psi \\
                   & = & \varphi (\sigma_\x (1 \x (\epsilon\delta))) \theta \\
                   & = & (\varphi \x \varphi) (1 \x (\epsilon\delta))) \theta \\
                   & = & (\varphi \x \delta) \theta
\end{eqnarray*}
The resulting expressions are both linear in the second argument so that they are equal provided precomposing with $1 \x \varphi$ makes them
equal:
\begin{eqnarray*}
(1 \x \varphi) (1 \x \delta) \theta S(\theta \sigma_\x)
       & = & (1 \x (\varphi\varphi)) \theta S(\theta \sigma_\x) \\
       & = & (1 \x \varphi) \varphi S(\theta \sigma_\x) \\
       & = & (1 \x \varphi) \theta \sigma_\x \varphi \\
       & = & \varphi \sigma_\x \varphi = (\varphi \x \varphi)\varphi \\
(1 \x \varphi)  (\varphi \x \delta) \theta
       & = & (\varphi \x \varphi)(1 \x \varphi) \theta \\
       & = &  (\varphi \x \varphi)\varphi
\end{eqnarray*}
This establishes the equality!
\item For the forcefulness of the counit both sides are linear as so
equal if they are equal after precomposing with $\varphi$. However,
$\varphi\psi\psi = \theta\psi$ and $\varphi S(1 \x \epsilon)\psi = (1 \x
\epsilon) \varphi\psi = (1 \x \epsilon) \theta$.  These last are linear
in their second argument and, therefore, are equal if and only if
precomposing with $1 \x \varphi$ makes them equal: $$(1 \x \varphi)
\theta \psi = \varphi \psi = \theta = (1 \x \varphi) (1 \x \epsilon)
\theta$$
\item The forcefulness of the comultiplication:
\begin{eqnarray*}
\delta S(\sigma_\X) \psi S(\epsilon \x 1) & = & \delta S(\sigma_\x \theta) \epsilon S(\epsilon \x 1) \\
                                          & = & \delta S(\sigma_\x \theta S(\epsilon \x 1)) \epsilon \\
                                          & = & S(\varphi \sigma_\x (\epsilon \x 1) \theta) \epsilon \\
                                          & = & S((\varphi \x \varphi) (\epsilon \x 1) \theta) \epsilon \\
                                          & = & S((1 \x \varphi) \theta) \epsilon \\
                                          & = & S( \varphi ) \epsilon = 1
\end{eqnarray*}
\item Commutativity of the force:
\begin{eqnarray*}
\psi \psi' & = & S(\theta) \epsilon S(\theta') \epsilon \\
           & = & S(\theta S(\theta') \epsilon \epsilon \\
           & = & S(\theta'S(\theta)) \epsilon \epsilon \\
           & = & S(\theta')\epsilon S(\theta) \epsilon \\
           & = & \psi' \psi
\end{eqnarray*}
\end{enumerate}

\endproof

\noindent Conversely:

\begin{proposition}
\label{abstract-coKleisli-basic}
Let $\S$ be a forceful comonad on $\X$ a Cartesian category then the
coKleisli category $\X_\S$ is a Cartesian storage category.
\end{proposition}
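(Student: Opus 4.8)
The plan is to show that the coKleisli category $\X_\S$ of a forceful comonad is a Cartesian storage category by exhibiting it as a \emph{strong abstract commutative coKleisli category} and then invoking the theorem of the previous subsection. Recall that the underlying data of $\X_\S$ is standard: objects are those of $\X$, a map $A \to B$ in $\X_\S$ is a map $S(A) \to B$ in $\X$, composition uses $\delta$, and the functor $S$ on $\X_\S$ together with $\varphi := \epsilon$ (viewed as a coKleisli arrow $A \to S(A)$) and $\epsilon := \varphi$ (the unit of the comonad, viewed as a coKleisli arrow $S(A) \to A$)---I will have to be careful about which is which, since in an abstract coKleisli category the roles of the comonad's counit and the would-be ``$\varphi$'' are swapped relative to the comonad presentation. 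The first block of work is therefore purely formal: check that $\X_\S$ is an abstract coKleisli category, i.e. that the identities $\varphi\epsilon = 1$, $S(\varphi)\epsilon = 1$, $\epsilon\epsilon = S(\epsilon)\epsilon$, and the naturality of $\epsilon_{S(\_)}$ all translate into comonad laws for $\check\S$. This is bookkeeping and should be dispatched quickly.

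Next I would establish that $\X_\S$ is Cartesian. Since $\X$ is Cartesian and $S$ carries a strength (a comonad with a force certainly has a costrength/strength in the relevant sense---indeed the force restricts, via $A = \top \x A$ or via the projection laws, to the comonad strength), the coKleisli category inherits finite products with the product on objects being the product in $\X$; the projections and terminal map are the images under the coKleisli inclusion of the corresponding maps of $\X$, and one checks these are $\epsilon$-natural, so $\X_\S$ is a Cartesian abstract coKleisli category. The strength on $\X_\S$ needed for ``strong abstract coKleisli category'' is exactly where the force $\psi$ is used: I would define $\theta\colon A \x S_{\X_\S}(X) \to S_{\X_\S}(A \x X)$ in $\X_\S$ from $\psi$ (composed with $\delta$'s and the strength of $S$ as an endofunctor of $\X$), and then verify the two strength coherence diagrams plus the ``strong even where unnatural'' condition $\theta(1\x\epsilon) = \epsilon$; these reduce, after unwinding coKleisli composition, to \textbf{[Force.2]} (projection and force), \textbf{[Force.5]} (forcefulness of comultiplication), and \textbf{[Force.1]} (associativity of force). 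Then $\varphi$ strong and $\epsilon$ strong follow from \textbf{[Force.3]} and \textbf{[Force.4]}.

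Finally, commutativity: by the lemma just proved, a strong abstract coKleisli category has a \emph{persistent} strong classification of its $\epsilon$-natural maps if and only if its monad $\S = (S,\varphi,\epsilon_{S(\_)})$ is commutative, so it suffices to check that the monoidal/commutative-monad square holds, and this is precisely what \textbf{[Force.6]} (commutativity of force), together with the symmetric dual $\psi' = S(c_\x)\psi S(c_\x)$, is designed to deliver. Applying the theorem from the previous subsection then gives that $\X_\S$ is a Cartesian storage category. The main obstacle I anticipate is not any single diagram but the translation overhead: every \textbf{[Force.$n$]} axiom is phrased in $\X$ with explicit $\delta$'s, $S(\sigma_\x)$'s and associators, whereas the abstract-coKleisli conditions are phrased in $\X_\S$ with coKleisli composition hidden; getting the dictionary exactly right---particularly tracking the associator bookkeeping in \textbf{[Force.1]} against the pentagon-type coherence for $\theta$---is the delicate part. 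I would organize the proof around that dictionary, proving once and for all how coKleisli composites of the relevant maps unwind, and then matching each needed coherence to the corresponding force axiom.
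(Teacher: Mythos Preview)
Your proposal is correct and follows essentially the same route as the paper: pass through the characterization of Cartesian storage categories as strong abstract commutative coKleisli categories, defining $\theta$ in $\X_\S$ to be $\psi$, and then discharge each required coherence by unwinding to one of the force axioms. Your dictionary is slightly shuffled --- in the paper's accounting \textbf{[Force.3]} handles \emph{naturality} of $\theta$ (not strength of $\varphi$), \textbf{[Force.5]} gives $(1\times\varphi)\theta=\varphi$ (strength of $\varphi$), and \textbf{[Force.4]} gives strength of $\epsilon$/$\mu$ --- but this is only bookkeeping and does not affect the argument.
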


\proof
The coKleisli category is immediately an abstract coKleisli category (see \cite{fuhrmann}): it remains only to show that
all the data ($S$, $\varphi$, and $\epsilon$) are strong, and that the induced monad is commutative.

The following is the interpretation for the data of the Cartesian storage category:
\begin{eqnarray*}
\llbracket \varphi \rrbracket & := & 1_{S(A)} \\
\llbracket \epsilon \rrbracket & := & \epsilon\epsilon \\
\llbracket S( f ) \rrbracket & := & \epsilon\delta S(\llbracket f \rrbracket) \\
\llbracket \theta \rrbracket & := & \psi \\
\llbracket f  \x g \rrbracket & := & \< S(\pi_0)\llbracket f \rrbracket,S(\pi_1)\llbracket g \rrbracket \>
                                 = \sigma_\x (\llbracket f \rrbracket \x \llbracket g \rrbracket) \\
\llbracket \Delta \rrbracket & := & \epsilon \Delta \\
\llbracket \pi_i \rrbracket & := &\epsilon \pi_i \\
\llbracket a_\x \rrbracket & := & \epsilon a_\x \\
\llbracket f g \rrbracket & := & \delta S(\llbracket f \rrbracket)\llbracket g \rrbracket.
\end{eqnarray*}

First we must show that $\bm\theta$ satisfies the requirements of being a strength
for ${\bf S}$ with respect to the product.  This may be expressed as following two requirements and naturality:
$$\xymatrix{A \x S(X) \ar[r]^{\theta} \ar[rd]_{\pi_1} & S(A \x X) \ar[d]^{ S(\pi_1)} \\
            & S(X)}
~~~~~~~~
\xymatrix{A \x (B \x S(X)) \ar[d]_{a_\x} \ar[rr]^{1 \x \theta}
           & & A \x S(B \x X) \ar[r]^{\theta}  & S(A \x (B \x X)) \ar[d]^{S(a_\x)} \\
          (A \x B) \x S(X) \ar[rrr]_{\theta}  & & & S((A \x B) \x X)}$$
The two diagrams are verified by the following calculation:
\begin{eqnarray*}
\llbracket \theta \bullet S(\pi_1) \rrbracket
  & = & \delta S(\psi) \epsilon \delta S(\epsilon \pi_1) \\
  & = & \psi S(\pi_1) = S(\pi_1) \epsilon = \epsilon \pi_1 \\
  & = & \llbracket \pi_1 \rrbracket \\
\llbracket  (1 \x \theta) \bullet \theta \bullet S(a_\x) \rrbracket
  & = & \delta S(\< S(\pi_0)\epsilon,S(\pi_1) \psi \>) \delta S(\psi) \epsilon \delta S(\epsilon a_\x) \\
  & = & \delta S(\< S(\pi_0)\epsilon,S(\pi_1) \psi \>) \psi S(a_\x)  \\
  & = & S(a_\x) \psi \\
  & = & \llbracket a_\x \bullet \theta \rrbracket
\end{eqnarray*}
For naturality we have:
\begin{eqnarray*}
\llbracket (f \x S(g)) \theta \rrbracket & = & \delta S(\sigma_\x(f \x (\epsilon\delta S(g)))) \psi \\
             & = & \delta S(\sigma_\x(1\x (\epsilon\delta))) S(f \x S(g)) \psi \\
             & = &  \delta S(\sigma_\x(1\x (\epsilon\delta))) \psi S(f \x g) \\
             & = & \psi \delta S(\sigma_\x) S(f \x g) \\
             & = & \delta S(\psi) \epsilon\delta S(\sigma_\x(f \x g)) = \llbracket \theta S(f \x g) \rrbracket
\end{eqnarray*}
Next we must verify that both $\bm\varphi$ and $\bm\epsilon_S$
({\em i.e.} $\bm\mu$) are strong:
$$\xymatrix{A \x X  \ar[drr]_{ \varphi} \ar[rr]^{ 1 \x \varphi} & & A \x S(X) \ar[d]^{ \theta} \\
            & & S(A \x X) }
~~~~~~~~
\xymatrix{A \x S^2(X) \ar[d]_{ 1 \x \epsilon_S} \ar[r]^{ \theta} &
    S(A \x S(X)) \ar[r]^{S(\theta)} & S^2(A \x X)  \ar[d]^{ \epsilon_S} \\
     A\x S(X) \ar[rr]_{\theta}  & & S(A \x X) }
$$
\begin{eqnarray*}
\llbracket (1 \x \varphi) \theta \rrbracket
  & = & \delta S(\sigma_\x (\epsilon \x 1)) \psi \\
  & = & \delta S(\sigma_\x) \psi S(\epsilon \x 1) \\
  & = & 1 = \llbracket \varphi \rrbracket \\
\llbracket (1 \x  \epsilon) \theta \rrbracket
  & = & \delta S(\sigma_\x (\epsilon \x \epsilon \epsilon)) \psi \\
  & = & \delta S(\sigma_\x (\epsilon \x \epsilon)) S(1 \x \epsilon) \psi \\
  & = & \delta S(\sigma_\x (\epsilon \x \epsilon)) \psi \psi \\
  & = & \psi \psi \\
  & = & \delta S(\psi\delta S(\psi)\epsilon)\epsilon  \\
  & = & \delta S(\psi)\delta S(\epsilon\delta S(\psi))\epsilon\epsilon  \\
  & = & \llbracket \theta S( \theta) \epsilon \rrbracket
\end{eqnarray*}
where we used the following equality
$\delta S(\sigma_\x (\epsilon \x \epsilon)) = 1_{S(A \x B)}$ which holds as
$$\delta S(\sigma_\x (\epsilon \x \epsilon))
       = \delta S(\<S(\pi_0),S(\pi_1)\>(\epsilon \x \epsilon))
       = \delta S(\<\epsilon\pi_0,\epsilon\pi_1\>))
       = \delta S(\epsilon \<\pi_0,\pi_1\>)
       = \delta S(\epsilon) = 1.$$

Lastly we must show that the induced monad is commutative, that is:
$$\xymatrix{ S(A) \x S(B) \ar[d]_{\theta'} \ar[rr]^{\theta} & & S(S(A) \x B) \ar[rr]^{S(\theta')} & & S^2(A \x B) \ar[d]^{\mu} \\
             S(A \x S(B)) \ar[rr]_{S(\theta)} & & S^2(A \x B) \ar[rr]_{\mu} & & S(A \x B)}$$
\begin{eqnarray*}
\llbracket \theta S(\theta') \mu \rrbracket
    & = & \delta S(\psi) (\delta S(\epsilon\delta S(\psi')) \epsilon\epsilon \\
    & = & \delta S(\psi \delta S(\psi') \epsilon) \epsilon \\
    & = & \psi \psi' = \psi'\psi \\
    & = & \delta S(\psi' \delta S(\psi) \epsilon) \epsilon \\
    & = & \delta S(\psi') (\delta S(\epsilon\delta S(\psi)) \epsilon\epsilon \\
    & = & \llbracket \theta' S(\theta) \mu \rrbracket .
\end{eqnarray*}
\endproof

We shall call a Cartesian category with a forceful comonad a {\bf
Cartesian linear category}.  We may summarize the
above results as follows:

\begin{theorem}
\label{exact-comonad}
A category is a Cartesian storage category if and only if it is the
coKleisli category of a Cartesian linear category. A Cartesian linear category
is the linear maps of a Cartesian storage category if and only if its comonad is exact.
\end{theorem}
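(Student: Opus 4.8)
The plan is to assemble this theorem from the machinery already in place, treating it as essentially a bookkeeping exercise that packages the earlier equivalences into a biconditional and then isolates the extra condition (exactness) that pins down the ``linear maps'' case. For the first sentence, the forward direction is Proposition \ref{abstract-coKleisli-basic}: given a Cartesian linear category, i.e.\ a Cartesian category $\X$ equipped with a forceful comonad $\check{\S}$, its coKleisli category $\X_{\check{\S}}$ is a Cartesian storage category. The reverse direction splits into two invocations: first, any Cartesian storage category is a strong abstract commutative coKleisli category (the Theorem in the ``Abstract coKleisli categories'' subsection), and second, by the Proposition ``In any Cartesian storage category $\X$ the comonad, $\check{\S}$ on the linear maps has a force'', the subcategory $\check{\X} = {\cal L}[]$ of linear maps carries a forceful comonad whose coKleisli category recovers $\X$. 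Here one must recall Fuhrmann's fact (quoted in the text) that from a strong abstract coKleisli category one reconstructs $\check{\X}$ with comonad $\check{\S} = (S, S(\varphi), \epsilon)$ and that $(\check{\X})_{\check{\S}} \cong \X$ exactly because the classification identifies the $\epsilon$-natural maps with the coKleisli arrows; so the only genuinely new content is that the reconstructed comonad is forceful, which is the cited Proposition via $\psi := S(\theta)\epsilon$.

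For the second sentence I would argue as follows. A Cartesian linear category $(\X, \check{\S})$ ``is the linear maps of a Cartesian storage category'' should be read as: $\X$ is (equivalent to) ${\cal L}[\Y]$ for some Cartesian storage category $\Y$, with $\check{\S}$ the induced comonad on linear maps. If this holds, then by Proposition \ref{prop:linmaps}(ii) the coequalizer $S^2(X) \rightrightarrows S(X) \to X$ is absolute, i.e.\ $\check{\S}$ is an exact modality; this is one direction. Conversely, suppose $\check{\S}$ is exact. Form $\Y := \X_{\check{\S}}$, which by the first part of the theorem is a Cartesian storage category; it remains to check that the linear maps of $\Y$ — the $\epsilon$-natural maps, equivalently the $\check{\S}$-coalgebras with linear structure (the Lemma identifying algebras-with-linear-structure-map with ${\cal L}[]$, read in its comonad-dual form) — together with the induced comonad, recover the original $(\X, \check{\S})$. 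This is precisely Fuhrmann's characterization: the canonical comparison $\X \to (\check{\Y})$ is an isomorphism iff $\check{\S}$ is exact, where $\check{\Y}$ denotes the reconstructed subcategory of linear maps of $\Y = \X_{\check{\S}}$. So exactness is exactly the condition that makes the round trip $\X \mapsto \X_{\check{\S}} \mapsto (\text{its linear maps})$ the identity on $\X$.

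The main obstacle is the second sentence, and specifically making the phrase ``is the linear maps of a Cartesian storage category'' precise and then verifying that the comonad reconstructed from $\X_{\check{\S}}$ — which a priori is $\check{\S}' = (S, S(\varphi_{\text{new}}), \epsilon_{\text{new}})$ built from the coKleisli interpretations $\llbracket \varphi \rrbracket = 1$, $\llbracket \epsilon \rrbracket = \epsilon\epsilon$, $\llbracket S(f)\rrbracket = \epsilon\delta S(\llbracket f\rrbracket)$ of Proposition \ref{abstract-coKleisli-basic} — actually coincides with the given $\check{\S}$ on $\X$, not merely up to some comparison functor. This requires chasing the exactness hypothesis through: exactness says $\epsilon \colon S(X) \to X$ is an absolute coequalizer of $\epsilon_{S(X)}, S(\epsilon)_X$, which is exactly what guarantees that the ``idempotent'' $S(\varphi)\epsilon$ on $S$-coalgebras splits back onto $\X$ rather than onto a proper subcategory. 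I expect the cleanest route is to cite Fuhrmann's result directly for the abstract reconstruction and exactness criterion, and then only verify the compatibility of the force — that the $\psi$ on $\Y$'s linear maps restricts to the original force on $\X$ — which is a short computation using $\psi = S(\theta)\epsilon$ and the coKleisli interpretation $\llbracket \theta \rrbracket = \psi$ already recorded in the proof of Proposition \ref{abstract-coKleisli-basic}. Everything else is routine transport along the equivalences established earlier in the section.
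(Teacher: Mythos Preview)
Your proposal is correct and follows essentially the same approach as the paper: the theorem is presented there as a summary of the preceding results (``We may summarize the above results as follows''), assembling Proposition~\ref{abstract-coKleisli-basic}, the Proposition that the comonad on the linear maps of a Cartesian storage category has a force, Proposition~\ref{prop:linmaps}(ii) for exactness, and Fuhrmann's characterization of exact comonads as those whose base category is recovered as the $\epsilon$-natural maps of the coKleisli category. Your additional remark about checking that the reconstructed force agrees with the original one is more than the paper spells out, but it is correct and handled exactly as you suggest via $\llbracket\theta\rrbracket = \psi$.
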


Recall that a comonad is exact when the commuting diagram:
   $$S(S(X)) \Two^{S(\epsilon)}_{\epsilon} S(X) \to^{\epsilon} X$$
is a coequalizer. A category with an exact comonad is always the
subcategory of $\epsilon$-natural maps of its coKleisli category.  This
allows the original category to be completely recovered from the
coKleisli category.

\medskip

Starting with a forceful comonad $\S$ on a Cartesian category $\X$ one can directly form the
simple fibraton of the coKleisli category.  The total category can be described by:
\begin{description}
\item{{\bf Objects:}}  $(A,X)$ where $A,X \in \Y$;
\item{{\bf Maps:}} $(f,g): (A,X) \to (B,Y)$ where $f: S(A) \to B$ and $g:S(A \x X) \to Y$;
\item{{\bf Identities:}} $(\epsilon,S(\pi_1)\epsilon): (A,X) \to (A,X)$;
\item{{\bf Composition:}} $(f,g)(f',g') = (\delta S(f) f',\delta S(\<S(\pi_0)f,g\>)g')$.
\end{description}
That this amounts to the simple fibration $\partial: \X_\S[\X_\S] \to \X_\S; (f,g) \mapsto f$ is easily checked.

\begin{remark}{}
\rm
One might expect for there to be more interactions between the comultiplication and the force.  In fact, there are
but they are often not so obvious!  Here is an example:
$$\xymatrix{S(A \x S(B))  \ar[d]_{\psi} \ar[rr]^{S(1 \x \delta)} & & S(A \x S^2(B)) \ar[d]^{\psi} \\
            S(A \x B)  & & S(A \x S(B)) \ar[ll]^{\psi}}$$
An easy, but perhaps rather unsatisfactory way to check this is by
looking in the corresponding Cartesian storage category where we have
rather easily:
\begin{eqnarray*}
S(1 \x \delta) \psi \psi & = & S(1 \x S(\varphi)) S(\theta) \epsilon S(\theta) \epsilon \\
                         & = & S((1 \x S(\varphi))\theta S(\theta)) \epsilon \epsilon \\
                         & = & S(\theta S((1 \x \varphi)\theta) \epsilon) \epsilon  \\
                         & = & S(\theta \varphi \epsilon) \epsilon = S(\theta)\epsilon = \psi
\end{eqnarray*}
However, one might reasonably want a direct proof.  We shall use the following equality
$$\delta S(\sigma_\x (\epsilon \x \epsilon)) = 1_{S(A \x B)}$$
which holds since
$$\delta S(\sigma_\x (\epsilon \x \epsilon))
       = \delta S(\<S(\pi_0),S(\pi_1)\>(\epsilon \x \epsilon))
       = \delta S(\<\epsilon\pi_0,\epsilon\pi_1\>))
       = \delta S(\epsilon \<\pi_0,\pi_1\>)
       = \delta S(\epsilon) = 1.$$
Here is the calculation:
\begin{eqnarray*}
\psi & = & S(1 \x \delta S(\epsilon))\psi = (1 \x \delta) \psi S(1 \x \epsilon) \\
   & = & (1 \x \delta) \psi \delta S(\sigma_\x) \psi S(\epsilon \x 1) S(1 \x \epsilon) \\
   & = & (1 \x \delta) \psi \delta S(\sigma_\x) \psi S(\epsilon \x \epsilon) \\
   & = & (1 \x \delta) \delta S(\sigma_\x) S(1 \x (\epsilon\delta)) \psi \psi S(\epsilon \x \epsilon) \\
   & = & (1 \x \delta) \delta S(\sigma_\x) S(1 \x (\epsilon\delta)) \psi S(\epsilon \x S(\epsilon)) \psi \\
   & = & (1 \x \delta) \delta S(\sigma_\x) S(1 \x (\epsilon\delta)) S(\epsilon \x S^2(\epsilon)) \psi \psi \\
   & = & (1 \x \delta) \delta S(\sigma_\x) S(\epsilon \x (\epsilon\delta S^2(\epsilon))) \psi \psi \\
   & = & (1 \x \delta) \delta S(\sigma_\x)(\epsilon \x \epsilon) (1 \x (S(\epsilon) \delta)) \psi \psi \\
   & = & (1 \x \delta) (1 \x (S(\epsilon) \delta)) \psi \psi \\
   & = & (1 \x \delta) \psi \psi
\end{eqnarray*}
\end{remark}

\bigskip

We now have the following which summarizes the main results of this section

\begin{theorem}
For a Cartesian category $\X$, the following are equivalent:
\begin{itemize}
\item $\X$ is a Cartesian storage category;
\item $X$ is a strong abstract commutative coKleisli category;
\item $X$ is the coKleisli category of a Cartesian category with a forceful comonad.
\end{itemize}
\end{theorem}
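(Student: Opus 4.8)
The plan is to assemble this theorem from results already proved in this section; it is a consolidation, so the task is to chain the relevant biconditionals and verify that nothing beyond them is needed.

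The equivalence of the first two bullets --- that \X is a Cartesian storage category precisely when it is a strong abstract commutative coKleisli category --- is exactly the Theorem established at the close of the subsection on abstract coKleisli categories, namely the one stated immediately after the Lemma identifying persistence of the strong classification with commutativity of the monad \S. No new argument is needed for this half.

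For the equivalence of the first and third bullets I would invoke Theorem~\ref{exact-comonad}, whose opening assertion is precisely that a category is a Cartesian storage category if and only if it is the coKleisli category of a Cartesian linear category, i.e.\ a Cartesian category carrying a forceful comonad. Spelled out: in the forward direction one passes to the system \L of linear maps, equips the induced comonad $\check{\S} = (S,\epsilon,\delta)$ on \L with the force $\psi = S(\theta)\epsilon$ furnished by the Proposition that the comonad of any Cartesian storage category is forceful, and then recovers \X as the coKleisli category of the pair $(\L,\check{\S})$, using here that the modality is exact (Proposition~\ref{prop:linmaps}(ii)) so that \L really is the $\epsilon$-natural subcategory of its own coKleisli category. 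In the reverse direction one applies Proposition~\ref{abstract-coKleisli-basic}: the coKleisli category of any forceful comonad on a Cartesian category is a Cartesian storage category.

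Chaining the two biconditionals gives the three-way equivalence. The only step that is not purely formal --- the main obstacle, such as it is --- lies in the forward half of the coKleisli characterisation, where exactness of the modality is genuinely required to reconstruct the base category from its coKleisli category; but this exactness is automatic in any Cartesian storage category by Proposition~\ref{prop:linmaps}(ii), so no extra hypothesis is needed and the circle of equivalences closes.
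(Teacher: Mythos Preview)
Your approach is correct and matches the paper's: this theorem is explicitly presented as a summary of the section's main results, with no separate proof given, and you have correctly identified the two earlier theorems that supply the biconditionals. One small point: your invocation of exactness in the forward direction $(1)\Rightarrow(3)$ is not actually needed there --- Fuhrmann's abstract coKleisli theory already guarantees that any abstract coKleisli category is the coKleisli category of its $\epsilon$-natural subcategory, and Proposition~\ref{prop:linmaps}(iii) identifies that subcategory with ${\cal L}$; exactness is instead what is required for the second sentence of Theorem~\ref{exact-comonad}, namely to recover the base linear category from its coKleisli category, which is not part of the present statement.
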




\section{Tensor storage categories}  
\label{tensor-storage-cats}

Our objective is now to link our development to the categorical semantics of linear
logic through the ``storage categories''  which were developed from the ideas introduced in \cite{Seely}.
The original definition of these categories required a comonad $S$ on a $*$-autonomous
category and natural isomorphisms $s_\x: S(A \x B) \to S(A) \ox S(B)$
and $s_1: S(1) \to \top$.  Subsequently Gavin Bierman realized that, in order
to ensure that equivalent proofs of Multiplicative Exponential Linear Logic (MELL)
were sent to the same map, the comonad actually had to be monoidal.  Bierman called these
categories, in which this new requirement was added, ``new Seely'' categories.

Bierman's examination of MELL also revealed that, even in the
absence of additives, the coKleisli category of a symmetric monoidal
category with an exponential would have products and, furthermore, when
the original category was closed, would be Cartesian closed.  Thus, it
seemed that the additive structure was not really necessary for the
theory. Bierman called the categories which provided models for MELL
{\em linear categories}.  In this manner, the additive structure, basic in Seely's
original definition, was relegated to a secondary and largely optional
role.

Andrea Schalk \cite{Schalk} collected the
various axiomatizations of Seely categories originating from this work,
and removed the requirement that the category be closed.  She  showed that
this was an orthogonal property whose main purpose was to ensure the
coKleisli category was Cartesian closed.  She called the modalities {\em
linear exponential comonads}, and thus replaced linear
categories with symmetric monoidal categories with a linear exponential
comonad.

Even more recently Paul-Andr\'e Melli\`es \cite{Mellies} while revisiting
``categorical models of linear logic'' concentrated entirely upon the
exponential structure: notably $*$-autonomous categories hardly rated a
mention and the additives are reduced to at most products---although
closedness is assumed throughout.  Of note was the emphasis that was
placed on the role of the monoidal adjunction which was induced between
the linear and Cartesian categories.  Of particular interest was an
axiomatization for Seely's original ideas which showed that rather than
demand that the comonad be a monoidal comonad one could obtain the same
effect by axiomatizing the Seely isomorphism itself more carefully. Here
we shall follow this idea and, adapting it somewhat, obtain a convenient
description of the variety of Seely category in which we are interested.
These are ``new Seely categories'', but following Andrea Schalk's lead, the
requirement of closedness is dropped.

One thing that should be emphasized is that in, this exposition of Seely
categories, we are focusing on the product structure: the monoidal
structure is regarded as secondary, indeed, even generated by products.
This is absolutely the opposite to the general trend in the work cited
above where an underlying theme is to decompose the product structure
into a more fundamental tensorial structure.  Mathematically, of course,
there is no tension between these approaches, as in these settings the
product and tensor are linked and it should be no surprise that the
linkage can be worked in both directions.  However, there is perhaps a
philosophical message, as it challenges the precept of what should be
taken as primary. Seemingly in spite of this, we shall call the current
notion ``tensor storage categories'', as they are very similar to the
``storage categories'' we considered in \cite{diffl}, in the context of
(tensor) differential categories.

The section starts by providing  an exposition of tensor storage categories: we start
from the definition of a storage transformation and explain why this is
the same as a coalgebra modality.  A tensor storage category is then defined to be
a symmetric monoidal category with products and a storage transformation
which is an isomorphism.  We then prove that this implies that the
modality is a monoidal comonad and, thus, that we do indeed obtain
(modulo the relaxing of the closed requirement) what Gavin Bierman
called a ``new Seely'' category.

Next, we return to the main theme of the paper, and formulate the notion of tensorial
representation in a Cartesian storage category.  We then show that the linear
maps of a Cartesian storage category, which has persistent tensorial representation,
always form a tensor storage category.  Conversely, the coKleisli category of a
tensor storage category is a Cartesian storage category with tensorial representation.
Thus, tensor storage categories, in which the comonad is exact, correspond {\em precisely\/} to
the linear maps of Cartesian storage categories which have persistent tensorial representation.


\subsection{Coalgebra modalities}
\label{coalgebra-modality}

Let \X be a symmetric monoidal category with tensor
$(\ox,a_\ox,c_\ox, u^L_\ox,u^R_\ox)$ which has
products and a comonad $(S,\delta,\epsilon)$.

\begin{definition} \label{storage-trans}
A {\bf storage transformation} is a symmetric comonoidal transformation
$s: S \to S$ from  $\X$, regarded as a symmetric monoidal category with respect
to the Cartesian product, to $\X$, regarded as a symmetric monoidal category with
respect to the tensor, for which $\delta$ is a comonoidal transformation.
\end{definition}

Thus, a storage transformation is a natural transformation
$s_2:S(X \x Y) \to S(X) \ox S(Y)$ and a map $s_1: S(1) \to \top$ satisfying:
$$\xymatrix{S((X \x Y) \x Z) \ar[d]_{S(a_\x)} \ar[r]^{s_2} & S(X \x Y)
            \ox S(Z) \ar[r]^{s_2 \ox 1~~~}
    & (S(X) \ox S(Y)) \ox S(Z) \ar[d]^{a_\ox} \\
            S(X \x (Y \x Z)) \ar[r]_{s_2} & S(X) \ox S(Y \x Z)
            \ar[r]_{1\ox s_2~~~}
    & S(X) \ox (S(Y) \ox S(Z)) }$$

$$\xymatrix{S(1 \x X) \ar[d]_{S(\pi_1)} \ar[r]^{s_2~~} & S(1) \ox S(X) \ar[d]^{s_0 \ox 1} \\
            S(X) & \top \ox S(X) \ar[l]^{u^L_\ox}} ~~~~~~
  \xymatrix{S(X \x 1) \ar[d]_{S(\pi_0)} \ar[r]^{s_2~~} & S(X) \ox S(1) \ar[d]^{1 \ox s_0} \\
            S(X) & S(X) \ox \top \ar[l]^{u^R_\ox}}$$
$$\xymatrix{S(X \x Y) \ar[d]_{S(c_\x)}\ar[r]^{s_2~~} & S(X) \ox S(Y) \ar[d]^{c_\ox} \\
            S(Y \x X) \ar[r]_{s_2~~} & S(Y) \ox S(X) }$$

In general, a natural transformation is a comonoidal transformation if it respects the comonoidal structure
in the sense that the following diagrams  commute:
$$\xymatrix{F(X \x Y) \ar[d]_{\sigma_2^F} \ar[r]^{\alpha} & G(X \x Y) \ar[d]^{\sigma_2^G} \\
            F(X) \ox F(Y) \ar[r]_{\alpha \ox \alpha} & G(X) \ox G(Y)}  ~~~~~
  \xymatrix{F(1)\ar[dr]_{\sigma_0^F} \ar[rr]^\alpha & & G(1) \ar[dl]^{\sigma_0^G} \\ & \top}$$
A comonad is comonoidal if all its transformations are.  However, for the comonad $(S,\delta,\epsilon)$
it makes no sense to insist that $\epsilon$ is comonoidal as the identity functor is not comonoidal (from $\X$ with products to $\X$ with tensor).
However, it does make sense to require $\delta$ is comonoidal.  Recall first that, with respect
to the product, every functor is canonically comonoidal with:
$$\sigma_2^\x = \< S(\pi_0),S(\pi_1) \>: S(X \x Y) \to S(X) \x S(Y)
           ~~\mbox{and}~~ \sigma_0^\x = \<\>: S(1) \to 1.$$
This allows us to express the requirement that $\delta$ be comonoidal as follows:
$$\xymatrix{S(X \x Y) \ar[dd]_{s^2} \ar[r]^{\delta} & S(S(X \x Y)) \ar[d]^{\sigma_2^\x} \\
            & S(S(X) \x S(Y)) \ar[d]^{s_2} \\
           S(X) \ox S(Y) \ar[r]_{\delta \ox \delta~~~~} & S(S(X)) \ox S(S(Y))}
~~~~ \xymatrix{S(1) \ar[ddr]_{s_0} \ar[r]^{\delta~} & S(S(1)) \ar[d]^{S(\sigma_0^\x)} \\
                & S(1) \ar[d]^{s_0} \\
               & \top}$$

\begin{definition} \label{coalg-modality}
A symmetric monoidal category \X with products has a {\bf commutative coalgebra modality} in
case there is a comonad $(S,\delta,\epsilon)$ such that each $S(X)$ is naturally a cocommutative
comonoid $(S(X),\Delta,e)$ and $\delta$ is a homomorphism of these monoids:
$$\xymatrix{S(X) \ar[d]_{\Delta} \ar[r]^{\delta} & S(S(X)) \ar[d]^{\Delta} \\
            S(X) \ox S(X) \ar[r]_{\delta \ox \delta~~~~} & S(S(X)) \ox S(S(X))}
~~~~ \xymatrix{S(X) \ar[dr]_e \ar[rr]^{\delta~} & & S(S(X)) \ar[dl]^{e} \\ & \top}$$
\end{definition}

\noindent
We now observe that:

\begin{proposition}
For a symmetric monoidal category with products, having a comonad with a symmetric
storage transformation is equivalent to having a cocommutative coalgebra modality.
\end{proposition}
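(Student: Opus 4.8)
The plan is to obtain this equivalence as an instance of the standard correspondence between comonoidal functors out of a Cartesian category and functors equipped with a natural comonoid structure on their values, and then to check that, under this correspondence, the two ways of demanding compatibility of $\delta$ coincide. Throughout one uses that in a Cartesian category every object $X$ carries the canonical cocommutative $\x$-comonoid structure $(X,\Delta_\x,!)$, natural in $X$, and that $S$, viewed as a functor into $\X$ with its product structure, is canonically comonoidal via $\sigma_2^\x = \langle S(\pi_0),S(\pi_1)\rangle$ and $\sigma_0^\x = \langle\rangle$, as recalled in the discussion preceding Definition~\ref{coalg-modality}.

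The first step is the object-level bijection. Given a symmetric storage transformation $(s_2,s_0)$ as in Definition~\ref{storage-trans}, I would define, on each $S(X)$,
$$\Delta \;:=\; S(\Delta_\x)\, s_2 : S(X) \to S(X)\ox S(X), \qquad e \;:=\; S(!)\, s_0 : S(X) \to \top$$
(composition in diagrammatic order). Naturality of $\Delta$ and $e$ in $X$ is immediate from naturality of $s_2$, functoriality of $S$, and naturality of $\Delta_\x$ and $!$; that $(S(X),\Delta,e)$ is a cocommutative comonoid is obtained by feeding the defining equations of $\Delta_\x$ and $!$ (i.e. the universal property of $\x$) into the associativity, unit and symmetry axioms of the comonoidal transformation $s$. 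Conversely, given a cocommutative comonoid structure $(\Delta,e)$ on each $S(X)$, natural in $X$, I would set
$$s_2 \;:=\; \Delta\,(S(\pi_0)\ox S(\pi_1)) : S(X\x Y) \to S(X)\ox S(Y), \qquad s_0 \;:=\; e : S(1) \to \top,$$
and check, using only the comonoid axioms, naturality of $\Delta$, and the universal property of $\x$, that $(s_2,s_0)$ is a symmetric comonoidal transformation. These two passages are mutually inverse: in one direction $S(\Delta_\x)\,\Delta\,(S(\pi_0)\ox S(\pi_1))$ is rewritten by naturality of $\Delta$ as $\Delta\,(S(\Delta_\x\pi_0)\ox S(\Delta_\x\pi_1)) = \Delta$, while $S(!)\,e = e$ by naturality of $e$; in the other, one expands $\Delta = S(\Delta_\x)\,s_2$, uses naturality of $s_2$ to turn $s_2\,(S(\pi_0)\ox S(\pi_1))$ into $S(\pi_0\x\pi_1)\,s_2$, and then $S(\Delta_\x)\,S(\pi_0\x\pi_1) = S(\langle\pi_0,\pi_1\rangle) = 1$, while $s_0$ is unchanged since $!\colon 1\to 1$ is the identity.

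It then remains to see that, under this bijection, the requirement that $\delta$ be a comonoidal transformation — as spelled out before Definition~\ref{coalg-modality} — is equivalent to the requirement that $\delta: S(X)\to S(S(X))$ be a homomorphism of the comonoids $(S(X),\Delta,e)$ for every $X$, as in Definition~\ref{coalg-modality}. Substituting $\Delta = S(\Delta_\x)\,s_2$ and $e = S(!)\,s_0$ into the homomorphism squares and using naturality of $\delta$, naturality of $s_2$, and the identities $S(\Delta_\x)\,\sigma_2^\x = \Delta_\x$ and $\langle S(\pi_0),S(\pi_1)\rangle\,\pi_i = S(\pi_i)$, one checks that the homomorphism square at $X$ is precisely the comonoidal square for $\delta$ at the pair $(X,X)$ with both routes precomposed by $S(\Delta_\x)$, and conversely that the comonoidal square at an arbitrary pair $(X,Y)$ is recovered from the homomorphism square at $X\x Y$; the counit clauses match in the same way. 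Since $\epsilon$ enters neither definition — as already observed, the identity functor is not comonoidal from $\x$ to $\ox$, so no clause is imposed on $\epsilon$ on either side — combining this with the previous step yields the asserted equivalence.

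The only genuine work lies in the coherence bookkeeping of the object-level step: translating the three axioms of a symmetric comonoidal transformation for $(s_2,s_0)$ into coassociativity, counitality and cocommutativity of $(\Delta,e)$, and back, keeping careful track of the associators, unitors and symmetries of $\ox$ and of $\x$. There is no conceptual obstacle — every move is an instance of naturality or of the universal property of the product — but the diagrams are sizeable; by contrast the mutual-inverse check and the matching of the $\delta$-clauses are short.
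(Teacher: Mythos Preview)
Your proposal is correct and follows essentially the same route as the paper: the same definitions $\Delta = S(\Delta_\x)s_2$, $e = S(!)s_0$ in one direction and $s_2 = \Delta(S(\pi_0)\ox S(\pi_1))$, $s_0 = e$ in the other, with the $\delta$-compatibility conditions matched via naturality. You are in fact a bit more thorough than the paper, which does not explicitly verify that the two passages are mutually inverse, and your handling of the $\delta$-clause (showing the homomorphism square at $X$ is the comonoidal square at $(X,X)$ precomposed by $S(\Delta_\x)$) is a cleaner bidirectional argument than the paper's one-sided explicit calculation.
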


\proof
\begin{description}
\item[$(\Rightarrow)$]
If one has a storage transformation then one can define natural
transformations $\Delta, e$ as
$\Delta = S(\Delta_\x) s_2: S(X) \to S(X) \ox S(X)$ and $e = S(\<\>) s_0: S(X) \to \top$.
As (symmetric) comonoidal functors preserve (commutative) comonoids these do define comonoids.
Further, since $\delta$ is comonoidal as a transformation it becomes a homomorphism of the induced
comonoids.  This means that we have a (cocommutative) coalgebra modality.
\item[$(\Leftarrow)$]
Conversely given a (cocommutative) coalgebra modality on a (symmetric) monoidal category we
may define $s_2 = \Delta (S(\pi_0) \ox S(\pi_1)): S(X \x Y) \to S(X) \ox S(Y)$ and
$s_0 = e: S(1) \to \top$.  $s_2$ is clearly a natural transformation and using the fact that
$\Delta$ is coassociative it is easily seen to satisfy the first comonoidal requirement.  For the
last we have:
$$s_2(1 \ox s_0) u^R_\ox = \Delta(S(\pi_0) \ox S(\pi_1))(1 \ox e)u^R_\ox
  = \Delta(1) \ox S(\pi_1))u^R_\ox S(\pi_0) = S(\pi_0).$$
Thus these maps do provide comonoidal structure for $S$.  It remains to show that $\delta$ is
a comonoidal transformation:
\begin{eqnarray*}
\delta S(\sigma_2^\x) s_2 & = & \delta S(\sigma_2^\x) \Delta (S(\pi_0) \ox S(\pi_1))\\
  & = & \delta \Delta (S(\sigma_2^\x) \ox S(\sigma_2^\x)) (S(\pi_0) \ox S(\pi_1))  \\
  & = & \delta \Delta (S(\sigma_2^\x \pi_0) \ox S(\sigma_2^\x \pi_1))  \\
  & = & \delta \Delta (S(S(\pi_0)) \ox S(S(\pi_)))  \\
  & = & \Delta (\delta \ox \delta)(S(S(\pi_0)) \ox S(S(\pi_)))  \\
  & = & \Delta (S(\pi_0) \ox S(\pi_)) (\delta \ox \delta) \\
  & = & s_2 (\delta \ox \delta)
\end{eqnarray*}
\end{description}
\endproof

This not only provides an alternative way to describe a commutative coalgebra modality but also
leads us into the definition of a tensor storage category:

\begin{definition}
A {\bf tensor storage category} is a symmetric monoidal category with a
comonad $(S,\delta,\epsilon)$ which has a storage transformation which
is an isomorphism (\/{\em i.e.} both $s_2$ and $s_1$ are isomorphisms).
\end{definition}

Thinking of tensor storage categories as ``Seely categories'', one
notices that this is not the usual definition; in \cite{Mellies}
this definition (essentially) is given as a theorem which relates this
description to the more standard definition given by Bierman and adapted
by Schalk.

First, we shall compare tensor storage categories to models of the
multiplicative exponential fragment of linear logic (which we shall {\em
not\/} assume includes being closed).  We shall simply call these linear
categories as they are Bierman's linear categories without the
requirement of closure.  Thus, a {\bf linear category} is a symmetric
monoidal category $\X$ with a coalgebra modality on a symmetric monoidal
comonad $(S, \epsilon,\delta,m_\ox,m_\top)$ such that $\Delta$ and $e$
are both monoidal transformations and coalgebra morphisms.

The requirements of being a linear category are worth expanding.
The requirement that $e: S(A) \to \top$ be a monoidal transformation
amounts to two coherence diagrams:
$$\xymatrix{\top \ar@{=}[dr] \ar[r]^{m_\top~} & S(\top) \ar[d]^e \\ & \top}
~~~~ \xymatrix{S(A) \ox S(B) \ar[d]_{e \ox e} \ar[r]^{~m_\ox} & S(A \ox B) \ar[d]^e \\ \top \ox \top \ar[r]_u & \top}$$
The requirement the $\Delta: S(A) \to S(A) \ox S(A)$ is a monoidal
transformation amounts to two coherence requirements:
$$\xymatrix{\top \ar[d]_u \ar[r]^{m_\top}& S(\top) \ar[d]^{\Delta} \\
               \top \ox \top \ar[r]_{m_\top \ox m_\top~~~} & S(\top) \ox S(\top)}
~~~~ \xymatrix{S(A) \ox S(B) \ar[d]_{\Delta \ox \Delta} \ar[rr]^{m_\ox} & & S(A \ox B) \ar[dd]^{\Delta} \\
                S(A) \ox S(A) \ox S(B) \ox S(B) \ar[d]_{\ex_\ox} \\
                S(A) \ox S(B) \ox S(A) \ox S(B) \ar[rr]_{~~~m_\ox \ox m_\ox} & & S(A \ox B)\ox S(A \ox B)}$$
Requiring that $e: S(A) \to \top$ forms a coalgebra morphism amounts to
the requirement that
$$\xymatrix{S(A) \ar[r]^{\delta} \ar[d]_e &   S^2(A) \ar[d]^{S(e)} \\ \top \ar[r]_{m_\top} & S(\top)}$$
commutes.  Finally requiring that $\Delta$ is a coalgebra morphism amounts to:
$$\xymatrix{S(A) \ar[d]_{\Delta} \ar[rr]^{\delta} & & S^2(A) \ar[d]^{S(\Delta)} \\
            S(A)\ox S(A) \ar[r]_{\delta \ox \delta~~} & S^2(A)\ox S^2(A) \ar[r]_{m_\ox~} & S(S(A) \ox S(A))}$$

In his definition, Bierman had another requirement,
namely that whenever $f: S(A) \to S(B)$ is a coalgebra morphism it must
also be a comonoid morphism.  We have amalgamated this into the
definition of a coalgebra modality:

\begin{lemma} In a linear category there is the following implication of commutative diagrams
$$\infer{\xymatrix{S(A) \ar[d]_{\Delta} \ar[r]^f & S(B) \ar[d]^{\Delta} \\
                   S(A) \ox S(A) \ar[r]_{f \ox f} & S(B) \ox S(B)}
        }{\xymatrix{S(A) \ar[d]_{\delta} \ar[r]^f & S(B) \ar[d]^{\delta} \\
                   S^2(A) \ar[r]_{S(f)} & S^2(B)}
        }$$
\end{lemma}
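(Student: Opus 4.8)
The plan is to lean on the universal property of the cofree coalgebra. First I would observe that, because $(S(B),\delta_B)$ is the cofree $S$-coalgebra on $B$ with $\epsilon_B$ as counit, a coalgebra morphism $f\colon S(A)\to S(B)$ is recovered from $g:=f\epsilon_B\colon S(A)\to B$; explicitly, the hypothesis $f\delta_B=\delta_A S(f)$ together with the comonad identity $\delta_B S(\epsilon_B)=1$ gives the factorisation $f=\delta_A\,S(g)$, since $\delta_A S(g)=\delta_A S(f)S(\epsilon_B)=f\delta_B S(\epsilon_B)=f$. Only the comonad axioms and the coalgebra-morphism hypothesis are used in this step.

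The heart of the argument is then the remark that the two factors of this composite are \emph{already} comonoid morphisms, so their composite $f$ must be one as well. That $S(g)\colon S^2(A)\to S(B)$ is a comonoid morphism --- from $(S^2(A),\Delta_{S(A)},e_{S(A)})$ to $(S(B),\Delta_B,e_B)$ --- is nothing but the naturality square of $\Delta$ (and of $e$) at the map $g\colon S(A)\to B$; and that $\delta_A\colon S(A)\to S^2(A)$ is a comonoid morphism is exactly the coalgebra-modality requirement that $\delta$ be a homomorphism of the comonoids, i.e. $\delta_A\Delta_{S(A)}=\Delta_A(\delta_A\ox\delta_A)$ (with its counit companion $\delta_A e_{S(A)}=e_A$). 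Splicing the two facts together then gives
$$f\Delta_B \;=\; \delta_A S(g)\Delta_B \;=\; \delta_A\Delta_{S(A)}(S(g)\ox S(g)) \;=\; \Delta_A(\delta_A\ox\delta_A)(S(g)\ox S(g)) \;=\; \Delta_A(f\ox f),$$
which is precisely the commuting square of the conclusion. (The identical computation with $e$ in place of $\Delta$ also delivers $fe_B=e_A$, so $f$ is in fact a full comonoid morphism, although only the $\Delta$-half is claimed here.)

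The only place that needs a genuine choice rather than bookkeeping is the strategy itself. The superficially obvious alternative --- noting that $f\Delta_B$ and $\Delta_A(f\ox f)$ are both coalgebra morphisms, post-composing with $m_\ox\colon S(B)\ox S(B)\to S(B\ox B)$ to land in the cofree coalgebra $S(B\ox B)$, and then comparing their composites with $\epsilon_{B\ox B}$ --- does not close, because in a linear category $m_\ox$ is not assumed monic and so cannot be cancelled afterwards. Factoring $f$ through $\delta_A$ and invoking naturality of $\Delta$ together with the single axiom that $\delta$ is a comonoid homomorphism sidesteps this entirely; what remains is simply tracking which diagram is which instance of naturality. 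I would present the factorisation as a one-line preliminary and then the three-equality chain above.
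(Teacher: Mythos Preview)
Your argument is correct; it differs from the paper's, though the two use the same ingredients. The paper post-composes both sides with the split monic $\delta\ox\delta$ and then runs the chain
\[
f\Delta(\delta\ox\delta)=f\delta\Delta=\delta S(f)\Delta=\delta\Delta(S(f)\ox S(f))=\Delta(\delta\ox\delta)(S(f)\ox S(f))=\Delta(f\ox f)(\delta\ox\delta),
\]
cancelling $\delta\ox\delta$ at the end. You instead factor $f=\delta_A S(g)$ with $g=f\epsilon_B$ and observe that each factor is already a comonoid morphism (the first by the coalgebra-modality axiom, the second by naturality of $\Delta$), so their composite is. Your route is slightly cleaner in that it avoids the cancellation argument and makes the conceptual point explicit: every coalgebra morphism between cofree coalgebras decomposes as $\delta$ followed by an $S$-image, both of which are automatically comonoid maps. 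The paper's route, on the other hand, never needs to name $g$ or invoke the cofree property, and stays entirely inside the algebra of $\Delta$, $\delta$ and $S(f)$. Either way the essential content is the same triple of facts: naturality of $\Delta$, the comonoid-homomorphism property of $\delta$, and the coalgebra-morphism hypothesis on $f$.
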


\proof
We use the fact that $\delta$ is a morphism of comonoids and is a
section (with retraction $\epsilon$).
This means the lower diagram commutes if and only if $f \Delta (\delta \ox \delta) = \Delta (f \ox f) (\delta \ox \delta)$
but for this we have:
\begin{eqnarray*}
f \Delta (\delta \ox \delta)
& = & f \delta \Delta = \delta S(f) \Delta = \delta \Delta (S(f) \ox S(f)) \\
& = & \Delta (\delta \ox \delta) (S(f) \ox S(f)) = \Delta ((\delta S(f)) \ox (\delta (S(f)) = \Delta ((f \delta) \ox (f \delta)) \\
& = & \Delta (f \ox f) (\delta \ox \delta).
\end{eqnarray*}
\endproof

Note that this immediately means that if $f$ is a coalgebra morphism, it
is also then a comonoid morphism, and so Bierman's original definition
of a linear category corresponds to ours (modulo the absence of
closedness).

\begin{theorem}
A tensor storage category is a linear category and conversely a linear category with products is
a tensor storage category.
\end{theorem}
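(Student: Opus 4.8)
The plan is to factor the theorem through the Proposition just established, which identifies ``a comonad equipped with a symmetric storage transformation'' with ``a cocommutative coalgebra modality''. Both a tensor storage category and a linear category carry, underneath, precisely such a modality $(S,\delta,\epsilon,\Delta,e)$: in each case the products, the cocommutative comonoid structure $(S(X),\Delta,e)$, and the fact that $\delta$ is a comonoid homomorphism are shared data. What a tensor storage category {\em adds} is that the canonical storage transformation --- which, by the construction in that Proposition, is $s_2 = \Delta\,(S(\pi_0)\ox S(\pi_1))$ and $s_0 = e$ --- is an isomorphism; what a linear category {\em adds} is a symmetric monoidal comonad structure $(m_\ox,m_\top)$ for which $\Delta$ and $e$ are monoidal transformations and coalgebra morphisms. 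So I would first invoke that Proposition, reducing the theorem to the equivalence of these two extra packages of data.

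For the forward implication I would construct the monoidal comonad structure out of the inverse Seely maps, taking
\[
m_\ox \;:=\; s_2^{-1}\cdot\delta\cdot S\bigl(s_2\cdot(\epsilon\ox\epsilon)\bigr)\colon\; S(A)\ox S(B)\longrightarrow S(A\ox B),
\qquad
m_\top \;:=\; s_0^{-1}\cdot\delta\cdot S(s_0)\colon\; \top\longrightarrow S(\top).
\]
That these are the intended maps is signalled by the observation that postcomposing either with $\epsilon$ collapses, using naturality of $\epsilon$ and $\delta\epsilon = 1$, to $\epsilon\ox\epsilon$, respectively $1_\top$. One then verifies, by diagram chases whose only ingredients are invertibility of $s_2$ and $s_0$, naturality, the comonad identities, and the comonoidality of $\delta$ with respect to the storage structure, that $(S,m_\ox,m_\top)$ is a lax symmetric monoidal functor and that $\epsilon$ and $\delta$ are monoidal transformations; and, likewise, that $\Delta = S(\Delta_\x)\,s_2$ and $e = S(\langle\rangle)\,s_0$ are monoidal transformations and coalgebra morphisms --- the coalgebra-morphism square for $\Delta$ being essentially comonoidality of $\delta$ read through the now-invertible $s_2$ together with $\Delta_\x\pi_i = 1$, and the one for $e$ its nullary instance. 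This part is long but entirely mechanical once the formulas are in place.

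For the reverse implication the storage transformation $s_2,s_0$ is already present, so only invertibility is at issue. I would propose the inverses
\[
s_2^{-1}\;:=\;(\delta\ox\delta)\cdot m_\ox\cdot S\bigl(\langle (1\ox e)\,u^R_\ox\,\epsilon,\; (e\ox 1)\,u^L_\ox\,\epsilon\rangle\bigr)\colon\; S(A)\ox S(B)\longrightarrow S(A\x B),
\]
with the nullary analogue ($m_\top$ followed by $S(\langle\rangle)\colon S(\top)\longrightarrow S(1)$) as the inverse for $s_0 = e$, and then check $s_2\,s_2^{-1} = 1$ and $s_2^{-1}\,s_2 = 1$. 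The equation $s_2\,s_2^{-1} = 1$ uses mainly the comonoid counit and coassociativity laws together with the lax monoidal coherences of $m_\ox$. The equation $s_2^{-1}\,s_2 = 1$ is the crux: it genuinely cannot be obtained from ``symmetric monoidal category with a comonad'' alone; here one must use that $\Delta$ and $e$ are monoidal transformations and, decisively, that $\Delta$ is a coalgebra morphism --- equivalently, by the Lemma just proved, that every coalgebra morphism is a comonoid morphism.

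Accordingly, the step I expect to be the main obstacle is this invertibility argument in the reverse direction: not because the computation is voluminous, but because one has to locate exactly where the coalgebra-modality axioms must be fed in to promote the evident one-sided relationship between $s_2$ and the monoidal-comonad data into a genuine two-sided inverse. This is the classical subtle point in the theory of Seely categories, and the remainder of the proof --- the forward construction and all the coherence verifications --- is routine by comparison.
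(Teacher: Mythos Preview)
Your proposal is correct and follows the same route as the paper: the paper gives exactly your formulas for $m_\ox$ and $m_\top$ in the forward direction, and for the converse simply defers the invertibility of the storage transformation to \cite{Schalk} and \cite{Mellies}. Your sketch in fact supplies more of the argument---including the explicit candidate inverse for $s_2$ and the identification of where the linear-category axioms enter---than the paper's own proof, which is largely a pointer to the literature.
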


\proof
We rely on a combination of \cite{Schalk} and \cite{Mellies} to provide the
proof.  In particular, the monoidal structure of the monad is given by.
\begin{eqnarray*}
\top \to^{m_\top} S(\top) & = &\top \to^{s_1^{-1}} S(1) \to^\delta S^2(1) \to^{S(s_1)} S(\top) \\
S(A) \ox S(B) \to^{m_\ox} S(A \ox B)
& = & S(A) \ox S(B) \to^{s_2^{-1}} S(A \x B) \to^\delta S^2(A \x B) \\
& & \to^{S(s_2)} S(S(A) \ox S(B)) \to^{\epsilon \ox \epsilon} S(A \ox B)
\end{eqnarray*}
The converse requires checking that in a linear category with products
provides a storage isomorphism.  The fact that there is a coalgebra
modality is part of the data above.  What remains is to prove that the
induced storage transformation is an isomorphism and this is in the
literature above.
\endproof

\subsection{Tensor representation}

We now return to the main thread of the paper and consider the notion of tensorial representation for a Cartesian storage category.  Our objective is to show
that a Cartesian storage category with tensorial representation is precisely the coKleisli category of a tensor storage category.  To achieve this, however,
we must first take a detour to develop the notion of tensor representation.  Recall that a basic intuition for a tensor product is that it should represent bilinear
maps: clearly this intuition can be expressed in any category with a system of linear maps:

\begin{definition}~
\begin{enumerate}[(i)]
\item
In any Cartesian category a system of linear maps has {\bf tensorial representation} in case for each $X$ and $Y$ there is an object $X \ox Y$ and
a bilinear map $\varphi_\ox: X \x Y \to X \ox Y$ such that for every bilinear map  $g: X \x Y \to Z$ in $\X$
there is a unique linear map in $\X$ making the following diagram commute:
$$\xymatrix{X \x Y \ar[d]_{\varphi_\ox} \ar[rr]^g & & Z \\ X \ox Y\ar@{..>}[rru]_{g_{\ox[A]}} }$$
\item
In any Cartesian category a system of linear maps has {\bf strong tensorial representation} in case for each $X$ and $Y$ there is an object $X \ox Y$ and
a bilinear map $\varphi_\ox: X \x Y \to X \ox Y$ such that for every bilinear map  $g: X \x Y \to Z$ in $\X[A]$ there is a unique linear map in $\X[A]$ making
the above diagram commute.  Note that this means in $\X$ we have the diagram:
$$\xymatrix{A \x X \x Y \ar[d]_{1 \x \varphi_\ox} \ar[rr]^g & & Z \\ A\x (X \ox Y) \ar@{..>}[rru]_{g_{\ox[A]}} }$$
\item
A system of linear maps is {\bf unit representable} in case there is a linear map $\varphi_\top: 1 \to \top$
such that in $\X$ for each point $p: 1 \to Z$ there is a unique linear point $p_\top: \top \to Z$  making
$$\xymatrix{1 \ar[d]_{\varphi_\top } \ar[rr]^{p} & & Y \\
            \top \ar@{..>}[rru]_{p^{\top}}}$$
commute.
\item
A system of linear maps is {\bf strongly unit representable} in case it is representable in each simple slice.  This means there
is a unique $p^{\top[A]}$ making
$$\xymatrix{A \x 1 \ar[d]_{1 \x \varphi_\top} \ar[rr]^{p} & & Y \\
            A \x \top \ar@{..>}[rru]_{p^{\top[A]}}}$$
commute.
\item
A strong tensor representation is {\bf persistent} in case, in {(ii)} above, whenever $A = A_1 \x B \x A_2$ and the map $g$ is
linear in $B$, then $g^{\ox[A]}$ is linear in $B$.  Similarly, for a strong tensor unit representation to be persistent requires, setting $A = A_1 \x B \x A_2$  in
{(iv)}, that whenever $p$ is linear in $B$ then $p^{\top[A]}$ is also linear in $B$.
\end{enumerate}
\end{definition}

As before, the basic form of tensorial representation assumes only that it holds in the original
category: to be a strong tensorial representation requires the property must also hold in every simple slice.
Thus, as for classification, there is a progression of
notions: tensor representation, strong tensor representation, and
persistent strong tensor representation, each of which demands more than the last.  This also
applies to the unit representation; however, as we shall shortly
discover (see Lemma \ref{lem:basic_tensor_rep}), every storage category
already has a persistent strong unit representation. Thus, we shall
often talk of {\em tensorial representation} when we mean both tensor and
unit representation.

We first observe that persistence produces a simultaneous universal property:

\begin{lemma} If $f: A \x X \x Y \x Z \to W$ is linear in its last two
arguments, then there are unique maps linear in their last arguments, $f_1: A \x X \ox (Y \ox Z) \to W$ and
$f_2: A \x (X \ox Y) \ox Z \to W$ such that $(1 \x ((1 \x \varphi)\varphi)) f_1 = f = (1 \x ((\varphi\x 1)\varphi)) f_2$.
\end{lemma}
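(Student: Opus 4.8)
The plan is to imitate, one step at a time, the staged argument used for the bi-classification property of Lemma~\ref{bilinear-universal}, but with the persistent strong tensorial representation now playing the role that the persistent strong classification by $S$ played there. Write $\varphi_\ox$ for the representation maps (the ``$\varphi$'' of the statement). The governing observation is that $f$ is linear \emph{separately} in $X$, in $Y$ and in $Z$, so the two tensors can be introduced a single adjacent pair of arguments at a time, and at each step persistence guarantees that the linearities not yet consumed survive the step.

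To build $f_1$ I would first take $A\x X$ as context: $f$ is then bilinear in $Y$ and $Z$, so strong tensorial representation supplies a unique $\hat f: A\x X\x (Y\ox Z)\to W$ that is linear in $Y\ox Z$ and satisfies $(1\x 1\x\varphi_\ox)\hat f=f$. By persistence (context $A\x X$, distinguished factor $X$, using that $f$ is linear in $X$), $\hat f$ is still linear in $X$, hence is bilinear in $X$ and $Y\ox Z$ over the context $A$; a second application of strong tensorial representation then yields the unique $f_1: A\x(X\ox(Y\ox Z))\to W$, linear in $X\ox(Y\ox Z)$, with $(1\x\varphi_\ox)f_1=\hat f$. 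Pasting the two defining triangles gives $\bigl(1\x((1\x\varphi)\varphi)\bigr)f_1=(1\x1\x\varphi_\ox)(1\x\varphi_\ox)f_1=f$. The map $f_2$ is produced by the mirror construction: after a harmless permutation of the product factors bringing the pair $X,Y$ into position, represent $X\x Y$ to get $\tilde f: A\x(X\ox Y)\x Z\to W$ linear in $X\ox Y$, invoke persistence to keep it linear in $Z$, and then represent $X\ox Y$ against $Z$ to obtain $f_2$ with $\bigl(1\x((\varphi\x1)\varphi)\bigr)f_2=f$.

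For uniqueness I would peel the construction back in the same order. If $g: A\x(X\ox(Y\ox Z))\to W$ is linear in its last argument and $\bigl(1\x((1\x\varphi)\varphi)\bigr)g=f$, then $(1\x\varphi_\ox)g$ is linear in $Y\ox Z$ — since $\varphi_\ox$ feeds $Y\ox Z$ into a linear argument of $\varphi_\ox$ and thence into a linear argument of $g$, so the composite is linear in that argument by the principle that a composite is linear in an argument whenever every composition along the way is taken at a linear argument — and it satisfies $(1\x1\x\varphi_\ox)(1\x\varphi_\ox)g=f$; uniqueness of $\hat f$ then forces $(1\x\varphi_\ox)g=\hat f$, and uniqueness of $f_1$ forces $g=f_1$, the case of $f_2$ being symmetric. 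The only genuinely delicate point — and, as the preceding discussion stresses, the whole reason for isolating this lemma — is the appeal to persistence at each stage: without it, replacing one adjacent pair of arguments by its tensor could destroy the linearity in an argument that must still be tensored at the next stage, so the two-stage construction would not even get started. Everything else is diagram pasting together with the uniqueness clauses already supplied by the tensorial representation.
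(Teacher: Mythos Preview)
Your proposal is correct and is exactly the argument the paper intends: the paper states the lemma without proof, presenting it as an immediate consequence of persistence (``persistence produces a simultaneous universal property''), and your staged construction---represent one adjacent pair, invoke persistence to retain linearity in the remaining argument, represent again, then peel back for uniqueness---is the direct analogue of the proof of Lemma~\ref{bilinear-universal} and is precisely what was left to the reader. One small remark: the hypothesis should read ``linear in its last \emph{three} arguments'' (the paper's ``two'' is evidently a slip, as the defining equations and the application to associativity require trilinearity in $X$, $Y$, $Z$), and you have correctly worked under that reading.
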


This is useful in the proof of:

\begin{proposition}
If $\X$ has a system of linear maps for which there is a persistent strong
tensorial (and unit) representation, then $\ox$ is a symmetric tensor
product with unit $\top$ on the subcategory of linear maps, ${\cal L}[]$.
\end{proposition}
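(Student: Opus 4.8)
The plan is to run the standard representability argument: every object built from $\ox$ and $\top$ is identified with an object representing a functor of multilinear maps on ${\cal L}[]$, and all of the monoidal data ($a_\ox$, $c_\ox$, $u^L_\ox$, $u^R_\ox$) is produced as the unique comparison isomorphism induced by an evident isomorphism between those represented functors. In this way naturality, the pentagon, the triangles, the hexagons and $c_\ox^2 = 1$ all fall out of the uniqueness clauses of the universal properties rather than from explicit diagram chases.

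First I would make $\ox$ a bifunctor on ${\cal L}[]$: given linear $f \colon X \to X'$ and $g \colon Y \to Y'$, the composite $(f \x g)\varphi_\ox \colon X \x Y \to X' \ox Y'$ is bilinear (using {\bf [LS.1]}--{\bf [LS.3]} to see $f \x 1$ and $1 \x g$ are each linear in the surviving argument, then composing with the bilinear $\varphi_\ox$), so strong tensorial representation yields a unique linear $f \ox g$, and preservation of identities and composites is immediate from uniqueness. Unpacking the definition, $X \ox Y$ represents the functor $W \mapsto \{\,g \colon X \x Y \to W \mid g\ \text{bilinear}\,\}$ on ${\cal L}[]$ (a linear $k \colon X \ox Y \to W$ corresponds to the bilinear $\varphi_\ox k$). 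Iterating this, and feeding in persistence through the preceding Lemma, both $(X \ox Y) \ox Z$ and $X \ox (Y \ox Z)$ represent the functor of \emph{trilinear} maps $X \x Y \x Z \to W$, the universal cones being, up to the comparisons just set up, the canonical insertions $(\varphi_\ox \x 1)\varphi_\ox$ and $(1 \x \varphi_\ox)\varphi_\ox$.

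The coherence isomorphisms are then read off. The associator $a_\ox \colon (X\ox Y)\ox Z \to X\ox(Y\ox Z)$ is the unique linear map with $((\varphi_\ox \x 1)\varphi_\ox)\,a_\ox = (1\x\varphi_\ox)\varphi_\ox$; running the construction in the other direction gives a linear map back, and each composite is a linear endomorphism fixing the canonical trilinear insertion, hence is the identity by uniqueness, so $a_\ox$ is invertible, and its naturality in $X,Y,Z$ is the same uniqueness argument. The symmetry $c_\ox \colon X \ox Y \to Y \ox X$ is the linear map classifying the bilinear $c_\x \varphi_\ox$, with $c_\ox^2 = 1$ again by uniqueness. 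For the unit I would exhibit $\top \ox X \cong X$ by matching universal properties: restriction along $\varphi_\top \x 1 \colon 1 \x X \to \top \x X$ is a bijection from bilinear maps $\top \x X \to W$ to linear maps $X \cong 1 \x X \to W$, with inverse $h \mapsto \pi_1 h$. Here $\pi_1 h \colon \top \x X \to W$ is bilinear whenever $h$ is linear (linearity in the $\top$-slot comes from {\bf [LS.1]} in $\X[X]$ and then {\bf [LS.3]}; linearity in the $X$-slot from {\bf [LS.1]} and {\bf [LS.2]}), which gives surjectivity; injectivity transposes the two candidate maps by $c_\x$ and invokes the uniqueness clause of \emph{strong} unit representability in the slice $\X[X]$. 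Consequently $\top \ox X$ represents $W \mapsto {\cal L}[](X,W)$, so the linear map $u^L_\ox \colon \top \ox X \to X$ classifying $\pi_1 \colon \top \x X \to X$ is a natural isomorphism, and $u^R_\ox := c_\ox u^L_\ox \colon X \ox \top \to X$ is its mirror image.

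Finally, the pentagon, the two triangle identities, the hexagons and $c_\ox^2 = 1$ are each an equality of two linear maps between iterated $\ox$-objects; in every case both sides, precomposed with the appropriate canonical multilinear insertion, reduce to one and the same multilinear map (a purely formal statement about rebracketing and transposing Cartesian products, for the triangles combined with $\varphi_\ox u^L_\ox = \pi_1$ and $\varphi_\ox u^R_\ox = \pi_0$), so the corresponding $n$-fold universal property --- an iterate of the preceding Lemma, legitimate by persistence --- forces the two sides to agree. The one genuinely load-bearing step, and where I expect the real work, is the unit isomorphism: tensorial representation by itself does not know that $\top$ behaves like a monoidal unit, so $\top \ox X \cong X$ has to be extracted from the interaction with (strong) unit representability, and the delicate point is getting ``bilinear out of $\top \x X$'' to match ``linear out of $X$'' precisely when the left factor is the representing object $\top$ rather than a generic object.
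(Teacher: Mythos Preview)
Your overall strategy---define $\ox$ as a bifunctor, produce $a_\ox$, $c_\ox$, $u^L_\ox$, $u^R_\ox$ as comparison maps between representing objects, and derive the coherences from uniqueness of multi-representation---is exactly the paper's approach, and the associator and symmetry parts are fine.

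There is, however, a genuine gap in your unit argument, precisely at the point you flag as load-bearing. You claim that for linear $h:X\to W$ the map $\pi_1 h:\top\x X\to W$ is bilinear, with ``linearity in the $\top$-slot coming from {\bf [LS.1]} in $\X[X]$ and then {\bf [LS.3]}''. This does not follow. Linearity in $\top$ means that $c_\x(\pi_1 h)=\pi_0 h:X\x\top\to W$ lies in ${\cal L}[X]$, which would require the first projection $\pi_0:X\x\top\to X$ to be in ${\cal L}[X]$. But in the fiber $\X[X]$ this map is a \emph{point} $\top\to X$ (the generic element of $X$), not one of the structural maps guaranteed by {\bf [LS.1]}; the projections promised there are $\pi_1\pi_0,\pi_1\pi_1:X\x(Y\x Z)\to Y,Z$. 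In concrete differential examples such ``constant'' maps are typically \emph{not} linear in the fiber, so your proposed inverse $h\mapsto\pi_1 h$ does not land in bilinear maps.

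The repair is to use persistence of the unit representation rather than a direct bilinearity claim: given linear $h:X\to W$, the map $\pi_0 h:X\x 1\to W$ is linear in $X$ (it is $\X[!]$ applied to $\pi_1 h\in{\cal L}[1]$, using {\bf [LS.3]}); strong unit representation then produces $(\pi_0 h)^{\top[X]}:X\x\top\to W$ linear in $\top$, and persistence upgrades this to bilinear. Its swap is the required bilinear preimage of $h$, and your injectivity argument via uniqueness in $\X[X]$ already shows it is the only one. This is essentially how the paper proceeds: it builds $u^R_\ox$ from the bilinear extension $(\pi_0)^{\top[A]}$ and exhibits an explicit inverse, with bilinearity coming from persistence rather than from any claim that $\pi_0$ itself is linear in the fiber. (The paper's parenthetical assertion that $(\pi_0)^{\top[A]}=\pi_0$ is in fact the same unjustified claim you make, but its argument does not actually use it.)
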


\proof
When we have a persistent representation we can define the required isomorphisms for a tensor product:
\begin{enumerate}
\item The associativity isomorphism:
$$\xymatrix{A \x B\x C \ar[d]_{1 \x \varphi_\ox} \ar[rr]^{\varphi_\ox \x 1} &
                & (A \ox B) \x C \ar[rr]^{\varphi_\ox} & & (A \ox B) \ox C \\
            A \x (B \ox C) \ar[d]_{\varphi_\ox} \ar@{..>}[urrrr] \\
            A \ox (B \ox C) \ar@{..>}[uurrrr]_{a_\ox}}$$
where the key is to observe that $(1 \x \varphi_\ox)\varphi_\ox: A \x B\x C \to (A \ox B) \ox C$ is trilinear so that
the two extensions indicated are given by the universal property.
\item The symmetry isomorphism:
$$\xymatrix{A \x B \ar[d]_{\varphi_\ox} \ar[r]^{c_\x} & B \x A \ar[r]^{\varphi_\ox} & B \ox A \\
            A \ox B \ar@{..>}[rru]_{c_\ox}}$$
\item The unit isomorphisms:
$$\xymatrix{A \x 1 \ar[d]_{1 \x \varphi_\top} \ar[rr]^{\pi_1} & & A \\
            A \x \top \ar[d]_{\varphi_\ox}\ar@{..>}[rru]^{(\pi_0)^{[A]}_\top}_{\!\!\!=\,\pi_0} \\
            A \ox \top \ar@{..>}[rruu]_{u^R_\ox}}$$
Note that $(\pi_0)^{[A]}_\top = \pi_0$ since both fit here and this is bilinear whence one can define $u^R_\ox$.
It is not obvious that $u^R_\ox$ is an isomorphism, however, the diagram itself suggests that its inverse is
$$(u^R_\ox)^{-1} = \pi_1^{-1}(1 \x \varphi_\top)\varphi_\ox: A \ox \top \to A$$
and it is easily checked that this works.

We can now obtain the unit elimination on the left using the symmetry map $u^L_\ox = c_\ox u^R_\ox$.
\end{enumerate}
The coherences now follow directly from the fact that the product is a symmetric tensor and the multi-universal
property of the representation.
For example, we need $(f+g) \ox h = f \ox h + g \ox h$, which follows
from the fact that $(f+g) \ox h$ is determined by $\varphi_\ox ((f+g) \ox
h)$:

\begin{eqnarray*}
\varphi_\ox ((f+g) \ox h)
            &=_{\rlap{\scr[defn]}}\phantom{\mbox{\scr[$\varphi$ bilinear]}}& ((f+g) \x  h) \varphi_\ox \\
            &=_{\rlap{\scr[$\varphi_\ox$ bilinear]}}\phantom{\mbox{\scr[$\varphi$ bilinear]}}& (f \x h) \varphi_\ox + (g \x h) \varphi_\ox \\
            &=_{\rlap{\scr[defn]}}\phantom{\mbox{\scr[$\varphi$ bilinear]}}& \varphi_\ox (f \ox h) + \varphi_\ox (g \ox h) \\
            &=_{\rlap{\scr[left additive]}}\phantom{\mbox{\scr[$\varphi$ bilinear]}}& \varphi_\ox ((f \ox h) + (g \ox h))
\end{eqnarray*}

\endproof

When a Cartesian storage category has strong tensor representation, then this representation is automatically persistent.
 To establish this we start by observing that a Cartesian storage category already has a fair amount of tensor representation.

\begin{lemma}\label{lem:basic_tensor_rep}
In any Cartesian storage category
\begin{enumerate}[{\em (i)}]
\item $\varphi: 1 \to S(1)$ gives persistent unit tensorial representation;
\item $m_\x: S(A) \x S(B) \to S(A \x B)$ gives persistent tensor representation for the objects $S(A)$ and $S(B)$.
\end{enumerate}
\end{lemma}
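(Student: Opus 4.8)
The plan is to read both parts off the classification universal property that a Cartesian storage category already carries; no genuinely new computation should be needed. For part~(i) I would simply take $\top := S(1)$ and $\varphi_\top := \varphi_1$ and observe that strong persistent unit representation is nothing but strong persistent classification at the terminal object: given $p\colon A \x 1 \to Z$, i.e. essentially a map $p\colon A \to Z$, strong classification yields a unique $p^{\top[A]} := p^\sharp\colon A \x S(1) \to Z$ which is linear in its second argument and satisfies $(1 \x \varphi_1)p^\sharp = p$; persistence of the classification immediately gives persistence of this representation, and the case $A=1$ recovers unit representation in $\X$ itself. So (i) should fall out at once.

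The content is in part~(ii). Fix $A,B$, put $X := S(A)$, $Y := S(B)$, declare $X \ox Y := S(A \x B)$ and $\varphi_\ox := m_\x\colon S(A)\x S(B)\to S(A\x B)$, and recall that $m_\x$ is bilinear and, by its very construction (an instance of Lemma~\ref{bilinear-universal}), is characterised by $(\varphi_A \x \varphi_B)\,m_\x = \varphi_{A\x B}$. The key idea is to show that postcomposition $k \mapsto m_\x k$ is a bijection from linear maps $S(A\x B)\to Z$ onto bilinear maps $S(A)\x S(B)\to Z$, by exhibiting it as the composite of two bijections we already have: the classification bijection between maps $A\x B\to Z$ and linear maps $S(A\x B)\to Z$ (mediated by $\varphi_{A\x B}$), and the bi-classification bijection of Lemma~\ref{bilinear-universal} between maps $A\x B\to Z$ and bilinear maps $S(A)\x S(B)\to Z$ (mediated by $\varphi_A\x\varphi_B$). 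For this I would check two small facts: that $m_\x k$ is bilinear whenever $k$ is linear (each composition occurs at a linear argument, so the composite is linear in each argument, by the composition principle for linear arguments recorded in Section~\ref{linear-maps}), and that $(\varphi_A\x\varphi_B)(m_\x k) = \varphi_{A\x B}\,k$. Granting these, for a bilinear $g$ I would set $g^\ox := \big((\varphi_A\x\varphi_B)\,g\big)^\sharp$; then $m_\x g^\ox$ and $g$ are both bilinear and both become $(\varphi_A\x\varphi_B)g$ after precomposition with $\varphi_A\x\varphi_B$, so $m_\x g^\ox = g$ by the uniqueness clause of Lemma~\ref{bilinear-universal}, and uniqueness of $g^\ox$ follows because $m_\x k = g$ forces $\varphi_{A\x B}k = (\varphi_A\x\varphi_B)g$ and hence $k = g^\ox$ by uniqueness of classification.

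To upgrade this to \emph{strong} and then \emph{persistent} tensor representation, I would rerun exactly the same argument with a context object $C$ carried along throughout: by Lemma~\ref{simple-slice-context} every simple slice $\X[C]$ is again a Cartesian storage category, in which the canonical bilinear lift of $1 \x \varphi_X$ is $1 \x m_\x$, so strong classification and the slice form of Lemma~\ref{bilinear-universal} apply and produce $g^{\ox[C]} := \big((1 \x (\varphi_A\x\varphi_B))\,g\big)^\sharp$. For persistence proper, writing $C = C_1 \x D \x C_2$ with $g$ linear in $D$, precomposition with $1 \x (\varphi_A\x\varphi_B)$ is a substitution functor acting only on the part of the context disjoint from $D$, hence preserves linearity in $D$ by {\bf [LS.3]}; persistence of the classification then pushes linearity in $D$ through $(-)^\sharp$, so $g^{\ox[C]}$ is linear in $D$.

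I expect the only real obstacle to be bookkeeping in the sliced computation — keeping straight which factors are context and which are being classified, and checking that every use of ``a composite of a bilinear map with a linear map is bilinear'' is licensed by composition at a linear argument. The one point that could look suspicious but is in fact fine is that $\varphi_A\x\varphi_B$ is not itself linear: prefixing with it still detects the equalities we need, because in every comparison both sides are already known to be bilinear, which is precisely the hypothesis under which Lemma~\ref{bilinear-universal} guarantees uniqueness.
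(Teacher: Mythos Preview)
Your proposal is correct and follows essentially the same route as the paper: the paper also sets $g^\ox := \big((\varphi_A\x\varphi_B)g\big)^\sharp$, verifies $m_\x g^\ox = g$ by observing that both are bilinear with the same precomposite by $\varphi_A\x\varphi_B$ (invoking Lemma~\ref{bilinear-universal}), and dispatches strong persistence by noting that the whole argument runs in any simple slice and that persistence of classification supplies persistence of the representation. Your write-up is in fact more explicit than the paper's on two points---the conceptual framing of $k\mapsto m_\x k$ as a composite of the classification and bi-classification bijections, and the use of {\bf [LS.3]} to push linearity in the side variable $D$ through the precomposition $1\x(\varphi_A\x\varphi_B)$---but these are elaborations of the same argument rather than a different one.
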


\proof
We shall focus on the binary tensorial representation: first note that $m_\x$ is bilinear
as $m_\x = \theta S(\theta') \epsilon = \theta' S(\theta)  \epsilon$ where the last two maps of each
expansion are linear while $\theta$ is linear in its second argument while $\theta'$ is linear
in its first.

The universal property for an arbitrary bilinear $h$ is given by the following diagram, valid in any
slice $\X[X]$:
$$\xymatrix{A \x B \ar[drr]_{\varphi} \ar[rr]^{\varphi \x \varphi}
                  & & S(A) \x S(B) \ar[d]^{m_\x} \ar[rr]^{~~~~~~h=m((\varphi \x \varphi)h)^\sharp} & & Z \\
            & & S(A \x B) \ar@{..>}[rru]_{((\varphi \x \varphi)h)^\sharp} }$$
where $h$ must be the solution to the simultaneous classification. But this makes the
righthand triangle commute by uniqueness of the universal property.

Furthermore this is a persistent representation as the classification is persistent.
\endproof

Notice that we have shown that the Seely isomorphisms $s_0: S(1) \to \top$ and
$s_2: S(A \x B) \to S(A) \ox S(B)$ are present. These isomorphisms, which are so central to the structure
of linear logic, play an important role in what follows.

\bigskip

The observation above means that a storage category always has (persistent and strong) representation of the tensor unit.
Thus, we now focus on understanding tensorial representation.   A key observation is:

\begin{lemma} \label{bilinear-coequalizes}
In a Cartesian storage category $\X$, $f: A \x B \to C$ is
bilinear if and only if its classifying linear map $f^\sharp: S(A \x B) \to C$ coequalizes
the linear maps $(S(\epsilon)\ox S(\epsilon)) s^{-1}_2$ and $(\epsilon\ox\epsilon) s^{-1}_2$.
\end{lemma}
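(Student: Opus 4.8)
The plan is to run everything through the bilinearity criterion of Proposition~\ref{prop:linmaps}(iv): $f: A\x B\to C$ is bilinear exactly when $(\epsilon\x\epsilon)f=m_\x S(f)\epsilon$, and since $f^\sharp=S(f)\epsilon$ this reads $(\epsilon\x\epsilon)f=m_\x f^\sharp$. I will also use the identification (Lemma~\ref{lem:basic_tensor_rep} and the observation after it) that $m_\x=\varphi_\ox\,s_2^{-1}$, where $\varphi_\ox:S(A)\x S(B)\to S(A)\ox S(B)$ is the universal bilinear map and $s_2^{-1}:S(A)\ox S(B)\to S(A\x B)$ the Seely isomorphism; from the defining square of $m_\x$ this yields the auxiliary identity $(\varphi\x\varphi)\varphi_\ox=\varphi\,s_2$. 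In the statement, ``$\epsilon\ox\epsilon$'' means $\epsilon_{S(A)}\ox\epsilon_{S(B)}=\mu_A\ox\mu_B$, and both displayed maps are parallel arrows $S^2(A)\ox S^2(B)\to S(A\x B)$; since $S(\epsilon)$ and $\mu$ are linear and $\ox$ is functorial on linear maps, the composites $(S(\epsilon)\ox S(\epsilon))s_2^{-1}f^\sharp$ and $(\epsilon\ox\epsilon)s_2^{-1}f^\sharp$ are linear maps out of $S^2(A)\ox S^2(B)$.

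First I would note that $S^2(A)\ox S^2(B)$ is carried isomorphically onto $S(S(A)\x S(B))$ by $s_2$, so by couniversality of $\varphi$ (strong classification) the two linear maps above agree iff they agree after precomposing with $\varphi_{S(A)\x S(B)}\,s_2$; this reduces the claim to computing the two ``$\varphi$-prefixed legs'' as maps $S(A)\x S(B)\to C$. For the first leg, naturality of $s_2$ gives $s_2(S(\epsilon)\ox S(\epsilon))s_2^{-1}=S(\epsilon\x\epsilon)$, and then naturality of $\varphi:1\to S$ together with $\varphi f^\sharp=f$ collapses it to $(\epsilon\x\epsilon)f$. For the second leg, I would rewrite $\varphi_{S(A)\x S(B)}\,s_2$ as $(\varphi_{S(A)}\x\varphi_{S(B)})\varphi_\ox$, slide $\mu_A\ox\mu_B$ past $\varphi_\ox$ using naturality of $\varphi_\ox$ in its (linear) arguments, and then use $\varphi_{S(A)}\mu_A=\varphi_{S(A)}\epsilon_{S(A)}=1$ (the identity $\varphi\epsilon=1$) to kill the $\varphi\x\varphi$ prefix; what remains is $\varphi_\ox\,s_2^{-1}f^\sharp=m_\x f^\sharp$. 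With the two legs identified as $(\epsilon\x\epsilon)f$ and $m_\x f^\sharp$, ``$f^\sharp$ coequalises the pair'' becomes verbatim ``$(\epsilon\x\epsilon)f=m_\x f^\sharp$'', which by Proposition~\ref{prop:linmaps}(iv) is bilinearity of $f$ — so both implications come out at once.

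The calculations themselves are short; the main thing to watch is the bookkeeping. One must keep straight the several differently-typed instances of $s_2$, $s_2^{-1}$, $\varphi_\ox$ and $m_\x$ (the $s_2^{-1}$ in the statement is the one $S(A)\ox S(B)\to S(A\x B)$, whereas the $s_2$ used to transport the comparison is $S(S(A)\x S(B))\to S^2(A)\ox S^2(B)$, and $\varphi_\ox$ occurs both at $S(A),S(B)$ and at $S^2(A),S^2(B)$), and one must make sure the comparison is genuinely run through $\varphi$-couniversality for an object of the form $S(-)$ — this matters because a bare Cartesian storage category need not have a tensor $A\ox B$ of arbitrary objects, so every tensor in sight must be of the form $S(-)\ox S(-)$, which exists by Lemma~\ref{lem:basic_tensor_rep}; phrasing the coequalising condition purely as the displayed equation of linear maps, rather than as a factorisation through a putative object ``$A\ox B$'', sidesteps this. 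Beyond Proposition~\ref{prop:linmaps}(iv), the only non-formal input is the identification $m_\x=\varphi_\ox\,s_2^{-1}$ (with its corollary $(\varphi\x\varphi)\varphi_\ox=\varphi\,s_2$); everything else is naturality of $\varphi$, $s_2$ and $\varphi_\ox$ together with the abstract-coKleisli identities $\varphi\epsilon=1$, $S(\varphi)\epsilon=1$.
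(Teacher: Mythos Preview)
Your proposal is correct and takes essentially the same approach as the paper: both precompose the two linear maps with $(\varphi\x\varphi)\varphi_\ox$ (which you correctly identify with $\varphi\,s_2$), reduce the two legs to $(\epsilon\x\epsilon)f$ and $m_\x f^\sharp$ respectively, and invoke Proposition~\ref{prop:linmaps}(iv). Your framing is slightly cleaner in that you make the reduction an explicit ``iff'' (via the universal property of $\varphi$ together with the iso $s_2$), so both directions fall out at once, whereas the paper computes one direction and then says the converse follows ``by reversing the argument''.
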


We shall use tensor notation even though we are not assuming that we have tensorial representation: this is justified by the fact that, for these particular objects,
we are {\em always\/} guaranteed representation by Lemma \ref{lem:basic_tensor_rep}.   The translation of  these maps, $S(\epsilon)\ox S(\epsilon)$ and
$\epsilon_{S(A)} \ox \epsilon_{S(A)}$, back into tensor-free notation uses the commuting diagrams below:
$$\xymatrix{S(S(A) \x S(B)) \ar[d]_{s_2}  \ar[rr]^{S(\epsilon \x \epsilon)} & & S(A \x B) \ar[d]^{s_2} \\
                    S^2(A) \ox S^2(B) \ar[rr]_{S(\epsilon)\ox S(\epsilon)} & & S(A) \ox S(B)  }$$
$$\xymatrix{S(S(A) \x S(B))  \ar[d]_{s_2}  \ar[rr]^{\epsilon} & & S(A) \x S(B) \ar[rrd]_{\varphi_\ox} \ar[rr]^{m_\x} & & S(A \x B) \ar[d]^{s_2} \\
                    S^2(A) \ox S^2(B) \ar[rrrr]_{\epsilon\ox \epsilon} & && & S(A) \ox S(B)  }$$

\proof
We shall use the characterization of bilinear maps in Proposition
\ref{prop:linmaps} to show that coequalizing these maps is equivalent to bilinearity.

Assume that $(S(\epsilon)\ox S(\epsilon)) s_2^{-1} f^\sharp = (\epsilon\ox\epsilon) s_2^{-1} f^\sharp$ so that
$$(\varphi \x \varphi) \varphi_\ox (S(\epsilon)\ox S(\epsilon)) s_2^{-1} f^\sharp = (\varphi \x \varphi) \varphi_\ox (\epsilon\ox\epsilon) s_2^{-1} f^\sharp$$
But then we have:
\begin{eqnarray*}
\lefteqn{(\varphi \x \varphi) \varphi_\ox (S(\epsilon)\ox S(\epsilon)) s_2^{-1} f^\sharp} \\
& = & (\varphi \x \varphi) (S(\epsilon)\x S(\epsilon)) \varphi_\ox s_2^{-1} f^\sharp \\
& = & (\epsilon \x \epsilon) (\varphi \x \varphi) m_\x f^\sharp \\
& = &  (\epsilon \x \epsilon) \varphi f^\sharp \\
& = &  (\epsilon \x \epsilon) f \\
\lefteqn{(\varphi \x \varphi) \varphi_\ox (\epsilon\ox\epsilon) s_2^{-1} f^\sharp} \\
& = & (\varphi \x \varphi) (\epsilon\x\epsilon) \varphi_\ox s_2^{-1} f^\sharp \\
& = & \varphi_\ox s_2^{-1} f^\sharp \\
& = & m_\x S(f)\epsilon
\end{eqnarray*}
So this condition implies bilinearity.   Conversely, by reversing the
argument, assuming bilinearity gives---using classification and tensorial representation---the equality of these maps.
\endproof

We now have:

\begin{proposition} \label{basic-tensor-rep}
$\X$ has (basic) tensor representation at $A$ and $B$ if and only if
$$S^2(A)\ox S^2(B) \Two^{S(\epsilon)\ox S(\epsilon)}_{\epsilon\ox\epsilon} S(A)\ox S(B) \to^{\epsilon \ox \epsilon} A \ox B$$
is a coequalizer in the subcategory of linear maps.
\end{proposition}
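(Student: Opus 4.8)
The plan is to recast both sides of the equivalence as the representability of a set-valued functor on the category ${\cal L}[]$ of linear maps, and to link the two functors by a natural bijection extracted from Lemma~\ref{bilinear-coequalizes}. First note that the statement is well posed: by Lemma~\ref{lem:basic_tensor_rep}(ii) the objects $S(A)\ox S(B)$ and $S^2(A)\ox S^2(B)$ are available and the maps $S(\epsilon)\ox S(\epsilon)$, $\epsilon_{S(A)}\ox\epsilon_{S(B)}$ and $\epsilon_A\ox\epsilon_B$ are all linear, so ``coequalizer in the subcategory of linear maps'' makes sense. (In the direction where we assume the fork is a coequalizer, the object $A\ox B$ and the map $\epsilon_A\ox\epsilon_B$ are supplied by that hypothesis.)

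\emph{The natural bijection.} For a linear $k\colon S(A)\ox S(B)\to Z$ put $\bar k := \varphi_{A\x B}\, s_2\, k\colon A\x B\to Z$; conversely, for a bilinear $g\colon A\x B\to Z$ the map $s_2^{-1}\, g^\sharp$ is linear. Using the identity $\varphi_{A\x B}\, s_2 = (\varphi\x\varphi)\,\varphi_\ox$, which follows from $\varphi_\ox = m_\x\, s_2$ together with the defining equation $(\varphi\x\varphi)\,m_\x=\varphi$ of $m_\x$, one checks $(\bar k)^\sharp = s_2\, k$, whence $k\mapsto\bar k$ and $g\mapsto s_2^{-1}g^\sharp$ are mutually inverse; both are natural in $Z$ under post-composition. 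Moreover, since $s_2^{-1}s_2=1$, a linear $k$ coequalizes the pair $S(\epsilon)\ox S(\epsilon),\ \epsilon_{S(A)}\ox\epsilon_{S(B)}$ exactly when $s_2\,k=(\bar k)^\sharp$ coequalizes $(S(\epsilon)\ox S(\epsilon))\,s_2^{-1}$ and $(\epsilon\ox\epsilon)\,s_2^{-1}$, and by Lemma~\ref{bilinear-coequalizes} this is exactly the requirement that $\bar k$ be bilinear. Thus $k\mapsto\bar k$ restricts to a natural isomorphism between the functor $G$ sending $Z$ to the set of linear maps $S(A)\ox S(B)\to Z$ that coequalize the displayed pair, and the functor $F$ sending $Z$ to the set of bilinear maps $A\x B\to Z$.

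\emph{Reading off the proposition.} By definition, $\X$ has (basic) tensor representation at $A,B$ iff $F$ is representable, and the displayed fork is a coequalizer in ${\cal L}[]$ iff $G$ is representable with $\epsilon_A\ox\epsilon_B$ as universal element. A natural isomorphism of set-valued functors transports representability and carries universal elements to universal elements, so $F$ is representable iff $G$ is. If $\X$ has tensor representation at $A,B$, with universal bilinear $\varphi_\ox^{A,B}\colon A\x B\to A\ox B$, then $G$ is represented by $(A\ox B,k_0)$ for the unique linear $k_0$ with $\bar k_0=\varphi_\ox^{A,B}$; and $\overline{\epsilon_A\ox\epsilon_B}=(\varphi\x\varphi)\,\varphi_\ox\,(\epsilon_A\ox\epsilon_B)=(\varphi\x\varphi)(\epsilon_A\x\epsilon_B)\,\varphi_\ox^{A,B}=\varphi_\ox^{A,B}$, using the defining equation $\varphi_\ox^{S(A),S(B)}(\epsilon_A\ox\epsilon_B)=(\epsilon_A\x\epsilon_B)\varphi_\ox^{A,B}$ of $\epsilon_A\ox\epsilon_B$ and $\varphi\epsilon=1$, so $k_0=\epsilon_A\ox\epsilon_B$ and the fork is a coequalizer. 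Conversely, if the fork is a coequalizer then $G$ is represented by $(A\ox B,\epsilon_A\ox\epsilon_B)$, hence $F$ is represented by $(A\ox B,\overline{\epsilon_A\ox\epsilon_B})$; the map $\overline{\epsilon_A\ox\epsilon_B}\colon A\x B\to A\ox B$ is bilinear (being $\bar k$ for the coequalizing $k=\epsilon_A\ox\epsilon_B$) and is the sought universal bilinear map, so $\X$ has tensor representation at $A,B$.

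\emph{Main obstacle.} The substantive input is Lemma~\ref{bilinear-coequalizes}, which we are free to assume. The only real care needed here is bookkeeping with the two instances of the Seely isomorphism $s_2$---the one at $A,B$ used in the definition of $\bar k$, versus the one at $S(A),S(B)$ hidden in the domain $S^2(A)\ox S^2(B)$---to be sure that the parallel pair appearing in Lemma~\ref{bilinear-coequalizes} is exactly the $s_2$-conjugate of the pair in the statement, so that ``coequalizing one pair'' and ``coequalizing the other'' literally coincide. Everything beyond that is the formal uniqueness of representing objects.
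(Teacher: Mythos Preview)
Your proof is correct and follows essentially the same route as the paper: both arguments pivot on Lemma~\ref{bilinear-coequalizes} to set up a bijection between bilinear maps $A\x B\to Z$ and linear maps $S(A)\ox S(B)\to Z$ coequalizing the displayed pair, and then transport the universal property across this bijection. The paper carries out the transport by hand (exhibiting $z'_\ox$ and verifying $(\epsilon\ox\epsilon)z'_\ox=z$ directly), whereas you package the same content as a natural isomorphism of set-valued functors and invoke the stability of representability---a cleaner formulation, but not a different idea.
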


\begin{proof}
Suppose $z$ equalizes $S(\epsilon) \ox S(\epsilon)$ and $\epsilon \ox \epsilon$ then by the previous lemma $z' = (\varphi \x \varphi) \varphi_\ox z$ is bilinear
and this determines a unique linear map $z'_\ox$ with $z'= \varphi_\ox z'_\ox$.
$$\xymatrix{ S^2(A) \x S^2(B) \ar[d]_{\varphi_\ox} \ar@<1ex>[rr]^{S(\epsilon) \x S(\epsilon)}  \ar@<-1ex>[rr]_{\epsilon \x \epsilon}
                                       && S(A) \x S(B)  \ar[d]^{\varphi_\ox} \ar[rr]^{\epsilon \x \epsilon} && A \x B \ar[d]^{\varphi_\ox} \ar@{..>}@/^1pc/[ddrr]^{z'}\\
                     S^2(A) \ox S^2(B) \ar@<1ex>[rr]^{S(\epsilon) \ox S(\epsilon)}  \ar@<-1ex>[rr]_{\epsilon \ox \epsilon}
                                       && S(A) \ox S(B) \ar[rr]_{\epsilon \ox \epsilon} \ar@/_1pc/[drrrr]_z&& A \ox B \ar@{..>}[drr]_{z'_\ox}\\
                                       && && && Z}$$
We claim $z'_\ox$ is the unique comparison map making the fork a coequalizer in the linear map category.  To show this we need  $(\epsilon \ox \epsilon) z'_\ox = z$ and
we must show that any other linear map $k$ with $(\epsilon \ox \epsilon) k = z$ has $\varphi_\ox k = z'$.  For the first of these we note that $z$ is determined by
representing $(\varphi \x \varphi)\varphi_\ox z = z'$ but
$$(\varphi \x \varphi)\varphi_\ox (\epsilon \ox \epsilon) z'_\ox = (\varphi \x \varphi) (\epsilon \x \epsilon) \varphi_\ox z'_\ox = \varphi_\ox z'_ox = z'$$
so the two maps are equal.  For the second, suppose we have such a $k$ then
$$\varphi_\ox k = (\varphi \x \varphi)(\epsilon \x \epsilon) \varphi_\ox k = (\varphi \x \varphi) \varphi_\ox (\epsilon \ox \epsilon) k = (\varphi \x \varphi) \varphi_\ox z = z'.$$

For the converse assume that the linear map category has this fork a coequalizer and suppose $f:A \x B \to C$ is bilinear.  Returning to the diagram above, by Lemma \ref{bilinear-coequalizes} we may set $z'=f$ and $z= s^{-1}_2f^\sharp$.  Noting that the top fork is an absolute coequalizer gives the map $\varphi_\ox: A \x B \to A \ox B$ which will then represent bilinear maps.
\end{proof}

Considering what happens in an arbitrary simple slice gives:

\begin{corollary} \label{strong-tensor-rep}
A storage category has strong tensorial representation if and only if forks of the form:
$$X \x S^2(A) \ox S^2(B) \Two^{1 \x S(\epsilon) \ox S(\epsilon)}_{1 \x \epsilon \ox \epsilon} X \x S(A) \ox S(B) \to^{1 \x \epsilon \ox \epsilon} X \x A \ox B$$
are coequalizers for the linear subcategories ${\cal L}[X]$
\end{corollary}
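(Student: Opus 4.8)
The plan is to derive the corollary by running Proposition \ref{basic-tensor-rep} internally in each simple slice. This is legitimate because, by Lemma \ref{simple-slice-context}, every simple slice $\X[X]$ of a Cartesian storage category is again a Cartesian storage category; and, by definition, $\X$ has strong tensorial representation precisely when there is one common tensor $\ox$ with representing map $\varphi_\ox$ for which basic tensorial representation holds in every $\X[X]$. So once we know that the coequalizer fork which Proposition \ref{basic-tensor-rep} attaches to the storage category $\X[X]$ is, when read back in $\X$, exactly the displayed fork, the corollary follows by applying that proposition in every slice.

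The one thing that needs care is the dictionary between the storage data of $\X[X]$ and that of $\X$. The object-function $S$ is unchanged. A morphism $g\colon Z\to W$ of $\X[X]$ is an $\X$-map $g\colon X\x Z\to W$, and the (fibred) storage functor sends it to $\theta\,S(g)\colon X\x S(Z)\to S(W)$ --- this is the formula $(g\varphi)^\sharp$ coming from the morphism of fibrations $S[\X]$, simplified using $S(\varphi)\epsilon=1$. The counit of $\X[X]$ is the morphism $S(Y)\to Y$ which in $\X$ is $\pi_1\epsilon$; hence the slice-functor applied to this counit is $\theta\,S(\pi_1)\,S(\epsilon)$, and the strength equation $\theta\,S(\pi_1)=\pi_1$ collapses this to $\pi_1 S(\epsilon)$ --- that is, to $S(\epsilon)$ itself, read in the slice. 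Finally, the tensor $\ox$ and the map $\varphi_\ox$ furnished by Lemma \ref{lem:basic_tensor_rep} are fibred: on the objects occurring in the fork one has $S(A)\ox S(B)=S(A\x B)$ and $S^2(A)\ox S^2(B)=S(S(A)\x S(B))$ in $\X[X]$ just as in $\X$ (persistence of $m_\x$ and of the classification), and the $\X[X]$-morphisms $\epsilon\ox\epsilon$ and $S(\epsilon)\ox S(\epsilon)$ --- whose arguments do not involve the context $X$ --- are, in the slice picture, just $1_X\x(\epsilon\ox\epsilon)$ and $1_X\x(S(\epsilon)\ox S(\epsilon))$. Putting this together, the fork of Proposition \ref{basic-tensor-rep} formed inside $\X[X]$ becomes, in $\X$,
$$X \x S^2(A) \ox S^2(B) \Two^{1 \x S(\epsilon) \ox S(\epsilon)}_{1 \x \epsilon \ox \epsilon} X \x S(A) \ox S(B) \to^{1 \x \epsilon \ox \epsilon} X \x A \ox B,$$
and ``coequalizer in the linear maps of $\X[X]$'' means exactly ``coequalizer in ${\cal L}[X]$''.

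With this dictionary in hand, both directions would be immediate. If $\X$ has strong tensorial representation, then each $\X[X]$ has basic tensorial representation at every pair $A,B$, so Proposition \ref{basic-tensor-rep} applied in the storage category $\X[X]$ makes the displayed fork a coequalizer in ${\cal L}[X]$. Conversely, if every such fork is a coequalizer in ${\cal L}[X]$, then Proposition \ref{basic-tensor-rep} applied in $\X[X]$ gives basic tensorial representation at every $A,B$ in every $\X[X]$; since all these forks are built from the one object $A\ox B$ (which, for $X=1$, is the tensor on ${\cal L}$ itself), the resulting representations share a common $\ox$ and $\varphi_\ox$, so $\X$ has strong tensorial representation.

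The main obstacle is thus not any calculation but the bookkeeping of the second paragraph: pinning down that the counit of the simple slice $\X[X]$ is $\pi_1\epsilon$, that the fibred functor acts as $g\mapsto\theta\,S(g)$ on slice-morphisms, and that $\theta\,S(\pi_1)=\pi_1$ makes $S$ of the slice-counit reduce to the ordinary $S(\epsilon)$ --- so that the abstract coequalizer fork supplied by Proposition \ref{basic-tensor-rep} in $\X[X]$ genuinely coincides with the concrete fork in the statement. Everything downstream of that identification is formal, given Lemma \ref{simple-slice-context} and Proposition \ref{basic-tensor-rep}.
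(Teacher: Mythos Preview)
Your approach is exactly the paper's: the paper's entire argument for this corollary is the single phrase ``Considering what happens in an arbitrary simple slice gives:'', which is precisely what you do by invoking Lemma~\ref{simple-slice-context} and then running Proposition~\ref{basic-tensor-rep} inside each $\X[X]$. The additional bookkeeping you supply---identifying the slice counit as $\pi_1\epsilon$, the slice action of $S$ as $g\mapsto\theta\,S(g)$, and using $\theta\,S(\pi_1)=\pi_1$ to see that the abstract fork in $\X[X]$ is the displayed one---is detail the paper leaves implicit but which is correct and makes the passage rigorous.
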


There are a number of different reasons why these forks might be coequalizers.   It is often the case that these forks will also be coequalizers in the whole storage category.  When this is so, one may decompose the presence of these coequalizers into the presence of the basic coequalizer (of Proposition \ref{basic-tensor-rep}) and the fact they must be preserved by the functor $X \x \_$.  Because these coequalizers are clearly reflexive, this latter condition is delivered whenever the product functor preserves, more generally, reflexive coequalization.  Of course, when the storage category is Cartesian closed, the product functor will preserve {\em all\/} colimits and so in particular these coequalizers.   In fact, the case which will be of primary interest to us here, as shall be discussed below, is when these coequalizers are absolute and so are automatically preserved by the product functors and, furthermore, are present under the very mild requirement that linear idempotents split.

We have established that these coequalizers must be present in any
storage category with strong tensor representation.    The final
ingredient we need is persistence: fortunately this is guaranteed once
strong tensor representation is assumed.

\begin{proposition}
In a Cartesian storage category with a strong tensor representation, the representation is necessarily persistent: that is,
given a map $h:X_0\x C\x X_1\x A\x B\to Y$, which is linear in $C$, $A$, and $B$
\[\xymatrix{
X_0\x C\x X_1\x A\x B  \ar[r]^{~~~~~~~~~~~~~h} \ar[d]_{1\x\varphi_{\ox}} & Y \\
X_0\x C\x X_1\x (A\ox B) \ar[ur]_{h^\ox}
}\]
then its linear lifting $h_\ox:X_0\x C\x X_1\x (A\ox B) \to Y$ is linear
in $C$.
\end{proposition}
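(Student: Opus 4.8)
The plan is to derive persistence of the tensor representation from the persistence of the \emph{classification}, which is free in any Cartesian storage category, by using the absolute coequalizer $S^2(C)\Two S(C)\to^{\epsilon_C}C$ of Proposition~\ref{prop:linmaps}(ii) to detect when a linear map is pulled back along $\epsilon_C$. Write $\Gamma=X_0\x C\x X_1$, $\Gamma'=X_0\x S(C)\x X_1$ and $\Gamma''=X_0\x S^2(C)\x X_1$; since all symmetry maps are linear and preserved, there is no harm in treating $C$ as if it were a convenient coordinate. First, $h^\ox\colon\Gamma\x(A\ox B)\to Y$ exists and is linear in $(A\ox B)$ by strong tensorial representation applied in the simple slice $\X[\Gamma]$ (itself a Cartesian storage category by Lemma~\ref{simple-slice-context}), using that $h$, being linear in $A$ and in $B$, is bilinear there.

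Next classify $h^\ox$ at its $C$-argument: this gives the unique map $(h^\ox)^{\sharp_C}\colon\Gamma'\x(A\ox B)\to Y$, linear in $S(C)$, with $(1\x\varphi_C)(h^\ox)^{\sharp_C}=h^\ox$; and since $h^\ox$ is linear in $(A\ox B)$, the persistence of the classification (part of the definition of a Cartesian storage category) forces $(h^\ox)^{\sharp_C}$ to be linear in $(A\ox B)$ as well. I claim it now suffices to show that $(h^\ox)^{\sharp_C}$ factors through $1\x\epsilon_C\colon\Gamma'\x(A\ox B)\to\Gamma\x(A\ox B)$. Indeed, if $(h^\ox)^{\sharp_C}=(1\x\epsilon_C)g$, then $g$ is linear by {\bf [LS.2]} (as $1\x\epsilon_C$ is a linear retraction and $(h^\ox)^{\sharp_C}$ is linear) and $g=h^\ox$ by precomposing with $1\x\varphi_C$ and using $\varphi_C\epsilon_C=1$; hence $h^\ox$ is linear in $C$. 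Now $S^2(C)\Two S(C)\to^{\epsilon_C}C$ is an absolute coequalizer, so $\Gamma''\x(A\ox B)\Two\Gamma'\x(A\ox B)\to\Gamma\x(A\ox B)$ is too, and the desired factorization amounts exactly to the equality $(1\x S(\epsilon_C))(h^\ox)^{\sharp_C}=(1\x\epsilon_{S(C)})(h^\ox)^{\sharp_C}$.

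Both sides of this last equality are linear in $(A\ox B)$, since $S(\epsilon_C)$ and $\epsilon_{S(C)}=\mu_C$ are linear and substituting along a linear map preserves linearity ({\bf [LS.3]}). Hence, by the universal property of $\varphi_\ox$ — strong tensorial representation in the slice $\X[\Gamma'']$ — it is enough to check the equality after precomposing with $1\x\varphi_\ox\colon\Gamma''\x A\x B\to\Gamma''\x(A\ox B)$. Precomposition with $1\x\varphi_\ox$ commutes with the $C$-side maps (disjoint coordinates) and carries $(h^\ox)^{\sharp_C}$ to the map $(1\x\varphi_\ox)(h^\ox)^{\sharp_C}$, which is linear in $S(C)$ and satisfies $(1\x\varphi_C)(1\x\varphi_\ox)(h^\ox)^{\sharp_C}=h$; by uniqueness of the classification this map is just $h^{\sharp_C}$, the $C$-classification of $h$. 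So the problem reduces to showing $(1\x S(\epsilon_C))h^{\sharp_C}=(1\x\epsilon_{S(C)})h^{\sharp_C}$, which is immediate: $h$ is linear in $C$, so $h^{\sharp_C}=(1\x\epsilon_C)h$, and $(1\x\epsilon_C)h$ coequalizes the pair because $S(\epsilon_C)\epsilon_C=\epsilon_{S(C)}\epsilon_C$ (the abstract coKleisli identity $\epsilon\epsilon=S(\epsilon)\epsilon$).

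The crux is the realization that persistence of the tensor representation should not be attacked head-on: the only genuinely delicate point is that classifying $h^\ox$ at $C$ does not destroy its linearity in $(A\ox B)$, and that is precisely persistence of the \emph{classification}, which is already assumed. Everything else is the routine bookkeeping of sliding the coequalization condition through $\varphi_\ox$ and $\varphi_C$; this works out once one checks that each map in play is linear in the coordinate being tested, so the one thing to stay careful about is tracking which arguments are linear at each step.
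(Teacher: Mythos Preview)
Your proof is correct, and it takes a genuinely different route from the paper's argument.

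The paper attacks the linearity of $h^\ox$ in $C$ directly: it writes out the square expressing ``linear in $C$'' and, using that strong tensor representation makes $(\epsilon\ox\epsilon)\times 1:S(A)\ox S(B)\times C\to(A\ox B)\times C$ epic (Corollary~\ref{strong-tensor-rep}), reduces the check to a calculation with $m_\x$, $\theta$, and $S(f^\ox)$. The crucial step there is an explicit appeal to the \emph{commutativity of the monad} $\S$.

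Your argument instead works on the $C$-side. You classify $h^\ox$ at $C$, observe that persistence of the \emph{classification} preserves linearity in $A\ox B$, and then test the coequalization condition $(1\times S(\epsilon_C))(h^\ox)^{\sharp_C}=(1\times\epsilon_{S(C)})(h^\ox)^{\sharp_C}$ after precomposing with $\varphi_\ox$ --- which is legitimate precisely because both sides remain linear in $A\ox B$. This collapses the whole question to the trivial fact that $h$ itself is linear in $C$. In effect you have shown that persistence of tensor representation is a formal consequence of persistence of classification, via the absolute coequalizer of Proposition~\ref{prop:linmaps}(ii). This is conceptually cleaner: no $m_\x$ computation, no explicit use of monad commutativity. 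What the paper's approach buys is that it exercises the tensor-side coequalizer and the monoidal structure directly, which fits the surrounding narrative about $\ox$; your approach buys a shorter and more structural argument that makes the dependence on the storage axioms transparent.
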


\begin{proof}
By using Lemma \ref{simple-slice-context} we may simplify what needs to be proven: namely, given $f: A \x B \x C \to Y$ which is linear in all its
arguments then $f^\ox: A \ox B \x C \to Y$ is linear in $C$.   This latter requires that we show that
$$\xymatrix{A \ox B \x S(C) \ar[d]_{1 \x \epsilon} \ar[r]^{\varphi \x 1} & S(A \ox B) \x S(C) \ar[r]^{m_\x} & S(A \ox B \x C) \ar[r]^{f^\ox} & S(Y) \ar[d]^{\epsilon} \\
                    A \ox B \x C \ar[rrr]_{f^\ox} &&& Y}$$
commutes.   Because we have strong tensor representation we know $(\epsilon \ox \epsilon) \x 1 :S(A) \ox S(B) \x C \to A \ox B \x C$ is epic.  Thus, we may preface this
square with the map $(s^2 \x 1) ((\epsilon \ox \epsilon) \x 1)$ to test commutativity.   Preliminary to this calculation note that whenever $f$ is bilinear the following diagram
$$\xymatrix{S(A) \x S(B)  \ar[dd]^{\epsilon \x \epsilon} \ar[r]_{m_\x} \ar[rd]_{\varphi_\ox}
                     & S(A \x B) \ar[d]^{s_2} \ar[r]_{S(f)} & S(Y) \ar[dd]^{\epsilon} \\
                     & S(A) \ox S(B) \ar@{..>}[dr] \ar[d]_{\epsilon \ox \epsilon} \\
                     A \x B \ar@/_2pc/[rr]_{f} \ar[r]^{\varphi_\ox} & A \ox B \ar[r]^{f^\ox} & Y}$$
commutes as the dotted arrow is the unique linear extension of the bilinear map $m_\x S(f) \epsilon = (\epsilon \x \epsilon) f$.
Putting the right square in the simple slice over $C$ gives the following commuting square:
$$\xymatrix{S(A \x B) \x C \ar[d]_{s_2 \x 1} \ar[r]^{(1 \x \varphi) m_\x} & S(A \x B \x C) \ar[r]^{~~~~S(f)} & S(Y) \ar[dd]^{\epsilon} \\
                    S(A) \ox S(B) \x C \ar[d]_{\epsilon \ox \epsilon \x 1} \\
                    A \ox B \x C \ar[rr]_{f_\ox} && Y}$$
We now have the calculation:
\begin{eqnarray*}
(s_2(\epsilon \ox \epsilon) \x 1)(1 \x \epsilon) f^\ox
& = & (1 \x \epsilon)(s_2(\epsilon \ox \epsilon) \x 1)f^\ox \\
& = &  (1 \x \epsilon)(1 \x \varphi)m_\x S(f) \epsilon \\
& = & m_\x S(f) \epsilon \\
(s_2(\epsilon \ox \epsilon) \x 1)(\varphi \x 1)m_\x S(f^\ox) \epsilon
&= & (\varphi \x 1)m_\x S((s_2(\epsilon \ox \epsilon) \x 1)f^\ox) \epsilon \\
& = & (\varphi \x 1)m_\x S((1 \x \varphi)m_\x S(f) \epsilon)\epsilon \\
& = & (\varphi \x 1)m_\x S((1 \x \varphi)m_\x)\epsilon S(f) \epsilon \\
& = & m_\x S(f) \epsilon
\end{eqnarray*}
where the last step crucially uses the commutativity of the monad.
\end{proof}

When one has tensor representation, if the tensor preserves
the coequalizers which witness the exactness of the modality $S$, that is coequalizers of the form
$$S^2(A) \Two^{S(\epsilon)}_{\epsilon} S(A) \to^\epsilon A$$
then the coequalizer above can be further analyzed {\em via} the following parallel coequalizer diagram:
\[
\xymatrix{
S^2(A) \ox S^2(B) \ar@<1ex>@{<-}@/^1.5pc/[rr]^{\delta\ox1} \ar@<1ex>[rr]^{\epsilon \ox 1} \ar@<-1ex>[rr]_{S(\epsilon) \ox 1}
                  \ar@<-1ex>@{<-}@/_3pc/[dd]_{1\ox\delta} \ar@<1ex>[dd]^{1\ox\epsilon} \ar@<-1ex>[dd]_{1\ox S(\epsilon)}
                  \ar@<1ex>[ddrr]^{\epsilon\ox\epsilon} \ar@<-1ex>[ddrr]_{S(\epsilon)\ox S(\epsilon)}
       && S(A)\ox S^2(B) \ar[rr]^{\epsilon\ox1}
                  \ar@<1ex>[dd]^{1\ox\epsilon} \ar@<-1ex>[dd]_{1\ox S(\epsilon)}
       && A\ox S^2(B)  \ar@<1ex>[dd]^{1\ox\epsilon} \ar@<-1ex>[dd]_{1\ox S(\epsilon)}
\\  &&&& \\
S^2(A)\ox S(B) \ar@<1ex>[rr]^{\epsilon \ox 1} \ar@<-1ex>[rr]_{S(\epsilon) \ox 1} \ar@<1ex>[dd]^{1\ox\epsilon}
       && S(A)\ox S(B) \ar@<1ex>[dd]^{1\ox\epsilon} \ar[rr]^{\epsilon\ox1} \ar[ddrr]^{\epsilon\ox\epsilon}
       && A\ox S(B)  \ar@<1ex>[dd]^{1\ox\epsilon}
\\  &&&& \\
S^2(A)\ox B  \ar@<1ex>[rr]^{\epsilon \ox 1} \ar@<-1ex>[rr]_{S(\epsilon) \ox 1}
      && S(A)\ox B \ar[rr]_{\epsilon\ox1}
      && A\ox B
}
\]
As proven in \cite[Lemma 1.2.11]{PTJElephant}, for example, given
horizontal and vertical reflexive coequalizers as shown above, the
diagonal is also a coequalizer.   Recall that the basic coequalizers
are actually absolute coequalizers in the storage category so they
certainly are coequalizers in the linear category (use {\bf LS.2]}) but
they will not necessarily be absolute in the linear category.  Thus,
assuming that the tensor product preserves them is a far from benign
assumption.


\subsection{Codereliction and  tensor representation}

An important source of tensorial representation in Cartesian storage categories arises from the presence of a {\bf codereliction}: this is an
unnatural transformation $\eta: A \to S(A)$ which {\em is\/} natural for linear maps such that each $\eta_A$ is
linear and splits $\epsilon$ in the sense that $\eta_A\epsilon_A = 1_A$.
Notice that $\varphi$, in general, will not be a codereliction: is natural for all maps but it is not in general
linear---in fact if it were linear then all maps would be linear and $S$ would
necessarily be equivalent to the identity functor.

As we shall shortly see, all Cartesian differential storage categories have a codereliction.  This is important because of the following
observation:

\begin{proposition}
Any Cartesian storage category in which linear idempotents split
linearly, and which has a codereliction, has persistent tensorial
representation.
\end{proposition}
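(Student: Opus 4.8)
The plan is to realise, for each pair of objects $A,B$, the tensor product $A\ox B$ as the \emph{linear} splitting of an explicit linear idempotent on $S(A)\ox S(B)$, and then to recognise the resulting diagram as a split --- hence absolute --- coequalizer, so that Proposition~\ref{basic-tensor-rep}, Corollary~\ref{strong-tensor-rep} and the previous proposition on automatic persistence do the remaining work.

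First I would assemble the ambient data. By Lemma~\ref{lem:basic_tensor_rep} the objects of the form $S(X)$ already carry a persistent tensorial representation, with $S(A)\ox S(B)=S(A\x B)$, and $\ox$ restricts to a functor on the linear maps between such objects. Since the comonad $\check{\S}$ lives on ${\cal L}[]$, the endofunctor $S$ carries linear maps to linear maps; so from the codereliction $\eta$ --- whose components are linear and satisfy $\eta_A\epsilon_A=1_A$ --- I extract the linear idempotent $e_A:=\epsilon_A\eta_A\colon S(A)\to S(A)$, the fact that $S(\eta_A)$ is a linear section of $S(\epsilon_A)$ (apply $S$ to $\eta_A\epsilon_A=1_A$), and, from naturality of $\epsilon$ at the linear map $\eta_A$, the identity $S(\eta_A)\epsilon_{S(A)}=\epsilon_A\eta_A=e_A$.

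The heart of the argument is then this. Form the linear idempotent $e_A\ox e_B$ on $S(A)\ox S(B)$ and, using the hypothesis that linear idempotents split linearly, split it: call the splitting object $A\ox B$, with linear maps $\rho\colon S(A)\ox S(B)\to A\ox B$ and $\varsigma\colon A\ox B\to S(A)\ox S(B)$, so that $\varsigma\rho=1$ and $\rho\varsigma=e_A\ox e_B$. I claim that
\[
S^2(A)\ox S^2(B)\ \Two^{S(\epsilon_A)\ox S(\epsilon_B)}_{\epsilon_{S(A)}\ox\epsilon_{S(B)}}\ S(A)\ox S(B)\ \to^{\rho}\ A\ox B
\]
is a split coequalizer, the contracting maps being $\varsigma$ and $t:=S(\eta_A)\ox S(\eta_B)\colon S(A)\ox S(B)\to S^2(A)\ox S^2(B)$. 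The four required identities follow from functoriality of $\ox$ together with the data above: $t\,(S(\epsilon_A)\ox S(\epsilon_B))=S(\eta_A\epsilon_A)\ox S(\eta_B\epsilon_B)=1$; $\varsigma\rho=1$; $\rho\varsigma=e_A\ox e_B=t\,(\epsilon_{S(A)}\ox\epsilon_{S(B)})$; and the coequalizing identity $(S(\epsilon_A)\ox S(\epsilon_B))\,\rho=(\epsilon_{S(A)}\ox\epsilon_{S(B)})\,\rho$, which --- as $\varsigma$ is monic and $\rho\varsigma=e_A\ox e_B$ --- reduces to the identities $S(\epsilon_A)\,e_A=\epsilon_{S(A)}\,e_A$, and these hold because $\epsilon\epsilon=S(\epsilon)\epsilon$. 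Thus $\rho$ is an absolute coequalizer of the displayed parallel pair; every arrow in sight being linear, it is in particular a coequalizer in ${\cal L}[]$, and after matching the parallel pair with that of Proposition~\ref{basic-tensor-rep} (via the identifications of Lemma~\ref{lem:basic_tensor_rep}) we conclude that $\X$ has basic tensorial representation at $A$ and $B$, for all $A,B$.

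Finally I would climb the ladder of strengths. An absolute coequalizer is preserved by the product functors $X\x(-)$, so the corresponding forks in the simple slices are again split coequalizers in each ${\cal L}[X]$; by Corollary~\ref{strong-tensor-rep} the representation is therefore strong, and by the previous proposition a strong tensorial representation in a Cartesian storage category is automatically persistent. Together with the persistent strong \emph{unit} representation already supplied by Lemma~\ref{lem:basic_tensor_rep}(i), this is exactly persistent tensorial representation, which is what is asserted. I expect the only genuinely delicate part to be the bookkeeping around the identifications: verifying that, writing $S(A)\ox S(B)=S(A\x B)$ and so on, the parallel pair of Proposition~\ref{basic-tensor-rep} really is $\big(S(\epsilon_A)\ox S(\epsilon_B),\ \epsilon_{S(A)}\ox\epsilon_{S(B)}\big)$, and that $\ox$ is functorial on the objects $S(X)$ --- both routine consequences of Lemma~\ref{lem:basic_tensor_rep} and Proposition~\ref{prop:linmaps}. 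The conceptual content is short: the codereliction forces the exactness fork of Proposition~\ref{basic-tensor-rep} to split.
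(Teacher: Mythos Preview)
Your proposal is correct and follows essentially the same approach as the paper: split the linear idempotent $\epsilon\eta\ox\epsilon\eta$ on $S(A)\ox S(B)$ to obtain $A\ox B$, recognise the fork of Proposition~\ref{basic-tensor-rep} as a split (hence absolute) coequalizer, and then invoke Corollary~\ref{strong-tensor-rep} together with the automatic persistence of strong tensor representation. The only cosmetic difference is that the paper takes the contracting map to be $\eta_{S(A)}\ox\eta_{S(B)}$ rather than your $S(\eta_A)\ox S(\eta_B)$; either choice works, yielding the same idempotent and the same absolute coequalizer.
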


Recall  that a linear idempotent $e$ splits linearly when there is a
splitting $(r,s)$ with $rs = e$ and $sr = 1$ such that both $s$ and $r$
are linear.  It follows from Lemma \ref{storage-splitting}, that both
$r$ and $s$ are linear if either is. Furthermore, it follows from
Proposition \ref{linear-idempotent-splitting} that we may always
formally split linear idempotents.

\begin{proof}
Once we have a codereliction we have a split pair
\[
\xymatrix{
S^2(A) \ox S^2(B) \ar@<1ex>@{<-}@/^1.5pc/[rr]^{\eta \ox\eta} \ar@<1ex>[rr]^{\epsilon \ox \epsilon} \ar@<-1ex>[rr]_{S(\epsilon) \ox S(\epsilon)}
       && S(A) \ox S(B)
}
\]
whose absolute coequalizer is the splitting of $\epsilon \eta \ox
\epsilon\eta$.   Thus, when linear idempotents split one has basic
tensor representation. As this coequalizer is absolute in the whole
storage category it is necessarily preserved by products and so the
tensor representation is strong, hence persistent.
\end{proof}

In particular, as we shall discover in Section \ref{diff-storage}, a Cartesian differential storage category always has a codereliction
$\eta = \< 1,0 \> D_\x[\varphi]$ which is linear (in the differential sense) and splits $\epsilon$.  Thus, in these examples it
suffices for linear idempotents (in simple slices) to have linear splittings.  This can always be formally arranged by splitting the linear idempotents.


\subsection{Tensor storage categories with an exact modality}

We now observe that in a Cartesian storage category, which has
tensorial representation, the linear maps form a tensor storage category:

\begin{proposition}
In any Cartesian storage category with tensorial representation,
the subcategory of linear maps is a tensor storage category.
\end{proposition}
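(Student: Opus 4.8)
The plan is to assemble ingredients that are for the most part already available, and then to verify the one genuinely new point: that the Seely isomorphisms supplied by Lemma~\ref{lem:basic_tensor_rep} constitute a \emph{storage transformation} for the comonad carried by the linear maps.

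First I would record the underlying structure. Being a Cartesian system of maps, ${\cal L}[]$ is closed under the Cartesian product structure of $\X$ --- projections, diagonals, final maps, pairings --- and so has finite products. Taking tensorial representation in the persistent strong form appropriate here (recall that in a storage category strong tensorial representation is automatically persistent, and that the tensor unit is always represented, by $\varphi: 1 \to S(1)$), the earlier proposition on persistent strong tensorial representation makes $(\ox,\top)$ a symmetric tensor product on ${\cal L}[]$; thus ${\cal L}[]$ is a symmetric monoidal category with products. Moreover the comonad $\check{\S}=(S,\delta,\epsilon)$ already restricts to ${\cal L}[]$: $S$ sends a linear map $f$ to the linear map $S(f)=(f\varphi)^\sharp$; both $\epsilon$ and $\delta=S(\varphi)$ are linear by construction; $\epsilon$ is natural on linear maps and $\delta$ is natural because $\varphi$ is; and the comonad identities were checked when $\check{\S}$ was introduced.

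Second, I would produce the storage transformation. By Lemma~\ref{lem:basic_tensor_rep} the bilinear map $m_\x: S(A)\x S(B)\to S(A\x B)$ is itself a tensorial representation of $S(A)$ and $S(B)$; comparing it, via the two universal properties, with the representation $\varphi_\ox: S(A)\x S(B)\to S(A)\ox S(B)$ coming from the ambient tensor product yields mutually inverse linear maps, and I take $s_2: S(A\x B)\to S(A)\ox S(B)$ to be this isomorphism and $s_0: S(1)\to\top$ the canonical isomorphism exhibiting $S(1)$ as a tensor unit. To see that $(s_2,s_0)$ is a symmetric comonoidal transformation $S\to S$ for which $\delta$ is comonoidal, I would route through the earlier equivalence between symmetric storage transformations and cocommutative coalgebra modalities: equip each $S(X)$ with the linear comonoid structure $\Delta=S(\Delta_\x)\,s_2$ and $e=S(!_X)\,s_0$, and check that $(S,\Delta,e,\delta,\epsilon)$ is a cocommutative coalgebra modality on ${\cal L}[]$. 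Every equation to be verified is between linear maps out of $S(X)$ or $S(X\x X)$, so it may be tested after precomposing with $\varphi$, respectively with $(\varphi\x\varphi)\,\varphi_\ox$, and then collapses to a routine identity in the Cartesian structure of $\X$, using coassociativity and counitality of $\Delta_\x$, naturality of $\varphi$, and the defining property of $s_2$ as (the inverse of the map) representing $m_\x$.

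Assembling these observations, ${\cal L}[]$ is a symmetric monoidal category with products carrying a comonad equipped with a storage transformation that is an isomorphism --- that is, a tensor storage category. The step I expect to be the main obstacle is the verification in the previous paragraph: confirming that the Seely isomorphism $s_2$ is coherent with the associator, unitors and symmetry of $\ox$ and with the comultiplication $\delta$. This is the only part requiring real computation, and it remains manageable precisely because the bi-classification property and the universal property of $\varphi_\ox$ reduce each coherence diagram to an equation among product maps.
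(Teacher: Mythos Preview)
Your proposal is correct and follows essentially the same route as the paper: both obtain the Seely isomorphisms by comparing the universal property of $\varphi_\ox$ on $S(A)\x S(B)$ with that of $m_\x$ (Lemma~\ref{lem:basic_tensor_rep}), and both rely on the already-established comonad $\check{\S}$ on ${\cal L}[]$ together with the tensor from persistent tensorial representation. The paper's proof is terser --- it simply declares the storage-transformation coherences ``straightforward to check'' --- whereas you spell out the verification strategy via the coalgebra-modality characterization and the bi-classification trick; this is a helpful elaboration but not a different argument.
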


\proof
We have already observed that there is a monoidal comonad present on the linear maps.  In addition,
when both a persistent classification and representation is present there is are natural isomorphisms
$s_\ox: S(X \x Y) \to S(X) \ox S(Y)$ and an isomorphism  $s_\top: \top \to S(1)$ constituting an
iso-comonoidal structure for the functor $S: \X \to \X$ from $\X$ with product to $\X$ with tensor: this is
the storage isomorphism.

This follows immediately from the fact that
$$\xymatrix{X \x Y \ar[rrd]_{\varphi} \ar[r]^{\varphi \x \varphi} & S(X) \x S(Y) \ar[r]^{\varphi_\ox}
              & S(X) \ox S(Y) \ar@{..>}[d]^{s_\ox^{-1}} \\
              & & S(X \x Y) } ~~~~\mbox{and}~~~
 \xymatrix{X \x Y \ar[rd]_{(\varphi \x \varphi)\varphi_\ox} \ar[r]^{\varphi}
              & S(X \x Y)  \ar@{..>}[d]^{s_\ox} \\ & S(X) \ox S(Y) }$$
have the same universal property.  Similarly $\top$ and $S(1)$ have the same universal property.
That this constitutes a storage transformation is straightforward to check.
\endproof

To establish the converse of this observation we need to prove that the coalgebra modality of a
tensor storage category is a comonad with a commutative force:

\begin{proposition} \label{force-for-tensor-storage}
The coKleisli category of a tensor storage category is a Cartesian
storage category with tensorial representation.
\end{proposition}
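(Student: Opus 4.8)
The plan is to show that the comonad $\check{\S} = (S,\epsilon,\delta)$ of the given tensor storage category $\X$, regarded on its underlying Cartesian category, carries a force; by Proposition~\ref{abstract-coKleisli-basic} this makes $\X_\S$ a Cartesian storage category, after which it only remains to equip $\X_\S$ with tensorial representation.

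Since $s_2$ is an isomorphism, the natural candidate for the force is
$$\psi := s_2\,(1 \ox \epsilon)\,s_2^{-1} \colon S(A \x S(X)) \to S(A \x X),$$
which is natural in $A$ and $X$, being a composite of natural transformations. Each of the coherences $[\mathrm{Force}.1]$--$[\mathrm{Force}.6]$ is then a diagram chase using only the data of a tensor storage category: naturality and invertibility of $s_2$; its associativity, unit and symmetry coherences; the comonoidality of $\delta$ (that $\delta$ commutes with $s_2$ and with the canonical product-comonoidal structure of $S$); and the comonad identities for $(S,\epsilon,\delta)$. For instance, the symmetry coherence $S(c_\x)\,s_2 = s_2\,c_\ox$ gives $\psi' = s_2\,(\epsilon \ox 1)\,s_2^{-1}$, so that $\psi\psi'$ and $\psi'\psi$ both collapse to $s_2\,(\epsilon \ox \epsilon)\,s_2^{-1}$, yielding $[\mathrm{Force}.6]$; and $[\mathrm{Force}.5]$ follows on substituting the definition of $\psi$, commuting $s_2^{-1}$ past $S(\epsilon \x 1)$ by naturality, and then invoking the comonoidality of $\delta$ together with $\delta\,S(\epsilon) = 1 = \delta\,\epsilon_{S}$. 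I expect the associativity law $[\mathrm{Force}.1]$ and forceful naturality $[\mathrm{Force}.3]$ to be the main obstacle, since they require combining the pentagon-type coherence of $s_2$ with $\delta$-comonoidality, but they involve no idea beyond careful bookkeeping.

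Given a force, Proposition~\ref{abstract-coKleisli-basic} makes $\X_\S$ a Cartesian storage category whose linear maps are the $\epsilon$-natural ones, which form a copy of $\X$. For tensorial representation on $\X_\S$ I would take the unit to be $\top$ with $\varphi_\top \colon 1 \to \top$ the coKleisli map given by $s_1 \colon S(1) \to \top$, and take the tensor of $A$ and $B$ to be the $\X$-tensor $A \ox B$, with $\varphi_\ox \colon A \x B \to A \ox B$ the coKleisli map given by $s_2\,(\epsilon \ox \epsilon) \colon S(A \x B) \to A \ox B$ (which one checks, via Lemma~\ref{bilinear-coequalizes}, is bilinear in $\X_\S$). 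The universal property I would verify by translating it through the coKleisli structure: a linear map $A \ox B \to C$ in $\X_\S$ is just an $\X$-map, coKleisli precomposition with $\varphi_\ox$ becomes postcomposition in $\X$ with $s_2\,(\epsilon \ox \epsilon)$, and --- applying Lemma~\ref{bilinear-coequalizes} and Proposition~\ref{basic-tensor-rep} inside $\X_\S$ together with the absolute coequalizer $S^2(X) \Two^{S(\epsilon)}_{\epsilon} S(X) \to^{\epsilon} X$ of Proposition~\ref{prop:linmaps}(ii) --- bilinearity of a coKleisli map $A \x B \to C$ becomes exactly the requirement that the corresponding $\X$-map factor, uniquely and linearly, through $s_2\,(\epsilon \ox \epsilon)$. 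Because $s_1$ and $s_2$ are isomorphisms this factorization exists and is unique, so $\X_\S$ has basic tensorial representation; and since the witnessing coequalizer is assembled, via the isomorphism $s_2$, from the split (hence absolute) coequalizers of Proposition~\ref{prop:linmaps}(ii), it is preserved by the product functors, which upgrades the representation to strong, and hence --- by the earlier proposition that a Cartesian storage category with strong tensor representation is automatically persistent --- persistent.
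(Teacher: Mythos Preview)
Your approach is essentially the paper's: the same force $\psi = s_2(1 \ox \epsilon)s_2^{-1}$, the same appeal to Proposition~\ref{abstract-coKleisli-basic}, and the same $\varphi_\ox = s_2(\epsilon \ox \epsilon)$ for tensorial representation. The paper carries out all six {\bf [Force.$n$]} verifications explicitly (your assessment that {\bf [Force.1]} and {\bf [Force.3]} are the bookkeeping-heavy ones is accurate), and for the universal property of $\varphi_\ox$ it simply asserts that it is ``easily seen'' with persistence ``automatic in a coKleisli category'', rather than routing through Lemma~\ref{bilinear-coequalizes} and Proposition~\ref{basic-tensor-rep} as you do---your argument is more explicit there, though the direct check (translating bilinearity in $\X_\S$ back to $\X$ and using that $s_2$ is invertible) is shorter.
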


\proof
We shall show that the comonad is forceful where the force is defined as:
\begin{eqnarray*}
\lefteqn{S(A \x S(X)) \to^\psi S(A \x X)} \\
& = & S(A \x S(X)) \to^{s_2} S(A) \ox S^2(X) \to^{1 \ox \epsilon} S(A) \ox S(X)  \to^{s_2^{-1}} S(A \x X).
\end{eqnarray*}
We now have to check the six coherence diagrams of a force:
\begin{enumerate}[{\bf [Force.1]}]
\item For the associativity of force we use the fact that the Seely isomorphism is comonoidal:
\begin{eqnarray*}
\lefteqn{\delta S(\sigma_\x) S(\epsilon \x \psi) S(\psi) S(a_\x)}\\
 & = & \delta S(\sigma_\x) S(\epsilon \x 1)S(1 \x (s_2 (1 \ox \epsilon)s_2^{-1})) s_2 (1 \ox \epsilon)s_2^{-1} S(a_\x) \\
 & = & \delta S(\sigma_\x) s_2 (\epsilon \ox S(s_2 (1 \ox \epsilon)s_2^{-1})) (1 \ox \epsilon) s_2^{-1} S(a_\x) \\
 & = & \delta S(\sigma_\x) s_2 (\epsilon \ox (\epsilon s_2 (1 \ox \epsilon))) (1 \ox s_2^{-1}) s_2^{-1} S(a_\x) \\
 & = & \delta S(\sigma_\x) s_2 (\epsilon \ox (\epsilon s_2 (1 \ox \epsilon))) a_\ox (s_2^{-1} \ox 1) s_2^{-1} \\
 & = & \delta S(\sigma_\x) s_2 (\epsilon \ox (\epsilon s_2))) a_\ox (s_2^{-1} \ox \epsilon) s_2^{-1} \\
 & = & s_2 (\delta \ox \delta) (\epsilon \ox (\epsilon s_2)) a_\ox (s_2^{-1} \ox \epsilon) s_2^{-1} \\
 & = & s_2 (1 \ox s_2) a_\ox (s_2^{-1} \ox \epsilon) s_2^{-1}
  =  S(a_\x) s_2 (1 \ox \epsilon)s_2^{-1}
  =  S(a_\x) \psi
\end{eqnarray*}
\item For projection of force:
\begin{eqnarray*}
\psi S(\pi_1) & = & s_2 ( 1 \ox \epsilon) s_2^{-1} S(\pi_1) \\
              & = & s_2(1 \ox \epsilon) (e \ox 1) u^\top_L \\
              & = & s_2 (e \x 1) u^\top_L \epsilon \\
              & = & S(\pi_1) \epsilon
\end{eqnarray*}
\item For forceful naturality:
\begin{eqnarray*}
\delta S(\sigma_\x)S(1 \x (\epsilon\delta)))\psi
& = & \delta S(\sigma_\x)S(1 \x (\epsilon\delta)) s_2 (1 \ox \epsilon) s_2^{-1} \\
& = & \delta S(\sigma_\x)s_2 (1 \ox S(\epsilon\delta)) (1 \ox \epsilon) s_2^{-1} \\
& = & s_2 (\delta \ox \delta) (1 \ox S(\epsilon\delta)) (1 \ox \epsilon) s_2^{-1} \\
& = & s_2 (1 \ox \epsilon) (\delta \ox \delta)  s_2^{-1} \\
& = & s_2 (1 \ox \epsilon) s_2^{-1} \delta S(\sigma_\x) \\
& = & \psi \delta S(\sigma_\x)
\end{eqnarray*}
\item For forcefulness of counit:
\begin{eqnarray*}
\psi\psi & = & s_@ (1 \ox \epsilon) s_2^{-1}s_2(1 \ox \epsilon)s_2^{-1} \\
& = & s_2(1 \ox \epsilon\epsilon)s_2^{-1} \\
& = & s_2(1 \ox S(\epsilon)\epsilon)s_2^{-1} \\
& = & S(1 \x \epsilon) s_2 (1 \ox \epsilon) s_2^{-1} \\
& = & S(1 \x \epsilon) \psi
\end{eqnarray*}
\item For forcefulness of comultiplication:
\begin{eqnarray*}
\delta S(\sigma_\x) \psi S(\epsilon \x 1)
       & = & \delta S(\sigma_\x) s_2 (1 \ox \epsilon) s_2^{-1} S(\epsilon \x 1) \\
       & = & s_2 (\delta \ox \delta) (S(\epsilon) \ox \epsilon) s_2^{-1} = 1
\end{eqnarray*}
\item Commutativity of force:
\begin{eqnarray*}
\psi \psi' & = & s_2 (1 \ox \epsilon) s_2^{-1} s^2 (\epsilon \ox 1) s^{-1}_2 \\
           & = & s_2 (1 \ox \epsilon)(\epsilon \ox 1) s^{-1}_2 \\
           & = & s_2 (\epsilon \ox 1)(1 \ox \epsilon) s^{-1}_2 \\
           & = & s_2 (\epsilon \ox 1)s_2^{-1} s^2 ((1 \ox \epsilon) s^{-1}_2 \\
           & = & \psi'\psi
\end{eqnarray*}
\end{enumerate}

\medskip

This shows that the coKleisli category is a Cartesian storage category;
it remains to show that it has tensorial representation.  However, this
follows, since in the coKleisli category $A \x B
\to^{\varphi_\ox} A \ox B$ is given by the $\X$-map
$$S(A \x B) \to^{s_2} S(A) \ox S(B) \to^{\epsilon \ox \epsilon} A \ox B$$
That this represents the tensor is easily seen, and persistence is
automatic in a coKleisli category.
\endproof

\bigskip

The results above tell us that a tensor storage category with an exact
modality is always the category of linear maps of some Cartesian storage
category with tensorial representation.  This is because its coKleisli
category is a Cartesian storage category with tensorial representation
and one can recover, as the linear maps, the original tensor storage
category.


\subsection{Closed storage categories} \label{closedstorage}

We briefly return to the issue of the closeness of these various categories.  Starting with a tensor storage category (Seely category) it is well-known that if
the category is closed in the sense that there is an adjunction
$$\infer={ X \to A \lollipop B}{A \ox X \to B}$$
then the coKleisli category is Cartesian closed with
$$A \Rightarrow B := S(A) \lollipop B$$
as we have the following natural equivalences:
$$\infer={S(X) \to S(A) \lollipop B}{
    \infer={S(A) \ox S(X) \to B}{
    S(A \x X ) \to B}}$$
Thus, if the tensor storage category is closed then the coKleisli category, that is the Cartesian storage category, must be Cartesian closed.   We wish
to work for a converse, namely, knowing that the Cartesian storage category is Cartesian closed, can we provide some natural conditions for the linear
maps to form a monoidal closed category?  The conditions that we shall consider, in fact, do not require tensorial representation and, in fact, make the linear maps
into a closed category in the original sense of Kelly and Eilenberg \cite{KE}.

We shall say that a Cartesian storage category is {\bf closed} in case
it is Cartesian closed, the linear maps form a closed system (see
Definition \ref{closed-system-defn}), the functor $S$ is enriched over
itself, and the following equalizer (which defines the object $A \lollipop B$) exists for each $A$ and $B$:
$$A \lollipop B \to^{k_{AB}} A \Rightarrow B \Two^{S_{AB} (1 \Rightarrow \epsilon)}_{\epsilon \Rightarrow 1} S(A) \Rightarrow B$$
with $k_{AB}$ linear for each $A$ and $B$.  Here the map $S_{AB}: A \Rightarrow B \to S(A) \Rightarrow S(B)$ is given by the enrichment of the functor $S$.  Clearly we are
intending that this equalizer should provide the object of linear maps from $A$ to $B$.  That it has this property relies on the following:

\begin{proposition} \label{closed-prop} In any closed Cartesian storage category:
\begin{enumerate}[{\em (i)}]
\item
If $f: A \x X \to B$ is linear in its first argument  then there is a unique map $\tilde{f}: X \to A \lollipop B$ making
$$\xymatrix{ A \x X \ar[rr]^f \ar@{..>}[d]^{1 \x \tilde{f}}& & B \\ A \x A \lollipop B \ar[r]_{1 \x k_{AB}} & A \x A \Rightarrow B \ar[ur]_{{\sf ev}} }$$
commute.
\item Any $f$ which can be expressed as $f = (1 \x \tilde{f}) (1 \x k_{AB}) {\sf ev}$ must be linear in its first argument.
\item If $f$ is bilinear then $\tilde{f}$ is linear.
\end{enumerate}
\end{proposition}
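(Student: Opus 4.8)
The plan is to handle (i) and (ii) together by transposing the defining triangle along the relevant exponential adjunctions, and to deduce (iii) from closedness of the linear system together with Lemma~\ref{storage-splitting}(i).

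First I would observe that, writing $\curry$ for the transpose that abstracts the left-hand factor (so that ${\sf ev}: A\x(A\Rightarrow B)\to B$), the triangle in (i) commutes for a given $\tilde f: X\to A\lollipop B$ exactly when $\tilde f\,k_{AB}=\curry(f)$. Since an equalizer inclusion is monic this pins down $\tilde f$ uniquely, so all of (i) amounts to showing that $\curry(f): X\to A\Rightarrow B$ factors through $k_{AB}$, i.e.\ that it equalizes the pair $S_{AB}(1\Rightarrow\epsilon)$ and $\epsilon\Rightarrow 1$. The key computation is a transposition along the $S(A)$-exponential adjunction: the leg $\curry(f)(\epsilon\Rightarrow1)$ transposes directly to $(\epsilon\x 1)f: S(A)\x X\to B$, while $\curry(f)S_{AB}(1\Rightarrow\epsilon)$ transposes --- using naturality of the strength and the fact that the self-enrichment $S_{AB}$ has transpose $\theta'\,S({\sf ev})$, where $\theta'$ is the right strength $S(A)\x X\to S(A\x X)$ --- to $\theta'\,S(f)\,\epsilon$. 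Hence $\curry(f)$ equalizes the pair precisely when
$$(\epsilon\x 1)\,f \;=\; \theta'\,S(f)\,\epsilon,$$
which is exactly the ``first-argument'' instance of the linearity criterion of Proposition~\ref{prop:linmaps}(iii) (obtained from that criterion by running it on $c_\x f$ and using the compatibility $\theta\,S(c_\x)=c_\x\,\theta'$ of the two strengths). This proves (i); and running the transposition in reverse proves (ii), since any $f=(1\x\tilde f)(1\x k_{AB}){\sf ev}$ has $\curry(f)=\tilde f\,k_{AB}$, which equalizes the pair because $k_{AB}$ does.

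For (iii) I would assume in addition that $f$ is linear in its second argument, so that $f$ is bilinear and $\tilde f$ exists by (i). By closedness of the system of linear maps (Definition~\ref{closed-system-defn}, in the form ``$k\in{\cal L}[B]$ iff $\curry(k)\in{\cal L}[]$''), linearity of $f$ in its second argument forces $\curry(f)=\tilde f\,k_{AB}$ to lie in ${\cal L}[]$. Since $k_{AB}$ is both linear (part of the data of a closed Cartesian storage category) and monic (being an equalizer inclusion), Lemma~\ref{storage-splitting}(i) then yields that $\tilde f$ is linear.

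The step I expect to be most delicate is the transposition calculation in (i): one must keep the currying conventions consistent, correctly identify the transpose of the self-enrichment $S_{AB}$ as $\theta'\,S({\sf ev})$, and match the resulting equation against Proposition~\ref{prop:linmaps}(iii), which is phrased for the \emph{second} argument --- so a small dualisation through the symmetry $c_\x$ is needed. The rest is formal.
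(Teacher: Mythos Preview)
Your argument for (i) and (ii) is essentially the same as the paper's: both identify $\tilde f\,k_{AB}$ with $\curry(f)$, obtain uniqueness from $k_{AB}$ being monic, and reduce existence of the factorisation to the equation $(\epsilon\x 1)f=\theta'\,S(f)\,\epsilon$, which is the first-argument form of the linearity criterion in Proposition~\ref{prop:linmaps}(iii). The paper presents this via two explicit diagrams computing $\curry((\epsilon\x1)f)$ and $\curry(\theta'S(f)\epsilon)$ separately; you package the same computations as a single transposition along the $S(A)$-exponential adjunction, using that the transpose of $S_{AB}$ is $\theta'\,S({\sf ev})$. Either way the content is identical.

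Your proof of (iii), however, is genuinely different from the paper's and rather slicker. The paper argues directly that $\tilde f$ is $\epsilon$-natural: it uses second-argument linearity of $f$ to get $(1\x\epsilon)f=\theta\,S(f)\,\epsilon$, rewrites both sides as $(1\x\epsilon\tilde f)\,{\sf ev}_\lollipop$ and $(1\x S(\tilde f)\epsilon)\,{\sf ev}_\lollipop$ respectively (the second using linearity of ${\sf ev}_\lollipop$ in its higher-order argument), and then cancels ${\sf ev}_\lollipop$ via the universal property. Your route instead observes that closedness of the linear system makes $\curry(f)=\tilde f\,k_{AB}$ linear, and then appeals to Lemma~\ref{storage-splitting}(i) to strip off the linear monic $k_{AB}$. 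This avoids the diagram chase entirely and makes transparent exactly which hypotheses are used: closedness of the system for $\curry(f)$, and the general cancellation property of linear monics for the final step. The paper's argument has the merit of being self-contained at the level of the $\epsilon$-naturality definition, but yours is shorter and highlights the role of Lemma~\ref{storage-splitting}.
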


\begin{proof}~
\begin{enumerate}[{\em (i)}]
\item
Clearly $\tilde{f} k_{AB} $ must be the unique map to the hom-object $A \Rightarrow B$, $\hat{f} = \tilde{f}k_{AB}: X \to A \Rightarrow B$, and so, as $k_{AB} $ is monic, $\tilde{f}$, must be unique, if it exists.  The difficulty is to show that $\hat{f}$ factors through this equalizer.

Recall that as $f$ is linear in its first argument we have
$$\xymatrix{S(A) \x X \ar[d]_{\epsilon \x 1} \ar[r]^{\theta'} & S(A \x X) \ar[r]^{S(f)} & S(B) \ar[d]^{\epsilon} \\
                    A \x X \ar[rr]_f & & B}$$
commutes (that is $(1 \x \epsilon) f = \theta'S(f)\epsilon$).  But then we have the following two commuting diagrams displaying the curried map for these two maps:
$$\xymatrix{S(A) \x X \ar[d]_{! \x \hat{f}} \ar[r]^{\epsilon \x 1} & A \x X  \ar[d]^{1 \x \hat{f}}\ar@/^1pc/[rrd]^f \\
                    S(A) \x A \Rightarrow B \ar[d]_{1 \x (\epsilon \Rightarrow 1)} \ar[r]_{\epsilon \x 1}  & A \x A \Rightarrow B \ar[rr]_{\sf ev} & & B \\
                    S(A) \x S(A) \Rightarrow B   \ar@/_1pc/[rrru]_{\sf ev} }$$
This shows that ${\sf curry}((1 \x \epsilon) f) = \hat{f} (\epsilon \Rightarrow 1)$.
$$\xymatrix{S(A) \x X \ar[d]_{1 \x \hat{f}} \ar[r]^{\theta'} & S(A \x X) \ar[r]^{S(f)} & S(B) \ar[ddrr]^{\epsilon} \\
                    S(A) \x A \Rightarrow B \ar[d]_{1 \x S_{AB}} \\
                    S(A) \x S(A) \Rightarrow S(B) \ar[d]_{1 \x (1 \Rightarrow \epsilon)}\ar@/_1pc/[rruu]_{\sf ev} & & & & B \\
                    S(A) \x S(A) \Rightarrow B  \ar@/_1pc/[rrrru]_{\sf ev} }$$
While this shows that ${\sf curry}(\theta' S(f) \epsilon) = \hat{f} S_{AB} (1 \Rightarrow \epsilon)$.  Thus $\hat{f}$ factors through the equalizer as desired.
\item For the converse we have:
$$\xymatrix{S(A) \x X \ar[dr]_{! \x \hat{f}} \ar[ddd]^{\epsilon \x 1} \ar[rr]^{\theta'} & & S(A \x X) \ar[rr]^{S(f)} & &  S(B) \ar[ddd]^{\epsilon} \\
                     & S(A) \x A \lollipop B \ar[rrd]_{1 \x (k_{AB}(\epsilon \Rightarrow 1))} \ar[rr]^{1 \x (k_{AB}S_{AB})~~} & & S(A) \x S(A) \Rightarrow S(B)  \ar[ru]_{\sf ev} \ar[d]^{1 \x (1 \Rightarrow \epsilon)} \\
                     & & & S(A) \x S(A) \Rightarrow B \ar[dr]^{\sf ev} \\
                     A \x X \ar[rrrr]_f  &&&& B}$$
\item  It remains to show that $\tilde{f}$ is linear when $f$ is bilinear that is $\epsilon \tilde{f} = S(\tilde{f})\epsilon$ for  which we have:
$$\xymatrix{A \x S(X) \ar[d]_{1 \x \epsilon\tilde{f}} \ar[r]^{1 \x \epsilon} & A \x X \ar[d]^f \ar[dl]_{1 \x \tilde{f}} \\
                    A \x A \lollipop B \ar[r]_{~~~~{\sf ev}_\lollipop} & B}$$
$$\xymatrix{ A \x S(X) \ar[r]^{\theta} \ar[d]_{1 \x S(\tilde{f}) }& S(A \x X) \ar[r]^{S(f)} & S(B) \ar[dd]^{\epsilon} \\
                    A \x S(A \lollipop B) \ar[r]_{\theta} \ar[d]_{1 \x \epsilon} & S(A \x A \lollipop B) \ar[ru]_{S({\sf ev}_\lollipop)} \\
                    A \x A \lollipop B \ar[rr]_{{\sf ev}_\lollipop} & & B}$$
                    Showing linearity in the second argument implies that $\tilde{f}$ is linear.
\end{enumerate}
\end{proof}

In particular, notice that this immediately means that ${\sf ev}_\lollipop := (1 \x k_{AB}) {\sf ev}$ is linear in its first argument, as ${\sf ev}_\lollipop$ defined in this manner certainly satisfies
Proposition \ref{closed-prop} {\em (ii)}.
However, as $k_{AB}$ is linear and ${\sf ev}$ is linear in its second argument---as we are assuming a closed linear system---it follows that ${\sf ev}_\lollipop$ is bilinear.
Thus, when one has tensorial representation this means that there is an induced evaluation map ${\sf ev}_\ox: A \ox (A \lollipop B) \to B$ with
$\varphi_\ox {\sf ev}_\ox = {\sf ev}_\lollipop$ which is linear.  Furthermore, by Proposition \ref{closed-prop} (iii) the curry map for $f: A \ox X \to B$ is also linear
as it is $\widetilde{\varphi_\ox f}$.  Thus we have:

\begin{corollary}
If $\X$ is a closed Cartesian storage category which has tensor representation, then the subcategory of linear maps forms a symmetric monoidal closed category.
\end{corollary}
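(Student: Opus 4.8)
The plan is to produce, for each pair of objects $A$ and $B$, a natural isomorphism ${\cal L}[](A \ox X, B) \cong {\cal L}[](X, A \lollipop B)$ in the variable $X$, exhibiting $A \lollipop (-)$ as a right adjoint to $A \ox (-)$ on the subcategory of linear maps. Since the earlier Proposition already tells us that ${\cal L}[]$ is a tensor storage category --- in particular a symmetric monoidal category with tensor $\ox$ and unit $\top$, with $A \ox (-)$ a functor --- such a family of adjunctions is exactly what is needed to conclude that ${\cal L}[]$ is symmetric monoidal closed, so the work reduces to constructing this isomorphism.

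First I would assemble the data. By hypothesis the equalizer defining $A \lollipop B$ exists with $k_{AB}$ linear, and by the remark following Proposition \ref{closed-prop} the map ${\sf ev}_\lollipop := (1 \x k_{AB}){\sf ev}$ is bilinear, so tensorial representation supplies a unique linear ${\sf ev}_\ox : A \ox (A \lollipop B) \to B$ with $\varphi_\ox {\sf ev}_\ox = {\sf ev}_\lollipop$. I would then define the transposition by $\Lambda(f) := \widetilde{\varphi_\ox f}$ for linear $f : A \ox X \to B$, and its proposed inverse by $g \mapsto (1_A \ox g){\sf ev}_\ox$ for linear $g : X \to A \lollipop B$. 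Note that $\Lambda(f)$ is well defined and linear: $\varphi_\ox f$ is bilinear, being the composite of the bilinear $\varphi_\ox$ with the linear $f$, so Proposition \ref{closed-prop}(i) yields $\widetilde{\varphi_\ox f}$ and Proposition \ref{closed-prop}(iii) makes it linear; and $(1_A \ox g){\sf ev}_\ox$ is linear as a composite of linear maps, using that $\ox$ is a bifunctor on ${\cal L}[]$.

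The remaining step is to verify that these two maps are mutually inverse and natural in $X$, using only the universal properties of tensorial representation and of the equalizer $A \lollipop B$. For $\Lambda^{-1}\Lambda(f)$ I would precompose with $\varphi_\ox$: this gives $(1 \x \widetilde{\varphi_\ox f})\varphi_\ox {\sf ev}_\ox = (1 \x \widetilde{\varphi_\ox f}){\sf ev}_\lollipop = \varphi_\ox f$ by the defining equation of the transpose in Proposition \ref{closed-prop}(i), and since both $\Lambda^{-1}\Lambda(f)$ and $f$ are linear and agree after precomposition with $\varphi_\ox$, the uniqueness clause of tensorial representation forces them equal. For $\Lambda\Lambda^{-1}(g)$ I would compute $\varphi_\ox (1_A \ox g){\sf ev}_\ox = (1 \x g){\sf ev}_\lollipop$, which is linear in its first argument (as ${\sf ev}_\lollipop$ is, and $1 \x g$ leaves that argument untouched), whence the uniqueness clause of Proposition \ref{closed-prop}(i) identifies its transpose with $g$. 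Naturality in $X$ is the same style of argument. Once it is in hand, one concludes, together with the symmetric monoidal structure already established, that the linear maps form a symmetric monoidal closed category. I expect the main obstacle to be purely organizational: the delicate part --- that the curried map actually factors through the equalizer $A \lollipop B$ --- has already been discharged in Proposition \ref{closed-prop}, so what remains is to keep careful track of which maps are linear and which are merely linear in a single argument, so that each appeal to a universal property is legitimate.
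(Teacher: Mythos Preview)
Your proposal is correct and follows essentially the same approach as the paper: both arguments obtain ${\sf ev}_\ox$ by tensor-representing the bilinear ${\sf ev}_\lollipop$, and both take the curry map to be $\widetilde{\varphi_\ox f}$, invoking Proposition~\ref{closed-prop}(iii) for its linearity. The paper stops there, leaving the verification that these constitute an adjunction implicit, whereas you spell out the two inverse checks explicitly; this is extra detail rather than a different route.
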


This leaves open the converse: if one starts with a monoidal
closed tensor storage category, will its coKleisli category be a closed
Cartesian storage category? We shall now show that this is true, giving
a complete characterization for the closed case:

\begin{proposition}
If $\X$ is a tensor storage category which is monoidal closed then its coKleisli category is a closed Cartesian storage category.
\end{proposition}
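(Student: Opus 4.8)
The plan is to verify, clause by clause, the four requirements in the definition of a closed Cartesian storage category for the coKleisli category $\X_\S$. Two of these are already in hand: by Proposition \ref{force-for-tensor-storage} the category $\X_\S$ is a Cartesian storage category with tensorial representation, and by the opening discussion of this subsection it is Cartesian closed with $A \Rightarrow B = S(A) \lollipop B$, the exponential transpose being obtained by composing the Seely isomorphism with the closed-structure transpose of $\X$, as in $\X_\S(X \x A, B) \cong \X(S(X) \ox S(A), B) \cong \X(S(X), S(A) \lollipop B) \cong \X_\S(X, S(A) \lollipop B)$. So it remains to establish: (b) that the linear maps of $\X_\S$ form a closed system in the sense of Definition \ref{closed-system-defn}; (c) that the functor $S$ on $\X_\S$ is enriched over itself, supplying $S_{AB}\colon(A\Rightarrow B)\to(S(A)\Rightarrow S(B))$; and (d) that the equalizer defining $A\lollipop B$ exists in $\X_\S$ with $k_{AB}$ linear.

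For (c), since a tensor storage category is a linear category (by the theorem above), the comonad $S$ on $\X$ is monoidal, hence $S\colon\X\to\X$ is a monoidal endofunctor of a symmetric monoidal closed category and so is canonically a closed functor, carrying a morphism $A\lollipop B\to(S(A)\lollipop S(B))$ compatible with composition and units; precomposing with $\delta$ and transporting along the coKleisli construction and the Seely isomorphisms yields the self-enrichment $S_{AB}$ of $S$ on $\X_\S$, the enrichment coherences following from the closed-functor axioms on $\X$ together with the comonoidal coherences of the Seely transformation. For (b), the evaluation of $\X_\S$ is, under the Seely isomorphism, the $\X$-morphism $S(A)\ox S(S(A)\lollipop B)\to^{1\ox\epsilon}S(A)\ox(S(A)\lollipop B)\to^{{\sf ev}_\lollipop}B$, with ${\sf ev}_\lollipop$ the closed-structure evaluation of $\X$; as this factors through $1\ox\epsilon$ in the higher-order coordinate, the characterization of linearity in Proposition \ref{prop:linmaps}(iii), applied in the Cartesian storage category $\X_\S$ via the explicit force and strength formulas of Proposition \ref{force-for-tensor-storage}, shows it is linear in its second argument. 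Closure under currying then follows from the equivalent formulation in Definition \ref{closed-system-defn}: transposition in $\X_\S$ is implemented by the isomorphisms above, under which both ``$k\in{\cal L}[B]$'' and ``$\curry(k)\in{\cal L}[]$'' unwind to the single condition that the associated $\X$-morphism $S(B)\ox S(X)\to Y$ factor through $1\ox\epsilon_X$, so that one holds precisely when the other does.

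For (d) I would take $A\lollipop B$ to be the internal hom of $\X$ and $k_{AB}$ to be the image under the coKleisli inclusion of $\X$ into $\X_\S$ of $\epsilon_A\lollipop 1_B\colon(A\lollipop B)\to(S(A)\lollipop B)=A\Rightarrow B$, which is therefore linear in $\X_\S$; one then checks that $k_{AB}$ equalizes $\epsilon\Rightarrow 1$ and $S_{AB}(1\Rightarrow\epsilon)$ and is universal among maps with this property, by unwinding both composites, via the coKleisli hom-set bijections and the closed structure of $\X$, into statements about $\X$-morphisms of the shape $S(W)\ox S(A)\to B$, and showing that equalizing the two maps is exactly the condition that such a morphism factor through $1\ox\epsilon_A$, the universal property then matching the couniversal property of $\lollipop$ in $\X$. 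I expect (d) to be the principal obstacle: the \emph{monoidal}, not merely comonoidal, structure of the comonad is what makes ``equalizes $\epsilon\Rightarrow1$ and $S_{AB}(1\Rightarrow\epsilon)$'' coincide with ``is $\epsilon$-natural in the $A$-coordinate'', and one must moreover see that $k_{AB}$ is genuinely monic in $\X_\S$, so that the equalizer it defines really is the intended object of linear maps --- this is where the closedness of $\X$ is used essentially (rather than only to make $\X_\S$ Cartesian closed), since it is the behaviour of the counit against the internal hom that forces the required factorizations to be unique, exactly as it is in the standard models.
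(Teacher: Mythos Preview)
Your decomposition mirrors the paper's exactly: both proofs check the clauses of the definition one by one, and your arguments for (b) and for Cartesian closedness are essentially the paper's (the evaluation map is displayed as $s_2(1\ox\epsilon){\sf ev}_\lollipop$, making linearity in the higher-order argument immediate; persistence under currying is the observation that linearity in a coordinate is, after the Seely isomorphism, the condition ``factors through $1\ox\epsilon$ in that tensor factor'', which currying obviously preserves). For (c) you take a slightly different route: the paper observes that in a Cartesian closed category a strong functor is automatically enriched, and invokes the force $\psi$ from Proposition~\ref{force-for-tensor-storage} to get strength of $S$ on $\X_\S$; your argument via ``$S$ is monoidal on the closed $\X$, hence a closed functor, then transport'' is valid but makes you work harder, since you must then verify that the transported structure really is the self-enrichment in $\X_\S$.

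The one point where your sketch is thinner than the paper's is (d). You correctly identify that a cone over the parallel pair, unwound to $\X$, is an arrow $S(W)\ox S(A)\to B$ which equalizes the two maps precisely when it factors through $1\ox\epsilon_A$; but you do not say why such a factorization exists and is unique. The paper supplies this by invoking \emph{exactness} of the modality: the fork
\[
A\lollipop B \to^{\epsilon\lollipop 1} S(A)\lollipop B \Two^{S(\epsilon)\lollipop 1}_{\epsilon\lollipop 1} S^2(A)\lollipop B
\]
is the image in $\X_\S$ of the $\lollipop$-transpose of the coequalizer $S^2(A)\Two^{S(\epsilon)}_{\epsilon}S(A)\to^{\epsilon}A$, and it is this coequalizer (preserved by $\ox$ since $\X$ is closed) that delivers both existence and uniqueness of the factorization through $1\ox\epsilon_A$. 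The paper also notes that one may always assume the modality is exact by passing to the subcategory of linear maps. Without this ingredient your step ``the universal property then matching the couniversal property of $\lollipop$ in $\X$'' is an assertion rather than an argument; once you add it, your proof and the paper's coincide.
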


\begin{proof}
We provide a sketch of the proof.  We must show three things:
\begin{enumerate}
\item We need to show that the coKleisli category is a Cartesian closed category and that the functor induced by the modality
is suitably enriched.   As above, the closed structure is given by $A \Rightarrow B := S(A) \lollipop B$.  In a Cartesian closed category
a functor is enriched whenever it is strong, so the fact that there is a natural force $\psi: S(A \x S(B)) \to S(A \x B)$ (described in
Proposition \ref{force-for-tensor-storage}) guarantees the functor is strong.
\item  We need to show that the linear maps of the coKleisli category form a closed system.   We know already they form a linear system
so we need only check that they are a {\em closed\/} linear system.
This amounts to checking the following.
\begin{enumerate}
\item The evaluation map is linear in its higher-order argument. This is
immediately true by inspection. Here is the definition of the evaluation
map in the coKleisli category:
$$S(A \x  (S(A) \lollipop B)) \to^{s_2} S(A) \ox S(S(A) \lollipop B) \to^{1 \ox \epsilon}  S(A) \ox (S(A) \lollipop B) \to^{{\sf ev}_\lollipop} B$$
The second map in the sequence ensures it is linear in the second argument.
\item Linearity is ``persistent'' over currying.  To say that a coKleisli map $f: S(A \x B \x C) \to D$ is linear in its second argument amounts to saying
that $f = s_3 (1 \ox \epsilon \ox 1) f'$.  Currying this map (with respect to the first argument) clearly maintains the linearity in the second argument.
\end{enumerate}
\item This leaves only the requirement that $A \lollipop B$ occurs as an equalizer:
$$A \lollipop B \to^{k_{AB}} A \Rightarrow B \Two^{S_{AB} (1 \Rightarrow \epsilon)}_{\epsilon \Rightarrow 1} S(A) \Rightarrow B$$
Recall that we may assume the modality is exact (if it is not, simply work in the subcategory of linear maps).   This equalizer (after considerable
unwinding) is exactly the image of
$$A \lollipop B \to^{\epsilon \lollipop 1} S(A) \lollipop B \Two^{S(\epsilon)  \lollipop 1}_{\epsilon \lollipop 1} S^2(A) \lollipop B$$
under the inclusion into the coKleisli category.  Since
$$S^2(A) \Two^{S(\epsilon)}_{\epsilon} S(A) \to^{\epsilon} A$$
under this inclusion is a coequalizer, the transpose of this is an equalizer.  This shows that $A \lollipop B$ does occur as an equalizer as required.
\end{enumerate}
\end{proof}


\section{Cartesian differential storage categories} \label{diff-storage}


Tensor differential categories were introduced in \cite{diffl}:  they consist of a symmetric monoidal category with a coalgebra modality
and a deriving transformation.   In \cite{CartDiff} the notion of a Cartesian differential category was introduced.  It was proven that an important way
in which Cartesian differential categories arise is as the coKleisli categories of tensor differential categories which satisfied an additional interchange requirement
on the deriving transformation.  This latter requirement was presented as a sufficient condition: the question of whether it was necessary was left open.  Here we partially
answer that question for an important class of tensor differential categories: those whose modality is a storage modality---in other words those which are
``Seely categories''  in the sense discussed above---and exact: we show that this condition is indeed necessary.

Not surprisingly, the strategy of the proof is to characterize the coKleisli category of a tensor differential category with a storage modality using the development
above.  The fact that the coKleisli category of a tensor differential category (satisfying the interchange condition) is necessarily a Cartesian differential category
means we should consider Cartesian storage categories which are simultaneously Cartesian differential categories.  Furthermore, we expect the linear maps
in the differential sense to be the linear maps in the storage sense.  We will then show that, under these assumptions, the linear maps form a tensor differential category
which satisfies the interchange requirement.  This then provides a {\em characterization\/}  of coKleisli categories of tensor differential categories (with a storage modality)
which form differential categories.

It is worth mentioning---not least as it caused the authors some
strife---that there are at least three different sorts of tensor
differential category which have been discussed in the literature:
\begin{enumerate}[(A)]
\item A differential given by a {\bf deriving transformation} as introduced in \cite{diffl}.
\item A differential given by a  {\bf codereliction} as introduced \cite{ER} and described in \cite{diffl}
\item A differential given by a {\bf creation operator} as introduced in \cite{fiore}.
\end{enumerate}
Each form of differential is more specialized than its predecessor: the first merely requires a coalgebra modality, the second requires a
bialgebra modality, while the last requires a storage modality.   A standard way in which a bialgebra modality arises is from the presence of
biproducts together with a storage modality: the Seely isomorphism then transfers the bialgebra structure of the biproduct onto the modality.
This means that the last two forms of differential are particularly suited to storage settings.  As, in the current context, we are considering
storage categories one might think that the differential which one extracts from a Cartesian differential storage category should  be of type (C) or at least (B).  It is, therefore,
perhaps somewhat unexpected that the differential that emerges is one of type (A).

Cartesian differential storage categories are important in their own right.  Their structure can be approached in two very different ways: from the perspective of being a Cartesian differential category with tensorial representation,  or as the coKleisli category of a tensor differential category.  This tension allows one to transfer arguments between the Cartesian and the tensor worlds.  This ability is frequently used in differential geometry where one often wants to view differential forms as maps from tensor products.


\subsection{Cartesian differential categories}

Cartesian differential categories, were introduced in \cite{CartDiff}: here we briefly review their properties.

One way to view the axiomatization of Cartesian differential categories is as an abstraction of the Jacobian of a smooth
map $f: \mathbb{R}^n \to \mathbb{R}^m$.  One ordinarily thinks of the Jacobian as a smooth map
	\[ J(f): \mathbb{R}^n \to \mbox{Lin}(\mathbb{R}^n, \mathbb{R}^m).  \]
Uncurrying, this means the Jacobian can also be seen as a smooth map
	\[ J(f): \mathbb{R}^n \times \mathbb{R}^n \to \mathbb{R}^m \]
which is linear in its first variable.  A Cartesian differential category asks for an operation of this type, satisfying the axioms described below.  However, notice that to express the axioms, one needs the ability to add parallel maps.  It turns out this is not an enrichment in commutative monoids as one might expect, but rather a skew enrichment, \cite{street}, which makes it a Cartesian left additive category.  In such a category the addition of arrows is only preserved by composition on the left, that is  $f (g+h) = fg + gh$ and $f 0 = 0$ .

\begin{definition}
A \textbf{Cartesian differential category} is a Cartesian left additive category with an operation
$$\infer[{D[\_]}]{X \x X \to_{D[f]} Y}{X \to^f Y}$$
(called ``differentiation'') satisfying:
\begin{enumerate}[{\bf [CD.1]}]
\item $D[f+g] = D[f]+D[g]$ and $D[0]=0$ (differentiation preserves addition);
\item $\< a+b,c\>D[f] = \< a,c\>D[f] + \< b,c\>D[f]$ and $\< 0,a\>D[f] = 0$ (a derivative is
additive in its first variable);
\item $D[\pi_0] = \pi_0\pi_0$, and $D[\pi_1]= \pi_0\pi_1$ (projections and identity are linear);
\item $D[\<f,g\>] = \< D[f],D[g]\>$ (differentiation is compatible with pairing) ;
\item $D[fg] = \<D[f],\pi_1f \>D[g]$  (the chain rule);
\item $\<\<a,0\>,\<c,d\>\>D[D[f]] = \<a,d\>D[f]$ (the differential is linear);
\item $\< \< a,b\>,\< c,d\>\> D[D[f]] = \<\< a,c\>,\< b,d\>\> D[D[f]]$ (interchange rule);
\end{enumerate}
\end{definition}

We recall a number of examples of Cartesian differential categories:

\begin{example}{\em ~
\begin{enumerate}[(1)]
\item If $\X$ is an additive Cartesian category ({\em i.e.} enriched in commutative monoids) then, by defining $D[f] = \pi_0f$, it can be viewed as a Cartesian differential category in which
every map is linear.
\item Smooth functions on finite dimensional Euclidean vector spaces form a Cartesian differential category.
\item The coKleisli category of any tensor differential category (satisfying the interchange law) is a Cartesian differential category, \cite{CartDiff}.  A basic example of a tensor differential category is
provided by the category of relations, ${\sf Rel}$, with respect to the ``multi-set'' (or ``bag'') comonad.   The deriving transformation $d_\ox: A \ox M(A) \to M(A)$, which
adds another element to the multi--set $M(A)$,  provides the (tensor) differential structure.
\item Convenient vector spaces \cite{BET} form a Cartesian differential category.
\item There is a comonad ${\sf Faa}$ on the category of Cartesian left additive categories whose coalgebras are exactly Cartesian differential categories \cite{faa}.
\end{enumerate} }
\end{example}

Cartesian differential categories already have a notion of ``linear map'': namely those $f$ such that $D[f] = \pi_0f$.
Furthermore, it is a basic result that any simple slice, $\X[A]$, of a Cartesian
differential category, $\X$, is also a Cartesian differential category.  The differential in the simple slice $\X[A]$ is given by:
$$\infer[{D_A[\_]}]{(X \x X) \x A \to_{D_A[f]} Y}{X \x A \to^f Y}$$
where
\begin{eqnarray*}
\lefteqn{(X \x X) \x A \to^{D_A[f]} Y} \\
& = &  (X \x X) \x A \to_{\< \< \pi_0\pi_0,0\>,\<\pi_0\pi_1,\pi_1\>\>}  (X \x A) \x (X \x A) \to_{D[f]} Y
\end{eqnarray*}
This is precisely the familiar notion of a {\em partial derivative\/}.

In particular, this means we may isolate linear maps in this differential sense in each slice as those maps with $D_A[f] = (\pi_0 \x 1) f$.   Our first observation is then:

\begin{proposition}
In a Cartesian differential category, $\X$, the linear maps form a system of linear maps.
\end{proposition}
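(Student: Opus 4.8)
The strategy is to verify \textbf{[LS.1]}, \textbf{[LS.2]} and \textbf{[LS.3]} directly, exploiting the two facts recalled above: each simple slice $\X[A]$ is itself a Cartesian differential category, with differential $D_A$ the stated reindexing of $D$, and the ${\cal L}[A]$-linear maps are by definition exactly the differential-linear maps of $\X[A]$ (the condition ``$D_A[f]=(\pi_0\x 1)f$'' is literally ``$D_A[f]=\pi_0\cdot_{\X[A]}f$'' once one unwinds slice composition). Consequently \textbf{[LS.1]} and the closure-under-composition clause of \textbf{[LS.2]} are purely assertions about differential-linear maps inside a single Cartesian differential category, which I would deduce from the axioms as in \cite{CartDiff}: from \textbf{[CD.3]} the projections are linear; from \textbf{[CD.4]} the pairing $\langle f,g\rangle$ is linear whenever $f$ and $g$ are; the identity is linear because $D[1]=\pi_0$, which follows on applying the chain rule \textbf{[CD.5]} to the trivial factorization $\pi_0=\pi_0\cdot 1$, using \textbf{[CD.3]} to get $\pi_0\cdot D[1]=D[\pi_0]=\pi_0\pi_0$, and then precomposing with a section of the split epi $\pi_0$; and \textbf{[CD.5]} together with $D[1]=\pi_0$ makes linear maps compose. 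Reading all of this inside $\X[A]$ gives \textbf{[LS.1]} and the first clause of \textbf{[LS.2]} in every slice.

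The first point that needs a genuine (if small) argument is the retraction clause of \textbf{[LS.2]}. Working in a fixed slice, suppose $g\colon X\to Y$ is linear and is a retraction with section $s$ (so $sg=1_Y$), and that $gh$ is linear; the subtlety is that $s$ itself need not be linear, so one cannot simply invoke closure under composition. Since $h=s(gh)$, the chain rule and linearity of $gh$ give $D[h]=\langle D[s],\pi_1 s\rangle D[gh]=\langle D[s],\pi_1 s\rangle\pi_0\,(gh)=D[s]\,g\,h$. On the other hand $D[s]\,g=\langle D[s],\pi_1 s\rangle\pi_0\,g=\langle D[s],\pi_1 s\rangle D[g]=D[sg]=D[1_Y]=\pi_0$, using linearity of $g$, the chain rule, and $D[1_Y]=\pi_0$. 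Hence $D[h]=\pi_0\,h$, so $h$ is linear, and the same computation applies verbatim in any slice.

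It remains to prove \textbf{[LS.3]}: each substitution functor $\X[f]\colon\X[B]\to\X[A]$, $g\mapsto(1\x f)g$, preserves linear maps. The plan is to show $\X[f]$ is a Cartesian differential functor, i.e.\ it preserves finite products (these are computed in $\X$) and it commutes with the differential, $D_A[\X[f](g)]=\X[f]\bigl(D_B[g]\bigr)$. This last identity I would establish by expanding both sides with the slice-differential formula and the chain rule \textbf{[CD.5]}, \textbf{[CD.4]}, \textbf{[CD.3]}; the two sides coincide once one notes that differentiating $1\x f=\langle\pi_0,\pi_1 f\rangle$ creates a term whose ``$B$-direction'' component is $\langle 0,a\rangle D[f]$, which vanishes by the second clause of \textbf{[CD.2]}. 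Since a Cartesian differential functor sends a linear map $g$ (with $D[g]=\pi_0 g$) to a linear map — $D[\X[f](g)]=\X[f](D[g])=\X[f](\pi_0)\,\X[f](g)=\pi_0\,\X[f](g)$ — this gives \textbf{[LS.3]}.

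The conceptual content here is slight; the main obstacles are bookkeeping. The one in \textbf{[LS.3]} is carrying the slice reindexings $\langle\langle\pi_0\pi_0,0\rangle,\langle\pi_0\pi_1,\pi_1\rangle\rangle$ through the chain-rule expansion without error, and the one to be aware of in \textbf{[LS.2]} is that the retraction clause genuinely depends on $D[1]=\pi_0$ precisely because the section $s$ is not assumed linear — so this clause is not a formal consequence of the others.
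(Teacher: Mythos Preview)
Your proposal is correct and follows essentially the same approach as the paper: both verify \textbf{[LS.1]}--\textbf{[LS.3]} directly from the Cartesian differential axioms, and your retraction argument for \textbf{[LS.2]} is line-for-line the paper's computation. The only difference is in \textbf{[LS.3]}: the paper computes $D_A[(1\times f)h]$ directly for linear $h$ and shows it equals $(\pi_0\times 1)(1\times f)h$, whereas you establish the stronger fact that $\X[f]$ commutes with the slice differentials (i.e.\ is a Cartesian differential functor) and then specialize --- but the underlying calculation, in particular the use of \textbf{[CD.2]} to kill the $\langle 0,\pi_1\rangle D[f]$ term, is identical.
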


\begin{proof}
We must check the requirements of being a linear system:
\begin{enumerate}[{\bf [LS.1]}]
\item We require, for each $A \in \X$, that the identity maps,
projections and pairings of linear maps are linear.  As explained above, in
$\X[A]$ a map $f: X \x A \to Y$ is linear in case $D_A[f] = (\pi_0 \x 1)f :
(X \x X) \x A \to Y$  where $D_A[f]$ is the
partial derivative of $f$:
$$D_A[f] :=  \< \< \pi_0\pi_0,0\>,\<\pi_0\pi_1,\pi_1\> \> D[f]$$
We shall leave all but the last requirement to the reader for which we
shall do a concrete calculation in $\X$.   Suppose $f$ and $g$ are linear
in this sense in $\X[A]$ (that is they are in ${\cal L}[A]$), then we
must show that the pairing of $f$ and $g$ in $\X[A]$ is linear: as a map in
$\X$ this pairing is just $\<f,g\>$ so that:
\begin{eqnarray*}
D_A[\< f,g\>] & = & \< \< \pi_0\pi_0,0\>,\<\pi_0\pi_1,\pi_1\> \> D[\<f,g\>] \\
                    & = & \< \< \pi_0\pi_0,0\>,\<\pi_0\pi_1,\pi_1\> \> \< D[f],D[g] \> \\
                    & = &  \< \< \< \pi_0\pi_0,0\>,\<\pi_0\pi_1,\pi_1\>
\>D[f],\< \< \pi_0\pi_0,0\>,\<\pi_0\pi_1,\pi_1\> \>D[g] \> \\
                    & = &  \< D_A[f],D_A[g]\> \\
                    & & ~~~\mbox{(Using linearity of $f$ and $g$)} \\
                    & = & \< \< (\pi_0 \x 1) f, (\pi_0 \x 1) g \> \\
                    & = & (\pi_0 \x 1)\<f,g\>
\end{eqnarray*}
showing that $\<f,g\>$ is linear in this sense.
\item We must show that these linear maps are closed to composition in each
slice and that when $g$ is a linear retraction that $gh \in {\cal L}[A]$
implies $h \in {\cal L}[A]$.   Leaving composition to the reader let us
focus on the second part.  If $sg = 1$ then using the non-trivial fact that
$D_A[\_]$ is a differential in $\X[A]$ we have
\begin{eqnarray*}
D_A[h] & = & D_A[sgh] = \< D_A[s],\pi_1s\>D_A[gh] = \< D_A[s],\pi_1s\>
\pi_0gh \\
            & = & D_A[s]g h = D_A[sg]h = D_A[1]h  = \pi_0h.\\
\end{eqnarray*}
\item  For the last part we shall do a concrete calculation in $\X$:   we
must show that if $h$ is linear in $\X[B]$ then
$\X[f](h)$ is linear in $\X[A]$; here is the calculation:
\begin{eqnarray*}
D_A[\X[f](h)] & = & D_A[(1\x f)h] = \<\<
\pi_0\pi_0,0\>,\<\pi_0\pi_1,\pi_1\> \> D[(1 \x f)h] \\
                     & = & \< \< \pi_0\pi_0,0\>,\<\pi_0\pi_1,\pi_1\> \> \<
D[1 \x f], \pi_1 (1 \x f) \> D[h] \\
                     & = & \< \< \pi_0\pi_0,0\>,\<\pi_0\pi_1,\pi_1\> \> \<
\< \pi_0\pi_0,\<\pi_0\pi_1,\pi_1\pi_1\> D[f]\>, \pi_1 (1 \x f) \> D[h] \\
                     & = & \< \< \pi_0\pi_0,0\>,\<\pi_0\pi_1,\pi_1\> \> \<
\< \pi_0\pi_0,\<\pi_0\pi_1,\pi_1\pi_1\> D[f]\>, \pi_1 (1 \x f) \> D[h] \\
                     & = & \< \< \pi_0\pi_0,\< 0,\pi_1\>D[f] \>
,\<\pi_0\pi_1,\pi_1\>(1 \x f)\>D[h] \\
                     & = &  \< \< \pi_0\pi_0,0 \> ,\<\pi_0\pi_1,\pi_1\>(1
\x f)\>D[h] \\
                     & = & (1 \x f) \<\< \pi_0\pi_0,0 \>
,\<\pi_0\pi_1,\pi_1\>\> D[h] \\
                     & & ~~~\mbox{(Linearity of $h$)} \\
                     & = & (1 \x f) (\pi_0 \x 1) h \\
                     & = & (\pi_0 \x 1) (1 \x f) h
\end{eqnarray*}
\end{enumerate}
\end{proof}


\subsection{Cartesian differential storage categories: the basics}

A {\bf Cartesian differential storage category}
is a Cartesian differential category whose linear maps---in the natural
differential sense---are (strongly and persistently) classified.

We first observe that every Cartesian  differential category has a codereliction map defined by:
$$\eta_A := \< 1,0\> D[\varphi]: A \to S(A).$$

\begin{lemma}
$\eta$ so defined is a codereliction map: that is it is natural for linear maps and has $\eta\epsilon = 1$.
\end{lemma}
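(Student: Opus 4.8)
The plan is to verify, one at a time, the three defining clauses of a codereliction for $\eta_A := \< 1,0\>D[\varphi_A]$: that $\eta_A\epsilon_A = 1_A$, that each $\eta_A$ is linear in the differential sense (i.e. $D[\eta_A] = \pi_0\eta_A$), and that $\eta$ is natural along linear maps. The inputs I will need from the storage side are that $\varphi\colon 1\to S$ is natural for \emph{all} maps, that $\epsilon$ is linear, and that $S(g)$ is linear for every $g$; from the differential side I will use the chain rule {\bf [CD.5]}, compatibility of $D$ with pairings {\bf [CD.4]}, additivity {\bf [CD.1]}, the base cases {\bf [CD.3]} (in particular that the identity is linear, so $D[1_A]=\pi_0$), and the linearity axiom {\bf [CD.6]}.

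The first two clauses come out by differentiating $\varphi$ and applying the axioms. For the splitting, differentiating $\varphi_A\epsilon_A = 1_A$ by the chain rule and using linearity of $\epsilon_A$ ($D[\epsilon_A]=\pi_0\epsilon_A$) gives $\pi_0 = D[1_A] = \< D[\varphi_A],\pi_1\varphi_A\>(\pi_0\epsilon_A) = D[\varphi_A]\epsilon_A$, whence $\eta_A\epsilon_A = \< 1,0\>D[\varphi_A]\epsilon_A = \< 1,0\>\pi_0 = 1_A$. For linearity, the chain rule gives $D[\eta_A] = \< D[\< 1,0\>],\ \pi_1\< 1,0\>\>\,D[D[\varphi_A]]$, and {\bf [CD.4]}, {\bf [CD.1]}, {\bf [CD.3]} give $D[\< 1,0\>] = \< D[1],D[0]\> = \<\pi_0,0\>$ and $\pi_1\< 1,0\> = \<\pi_1,0\>$; thus $D[\eta_A] = \<\<\pi_0,0\>,\<\pi_1,0\>\>D[D[\varphi_A]]$, which is exactly an instance of the left-hand side of {\bf [CD.6]} (with $a:=\pi_0$, $c:=\pi_1$, $d:=0$) and so collapses to $\<\pi_0,0\>D[\varphi_A] = \pi_0\< 1,0\>D[\varphi_A] = \pi_0\eta_A$. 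Hence $\eta_A$ is linear.

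For naturality along a linear map $g\colon A\to B$, naturality of $\varphi$ gives $\varphi_A S(g) = g\varphi_B$, so $D[\varphi_A S(g)] = D[g\varphi_B]$. Expanding both sides with the chain rule and using that $S(g)$ and $g$ are linear ($D[S(g)]=\pi_0 S(g)$, $D[g]=\pi_0 g$) this becomes $D[\varphi_A]S(g) = \<\pi_0 g,\pi_1 g\>D[\varphi_B] = (g\x g)D[\varphi_B]$. Precomposing with $\< 1,0\>$ and using $\< 1,0\>(g\x g) = g\< 1,0\>$ — valid because a linear map is additive, so $0\,g = 0$ — yields $\eta_A S(g) = \< 1,0\>D[\varphi_A]S(g) = g\< 1,0\>D[\varphi_B] = g\eta_B$, which is the required naturality square.

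All of these computations are short; the one step where the full strength of the Cartesian differential axioms is genuinely used is the linearity argument, where the iterated derivative $D[D[\varphi_A]]$ must be reduced to a first-order term, and this reduction is precisely what {\bf [CD.6]} provides. The only mildly delicate point is the bookkeeping needed to present the pairing $\<\<\pi_0,0\>,\<\pi_1,0\>\>$ in exactly the form of the left-hand side of {\bf [CD.6]}; in particular the interchange rule {\bf [CD.7]} plays no role here.
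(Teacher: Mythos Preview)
Your proof is correct and follows essentially the same calculations as the paper: the splitting $\eta\epsilon=1$ is obtained from $D[\varphi\epsilon]=D[1]=\pi_0$ via linearity of $\epsilon$, and naturality for linear $g$ comes from differentiating $\varphi S(g)=g\varphi$ and using that $S(g)$ and $g$ are linear together with additivity of $g$ to get $0g=0$. You additionally supply a verification that each $\eta_A$ is itself linear (via {\bf [CD.6]}), which is part of the formal definition of a codereliction but is not checked in the paper's proof of this lemma; this is a welcome completion rather than a different route.
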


\begin{proof}~
We need to show $\eta_A \epsilon_A = 1_A$:
\begin{eqnarray*}
\eta_A \epsilon_A & = & \< 1,0\> D[\varphi] \epsilon \\
& = & \< 1,0\> D[\varphi]\epsilon] ~~~~~\mbox{(as  $\epsilon$ is linear)} \\
& = & \< 1,0\> D[1] = \<1,0\>\pi_0 = 1
\end{eqnarray*}
and that, if $f: A \to B$ is a linear map, that $f \eta_B = \eta_A S(f)$:
\begin{eqnarray*}
\eta_A S(f) & = & \< 1,0\> D[\varphi] S(f) \\
& = & \< 1,0\> D[\varphi S(f)] ~~~~~\mbox{(as  $S(f)$ is linear)} \\
& = & \< 1,0\> D[f \varphi] \\
& = & \<1,0\>  \< D[f],\pi_1f\> D[\varphi] \\
& = & \<1,0\>  \< \pi_0f,\pi_1f\> D[\varphi] \\
& = & \<1,0\>  \< \pi_0f,\pi_1f\> D[\varphi] \\
& = & \< f,0f \> D[\varphi] = f \<1,0\>D[\varphi] = f \eta_B
\end{eqnarray*}
\end{proof}

Because Cartesian differential storage categories always have a codereliction map, up to
splitting linear idempotents, they also have tensor representation.
Thus it makes sense to ask whether the subcategory of linear maps forms
a tensor differential category.  To prove this is so will be the main aim of
the current section.  However, before turning to this it is worth making some
further basic observations.

Suppose $\X$ is a Cartesian storage category which is also a Cartesian
differential category.  In this context, $\X$, has two notions of
``linear'' map: {\em viz} the notion which all storage categories have,
and the notion that all Cartesian differential categories have.
An interesting observation is:

\begin{proposition}
$\X$ is a Cartesian differential storage category if and only if it is both a Cartesian differential and storage category and $S(f)$ (for any $f$)
and $\epsilon$ are linear in the differential sense and the codereliction map $\eta$ is linear in the storage sense.
\end{proposition}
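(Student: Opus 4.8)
The plan is to isolate exactly what ``Cartesian differential storage category'' adds to ``Cartesian differential category which is also a Cartesian storage category''. The storage structure on $\X$ classifies \emph{its own} system of linear maps, call it ${\cal L}_{\mathrm{st}}$; it therefore classifies the system ${\cal L}_{\mathrm{diff}}$ of linear maps in the differential sense precisely when ${\cal L}_{\mathrm{st}}={\cal L}_{\mathrm{diff}}$. So the whole statement reduces to showing that, under the stated side conditions, the two systems of linear maps coincide: the strong persistent classification of ${\cal L}_{\mathrm{st}}$ (which we have, since $\X$ is a Cartesian storage category) then \emph{is} a strong persistent classification of ${\cal L}_{\mathrm{diff}}$, making $\X$ a Cartesian differential storage category.

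\emph{Forward direction.} If $\X$ is a Cartesian differential storage category then it is Cartesian differential and, via the classification of ${\cal L}_{\mathrm{diff}}$, a Cartesian storage category, with ${\cal L}_{\mathrm{st}}={\cal L}_{\mathrm{diff}}$ by definition. Now $\epsilon$ and every $S(f)$ lie in ${\cal L}_{\mathrm{st}}$ --- $\epsilon$ by construction of the comonad, and $S(f)$ because $\epsilon_{S(A)}S(f)=\mu_AS(f)=S(S(f))\mu_B=S(S(f))\epsilon_{S(B)}$ by naturality of $\mu=\epsilon_{S(\_)}$ --- hence both are linear in the differential sense. For the remaining condition, $\eta=\langle 1,0\rangle D[\varphi]$ is linear in the differential sense: by {\bf [CD.5]}, {\bf [CD.4]}, {\bf [CD.1]} and linearity of the identity one gets $D[\eta]=\langle\langle\pi_0,0\rangle,\langle\pi_1,0\rangle\rangle D[D[\varphi]]$, and {\bf [CD.6]} rewrites this as $\langle\pi_0,0\rangle D[\varphi]=\pi_0\eta$; so $\eta\in{\cal L}_{\mathrm{diff}}={\cal L}_{\mathrm{st}}$, i.e. $\eta$ is linear in the storage sense.

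\emph{Converse.} Assume $\X$ is Cartesian differential and a Cartesian storage category with $\epsilon$ and all $S(f)$ linear in the differential sense and $\eta:=\langle 1,0\rangle D[\varphi]$ linear in the storage sense; I claim ${\cal L}_{\mathrm{st}}={\cal L}_{\mathrm{diff}}$. First, $\eta_A\epsilon_A=1$, exactly as in the codereliction lemma: $\langle1,0\rangle D[\varphi]\epsilon=\langle1,0\rangle D[\varphi\epsilon]=\langle1,0\rangle D[1_A]=\langle1,0\rangle\pi_0=1$, using that $\epsilon$ is differential-linear. For ${\cal L}_{\mathrm{st}}\subseteq{\cal L}_{\mathrm{diff}}$: if $f\colon X\to Y$ is storage-linear then $\epsilon_Xf=S(f)\epsilon_Y$ (Proposition~\ref{prop:linmaps}~(iii)); the right side is a composite of differential-linear maps, hence differential-linear, so $D[\epsilon_Xf]=\pi_0\epsilon_Xf$, which by {\bf [CD.5]} and differential-linearity of $\epsilon_X$ rewrites to $(\epsilon_X\x\epsilon_X)D[f]=\pi_0\epsilon_Xf$; precomposing both sides with $\eta_X\x\eta_X$ and using $\eta_X\epsilon_X=1$ collapses the left to $D[f]$ and the right to $\pi_0f$, so $f\in{\cal L}_{\mathrm{diff}}$. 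For ${\cal L}_{\mathrm{diff}}\subseteq{\cal L}_{\mathrm{st}}$: if $f\colon X\to Y$ is differential-linear, let $f^\sharp\colon S(X)\to Y$ be its storage classification, so $\varphi_Xf^\sharp=f$ and $f^\sharp\in{\cal L}_{\mathrm{st}}\subseteq{\cal L}_{\mathrm{diff}}$ by the inclusion just proved; then $\pi_0f=D[f]=D[\varphi_Xf^\sharp]=D[\varphi_X]f^\sharp$ (using $f^\sharp$ differential-linear), and precomposing with $\langle1,0\rangle$ yields $\eta_Xf^\sharp=f$. Since $\eta_X$ is storage-linear (hypothesis) and $f^\sharp$ is storage-linear, $f=\eta_Xf^\sharp$ is storage-linear. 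Thus the two systems agree in $\X$; by the simple stability of both structures (Lemma~\ref{simple-slice-context} and its differential analogue, together with the explicit slice formulas, which carry the side hypotheses into every $\X[A]$) the same argument runs in each simple slice, so ${\cal L}_{\mathrm{st}}={\cal L}_{\mathrm{diff}}$ as systems of linear maps and $\X$ is a Cartesian differential storage category.

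\emph{Main obstacle.} The substance is the coincidence of the two \emph{a priori} unrelated linearity conditions. The hypotheses on $\epsilon$ and $S(f)$ handle one inclusion --- transporting $D[-]=\pi_0(-)$ through the chain rule and then cancelling $\eta\x\eta$ by $\eta\epsilon=1$ --- while the hypothesis that $\eta$ is storage-linear is precisely what powers the reverse inclusion, through the identity $f=\eta_Xf^\sharp$ that exhibits a differential-linear $f$ as a composite of storage-linear maps. The only genuinely fiddly point beyond these calculations is checking that the three side hypotheses descend to every simple slice, so that the equality of the two systems holds slice-wise and not merely in the base category.
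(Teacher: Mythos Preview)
Your proof is correct and follows essentially the same strategy as the paper's: reduce the statement to showing that storage-linearity ($\epsilon$-naturality) coincides with differential-linearity under the stated hypotheses, and establish each inclusion by exhibiting a map as a composite of maps known to be linear in the target sense. The only cosmetic differences are that the paper uses $\varphi\times\varphi$ (rather than your $\eta\times\eta$) to cancel $\epsilon\times\epsilon$ in the ${\cal L}_{\mathrm{st}}\subseteq{\cal L}_{\mathrm{diff}}$ direction, and writes the key identity for the reverse inclusion as $f=\eta\,S(f)\,\epsilon$ rather than your equivalent $f=\eta\,f^\sharp$; your treatment of the forward direction (verifying the three side conditions explicitly, including the {\bf [CD.6]} computation for $\eta$) and your remark about passing to simple slices are more thorough than what the paper records.
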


\begin{proof}
By assumption, $D[S(f)]=\pi_0 S(f)$, for any $f$,
$D[\epsilon]=\pi_0 \epsilon$, and $\eta$ is $\epsilon$-natural.  We
must show that $f$ is $\epsilon$-natural if and only if $D_\x[f]=\pi_0 f$.
So suppose first that $f$ is $\epsilon$-natural:
\begin{eqnarray*}
D[f] &=& (\varphi\x\varphi)(\epsilon\x\epsilon)D[f] \\
&=& (\varphi\x\varphi) D[\epsilon f] \\
&=& (\varphi\x\varphi) D[S(f)\epsilon] \\
&=& (\varphi\x\varphi) \pi_0 S(f)\epsilon \\
&=& (\varphi\x\varphi) \pi_0 \epsilon f \\
&=& \pi_0 \varphi \epsilon f = \pi_0 f
\end{eqnarray*}
Next, suppose that $D[f]=\pi_0 f$ we must show $f$ is $\epsilon$-natural assuming that $\eta$ is.  We observe
that, in this case, $f = \eta S(f) \epsilon$, from which the result is immediate, as $\eta$, $S(f)$, and $\epsilon$ are $\epsilon$ natural.
Here is the calculation:
\begin{eqnarray*}
f &=& \<1,0>\pi_0 f \\
&=& \<1,0>D[f] \\
&=& \<1,0>D[ f \varphi \epsilon] \\
&=& \<1,0>D[ \varphi S(f) \epsilon] \\
&=& \<1,0>D[ \varphi] S(f) \epsilon \\
&=& \eta S(f) \epsilon .
\end{eqnarray*}
\end{proof}

Recall that in a tensor differential category it is possible to re-express the derivative using a {\em deriving transformation}
$d_\ox: A \ox S(A) \to S(A)$ into a more compact form.  It is natural to wonder whether the same thing cannot be done for
Cartesian differential storage categories: that is define the derivative in terms of analogous structure:

\begin{definition}
A {\bf Cartesian deriving transformation} on a Cartesian storage
category is a (not-necessarily natural) transformation
$d_{\x}:A \x A \to S(A)$ satisfying:
\begin{enumerate}[{\bf[cd.1]}]
\item $d_\x S(0)\epsilon=0$, $d_\x S(f+g)\epsilon =
d_\x(S(f)+S(g))\epsilon$
\item $<h+k,v>d_\x=<h,v>d_\x+<k,v>d_\x$, $<0,v>d_\x=0$
\item $d_\x\epsilon=\pi_0$
\item $d_\x S(<f,g>)\epsilon=d_\x<S(f)\epsilon,S(g)\epsilon>$ (Note that
$d_\x S(!)\epsilon=d_\x!=!$ is true since $1$ is terminal.)
\item $d_\x S(fg)\epsilon=<d_\x S(f)\epsilon,\pi_1f>d_\x S(g)\epsilon$
\item $<<g,0>,<h,k>>d_\x S(d_\x)\epsilon=<g,k>d_\x$
\item
$<<0,h>,<g,k>>d_\x S(d_\x)\epsilon=<<0,g>,<h,k>>d_\x S(d_\x)\epsilon$
\item $\eta=_{\sf def} <1,0>d_\x$ is $\epsilon$-natural (or linear).
\end{enumerate}
\end{definition}

It is now straightforward to observe:

\begin{proposition}
A Cartesian storage category with a Cartesian deriving transformation is precisely a Cartesian differential storage category.
\end{proposition}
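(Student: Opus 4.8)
The plan is to use the translation dictionary
$$D[f] \;=\; d_\x\, S(f)\, \epsilon \qquad\text{and}\qquad d_\x \;=\; D[\varphi]$$
and to check that, under it, the axioms {\bf [CD.1]}--{\bf [CD.7]} correspond to {\bf [cd.1]}--{\bf [cd.8]}. Throughout I will lean on three standing features of a Cartesian storage category: $\epsilon$ is linear, so $S(\ell)\epsilon=\epsilon\ell$ for linear $\ell$ and (once $D$ is in hand) $D[h\ell]=D[h]\ell$ for linear $\ell$; $\mu=\epsilon_{S(\_)}$ is natural, so $S^2(f)\epsilon=\epsilon S(f)$ and in particular $S(f)$ is always linear; and the modality is exact, so $\epsilon\epsilon=S(\epsilon)\epsilon$. (The ambient category is, of course, assumed Cartesian left additive, as this is already needed to state {\bf [cd.1]} and {\bf [cd.2]}.)

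For the direction ``Cartesian deriving transformation $\Rightarrow$ Cartesian differential storage category'', define $D[f]:=d_\x S(f)\epsilon$ and verify the differential axioms in turn. {\bf [CD.1]} and {\bf [CD.2]} drop out of {\bf [cd.1]}, {\bf [cd.2]} together with left additivity; {\bf [CD.3]} holds because $\pi_0,\pi_1,1$ are linear, whence $S(\pi_i)\epsilon=\epsilon\pi_i$ and {\bf [cd.3]} (that is, $d_\x\epsilon=\pi_0$) finishes it; {\bf [CD.4]} is {\bf [cd.4]} read through $h\langle p,q\rangle=\langle hp,hq\rangle$; {\bf [CD.5]} is {\bf [cd.5]} verbatim. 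For {\bf [CD.6]} one expands $D[D[f]]=d_\x S(d_\x)\,S^2(f)\,S(\epsilon)\epsilon$, rewrites $S(\epsilon)\epsilon=\epsilon\epsilon$ and $S^2(f)\epsilon=\epsilon S(f)$ to reach $D[D[f]]=\big(d_\x S(d_\x)\epsilon\big)S(f)\epsilon$, and applies {\bf [cd.6]} to the prefix. The subtle point is {\bf [CD.7]}: {\bf [cd.7]} is only the $a=0$ instance of the interchange law, but in the presence of {\bf [CD.1]}--{\bf [CD.6]} --- specifically {\bf [CD.6]}, which forces $\langle\langle a,0\rangle,\langle c,d\rangle\rangle D[D[f]]$ to be independent of $c$, together with additivity of $D[D[f]]$ in its first block (an instance of {\bf [CD.2]} for $D[f]$) --- the full {\bf [CD.7]} follows from that instance. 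Finally, to conclude that $\X$ is a Cartesian differential \emph{storage} category I identify the two notions of linearity: {\bf [cd.3]} with $\mu$-naturality gives $D[S(f)]=\pi_0 S(f)$, {\bf [cd.3]} with exactness gives $D[\epsilon]=\pi_0\epsilon$, and {\bf [cd.8]} says $\eta=\langle 1,0\rangle d_\x$ is $\epsilon$-natural; so the earlier characterisation of Cartesian differential storage categories applies and the differential-linear maps coincide with the (strongly, persistently classified) storage-linear maps.

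For the converse, let $\X$ be a Cartesian differential storage category and set $d_\x:=D[\varphi]$. The crucial lemma is that $D[f]=d_\x S(f)\epsilon$ for \emph{all} $f$: since $S(f)$ and $\epsilon$ are differential-linear (part of the data of a Cartesian differential storage category), $S(f)\epsilon$ is differential-linear, so $D[\varphi]S(f)\epsilon=D[\varphi\, S(f)\epsilon]=D[f\varphi\epsilon]=D[f]$, using naturality of $\varphi$ and $\varphi\epsilon=1$. With this in hand each {\bf [cd.$n$]} becomes {\bf [CD.$n$]} instantiated at $\varphi$ --- for {\bf [cd.6]}, {\bf [cd.7]} after noting $d_\x S(d_\x)\epsilon=D[d_\x]=D[D[\varphi]]$ --- {\bf [cd.3]} is $D[\varphi\epsilon]=D[1]=\pi_0$, {\bf [cd.8]} is exactly the Lemma that $\langle 1,0\rangle D[\varphi]$ is a codereliction, and {\bf [cd.1]}, {\bf [cd.2]} combine {\bf [CD.1]}, {\bf [CD.2]} with left additivity.

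The one genuinely non-mechanical step is {\bf [CD.7]} in the forward direction --- recovering the full interchange law from the weaker {\bf [cd.7]} via the additivity-plus-$c$-independence argument above. Everything else is bookkeeping with the four identities $S(\ell)\epsilon=\epsilon\ell$, $S^2(f)\epsilon=\epsilon S(f)$, $\epsilon\epsilon=S(\epsilon)\epsilon$, and $h\langle p,q\rangle=\langle hp,hq\rangle$.
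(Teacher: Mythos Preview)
Your proof is correct and follows the same translation dictionary and axiom-by-axiom verification as the paper's own argument. You are in fact slightly more careful than the paper about {\bf [CD.7]}: you observe that {\bf [cd.7]} is only the $a=0$ instance and recover the general case via additivity of $D[D[f]]$ in its first block ({\bf [CD.2]}) together with the $c$-independence coming from {\bf [CD.6]}, whereas the paper merely asserts that {\bf [CD.1,2,4--7]} and {\bf [cd.1,2,4--7]} are ``clearly equivalent through this translation''; your route to the final identification of the two linearities, via the preceding characterisation Proposition rather than the paper's direct two-line argument, is an equally valid variant.
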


\begin{proof}
The translation between the two structures is given by:
\[ D[f] := d_\x S(f)\epsilon \mbox{ ~~~and~~~ } d_\x := D[\varphi] \]
Note that these are inverse as
$$D[f] := d_\x S(f)\epsilon := D[\varphi]S(f)\epsilon = D[\varphi S(f)\epsilon] = D[f \varphi\epsilon] = D[f]$$
and
$$d_\x := D[\varphi] := d_\x S(\varphi) \epsilon = d_\x.$$

Most of the axioms are clearly direct translations of each other: {\bf[CD.1,2,4-7]} and
{\bf[cd.1,2,4-7]} are clearly equivalent through this translation. For {\bf[CD.3]}, note that
$D[1]=d_\x \epsilon=\pi_0$ and that $D[\pi_i]=d_\x S(p_i) \epsilon = d_\x \epsilon \pi_i = \pi_0\pi_i$, since in a
storage category, projections are linear (by {\bf[LS.1]}).

It remains to show that being linear in the differential sense coincides with being linear in the storage sense.
First note if $f$ is epsilon natural, that is $\epsilon f = S(f)\epsilon$, then  $D[f]=d_\x S(f) \epsilon = d_\x \epsilon f = \pi_0 f$.
Conversely, suppose that $D_\x[f]=\pi_0 f$, then $f = \eta S(f) \epsilon$ as
$$ f ~=~ \<1,0>\pi_0f ~=~ \<1,0\>d_\x S(f)\epsilon ~=~ \eta S(f)\epsilon$$
which by assumption is linear (in the sense of being $\epsilon$-natural).
\end{proof}


\subsection{The main theorem}

From the results of Section 3, we also note that, in the presence of a codereliction map and when  sufficient linear idempotents split, every Cartesian
differential storage category has strong persistent tensor representation.    The main result of the paper is:

\begin{theorem}  \label{lin-of-cdsc}
The linear maps of a Cartesian differential storage category, in which linear idempotents split, form a tensor storage differential category
satisfying the interchange rule.
\end{theorem}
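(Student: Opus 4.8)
The plan is to bootstrap off the general storage machinery developed in Sections~\ref{storage-categories}--\ref{tensor-storage-cats} and then transport the Cartesian deriving transformation $D[\varphi]$ onto the tensor. \emph{Reduction to the storage machinery.} By the remark preceding the theorem, a Cartesian differential storage category $\X$ in which linear idempotents split --- or, after formally splitting them via Proposition~\ref{linear-idempotent-splitting}, which alters neither the linear maps nor the classification --- has strong persistent tensorial representation, since it carries the codereliction $\eta_A=\langle 1,0\rangle D[\varphi_A]$, which is linear and splits $\epsilon$. Hence, by the proposition that the linear maps of a Cartesian storage category with tensorial representation form a tensor storage category, the subcategory ${\cal L}[]$ of linear maps of $\X$ is a tensor storage category: symmetric monoidal under $(\ox,\top)$, with Seely isomorphisms $s_2: S(A\x B)\to S(A)\ox S(B)$ and $s_0: S(1)\to\top$, and $S$ a monoidal comonad. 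Recall also that in $\X$ the differential-linear and the storage-linear maps coincide, and that $\X$ is recovered as the coKleisli category of ${\cal L}[]$. It remains to equip ${\cal L}[]$ with a deriving transformation $d_\ox$ --- thereby making it a tensor differential category --- and to check the interchange rule.

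\emph{Constructing the deriving transformation.} First I would observe that, since strong classification and tensorial representation are both present, the object $A\ox S(A)$ together with the map $(1_A\x\varphi_A)\varphi_\ox: A\x A\to A\ox S(A)$ enjoys a ``half-free'' universal property: every map $k: A\x A\to Z$ that is linear in its first argument factors uniquely through a linear map $A\ox S(A)\to Z$. (Indeed a bilinear map $A\x S(A)\to Z$ is, by classification, determined by its restriction along $1\x\varphi$, which is precisely a map $A\x A\to Z$ linear in its first argument; now combine this with the representation of bilinear maps by $A\ox S(A)$.) By the axiom \textbf{[CD.6]} --- ``the differential is linear'' --- the map $D[\varphi_A]: A\x A\to S(A)$ is linear in its first argument, so it factors uniquely as a linear map $d_\ox: A\ox S(A)\to S(A)$ with $(1_A\x\varphi_A)\varphi_\ox\, d_\ox = D[\varphi_A]$; equivalently $s_2(\epsilon\ox 1)d_\ox = D[\varphi_A]^\sharp$. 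This $d_\ox$ is the candidate deriving transformation; that it is natural in the sense required of one follows from the naturality of $D[\varphi]$, i.e.\ from the chain rule $D[\varphi_A S(f)]=\langle D[\varphi_A],\pi_1\varphi_A\rangle D[S(f)]$ together with the linearity of $S(f)$ and of $\epsilon$.

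\emph{Verifying the axioms.} Since $\X$ is the coKleisli category of ${\cal L}[]$ and, by construction, $d_\ox$ induces on $\X$ exactly the given Cartesian differential structure (one computes $D[f]^\sharp = D[\varphi_A]^\sharp\,\overline{f} = s_2(\epsilon\ox 1)d_\ox\,\overline{f}$ for $f: A\to B$ with linear representative $\overline{f}$, using the chain rule \textbf{[CD.5]} and the fact that a linear map $g$ has $D[g]=\pi_0 g$), each axiom of a deriving transformation in the sense of \cite{diffl}, and the interchange axiom of \cite{CartDiff}, may be verified for $d_\ox$ by testing it in $\X$, where it becomes an instance of the corresponding Cartesian differential axiom --- which holds because $\X$ is a Cartesian differential category. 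Concretely the translation runs through the universal properties of $\varphi$ (classification), $\varphi_\ox$ (tensorial representation), $m_\x$ and $s_2$: additivity and the constant rule come from \textbf{[CD.1]}--\textbf{[CD.2]}; the linear rule from \textbf{[CD.3]}; the Leibniz rule from \textbf{[CD.4]} together with the comonoid structure $(S(A),\Delta,e)$; the chain rule from \textbf{[CD.5]} after unwinding $\delta=S(\varphi)$ and using naturality and the comonoidal coherence of $s_2$; and the \emph{interchange} rule from \textbf{[CD.7]}, $\langle\langle a,b\rangle,\langle c,d\rangle\rangle D[D[\varphi]]=\langle\langle a,c\rangle,\langle b,d\rangle\rangle D[D[\varphi]]$. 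It is only here, in the interchange step, that the full force of $\X$ being a Cartesian differential category (namely \textbf{[CD.7]}) is used --- whence the clause ``satisfying the interchange rule'' with no additional hypothesis. Assembling the pieces: ${\cal L}[]$ is a tensor storage category equipped with a deriving transformation satisfying interchange, i.e.\ a tensor storage differential category with interchange.

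\emph{The main obstacle.} The construction of $d_\ox$ is essentially forced, so the real work is the verification in the third step, and within it the \emph{chain-rule axiom} for $d_\ox$ is by far the most demanding: its reduction to \textbf{[CD.5]} requires carefully unwinding the comultiplication $\delta=S(\varphi)$ and repeatedly invoking the comonoidal coherence of the Seely isomorphism $s_2$, whereas the additive, linear, Leibniz and interchange axioms are comparatively routine translations. A secondary point requiring care is the ``half-free'' universal property of $A\ox S(A)$ on which the clean definition of $d_\ox$ rests, together with the verification that $d_\ox$ so defined reproduces the given Cartesian differential structure on the coKleisli category.
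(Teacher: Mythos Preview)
Your plan is essentially the paper's own plan: use the codereliction $\eta$ and the splitting hypothesis to get persistent tensorial representation, invoke the earlier results to make ${\cal L}[]$ a tensor storage category, define $d_\ox$ from $D[\varphi]$ via the universal properties, and then verify {\bf [d.1]}--{\bf [d.5]} by precomposing with $(1\x\varphi)\varphi_\ox$ (or iterates thereof) and computing in $\X$.  The paper writes the defining equation as $\varphi_\ox d_\ox = (\eta\x 1)m_\x S(D[\varphi])\epsilon$, which is equivalent to your ``half-free'' characterisation $(1\x\varphi)\varphi_\ox d_\ox = D[\varphi]$; and it records the same reconstruction $D[f] = (1\x\varphi)\varphi_\ox d_\ox S(f)\epsilon$ that you use.

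Where your sketch is misleading is in the diagnosis of the main obstacle.  The chain rule {\bf [d.4]} is long but mechanical: both sides, after precomposing with $(1\x\varphi)\varphi_\ox$, unwind to $D[\varphi\varphi]$ by repeated use of the strength identities and {\bf [CD.5]}.  The genuinely delicate axiom is the \emph{interchange} rule {\bf [d.5]}.  When one precomposes $(1\ox d_\ox)d_\ox$ with the universal maps one obtains
\[
\langle a\,\eta,\langle b,c\rangle D[\varphi]\rangle\, m_\x\, S(D[\varphi])\,\epsilon,
\]
and the identity to be proved is that $a$ and $b$ may be interchanged here.  But a direct appeal to {\bf [CD.7]} on $D[D[\varphi]]$ yields the analogous swap with $\varphi$ in place of $\eta$:
\[
\langle a\,\varphi,\langle b,c\rangle D[\varphi]\rangle\, m_\x\, S(D[\varphi])\,\epsilon
\;=\;
\langle b\,\varphi,\langle a,c\rangle D[\varphi]\rangle\, m_\x\, S(D[\varphi])\,\epsilon.
\]
Bridging the gap from $\varphi$ to $\eta$ requires an additional, non-obvious lemma, obtained by differentiating this last equation once more (in the first coordinate, at $0$) and using that $\eta(a) = \partial\varphi(x)/\partial x\,(0)\cdot a$.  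Your proposal does not anticipate this step, and without it the verification of {\bf [d.5]} does not go through.  So: same approach, but the hard part is interchange, not the chain rule, and it needs one more trick than you allow for.
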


Let us first recall what a tensor differential category is.   A {\bf
tensor differential category} is a tensor category with a coalgebra
modality (see Definition \ref{coalg-modality}) equipped with a natural
transformation
$$d_\ox: A \ox S(A) \to S(A)$$
called a {\bf deriving transformation} satisfying:
\begin{enumerate}[{\bf [d.1]}]
\item $d_\ox e = 0$ (constants)
\item $d_\ox \epsilon = 1 \ox e$ (linear maps)
\item $d_\ox \Delta = (1 \ox \Delta) (d_\ox \ox 1) + (1 \ox \Delta)(c_\ox \ox 1)(1 \ox d_\ox)$ (the product rule)
\item $ d_\ox \delta = (1 \ox \Delta_\ox) a_\ox (d_\ox \ox \delta) d_\ox$ (the chain rule)
\item $(1 \ox d_\ox) d_\ox = a_\ox(c_\ox \ox 1)a_\ox^{-1} (1 \ox d_\ox) d_\ox$ (the  interchange rule)
\end{enumerate}

A tensor {\em storage} differential category simply means that the
storage transformation (see Definition \ref{storage-trans}) are
isomorphisms.  Note also that in \cite{diffl} a differential category
was defined to be one satisfying only the first four of these
conditions: the last condition, the interchange rule, was introduced in
\cite{CartDiff} in order to ensure that the coKleisli category was a
Cartesian differential category.  Here we shall simply add the
interchange law as a condition for being a tensor differential category
as it will turn out not only to be sufficient but necessary to obtain
the characterization of tensor storage differential categories  (with an
exact modality) as the linear maps of a Cartesian differential storage
category.

The remainder of the section is dedicated to proving Theorem \ref{lin-of-cdsc}.

Given a Cartesian differential storage category here is the definition of the tensor deriving transformation:
$$\xymatrix{A \x S(A) \ar[d]_{\varphi_\ox}  \ar[r]^{\eta \x 1} & S(A) \x S(A) \ar[r]^{m_\x} & S(A \x A) \ar[r]^{S(D[\varphi_A])} & S^2(A) \ar[d]^{\epsilon} \\
A \ox S(A) \ar@{..>}[rrr]_{d_\ox} &&&  S(A)}$$

We start by observing:

\begin{lemma}~
\begin{enumerate}[(i)]
\item $d_\ox$ is natural for linear maps;
\item
The tensor deriving transformation and the differential are inter-definable with
$$D[f] = (1 \x \varphi)\varphi_\ox d_\ox S(f)\epsilon.$$
\end{enumerate}
\end{lemma}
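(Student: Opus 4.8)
The plan is to establish part \emph{(ii)} first, since part \emph{(i)} then follows with only routine naturality bookkeeping. Everything reduces to the single identity
\[
(1 \x \varphi)\,\varphi_\ox\, d_\ox \;=\; D[\varphi_A]\colon A \x A \to S(A).
\]
Granting this, \emph{(ii)} is immediate: postcomposing with $S(f)\epsilon$ gives $(1\x\varphi)\varphi_\ox d_\ox S(f)\epsilon = D[\varphi_A]S(f)\epsilon$, and $D[\varphi_A]S(f)\epsilon = D[\varphi_A S(f)\epsilon]$ by two applications of the chain rule \textbf{[CD.5]} (using that $S(f)$ and $\epsilon$ are linear in the differential sense in a Cartesian differential storage category, so $D[S(f)]=\pi_0 S(f)$ and $D[\epsilon]=\pi_0\epsilon$), while $\varphi_A S(f)\epsilon = f$ by naturality of $\varphi$ and $\varphi\epsilon = 1$; hence $D[\varphi_A]S(f)\epsilon = D[f]$.

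To prove the core identity I would unfold the defining diagram of $d_\ox$, namely $\varphi_\ox d_\ox = (\eta \x 1) m_\x S(D[\varphi_A])\epsilon$, obtaining $(1\x\varphi)\varphi_\ox d_\ox = (\eta \x \varphi) m_\x S(D[\varphi_A])\epsilon$. The first step is to observe that $(1 \x \varphi) m_\x = \theta'$: both sides are linear in their first argument (for $m_\x$ by its construction as a bilinear lifting of $\varphi$, for the strength $\theta'$ by its defining property), and both become $\varphi$ after precomposition with $\varphi \x 1$, so they coincide by the uniqueness clause of the strong classification. Therefore $(\eta\x\varphi)m_\x = (\eta \x 1)(1\x\varphi)m_\x = (\eta \x 1)\theta'$. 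The second, and key, step is that $D[\varphi_A]$ is linear in its \emph{first} argument --- a standard fact about Cartesian differential categories, which is essentially the content of axiom \textbf{[CD.6]} --- so the symmetric (first-argument) form of Proposition~\ref{prop:linmaps}(iii) gives $\theta' S(D[\varphi_A])\epsilon = (\epsilon \x 1)D[\varphi_A]$. Combining these, $(1\x\varphi)\varphi_\ox d_\ox = (\eta\x 1)(\epsilon\x 1)D[\varphi_A] = (\eta\epsilon \x 1)D[\varphi_A] = D[\varphi_A]$, using the codereliction identity $\eta\epsilon = 1$. I expect this second step to be the main obstacle: it is the point where the one-sided linearity of the derivative expressed by \textbf{[CD.6]} is translated, through the storage strength $\theta'$, into the statement that evaluating $\theta'$ at $\epsilon$ recovers the map; the remaining manipulations are purely formal.

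For part \emph{(i)}, to show $d_\ox$ is natural for a linear map $g\colon A \to B$, i.e.\ $(g\ox S(g))d_{\ox,B} = d_{\ox,A}S(g)$, I would note that both sides are linear maps out of $A\ox S(A)$, so by the couniversal property of $\varphi_\ox$ together with the uniqueness in the strong classification of the remaining $S(A)$ argument, it suffices to check equality after precomposition with $(1\x\varphi)\varphi_\ox\colon A\x A \to A\ox S(A)$. On the left, naturality of $\varphi_\ox$ for linear maps and of $\varphi$ turn $(1\x\varphi)\varphi_{\ox,A}(g\ox S(g))$ into $(g\x g)(1\x\varphi)\varphi_{\ox,B}$, so by the core identity the left side becomes $(g\x g)D[\varphi_B]$. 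On the right, $(1\x\varphi)\varphi_{\ox,A}d_{\ox,A}S(g) = D[\varphi_A]S(g) = D[\varphi_A S(g)] = D[g\varphi_B] = \<D[g],\pi_1 g\>D[\varphi_B] = (g\x g)D[\varphi_B]$, using the chain rule, naturality of $\varphi$, and $D[g] = \pi_0 g$ for the linear map $g$. The two sides agree, which completes the proof. (One should also record that $(\eta\x 1)m_\x S(D[\varphi_A])\epsilon$ is bilinear --- being a bilinear map postcomposed with linear maps --- so that $d_\ox$ is genuinely well defined by the tensor representation.)
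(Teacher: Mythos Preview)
Your proof is correct and follows essentially the same route as the paper's. The paper proves \emph{(ii)} by the same direct unfolding $(1\x\varphi)\varphi_\ox d_\ox S(f)\epsilon = (\eta\x 1)(1\x\varphi)m_\x S(D[\varphi])\epsilon S(f)\epsilon$, then invokes the strength identity and first-argument linearity of the differential exactly as you do; your factorization through the intermediate identity $(1\x\varphi)\varphi_\ox d_\ox = D[\varphi_A]$ is a clean way to organize the same computation, and your notation $\theta'$ for $(1\x\varphi)m_\x$ is in fact more accurate than the paper's (which writes $\theta$ at this point). For \emph{(i)} the paper simply observes that $\varphi_\ox d_\ox = (\eta\x 1)m_\x S(D[\varphi])\epsilon$ is built from pieces natural for linear maps; your argument spells this out by precomposing with $(1\x\varphi)\varphi_\ox$ and reducing to $(g\x g)D[\varphi_B]=D[\varphi_A]S(g)$ via the chain rule, which is the same content made explicit.
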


\begin{proof}~
\begin{enumerate}[(i)]
\item  Note that  $d_\ox S(f) = (f \ox S(f)) d_\ox$ if and only if $\varphi_\ox d_\ox S(f) = \varphi_\ox (f \ox S(f)) d_\ox$
but $\varphi d_\ox = (\eta \x 1) m_\x S(D[\varphi])\epsilon$ which is natural for linear $f$. This provides the result immediately.
\item It suffices to prove the equality:
\begin{eqnarray*}
\lefteqn{(1 \x \varphi)\varphi_\ox d_\ox S(f)\epsilon} \\
& = & (1 \x \varphi) (\eta \x 1) m_\x S(D[\varphi]) \epsilon S(f)\epsilon \\
& = & (1 \x \varphi) (\eta \x 1) m_\x S(D[\varphi ]) S(S(f) \epsilon) \epsilon] \\
& = & (\eta \x 1) (1 \x \varphi) m_\x S(D[\varphi S(f) \epsilon]) \epsilon \\
& = & (\eta \x 1) \theta S(D[f \varphi \epsilon])  \epsilon ~~~~ \mbox{(strength)}\\
& = & (1 \x \varphi) ( 1 \x \epsilon) D[f] ~~~~ \mbox{(linearity of differential in first argument)}\\
& = & D[f] \\
\end{eqnarray*}
\end{enumerate}
\end{proof}

We want to show that $d_\ox$ as defined satisfies {\bf (d.1)}--{\bf (d.5)} above.  So we shall simply go through the conditions:

\begin{enumerate}[{\bf [d.1]}]
\item Constants ($d_\ox e = 0$)
\begin{eqnarray*}
d_\ox e & = & d_\ox S(0)s_\top \\
              & = & (0 \ox S(0))d_\ox s_\top \\
              & = & 0
\end{eqnarray*}

\item Differentials of linear maps ($d_\ox \epsilon = 1 \ox e$)
\begin{eqnarray*}
\varphi_\ox d_\ox \epsilon & = & (\eta \x 1) m_\x S(D[\varphi]) \epsilon\epsilon \\
      & = & (\eta \x 1) m_\x S(D[\varphi] \epsilon) \epsilon \\
      & = & (\eta \x 1) m_\x S(D[\varphi \epsilon]) \epsilon \\
      & = & (\eta \x 1) m_\x S(D[1]) \epsilon \\
      & = & (\eta \x 1) m_\x S(\pi_0) \epsilon \\
      & = & (\eta \x 1) \pi_0 \epsilon \\
      & = & \pi_0 \eta \epsilon \\
      & = & \pi_0 = \varphi_\ox (1 \ox e)
\end{eqnarray*}

\item The product rule  ($d_\ox \Delta = (1 \ox \Delta) (d_\ox \ox 1) + (1 \ox \Delta)(c_\ox \ox 1)(1 \ox d_\ox)$)

This is more complicated.  Start by noting:
\begin{eqnarray*}
d_\ox \Delta & = & d_\ox S(\<1,0\> + \< 0,1\>) s_\ox \\
                       & = & ((\<1,0\> + \< 0,1\>) \ox S(\<1,0\> + \< 0,1\>)) d_\ox s_\ox \\
                       & = & (\< 1,0\>\ox S(\<1,0\> + \< 0,1\>)) d_\ox s_\ox \\
                       & & ~~~~~ + (\< 0,1\> \ox S(\<1,0\> + \< 0,1\>)) d_\ox s_\ox
\end{eqnarray*}

It suffices to prove that
$$(\< 1,0\>\ox S(\<1,0\> + \< 0,1\>)) d_\ox s_\ox = (1 \ox \Delta) (d_\ox \ox 1)~~~~\mbox{ and }$$
$$(\< 0,1\> \ox S(\<1,0\> + \< 0,1\>)) d_\ox s_\ox = (1 \ox \Delta)(c_\ox \ox 1)(1 \ox d_\ox)$$
We shall demonstrate the former leaving the latter to the reader.
However, we require some observations first:

\begin{lemma}~ \label{more-bilinear-identities}
\begin{enumerate}[(i)]
\item If $h: A \x B \to C$ is bilinear, then $\<\<a,b\>,\<c,e\>\> D[h] = \<a,e \> h+ \<c,b\>h$;
\item $\<\<a,b\>,\<c,e\>\> D[m_\x] = \<a,e \> m_\x + \<c,b\>m_\x$;
\item $\<\<a,b\>,\<c,e\>\> D[\varphi_{A \x B}] = \< \< a,c\>D[\varphi_A], e\varphi_B\>m_\x + \< c \varphi_A,\<b,e\>D[\varphi_B]\>m_\x$;
\item $(\<1,0\> \x 1)D[\varphi_{A \x B}] = a_\x (D[\varphi_A] \x \varphi_B)m_\x$.
\end{enumerate}
\end{lemma}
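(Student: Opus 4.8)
The plan is to reduce everything to part~(i), which says that a bilinear map obeys the familiar \emph{product rule} $\<\<a,b\>,\<c,e\>\>D[h] = \<a,e\>h + \<c,b\>h$; parts~(ii), (iii) and (iv) then follow from~(i) by the chain rule {\bf [CD.5]} together with the elementary axioms {\bf [CD.2]}, {\bf [CD.3]}, {\bf [CD.4]} and the defining equation of $m_\x$. The single fact about linearity I shall use repeatedly is that, unwinding the definition of linearity in an argument in terms of the simple-slice partial derivatives, $h\colon A \x B \to C$ being linear in its first argument is exactly the identity $\<\<a,0\>,\<c,e\>\>D[h] = \<a,e\>h$, and (symmetrically) linearity in the second argument is exactly $\<\<0,b\>,\<c,e\>\>D[h] = \<c,b\>h$.

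To prove~(i): since pairing is additive in a Cartesian left additive category (the projections being additive), $\<a,b\> = \<a,0\> + \<0,b\>$, and $D[h]$ is additive in its first argument by {\bf [CD.2]}, so $\<\<a,b\>,\<c,e\>\>D[h] = \<\<a,0\>,\<c,e\>\>D[h] + \<\<0,b\>,\<c,e\>\>D[h]$. The first summand is $\<a,e\>h$ directly, since $h$ is linear in its first argument. For the second summand, note $c_\x = \<\pi_1,\pi_0\>$ is linear (by {\bf [CD.3]} and {\bf [CD.4]}, $D[c_\x] = \pi_0 c_\x$), so by the chain rule $D[c_\x h] = \< \pi_0 c_\x,\pi_1 c_\x\> D[h] = (c_\x \x c_\x)D[h]$; applying the first-argument identity to $c_\x h$ (which is linear in its first argument precisely because $h$ is linear in its second) yields $\<\<0,b\>,\<c,e\>\>D[h] = \<c,b\>h$. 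Adding the two gives~(i).

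Part~(ii) is~(i) applied to $h = m_\x$, which is bilinear by Lemma~\ref{lem:basic_tensor_rep}. For~(iii), I would start from the defining equation $\varphi_{A\x B} = (\varphi_A \x \varphi_B)m_\x$ and use the chain rule to get $D[\varphi_{A\x B}] = \< D[\varphi_A \x \varphi_B],\ \pi_1(\varphi_A \x \varphi_B)\> D[m_\x]$; expanding $\varphi_A \x \varphi_B = \< \pi_0\varphi_A,\pi_1\varphi_B\>$ with {\bf [CD.3]}, {\bf [CD.4]}, {\bf [CD.5]} gives $D[\varphi_A \x \varphi_B] = \<\ \<\pi_0\pi_0,\pi_1\pi_0\>D[\varphi_A],\ \<\pi_0\pi_1,\pi_1\pi_1\>D[\varphi_B]\ \>$, and substituting this, then applying~(ii) to the outer $D[m_\x]$, produces exactly the claimed sum. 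Part~(iv) is the special case of~(iii) with direction $\<a,b\>$ replaced by $\<a,0\>$: the term $\< c\varphi_A,\<0,e\>D[\varphi_B]\>m_\x$ vanishes since $\<0,e\>D[\varphi_B]=0$ by {\bf [CD.2]}, and the surviving composite, read at a generalized element, is precisely $a_\x(D[\varphi_A]\x\varphi_B)m_\x$.

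The only genuinely conceptual point is~(i), and the subtlety there is getting the left/right conventions of the simple-slice definition of ``linear in an argument'' straight so that the two partial-derivative identities above really do hold; after that the lemma is mechanical. The main bookkeeping burden is in~(iii), where one must carefully track which iterated projection picks out which of the four coordinates of $(A\x B)\x(A\x B)$, and in~(iv), where one must keep straight the re-bracketing buried in the associativity map $a_\x$.
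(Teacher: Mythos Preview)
Your proposal is correct and follows essentially the same route as the paper's proof: part~(i) is obtained by splitting the direction via {\bf [CD.2]} and invoking linearity in each argument separately, (ii) is (i) for $m_\x$, (iii) is the chain rule applied to $\varphi_{A\x B}=(\varphi_A\x\varphi_B)m_\x$ together with (ii), and (iv) is (iii) specialized to $b=0$. The only small elision is in (iv): after $\<0,e\>D[\varphi_B]=0$ you still need $\<c\varphi_A,0\>m_\x=0$, which uses that $m_\x$ is linear in its second argument; the paper makes this step explicit.
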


\begin{proof}~
\begin{enumerate}[(i)]
\item  We have the following calculation:
\begin{eqnarray*}
\<\<a,b\>,\<c,e\>\> D[h] & = & \<\<a,0\>,\<c,e\>\> D[h] + \<\<0,b\>,\<c,e\>\> D[h] \\
  & = & \< a,\<c,e\>\> D_B[h] + \< b,\<c,e\>\> D_A[h] \\
  & = & \< a,e \> h + \< c,b\>h ~~~~~\mbox{($h$ is bilinear)}.
\end{eqnarray*}
\item  Use the fact that $m_\x$ is bilinear (see Lemma \ref{lem:basic_tensor_rep}).
\item  Using the fact that  $\varphi_{A \x B} = (\varphi_A \x \varphi_B) m_\x$ (see Proposition \ref{prop:linmaps}) we have:
\begin{eqnarray*}
\lefteqn{<\<a,b\>,\<c,e\>\> D[\varphi_{A \x B}]} \\ & = &
      <\<a,b\>,\<c,e\>\> D[(\varphi_A \x \varphi_B)m_\x] \\
      & = & <\<a,b\>,\<c,e\>\> \< D[\varphi_A \x \varphi_B], \pi_1 (\varphi_A \x \varphi_B) \> D[m_\x] \\
      & = & <\<a,b\>,\<c,e\>\> \< \<  (\pi_0 \x \pi_0)D[\varphi_A], (\pi_1 \x \pi_1)D[\varphi_B] \>, \< \pi_1 \pi_0 \varphi_A ,\pi_1 \pi_1 \varphi_B \> \> D[m_\x] \\
      & = & \< \<  \< a,c\> D[\varphi_A], \< b,e\> D[\varphi_B] \>, \< c \varphi_A ,e \varphi_B \> \> D[m_\x] \\
      & = & \<  \< a,c\> D[\varphi_A],e \varphi_B \>m_\x + \< c \varphi_A, \< b,e\> D[\varphi_B]\> m_\x.
 \end{eqnarray*}
\item We apply (iii):
\begin{eqnarray*}
\lefteqn{ (\<1,0\> \x 1)D[\varphi_{A \x B}]} \\
& =& \<\<\pi_0,0\>,\<\pi_1\pi_0,\pi_1\pi_1\>\> D[\varphi_{A \x B}] \\
& = & \<  \< \pi_0,\pi_1\pi_0\> D[\varphi_A],\pi_1\pi_1 \varphi_B \>m_\x + \< \pi_1\pi_1 \varphi_A, \< 0,\pi_1\pi_1\> D[\varphi_B]\> m_\x \\
& = & \<  \< \pi_0,\pi_1\pi_0\> D[\varphi_A],\pi_1\pi_1 \varphi_B \>m_\x + \< \pi_1\pi_1 \varphi_A, 0> m_\x \\
& =  & \<  \< \pi_0,\pi_1\pi_0\> D[\varphi_A],\pi_1\pi_1 \varphi_B \>m_\x + 0 \\
& = & a_\x (D[\varphi_A] \x \varphi_B) m_\x.
\end{eqnarray*}
\end{enumerate}
\end{proof}

We are now ready to calculate.  Since both maps are linear we may prefix each side with the universal map which gives tensorial representation;
the universal property tells us that the maps are equal if and only if these composites are equal.  Here is the calculation:
\begin{eqnarray*}
\lefteqn{ \varphi_\ox (\< 1,0\>\ox S(\<1,0\> + \< 0,1\>)) d_\ox s_2}\\
 & = & (\< 1,0\>\x S(\Delta_\x)) \varphi_\ox d_\ox s_2 \\
& = & (\< 1,0\>\x S(\Delta_\x)) (\eta \x 1)m_\x S(D[\varphi_{A \x A}]) \epsilon s_2 \\
& = & (\eta \x S(\Delta_\x))m_\x S((\< 1,0\> \x 1)D[\varphi_{A \x A}]) \epsilon s_2 \\
& = & (\eta \x \Delta_\x)(1 \x m_\x) m_\x S(a_\x (D[\varphi_A] \x \varphi_A)m_\x) \epsilon s_2 ~~~\mbox{(Lemma \ref{more-bilinear-identities} (iv))}\\
& = & (\eta \x \Delta_\x) a_\x (m_\x \x 1) m_\x S(D[\varphi_A] \x \varphi_A) S(m_\x) \epsilon s_2 \\
& = & (\eta \x \Delta_\x)a_\x (m_\x \x 1) (S(D[\varphi_A]) \x S(\varphi_A)) m_\x S(m_\x) \epsilon s_2 \\
& = & (\eta \x \Delta_\x)a_\x(m_\x \x 1) (S(D[\varphi_A]) \x S(\varphi_A)) (\epsilon \x \epsilon) m_\x s_2  ~~~\mbox{(Lemma \ref{bilinear-identities} (ii))}\\
& = & (\eta \x \Delta_\x)a_\x( m_\x \x 1) (S(D[\varphi_A]) \x 1) (\epsilon \x 1) \varphi_\ox \\
& = & (1 \x \Delta_\x)a_\x (((\eta \x 1) m_\x S(D[\varphi_A])\epsilon) \x 1) \varphi_\ox \\
& = & (1 \x \Delta_\x)a_\x ((\varphi_\ox d_\ox) \x 1) \varphi_\ox \\
& = & (1 \x \Delta_\x)a_\x (\varphi_\ox \x 1)\varphi_\ox (d_\ox \ox 1)  \\
& = & (1 \x \Delta_\ox) \varphi_\ox (d_\ox \ox 1)  \\
& = & \varphi_\ox (1 \ox \Delta_\ox) (d_\ox \ox 1)
\end{eqnarray*}

\item To prove the chain rule, $ d_\ox \delta = (1 \ox \Delta_\ox) a_\ox (d_\ox \ox \delta) d_\ox$, we
precompose both sides with $(1 \x \varphi) \varphi_\ox$.  As both maps are clearly linear we can then use the
universal properties of $\varphi$ and $\varphi_\ox$ to conclude the original maps are equal if and only if these composites are:

\begin{eqnarray*}
\lefteqn{(1 \x \varphi) \varphi_\ox d_\ox \delta} \\
 & = & (1 \x \varphi) (\eta \x 1)m_\x S(D[\varphi])\epsilon S(\varphi) \\
& = & (1 \x \varphi)  (\eta \x 1)m_\x S(D[\varphi]S(\varphi))\epsilon  \\
& = & (\eta \x \varphi) m_\x S(D[\varphi S(\varphi)])\epsilon  \\
& = & (\eta \x 1) \theta S(D[\varphi\varphi])\epsilon  \\
& = & (\eta \x 1) (\epsilon \x 1) D[\varphi\varphi]  \\
& = & D[\varphi\varphi] \\
\lefteqn{(1 \x \varphi) \varphi_\ox (1 \ox \Delta_\ox) a_\ox (d_\ox \ox \delta) d_\ox} \\
& = & (1 \x \varphi) (1 \x \Delta_\ox) \varphi_\ox  a_\ox (d_\ox \ox \delta) d_\ox \\
& = & (1 \x \varphi\Delta_\x) a_\x(\varphi_\ox \x 1) \varphi_\ox  (d_\ox \ox \delta) d_\ox  \\
& = & (1 \x \varphi\Delta_\x) a_\x((\varphi_\ox d_\ox) \x S(\varphi)) \varphi_\ox d_\ox \\
& = & (1 \x \varphi\Delta_\x) a_\x(((\eta \x 1)m_\x S(D[\varphi])\epsilon) \x S(\varphi)) (\eta \x 1)m_\x S(D[\varphi])\epsilon \\
& = & (1 \x \Delta_\x (\varphi \!\x\! \varphi)) a_\x ((\eta \!\x\! 1)m_\x S(D[\varphi]) \epsilon \!\x\! S(\varphi)) (\eta \!\x\! 1)m_\x S(D[\varphi] ) \epsilon\\
& = & (1 \x \Delta_\x) a_\x (((\eta \x \varphi)m_\x S(D[\varphi]) \epsilon) \x \varphi S(\varphi)) (\eta \x 1)m_\x S(D[\varphi] ) \epsilon\\
& = & (1 \x \Delta_\x) a_\x (((\eta \x 1)\theta S(D[\varphi]) \epsilon) \x (\varphi\varphi)) (\eta \x 1)m_\x S(D[\varphi] ) \epsilon\\
& = & (1 \x \Delta_\x) a_\x (((\eta \x 1)(\epsilon \x 1)D[\varphi]) \x (\varphi\varphi)) (\eta \x 1)m_\x S(D[\varphi] ) \epsilon\\
& = & (1 \x \Delta_\x) a_\x (D[\varphi] \x \varphi) (\eta \x \varphi)m_\x S(D[\varphi] ) \epsilon\\
& = & (1 \x \Delta_\x) a_\x (D[\varphi] \x \varphi) (\eta \x 1)(\epsilon \x 1)D[\varphi] \\
& = &(1 \x \Delta_\x) a_\x (D[\varphi] \x \varphi) D[\varphi] \\
& = & D[\varphi\varphi]
\end{eqnarray*}

\item  Interchange  ($(1 \ox d_\ox) d_\ox = a_\ox(c_\ox \ox 1)a_\ox^{-1} (1 \ox d_\ox) d_\ox$)

We may precompose with $(1 \x 1 \x \varphi) (1 \x \varphi_\ox)\varphi_\ox$.

\begin{eqnarray*}
\lefteqn{\<a,b,c\>(1 \x 1 \x \varphi) (1 \x \varphi_\ox)\varphi_\ox(1 \ox d_\ox) d_\ox}\\
& = & \<a,b,c\>(1 \x 1 \x \varphi) (1 \x \varphi_\ox d_\ox)\varphi_\ox d_\ox \\
& = & \<a,b,c\>(1 \x 1 \x \varphi) (1 \x (\eta \!\x\! 1)m_\x S(D[\varphi]) \epsilon) (\eta \!\x\! 1)m_\x S(D[\varphi]) \epsilon \\
& = & \<a \eta,b \eta,c\> 1 \x \theta (S(D[\varphi]) \epsilon) m_\x S(D[\varphi]) \epsilon \\
& = & \<a \eta,b \eta,c\> (1 \x (\epsilon \x 1)D[\varphi] ) m_\x S(D[\varphi]) \epsilon \\
& = & \< a \eta,\<b,c\>D[\varphi]\>m_\x S(D[\varphi]) \epsilon
\end{eqnarray*}

The identity requires us to show that  $a$ and $b$  can be swapped.

Consider the double differential of $\varphi$:
\begin{eqnarray*}
\<\< 0,a\>,\<b,c\>\>D[D[\varphi]] & = &\< \< 0,a\>,\<b,c\>\>D[\varphi S(D[\varphi])\epsilon] \\
                     & = & \< 0,a\>,\<b,c\>\>D[\varphi] S(D[\varphi])\epsilon \\
                     & = & \< 0,a\>,\<b,c\>\>D[(\varphi \x \varphi)m_\x]S(D[\varphi])\epsilon \\
                     & = & \< \< 0,a\>,\<b,c\>\>D[\varphi \x \varphi], \< b \varphi, c \varphi \> \> D[m_\x] S(D[\varphi])\epsilon  \\
                     & = & \< \< 0,b\> D[\varphi], \< a,c\>D[\varphi] \> ,\< b \varphi, c \varphi \> \> D[m_\x] S(D[\varphi])\epsilon  \\
                     & = & ( \<  \< 0,b\> D[\varphi],c \varphi \> m_\x + \< b \varphi, \< a,c\>D[\varphi] \> m_\x ) S(D[\varphi])\epsilon  \\
                     & = & ( 0 + \< b \varphi, \< a,c\>D[\varphi] \> m_\x ) S(D[\varphi])\epsilon  \\
                     & = & \< b \varphi, \< a,c\>D[\varphi] \> m_\x  S(D[\varphi])\epsilon
\end{eqnarray*}
Then we have:
\begin{eqnarray*}
\lefteqn{\< a \varphi,\<b,c\>D[\varphi] \> m_\x S(D[\varphi]) \epsilon }\\
& = & \<0,a\>,\<b,c\>\>D[D[\varphi]] \\
& = & \<0,b\>,\<a,c\>\>D[D[\varphi]] \\
& = & \< b \varphi,\<a,c\>D[\varphi] \> m_\x S(D[\varphi]) \epsilon
\end{eqnarray*}
We are nearly there except we would like to change $\varphi$ into $\eta$, which is allowed by another lemma.

\begin{lemma}~
$$\< b \varphi,\<a,c\> D[\varphi] \>m_\x S(D[\varphi])\epsilon = \< a \eta,\<b,c\>D[\varphi]\>m_\x S(D[\varphi])\epsilon$$
\end{lemma}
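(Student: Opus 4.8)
The plan is to reduce the stated identity to a single equation between morphisms,
\[ (\varphi_A \x 1_{S(A)})\, m_\x\, S(D[\varphi_A])\, \epsilon \;=\; (\eta_A \x 1_{S(A)})\, m_\x\, S(D[\varphi_A])\, \epsilon \;\colon\; A \x S(A) \to S(A), \]
and then to combine it with the $\varphi$-symmetry proved in the computation immediately preceding the lemma, namely $\< a\varphi, \<b,c\>D[\varphi]\> m_\x S(D[\varphi])\epsilon = \< b\varphi, \<a,c\>D[\varphi]\> m_\x S(D[\varphi])\epsilon$. Granting the displayed morphism equation, one writes $\< a\varphi,-\> = \<a,-\>(\varphi_A \x 1_{S(A)})$, so that the left-hand side of the lemma equals $\< a\varphi, \<b,c\>D[\varphi]\> m_\x S(D[\varphi])\epsilon = \< a, \<b,c\>D[\varphi]\>(\eta_A \x 1)\, m_\x S(D[\varphi])\epsilon = \< a\eta, \<b,c\>D[\varphi]\> m_\x S(D[\varphi])\epsilon$, which is the right-hand side; the first equality is the $\varphi$-symmetry, the second is the displayed equation, and the third is just regrouping.

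To prove the displayed morphism equation I would invoke the uniqueness half of strong classification. Both composites are linear in their second argument: $m_\x$ is bilinear (Lemma \ref{lem:basic_tensor_rep}), $S(D[\varphi_A])\epsilon$ is linear, and precomposition by $\varphi_A \x 1$ (respectively $\eta_A \x 1$) alters only the first coordinate and so preserves linearity in the second by {\bf [LS.3]}. Hence the two maps are equal as soon as they agree after precomposition with $1 \x \varphi_A$, since both are then linear lifts of the same map along $1 \x \varphi_A$. For the $\varphi$-branch, $(1 \x \varphi_A)(\varphi_A \x 1) = \varphi_A \x \varphi_A$ and $(\varphi_A \x \varphi_A)m_\x = \varphi_{A\x A}$, so naturality of $\varphi$ together with $\varphi\epsilon = 1$ gives $\varphi_{A\x A}S(D[\varphi_A])\epsilon = D[\varphi_A]$. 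For the $\eta$-branch, the defining diagram of $d_\ox$ says $(\eta_A \x 1)m_\x S(D[\varphi_A])\epsilon = \varphi_\ox d_\ox$, so one must compute $(1 \x \varphi_A)\varphi_\ox d_\ox$; applying the inter-definability formula $D[f] = (1 \x \varphi)\varphi_\ox d_\ox S(f)\epsilon$ to $f = \varphi_A$ and using the monad triangle identity $S(\varphi_A)\epsilon_{S(A)} = 1$, this is again $D[\varphi_A]$. The two branches coincide, establishing the morphism equation.

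I expect the only genuine subtlety to be the verification that both composites are linear in their second argument: this is exactly what licenses cancelling the $\varphi_A$ introduced by $1\x\varphi_A$, and without it the agreement after $1\x\varphi_A$ would carry no information. The coherence fact $S(\varphi_A)\epsilon_{S(A)} = 1_{S(A)}$ (a monad unit law, recalling $\mu = \epsilon_{S(\_)}$ and $\delta = S(\varphi)$) is the other small point to check; the remainder is formal bookkeeping with the already-established identities for $m_\x$, $\varphi$, $\eta$, and $d_\ox$, plus the $\varphi$-symmetry just derived from the interchange rule {\bf [CD.7]}.
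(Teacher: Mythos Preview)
Your argument is correct, and it takes a genuinely different route from the paper's.

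The paper obtains the lemma by \emph{differentiating} the already-established $\varphi$-symmetry
\[
\< \pi_0\pi_0\,\varphi,\<\pi_0\pi_1,\pi_1\>D[\varphi]\>\, m_\x\, S(D[\varphi])\,\epsilon
~=~
\< \pi_0\pi_1\,\varphi,\<\pi_0\pi_0,\pi_1\>D[\varphi]\>\, m_\x\, S(D[\varphi])\,\epsilon
\]
partially in the first coordinate at $0$.  On the left-hand side this replaces the exposed $\varphi$ by $\eta = \<1,0\>D[\varphi]$ (since $\eta(a)=\partial\varphi(x)/\partial x\,(0)\cdot a$), while on the right-hand side the first coordinate sits only in the linear position of $D[\varphi]$ and so survives unchanged; the two differentiated sides are exactly the two sides of the lemma.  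The paper carries this out in the term logic.

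You instead isolate the pointwise identity
\[
(\varphi_A\x 1)\,m_\x\,S(D[\varphi_A])\,\epsilon ~=~ (\eta_A\x 1)\,m_\x\,S(D[\varphi_A])\,\epsilon
\]
and prove it by the uniqueness half of strong classification: both sides are linear in the second argument (by bilinearity of $m_\x$ and {\bf [LS.3]}), and both become $D[\varphi_A]$ after precomposing with $1\x\varphi_A$ (the $\varphi$-branch via $(\varphi\x\varphi)m_\x=\varphi$ and $\varphi\epsilon=1$; the $\eta$-branch via the defining diagram for $d_\ox$ and the coKleisli identity $S(\varphi)\epsilon=1$).  You then simply substitute this into the $\varphi$-symmetry.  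This is a storage-theoretic proof rather than a differential one: it avoids a second round of differentiation and the term calculus entirely, at the cost of invoking the classification universal property once more.  Both arguments consume the same input (the $\varphi$-symmetry coming from {\bf [CD.7]}); yours is arguably more elementary, while the paper's keeps the whole verification of {\bf [d.5]} within the differential calculus.
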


To prove this lemma we partially differentiate both sides of the equation derived above in the form:
$$\< \pi_0\pi_0 \varphi,\<\pi_0\pi_1,\pi_1\>D[\varphi] \> m_\x S(D[\varphi]) \epsilon = \< \pi_0\pi_1 \varphi,\<\pi_0\pi_0,\pi_1\>D[\varphi] \> m_\x S(D[\varphi]) \epsilon$$
with respect to the first coordinate at position $0$.  This is best done using the term logic.  In the derivation below we flip between the term logic and the categorical term.
The derivation may best be read from the middle outward:

\begin{eqnarray*}
\lefteqn{\< \pi_0\pi_0 \eta,\<\pi_0\pi_1,\pi_1\>D[\varphi]\>m_\x S(D[\varphi])\epsilon} \\
& = & \left\llbracket ((a,b),c) \mapsto \epsilon(S(D[\varphi])(m_\x(\eta(a),D[\varphi](c) \cdot b))) \right\rrbracket \\
& = & \left\llbracket ((a,b),c) \mapsto \epsilon(S(D[\varphi])(m_\x(\diff{\varphi(x)}{x}{0}{a},D[\varphi](c) \cdot b))) \right\rrbracket \\
& = & \left\llbracket ((a,b),c) \mapsto \diff{\epsilon(S(D[\varphi])(m_\x(\varphi(x),D[\varphi](c).b)))}{x}{0}{a} \right\rrbracket \\
& = & \<\< \pi_0\pi_0,0,\>,0\>,\<\<0,\pi_0\pi_1\>,\pi_1\>\> D[\< \pi_0\pi_0 \varphi,\<\pi_0\pi_1,\pi_1\>D[\varphi] \> m_\x S(D[\varphi]) \epsilon \\
& = & \<\< \pi_0\pi_0,0,\>,0\>,\<\<0,\pi_0\pi_1\>,\pi_1\>\> D[\< \pi_0\pi_1 \varphi,\<\pi_0\pi_0,\pi_1\>D[\varphi] \> m_\x S(D[\varphi]) \epsilon \\
& = & \left\llbracket ((a,b),c)  \mapsto \diff{\epsilon(S(D[\varphi])(m_\x(\varphi(b),D[\varphi](c).x)))}{x}{0}{a}   \right\rrbracket  \\
& = & \left\llbracket ((a,b),c) \mapsto \epsilon(S(D[\varphi])(m_\x(\varphi b,\diff{D[\varphi](c)\cdot x}{x}{0}{a}))) \right\rrbracket \\
& = & \left\llbracket ((a,b),c) \mapsto \epsilon(S(D[\varphi])(m_\x(\varphi b,D[\varphi](c)\cdot a))) \right\rrbracket \\
& = &  \< \pi_0\pi_1 \varphi,\<\pi_0\pi_0,\pi_1\>D[\varphi]\>m_\x S(D[\varphi])\epsilon
\end{eqnarray*}

The key step uses the fact that $\eta(a)$ is the partial derivative of $\varphi(x)$ at $0$ with linear argument $a$, that is $\eta(a) = \diff{\varphi(x)}{x}{0}{a}$.  Otherwise we are using the fact that we are differentiating in linear positions so the term does not change.
\end{enumerate}

\begin{remark}{}
{\em Cartesian closed differential categories, following \cite{bem10,m12}, provide a semantics for the differential
and the resource $\lambda$-calculus.  Closed differential storage categories, by Section \ref{closedstorage},
are precisely those semantic settings which arise as the coKleisli categories of monoidal closed differential tensor storage categories.}
\end{remark}


\end{document}